\numberwithin{equation}{section}
\theoremstyle{plain}
\newtheorem{thm}{Theorem}[section]
\newtheorem{prop}[thm]{Proposition}
\newtheorem{propdef}[thm]{Proposition-Definition}
\newtheorem{defi}[thm]{Definition}
\newtheorem{lem}[thm]{Lemma}
\newtheorem{cor}[thm]{Corollary}
\newtheorem{eg}[thm]{{Example}}
\theoremstyle{remark}
\newtheorem{rema}[thm]{Remark}
\newcommand{\ad}{{\mbox{\upshape{ad}}}}
\newcommand{\adl}{{\mbox{\upshape{ad}}_l}}
\newcommand{\adr}{{\mbox{\upshape{ad}}_r}}
\newcommand{\adrs}{{\mbox{\upshape{ad}}_r^\ast}}
\newcommand{\C}{{\mathbb C}}
\newcommand{\covA}{A_\rf}
\newcommand{\covAcb}{\cA_{\rfb_{21}}}
\newcommand{\covAc}{\cA_\rf}
\newcommand{\covAcp}{\cA_{\rf'}}
\newcommand{\covAcpw}{\cA_{\rf'}^\wedge}
\newcommand{\N}{{\mathbb N}}
\newcommand{\cA}{{\mathcal A}}
\newcommand{\cB}{{\mathcal B}}
\newcommand{\cC}{{\mathcal C}}
\newcommand{\Cc}{{\mathcal C}}
\newcommand{\cD}{{\mathcal D}}
\newcommand{\cR}{{\mathcal R}}
\newcommand{\cU}{{\mathcal U}}
\newcommand{\cop}{\mathrm{cop}}
\newcommand{\cqg}{{\qfield_q[G]}}
\newcommand{\cqof}{{\qfield_q[\mathcal{O}_f]}}
\newcommand{\End}{\mbox{End}}
\newcommand{\fbar}{\bar{f}}
\newcommand{\field}{{\mathbb K}}
\newcommand{\gfrak}{{\mathfrak g}}
\newcommand{\gbar}{\bar{g}}
\newcommand{\hfrak}{{\mathfrak h}}
\newcommand{\Hom}{{\mathrm{Hom}}}
\newcommand{\id}{{\mbox{id}}}
\newcommand{\kow}{{\varDelta}}
\newcommand{\ltil}{\tilde{l}}
\newcommand{\oF}{{\overline{F}}}
\newcommand{\op}{\mathrm{op}}
\newcommand{\ot}{\otimes}
\newcommand{\Ppiplus}{P^+}
\newcommand{\qfield}{k}
\newcommand{\rf}{{\bf{r}}}
\newcommand{\rfb}{{\bf{\bar{r}}}}
\newcommand{\rp}{{\bf{r}'}}
\newcommand{\rk}{\mathrm{rank}}
\newcommand{\rh}{\hat{R}}
\newcommand{\Q}{\mathbb Q}
\newcommand{\rank}{\mathrm{rank}}
\newcommand{\rep}{\mathrm{Rep}}
\newcommand{\sform}{{\bf{s}}}
\newcommand{\slfrak}{{\mathfrak{sl}}}
\newcommand{\typeone}{\cC}
\newcommand{\ubar}{\overline{u}}
\newcommand{\Uq}{U}
\newcommand{\uqbp}{{U_q(\mathfrak{b}^+)}}
\newcommand{\uqbm}{{U_q(\mathfrak{b}^-)}}
\newcommand{\uqg}{{U_q(\mathfrak{g})}}
\newcommand{\uqgc}{{\check{U}_q(\mathfrak{g})}}
\newcommand{\tr}{\mathrm{tr}}
\newcommand{\vep}{\varepsilon}
\newcommand{\wght}{\mathrm{wt}}
\newcommand{\wurz}{\pi}
\newcommand{\Z}{{\mathbb Z}}
\title[Reflection equation algebras and centres]{Reflection equation
  algebras, coideal subalgebras, and their centres}
\author{Stefan Kolb}
\address{School of Mathematics and Maxwell Institute for Mathematical Sciences,
The University of Edinburgh, JCMB, King's Buildings, Mayfield Road, Edinburgh EH9 3JZ, UK}
\email{stefan.kolb@ed.ac.uk}
\author{ Jasper V.~ Stokman}
\address{Korteweg-de Vries Institute for Mathematics,
University of Amsterdam, Plantage Muidergracht 24, 1018 TV Amsterdam,
The Netherlands}
\email{j.v.stokman@uva.nl}
\subjclass[2000]{17B37}
\keywords{reflection equation, coideal subalgebras, transmutation, quantum
  homogeneous spaces}
\begin{document}

\begin{abstract}
  Reflection equation algebras and related $\uqg$-comodule algebras
  appear in various constructions of quantum homogeneous spaces and
  can be obtained via transmutation or equivalently via twisting by a
  cocycle. In this paper we investigate algebraic and representation
  theoretic properties of such so called `covariantized' algebras, in
  particular concerning their centres, invariants, and
  characters. Generalising M.~Noumi's construction of quantum
  symmetric pairs we define a coideal subalgebra $B_f$ of $\uqg$ for
  each character $f$ of a covariantized algebra.  

  The locally finite part $F_l(\uqg)$ of $\uqg$ with respect to the
  left adjoint action is a special example of a covariantized
  algebra. We show that for each character $f$ of $F_l(\uqg)$ the
  centre $Z(B_f)$ canonically contains the representation ring
  $\rep(\gfrak)$ of the semisimple Lie algebra 
  $\gfrak$. We show moreover that for $\gfrak=\slfrak_n(\C)$ such characters
  can be constructed from any invertible solution of the reflection
  equation and hence we obtain many new explicit realisations of
  $\rep(\slfrak_n(\C))$  inside $U_q(\slfrak_n(\C))$. As an example we
  discuss the solutions of the reflection equation corresponding to the
  Grassmannian manifold $Gr(m,2m)$ of $m$-dimensional subspaces in $\C^{2m}$.
\end{abstract}

\maketitle
\section*{Introduction}
Originating in the quantum inverse scattering method of the Leningrad
school, the theory of quantum groups was to a large extent invented to
provide a unified approach to solutions of the quantum Yang-Baxter
equation \cite{inp-Drinfeld1}. Without spectral parameter, these
solutions are well organised in the structure of a
braided monoidal category for each semisimple, finite
dimensional, complex Lie algebra $\gfrak$. Braided monoidal categories
have well-known applications in low dimensional topology and provide
representations of the Artin braid group.   

If one imposes additional boundary conditions \cite{a-Skl88}, then the quantum
Yang-Baxter equation is joined by the so called reflection equation
which first appeared in factorised scattering on the half line
\cite{a-Cher84}. The notion of a braided monoidal category can be
extended to include solutions of the reflection equation \cite{a-tD98},
\cite{a-tDHO98}, however, examples have so far only been constructed
by hand for $\gfrak=\slfrak_n(\C)$ \cite{a-tD99}. The programme
outlined in \cite{a-tD98} aims at applications to braid groups of
type $B$ and the affine braid group. Closely related to this
programme is the notion of a universal solution of the reflection
equation introduced independently in \cite{a-DKM03}. Again, for
quantised universal enveloping algebras, there is no unified
construction of examples.   

The reflection equation is also at the heart of certain classes
of quantum homogeneous spaces. M.~Noumi, T.~Sugitani, and M.~Dijkhuizen used
explicit solutions of the reflection equation to obtain analogues of
all classical symmetric pairs as coideal subalgebras of quantised
universal enveloping algebras, see e.g.~\cite{a-Noumi96},
\cite{a-NS95}, \cite{a-Dijk96}, 
\cite{a-NDS97}, \cite{a-DS99}. Using $l$-operators for the vector
representation their construction follows the spirit of
\cite{a-FadResTak1}. A unified construction of quantum symmetric pairs, more
along the lines of V.~Drinfeld's realisation of quantised universal
enveloping algebras, was achieved by G.~Letzter
\cite{MSRI-Letzter}. Central elements in Letzter's
coideal subalgebras lead to solutions of the reflection
equation \cite{a-Kolb08}.  

If one considers the entries of a matrix satisfying the reflection equation
as indeterminates then one obtains the so called reflection equation
algebra \cite{a-KuSkl92}. Characters of the reflection equation
algebra are the same as numerical solutions of the reflection
equation. Such characters were used by J.~Donin and A.~Mudrov
\cite{a-DM03a} to obtain quantisations of $GL(n)$-orbits in
$\End(\C^n)$. Related works by the same authors, e.g.~\cite{a-DKM03},
\cite{a-DM03a}, \cite{a-Mud07}, centre around the notion of twisting
by a cocycle which goes back to \cite{a-Drin90} and which transforms
FRT-algebras into reflection equation algebras. Twisting by a cocycle,
in turn, is also at the heart of S.~Majid's theory of transmutation
and covariantized algebras \cite[Section 3]{a-Majid91}, \cite[Section
  4]{a-Majid93}, \cite[7.4]{b-Majid1}. In this theory the reflection
equation algebra occurs as so called braided matrices.  

In the present paper we exhibit relations between the five
theories referred to above, namely 
\begin{enumerate}
  \item Twisting by a cocycle and quantisation via
  characters of twisted algebras, e.g.~\cite{a-DKM03}, \cite{a-DM03a},
  \cite{a-DM03}, \cite{a-Mud07},
  \item Transmutation and covariantized algebras \cite{a-Majid91},
    \cite{a-Majid93}, \cite[7.4]{b-Majid1}, 
  \item Universal cylinder forms \cite{a-tD98}, \cite{a-tDHO98}, \cite{a-tD99},
  \item Construction of quantum symmetric pairs via solutions of the
  reflection equation, e.g.~\cite{a-Noumi96}, \cite{a-NS95}, \cite{a-Dijk96},
\cite{a-NDS97}, \cite{a-DS99},
  \item G.~Letzter's construction of quantum symmetric pairs
  \cite{MSRI-Letzter}, \cite{a-Letzter03}
\end{enumerate}
which, for the most part, have been developed independently of each
other. 

Let $G$ be the connected, simply connected affine algebraic
group corresponding to the finite dimensional, semisimple, complex Lie
algebra $\gfrak$. Recall that the quantised algebra of functions
$\cqg$ on $G$ is 
a coquasitriangular Hopf algebra and let $\rf$ denote its universal
$r$-form. We will work in a setting which is tailored to include
FRT-algebras and the quantised algebra of functions $\cqg$ on
$G$. More explicitly, we consider any coquasitriangular bialgebra
$\cA$ together with a homomorphism $\Psi:\cA\rightarrow \cqg$ of
coquasitriangular bialgebras. In this setting, transmutation coincides
with twisting by a cocycle and universal cylinder forms for $\cA$ are, up to
translation of conventions, the same as characters of the
covariantized algebra $\covAc$ of $\cA$.

By construction, the covariantized algebra $\covAc$ is a right comodule
algebra over the quantum double of $\cA$. In our restricted setting,
however, $\covAc$ also has a left $\uqg$-comodule algebra 
structure. This allows us, for any character $f$ of the covariantized
algebra $\covAc$, to define a left coideal subalgebra $B_f\subseteq \uqg$ in
a straightforward manner. We call $B_f$ the Noumi coideal subalgebra
corresponding to $f$, because if $\cA$ is an FRT-algebra then a
character of $\covAc$ is the same as a solution of the reflection
equation and $B_f$ is a coideal of the type constructed in, say
\cite{a-NDS97}.

In this paper we investigate algebraic properties of both the
covariantized algebra $\covAc$  and the Noumi coideal subalgebra
$B_f$ in some detail. We are in particular interested in results
concerning the centres $Z(\covAc)$ and $Z(B_f)$ as well as characters
on $\covAc$. We show among other results, that for characters of
$\covAc$, which are 
convolution invertible with respect to the coproduct of the bialgebra
$\cA$, the centre $Z(B_f)$ is contained in the locally finite part
$F_r(\uqg)$ of $\uqg$ with respect to the right adjoint action. A
similar result was obtained by G.~Letzter for her quantum symmetric
pairs \cite[Theorem 1.2]{a-Letzter-memoirs}. 
The case when $\cA=\cqg$ is of particular interest. We show that in
this case the centre $Z(B_f)$ naturally contains a realisation of the
representation ring $\rep(\gfrak)$ of $\gfrak$. This result may seem
somewhat surprising. It is well known that $Z(\uqg)$ is isomorphic to
$\rep(\gfrak)$ with respect to the grading of $F_r(\uqg)$, see
e.g.~\cite{a-Baumann98}. Our constructions show that there are many more
natural realisations of $\rep(\gfrak)$ inside $\uqg$.

Finally, we address the question of how to obtain characters of the
covariantized algebra $\covAc$. If $\cA$ is an FRT-algebra then the
method devised in \cite{a-Kolb08} provides solutions of the reflection
equation via suitable central elements in coideal subalgebras of
$\uqg$. Here we give a generalised, streamlined presentation of this
result. In the case $\cA=\cqg$ we observe that $\covAc$
coincides with the left locally finite part $F_l(\uqg)$. It hence remains
to determine all characters of $F_l(\uqg)$. At this point we restrict
to the case $\gfrak=\slfrak_n(\C)$ and prove that any suitably scaled
invertible solution of the reflection equation for the vector
representation of $U_q(\slfrak_n(\C))$ factors to a character
of $F_l(U_q(\slfrak_n(\C)))$. Together with the explicit
classification in \cite{a-Mud02} this determines all characters of
$F_l(U_q(\slfrak_n(\C)))$. It would be desirable to have a
classification of characters of $F_l(\uqg)$ for general $\gfrak$. 

As an example we follow G.~Letzter's setting \cite{a-Letzter03} to discuss
the coideal subalgebra $B^s\subseteq U_q(\slfrak_{2m}(\C))$
corresponding to the Grassmann manifold 
$Gr(m,2m)$ of $m$-dimensional subspaces in $\C^{2m}$. Using the
structure of $Z(B^s)$ known from \cite{a-KL08}, we show that in
this case $Z(B^s)$ is naturally isomorphic to $\rep(\slfrak_n(\C))$.
Moreover, we calculate the solution of the reflection equation for the
element in $Z(B^s)$ corresponding to the vector representation. This
allows us to give the corresponding character of the covariantized
algebra explicitly. It doesn't come as a surprise that this character
bears close resemblance to the solutions of the reflection equation 
considered in \cite{a-NDS97}, \cite{a-tD99}, \cite{a-Mud02}, but we
avoid painstaking translation of conventions. 

We now briefly outline the structure of this paper. In Section
\ref{NotConv} we fix notations and conventions for quantised universal
enveloping algebras and quantised algebras of functions. The main
framework of the paper is outlined in Section \ref{transmutation} in
the setting of S.~Majid's theory of transmutation. We establish the
relation to Drinfeld twists and discuss transmutation of $\cqg$ and
FRT-algebras as examples. In Section \ref{AlgProp} we begin the
investigation of algebraic  properties of the covariantized algebra
$\covAc$. In particular we show that $\covAc$ is a domain if and only
if $\cA$ is a domain, which in turn can be used to identify the centre
$Z(\covAc)$ with the space of $\uqg$-invariants in $\cA$. In
\ref{charProp} we collect properties of characters of $\covAc$ which
allows us in Subsection \ref{CylForms} to identify such characters
with universal cylinder forms. Section \ref{Noumi} is devoted to the
construction and 
investigation of the Noumi coideal subalgebra $B_f$ for a given
character $f$ of $\covAc$. In Subsection \ref{NoumiCentre}, in
particular, we establish the realisation of $\rep(\gfrak)$ inside
$Z(B_f)$ in the case $\cA=\cqg$. Moreover, we show in \ref{locFiness}
that $Z(B_f)\subseteq F_r(\uqg)$. The final Section \ref{ConstrChar}
is devoted to the construction of characters of the covariantized
algebra $\covAc$. First we recall the general construction of
solutions of the reflection equation via central elements of coideal
subalgebras of $\uqg$. From Subsection \ref{characters} onwards we
restrict to the case $\gfrak=\slfrak_n(\C)$. In \ref{characters} we prove that any invertible numerical solution of the reflection equation gives rise to a character of
$F_l(U_q(\slfrak_n(\C)))$. The last two subsections are devoted to the
example $Gr(m,2m)$.

{\it Acknowledgements:} The second author was supported by
the Netherlands Organization for Scientific Research (NWO) in the
VIDI-project ``Symmetry and modularity in exactly solvable
models''. Part of the research was done while the first author
visited the University of Amsterdam for two weeks in June 
2008. This visit was supported under Scheme 4 of the London
Mathematical Society and by the above-mentioned VIDI-project. 

\section{Notations and conventions}\label{NotConv}
In this introductory section we fix notations and conventions
concerning quantised universal enveloping algebras and quantised
algebras of functions. All results stated in the first two subsections are well
known. Main sources of reference are the monographs
\cite{b-Jantzen96}, \cite{b-Joseph}, and \cite{b-KS}. In subsection
\ref{locfinl}, to simplify the presentation of the main
thrust of the paper, we recall some possibly less known results concerning
universal $r$-forms and $l$-functionals.

Let $\Z$ be the integers, $\N_0$ the non-negative integers, $\Q$ the
rational numbers, and $\C$ the complex numbers. 
Throughout this text, for any coalgebra $C$ we denote the counit by
$\vep$ and the coproduct by $\kow$. We make use of Sweedler notation
in the form $\kow(c)=c_{(1)}\ot c_{(2)}$ for any $c\in C$, suppressing
the summation symbol. We write $C^\cop$ to denote the opposite
coalgebra with coproduct $c\mapsto c_{(2)}\ot c_{(1)}$.
Similarly, for any algebra $A$ the symbol $A^\op$ denotes the opposite
algebra with multiplication $a\ot b\mapsto ba$. If $A$ is a bialgebra
then $A^{\op,\cop}$ denotes the bialgebra with both opposite
multiplication and opposite comultiplication. For any Hopf algebra
$H$ we use the symbol $\sigma$ for the antipode. We write
$H^\circ$ to denote the dual Hopf algebra consisting of all matrix
coefficients of finite dimensional representations of $H$. 


\subsection{The quantised universal enveloping algebra
  $U:=\uqgc$}\label{Usection}
   Let $\gfrak$ be a finite-dimensional complex semisimple Lie algebra of rank $r$ and let
  $\hfrak$ be a fixed Cartan subalgebra of $\gfrak$. Let $\Delta$ denote the root system
  associated with $(\gfrak,\hfrak)$. Choose an ordered basis 
  $\wurz=\{\alpha_1,\dots,\alpha_r\}$ of simple roots for $\Delta$.
  Let $W$ denote the Weyl group associated to the root system $\Delta$
  and let $w_0$ denote the longest element in $W$ with respect to
  $\wurz$. There is a unique $W$-invariant symmetric bilinear form
  $(\cdot,\cdot)$ on $\hfrak^\ast$ such that $(\alpha,\alpha)=2$ for
  all short roots $\alpha\in \Delta$. This form satisfies
  $(\alpha,\alpha)/2\in \{1,2,3\}$ for all $\alpha\in \Delta$.
  We write $Q$ for the root lattice and $P$ for the 
  weight lattice associated to the root system $\Delta$.
  Set $Q^+=\N_0\pi$ and let $P^+$ be the set of dominant integral weights with 
  respect to $\pi$. We will denote the fundamental weights in $P^+$
  by $\omega_1.\dots,\omega_r$.
  Let $\leq$ denote the dominance partial ordering on 
  $\hfrak^*$, so $\mu\leq\gamma$ if $\gamma-\mu\in Q^+$.

  Let $\qfield=\C(q^{1/N})$ denote the field of rational functions in one
  variable $q^{1/N}$ where $N$ has sufficiently many factors such that 
  $(\lambda,\mu)\in \frac{1}{N} \Z$ for all $\lambda,\mu\in P$. One
  may for instance choose $N$ to be the order of $P/Q$.
  We consider here the simply connected quantised universal enveloping algebra 
  $\uqgc$ as the $\qfield$-algebra generated by elements  
  $\{x_i, y_i, \tau(\lambda)\,|\, i=1,\dots,r,\,\lambda\in P\}$ and 
  relations as given for instance in \cite[Section
  3.2.9]{b-Joseph}. In particular the generators satisfy the following relations
  \begin{align*}
    \tau(\lambda) x_j=q^{(\lambda,\alpha_j)}x_j \tau(\lambda),&\qquad
    \tau(\lambda) y_j=q^{-(\lambda,\alpha_j)}y_j \tau(\lambda),\\
    x_iy_j-y_jx_i&=\delta_{ij} (t_i-t_i^{-1})/(q_i-q_i^{-1})
  \end{align*}
  where, following common convention, we write $t_i$ to denote
  $\tau(\alpha_i)$ and $q_i:=q^{(\alpha_i,\alpha_i)/2}$. The
  generators also satisfy the quantum Serre relations. The
  algebra $\Uq$ has the structure of a Hopf algebra with coproduct and
  antipode give by
  \begin{align*}
    \kow \tau(\lambda)&=\tau(\lambda)\ot \tau(\lambda), \qquad \quad \sigma(\tau(\lambda))=\tau(-\lambda),\\
    \kow x_i&=x_i\ot 1 +t_i\ot x_i, \qquad \sigma (x_i)=-t_i^{-1}x_i,\\
    \kow y_i&=y_i\ot t_i^{-1}+ 1\ot y_i, \qquad \sigma (y_i)=-y_it_i.
  \end{align*}
Note that these conventions also coincide with \cite{b-Jantzen96}
up to renaming of the generators and restriction to the subalgebra
generated by $x_i,y_i,t_i,t_i^{-1}$. 

One checks on the generators that the antipode of $\Uq$ satisfies the
relation
\begin{align}\label{sigma2}
  \sigma^2(u)=\tau(-2\rho)u\tau(2\rho)
\end{align}
where $\rho\in Q^+$ denotes the half sum of all positive roots.

We will write $U^+$ and $U^-$ to denote the subalgebra of $\Uq$ generated by
$\{x_1,\dots, x_r\}$ and $\{y_1,\dots, y_r\}$, respectively.  Let $U^0$ be the subalgebra of $\Uq$
spanned by the elements $\{\tau(\lambda)\,|\,\lambda\in P\}$. Moreover,
we will write  $\uqbp$ and $\uqbm$ to denote the subalgebra of $\Uq$ generated by
$\{x_i,\tau(\lambda)\,|\,i=1,\dots,r\mbox{ and } \lambda\in P\}$ and
$\{y_i,\tau(\lambda)\,|\,i=1,\dots,r\mbox{ and } \lambda\in P\}$,
respectively.

For any $\Uq$-module $V$ and any $\lambda\in P$ we call an element
$v\in V$ a weight vector of weight $\lambda$ if
$\tau(\mu)v=q^{(\mu,\lambda)}v$ for all $\mu\in P$, and we write $\wght (v):=\lambda$. 
For any $\alpha\in Q$ let $U_\alpha$ denote the weight space of $\Uq$ of
weight $\alpha$ with respect to the left adjoint action, more precisely
\begin{align*}
  \Uq_\alpha:=\{u\in \Uq\,|\, \tau(\lambda)\,u\,\tau(-\lambda)=q^{(\lambda,\alpha)}u\mbox{ for all }\lambda\in P\}.
\end{align*}
Note that $\Uq=\bigoplus_{\alpha\in Q}\Uq_\alpha$. Define moreover
$\Uq^+_\alpha:=U^+\cap \Uq_\alpha $ and $\Uq^-_\alpha:=U^-\cap \Uq_\alpha$.

For $\lambda\in P^+$ let $V(\lambda)$ be the simple $\Uq$-module
of highest weight $\lambda$. In particular, there is a highest weight
vector $v_\lambda\in V(\lambda)$ of weight $\lambda$ such that $x_i
v_\lambda = 0$ for all $i=1,\dots,r$. As usual, we say that a
$\Uq$-module is of type one if it has a basis consisting of weight
vectors with weights in $P$. We define $\cC$ to be the category of all
finite dimensional $\Uq$-modules of type one. Recall that $\cC$ is a
rigid, monoidal category via the antipode and the coproduct of
$\Uq$, and that $\cC$ is semisimple with simple objects
$V(\lambda)$ for $\lambda\in P^+$. Moreover, it is well know that
$\typeone$ is a braided monoidal  category
\cite[Chapter 7]{b-Jantzen96}, \cite[9.4.7]{b-Joseph}. One hence has a
family of $\Uq$-module isomorphisms $\rh=(\rh_{V,W}:V\ot W\rightarrow
W\ot V)_{V,W\in Ob(\typeone)}$ which is natural in both $V$ and $W$
and which satisfies the hexagon identities \cite[3.18, 3.19]{b-Jantzen96}.
We recall the construction of the braiding $\rh$ for $\cC$ along the
lines of \cite[Chapter 7]{b-Jantzen96} with a slight change of
convention. The reason for our choice of conventions will become
apparent in Remark \ref{R-choice-remark}.
For any $\alpha\in Q^+$ let $\Theta_\alpha\in U^+_\alpha\ot U^-_{-\alpha}$ denote the canonical
element defined at the beginning of \cite[7.1]{b-Jantzen96} up to
flipping the order of tensor factors. For any $V,W\in Ob(\Cc)$ define
$\Theta_{V,W}:V\ot W\rightarrow V\ot W$ by the action of
the formal sum $\Theta=\sum_{\alpha\in Q^+}\Theta_\alpha$. Moreover, define
a $k$-linear isomorphism $f_{V,W}:V\ot W\rightarrow V\ot W$ by $f_{V,W}(v\ot
w)=q^{-(\wght(v),\wght(w))}v\ot w$ for any weight vectors $v\in V$,
$w\in W$. Now set $\rh_{V,W}:=P_{12}\circ \Theta_{V,W}\circ
f_{V,W}:V\ot W \rightarrow W\ot V$ where $P_{12}$ denotes the flip
of tensor factors. It follows from \cite[7.5, 7.8]{b-Jantzen96} that
$\rh$ defines a braiding on $\typeone$. 
\subsection{The quantised algebra of functions $\cqg$}\label{cqg-qtrace}
Let $G$ denote the connected, simply-connected affine algebraic group
with Lie algebra $\gfrak$. We recall the definition of the quantised
algebra of functions on $G$. For any $V\in Ob(\cC)$ and elements $v\in V$, $f\in
V^\ast$ define a linear functional $c_{f,v}:\Uq\rightarrow \qfield$ by
$c_{f,v}(u):=f(uv)$ for all $u\in \Uq$. If $V=V(\lambda)$ then we also
write $c^\lambda_{f,v}=c_{f,v}$ to keep track of the representation
$V(\lambda)$. Let $C^V:=\mbox{span}\{c_{f,v}\,|\, v\in V,\, f\in
V^\ast\}$ denote the linear span of all matrix coefficients $c_{f,v}$
of the representation $V$. As usual we define $\cqg$ as the Hopf
subalgebra of the Hopf dual $\Uq^\circ$ spanned by the matrix
coefficients of all $V\in Ob(\cC)$. 

It is convenient to define a bilinear pairing of Hopf algebras by evaluation
\begin{align}\label{pairing} 
  \langle\cdot,\cdot\rangle: \cqg\times\Uq\rightarrow \qfield,\qquad
  \langle a,u\rangle:=a(u)
\end{align}
for $a\in \cqg$, $u\in \Uq$. The pairing $\langle\cdot,\cdot\rangle$ is
non-degenerate as a consequence of \cite[Proposition
  5.11]{b-Jantzen96}. The quantised algebra of functions $\cqg$ is a
left $\Uq^\cop\ot \Uq$-module algebra via the action
\begin{align}\label{UcopU}
  (X\ot Y)\cdot a=\langle a_{(1)},\sigma(X)\rangle  \langle a_{(3)},Y
  \rangle a_{(2)}.
\end{align}
By construction $\cqg$ has a Peter-Weyl decomposition
\begin{align}\label{PeterWeyl}
  \cqg=\bigoplus_{\lambda\in \Ppiplus} C^{V(\lambda)}
\end{align}
into irreducible $\Uq^\cop\ot \Uq$-modules. 

The braiding $\rh$ of $\cC$ gives rise to the structure of a universal
$r$-form on $\cqg$. We recall the definition of this notion for
the convenience of the reader.
\begin{defi}\cite[10.1.1]{b-KS} A coquasitriangular bialgebra
  $(A,\rf)$ over a field $\field$ is a pair consisting of a bialgebra $A$ over $\field$ and a
  convolution invertible linear map $\rf:A\ot A\rightarrow \field$ which
  satisfies the following relations
  \begin{align}
        \rf(a_{(1)}\ot b_{(1)}) a_{(2)} b_{(2)}&= b_{(1)} a_{(1)} \rf(a_{(2)}\ot b_{(2)}), \label{rh-hom}\\
        \rf(ab\ot c)&=\rf(a\ot c_{(1)})\,\rf(b \ot c_{(2)}),\label{r-mult}\\
        \rf(a\ot bc)&=\rf(a_{(1)}\ot c)\,\rf(a_{(2)} \ot b),\label{r-mult2}
      \end{align}
  for all $a,b,c\in A$. The map $\rf$ is called a universal $r$-form
  for the bialgebra $A$.
\end{defi}
\begin{rema}
  In the following, to shorten notation, we will suppress tensor
  symbols and write $\rf(a,b)$ instead of $\rf(a\ot b)$. For later reference note that
  any universal $r$-form satisfies
  \begin{align}\label{r-vep}
        \rf(a,1)&=\rf(1,a)=\vep(a)\quad \mbox{for all }a\in A.
  \end{align}
  Note, moreover, that if $A$ is a Hopf algebra then 
  \begin{align}
    \rf(\sigma(a),\sigma(b))&=\rf(a,b) \quad\mbox{for all } a,b\in A\label{r-sigma}
  \end{align}
  and the convolution inverse $\rfb$ of $\rf$ is given by
  $\rfb(a,b)=\rf(\sigma(a),b)$ for all $a,b\in A$. Consult \cite[10.1]{b-KS} for more details.
\end{rema}
For any coquasitriangular bialgebra $(A,\rf)$ the linear map 
$\rfb_{21}:A\ot A\rightarrow \field$ defined by
$\rfb_{21}(a,b)=\rfb(b,a)$ gives rise to a coquasitriangular bialgebra 
$(A,\rfb_{21})$. We will write $\rf'$ to denote either of the
universal $r$-forms $\rf$ or $\rfb_{21}$ for $A$. For $A=\cqg$ and
$\field=\qfield$ consider the linear map $\rf:\cqg\ot \cqg\rightarrow
k$ defined by  
\begin{align}\label{rf-def}
  \rf(c_{f,v},c_{g,w})=(g\ot f) (\rh_{V,W}(v\ot w))
\end{align}
if $c_{f,v}\in C^V$  and $c_{g,w}\in C^W$. It follows from the
properties of the braiding $\rh$ of $\cC$ that $(\cqg,\rf)$ is a coquasitriangular
Hopf algebra. We emphasise that all through this paper the notation $\rf$ as universal
$r$-form on $\cqg$ will always stand for the particular choice (\ref{rf-def}) above.
\subsection{Locally finite part and $l$-functionals}\label{locfinl}
As any Hopf algebra, the quantised universal enveloping algebra $\Uq$
is a left and a right module algebra over itself with respect to the
left and right adjoint actions defined by
\begin{align*}
  \adl(u)X:=u_{(1)}X\sigma(u_{(2)}),\qquad \adr(u)X:=\sigma(u_{(1)})X u_{(2)},
\end{align*}
respectively. The left locally finite part $F_l(\Uq)$ and the right locally finite part $F_r(\Uq)$ are defined by
\begin{align*}
  F_l(U)&:=\{u\in\Uq\,|\, \dim((\adl \Uq)u)<\infty\},\\
  F_r(U)&:=\{u\in\Uq\,|\, \dim((\adr \Uq)u)<\infty\}.
\end{align*}
Note that results for locally finite parts can be translated from left to right and vice versa using the formulae
\begin{align}\label{ad-leftright}
  \sigma((\adl u)X)=(\adr \sigma(u))\sigma(X),\qquad
  \sigma((\adr u)X)=(\adl \sigma(u))\sigma(X).
\end{align}
These formulae imply in particular the relation
\begin{align}\label{FrFl}
  F_r(\Uq)=\sigma(F_l(\Uq))=\sigma^2(F_r(\Uq)).
\end{align}
It was shown in \cite[Theorem 4.10]{a-JoLet2}, \cite{a-Caldero93} that the left locally finite part has a direct sum decomposition
\begin{align}\label{Fl-decomp}
  F_l(\Uq)=\bigoplus_{\lambda \in P^+}(\adl \Uq)\tau(-2\lambda)
\end{align}
into finite dimensional $\Uq$-submodules. Using (\ref{FrFl}) and the
relation $\sigma(\tau(\lambda))=\tau(-\lambda)$ one obtains a similar
decomposition for the right locally finite part 
\begin{align}\label{Fr-decomp}
  F_r(\Uq)=\bigoplus_{\lambda \in P^+}(\adr \Uq)\tau(2\lambda).
\end{align}
The decomposition (\ref{Fl-decomp}) of $F_l(\Uq)$ is closely related
to the Peter-Weyl decomposition (\ref{PeterWeyl}) of $\cqg$ via so
called $l$-functionals. Recall that $\cqg^\circ$ denotes the dual Hopf
algebra of $\cqg$ and that we write $\rf'$ to denote either of the
universal $r$-forms $\rf$ or $\rfb_{21}$ on $\cqg$.   
Following for instance \cite[10.1.3]{b-KS} one obtains linear maps
  $l_{\rf'}^+,   l_{\rf'}^-,l_{\rf'},\ltil_{\rf'}:\cqg\rightarrow \cqg^\circ$ defined by
  \begin{align}
    l_{\rf'}^+(a):=\rf'(\cdot,a)&,\quad  l_{\rf'}^-(a):=\rf'(\sigma(a),\cdot), \label{lpmdef}\\
    l_{\rf'}(a):=l_{\rf'}^-(\sigma^{-1}&(a_{(1)})) l_{\rf'}^+(a_{(2)})
          =\rf'(a_{(1)},\cdot) \rf'(\cdot,a_{(2)}),\label{ldef}\\    
    \ltil_{\rf'}(a):=l_{\rf'}^+(a_{(1)})&l_{\rf'}^-(\sigma^{-1}(a_{(2)}))
         =\rf'(\cdot,a_{(1)})\rf'(a_{(2)},\cdot).\label{ltildef}
  \end{align}
for any $a\in \cqg$. We call these maps $l$-operators and the elements
in their image $l$-functionals. Note that the $l$-operators with
respect to the universal $r$-forms $\rf$ and $\rfb_{21}$ are related by
\begin{align}\label{rrelation}
  l^\pm_{\rfb_{21}}=l^\mp_{\rf}.
\end{align}
The $l$-functionals $l_\rp$ satisfy commutation relations similar to
those of the matrix coefficients in $\cqg$. The following
statement is proved in \cite[10.1.3]{b-KS}.
  \begin{lem}
    For any $a,b\in \cqg$ the following relations hold
    \begin{align}
      l_\rp^\pm(ab)&=l_\rp^\pm(b)l_\rp^\pm(a),\label{lpmmult}\\
      l_\rp^\pm(a_{(1)}) l_\rp^\pm(b_{(1)})\rp(a_{(2)},b_{(2)})&=
       \rp(a_{(1)},b_{(1)})l_\rp^\pm(b_{(2)}) l_\rp^\pm(a_{(2)}),\label{lpmrtt}\\
      l_\rp^-(a_{(1)}) l_\rp^+(b_{(1)})\rp(a_{(2)},b_{(2)})&=
       \rp(a_{(1)},b_{(1)})l_\rp^+(b_{(2)}) l_\rp^-(a_{(2)}).\label{lmprtt}
    \end{align}    
  \end{lem}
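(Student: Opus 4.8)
The plan is to prove the three commutation relations \eqref{lpmmult}, \eqref{lpmrtt}, \eqref{lmprtt} by unwinding the definitions \eqref{lpmdef}--\eqref{ltildef} of the $l$-operators in terms of the universal $r$-form $\rp$ and then repeatedly applying the defining axioms \eqref{rh-hom}--\eqref{r-mult2} of a coquasitriangular bialgebra together with \eqref{r-vep}. Throughout, the product of two elements of $\cqg^\circ$ is the convolution product dual to the coproduct of $\cqg$, so for $\phi,\psi\in\cqg^\circ$ and $a\in\cqg$ one has $(\phi\psi)(a)=\phi(a_{(1)})\psi(a_{(2)})$; this is the identity that converts every statement below into a statement purely about the scalar-valued map $\rp$.

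For \eqref{lpmmult}: take $l_\rp^+$ first. By definition $l_\rp^+(ab)=\rp(\cdot,ab)$, and \eqref{r-mult2} gives $\rp(c,ab)=\rp(c_{(1)},b)\rp(c_{(2)},a)$, which is exactly the convolution product $\bigl(l_\rp^+(b)l_\rp^+(a)\bigr)(c)$; hence $l_\rp^+(ab)=l_\rp^+(b)l_\rp^+(a)$. For $l_\rp^-$ one argues dually: $l_\rp^-(ab)=\rp(\sigma(ab),\cdot)=\rp(\sigma(b)\sigma(a),\cdot)$, and now \eqref{r-mult} applied to the first slot gives $\rp(\sigma(b)\sigma(a),c)=\rp(\sigma(b),c_{(1)})\rp(\sigma(a),c_{(2)})=\bigl(l_\rp^-(b)l_\rp^-(a)\bigr)(c)$, using that $\sigma$ is an algebra anti-homomorphism. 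This settles \eqref{lpmmult}.

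For \eqref{lpmrtt} and \eqref{lmprtt}: the key input is the hexagon-type relation \eqref{rh-hom}, $\rf(a_{(1)},b_{(1)})a_{(2)}b_{(2)}=b_{(1)}a_{(1)}\rf(a_{(2)},b_{(2)})$, valid for $\rf=\rp$. I would apply $l_\rp^\pm\ot l_\rp^\pm$ (respectively $l_\rp^+\ot l_\rp^-$) to both sides of \eqref{rh-hom}, pair with arbitrary test elements, and use \eqref{lpmmult} to turn $l_\rp^\pm$ of a product into the (reversed) product of $l_\rp^\pm$'s. Concretely, one evaluates the identity $l_\rp^\pm(a_{(2)}b_{(2)})\,\rp(a_{(1)},b_{(1)}) = l_\rp^\pm(b_{(1)}a_{(1)})\,\rp(a_{(2)},b_{(2)})$ — obtained by applying $l_\rp^\pm$ to \eqref{rh-hom} and rearranging the scalars — and then invokes \eqref{lpmmult} on each side to reorder the factors, arriving after reindexing the Sweedler legs at \eqref{lpmrtt}. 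The mixed relation \eqref{lmprtt} follows the same pattern but starts from \eqref{rh-hom} written so that one tensor factor is treated via $l_\rp^+$ and the other via $l_\rp^-$; here one also uses $\rbar'(a,b)=\rf'(\sigma(a),b)$ and the fact that $\rp$ and its convolution inverse intertwine the left/right structures, i.e.\ relations of the type \eqref{r-sigma} and the compatibility of $\sigma$ with $\rp$.

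The main obstacle is bookkeeping rather than conceptual: one must be careful about the order in which $l^\pm$'s appear (since $l_\rp^\pm$ reverses multiplication by \eqref{lpmmult}) and about which slot of $\rp$ the coassociativity/naturality manipulations act on, so that the Sweedler indices line up correctly on both sides after applying \eqref{rh-hom}. A secondary subtlety is that \eqref{rh-hom} as stated is an identity in $\cqg\ot\cqg$ (not scalar-valued), so to extract the functional identities \eqref{lpmrtt}--\eqref{lmprtt} one applies the appropriate $l$-operator componentwise and then pairs against a generic element of $\cqg$; keeping track of which copy of $\cqg$ each $l$-operator hits is where errors would most easily creep in. Since the statement is attributed to \cite[10.1.3]{b-KS}, I would present the derivation as a short verification along these lines rather than reproving the coquasitriangularity machinery from scratch.
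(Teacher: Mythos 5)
The paper itself offers no argument here --- it simply cites \cite[10.1.3]{b-KS} --- so your attempt is measured only against correctness. Your proofs of \eqref{lpmmult} and \eqref{lpmrtt} are correct and complete: the anti-multiplicativity follows directly from \eqref{r-mult} and \eqref{r-mult2} applied to the two definitions in \eqref{lpmdef}, and \eqref{lpmrtt} does indeed drop out of applying the linear map $l_\rp^\pm$ to the identity \eqref{rh-hom} and then using \eqref{lpmmult} on the two products $a_{(2)}b_{(2)}$ and $b_{(1)}a_{(1)}$.

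The gap is in \eqref{lmprtt}. Your claim that it ``follows the same pattern'' from \eqref{rh-hom} ``written so that one tensor factor is treated via $l_\rp^+$ and the other via $l_\rp^-$'' does not work as stated: \eqref{rh-hom} is an identity in a single copy of $A$, and the term $a_{(2)}b_{(2)}$ is one product inside that copy, so there is no way to apply $l_\rp^-$ to the $a$-part and $l_\rp^+$ to the $b$-part of it. The mechanism that made \eqref{lpmrtt} work --- a single anti-multiplicative map absorbing the whole product --- is unavailable in the mixed case. The standard repair is a genuinely different two-step argument: first pair \eqref{rh-hom} against a third element $c$ in one slot of $\rp$ and expand with \eqref{r-mult} to obtain the quantum Yang--Baxter identity
\begin{align*}
\rp(a_{(1)},b_{(1)})\,\rp(a_{(2)},c_{(1)})\,\rp(b_{(2)},c_{(2)})
=\rp(b_{(1)},c_{(1)})\,\rp(a_{(1)},c_{(2)})\,\rp(a_{(2)},b_{(2)}),
\end{align*}
and then convolution-invert this identity in the pair of legs carrying $\rp(a,\cdot)$ against $c$, using $\rp(a_{(1)},c_{(1)})\,\rp(\sigma(a_{(2)}),c_{(2)})=\vep(a)\vep(c)$ twice, to trade those factors for $\rp(\sigma(a),c)=l_\rp^-(a)(c)$; after evaluating $l_\rp^+(b)(c)=\rp(c,b)$ this is exactly \eqref{lmprtt}. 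You do name the right auxiliary facts ($\rfb'(a,b)=\rf'(\sigma(a),b)$ and its inverse property), but without the intermediate Yang--Baxter identity and the double convolution-inversion the derivation of \eqref{lmprtt} is not actually carried out, and the route you describe for it would fail.
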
 
Consider $\Uq$ as a subalgebra of $\cqg^\circ$ via the non-degenerate
Hopf pairing (\ref{pairing}).  
\begin{lem}\label{lplusmin}
$l_\rf^{\pm}(\cqg)\subseteq U_q(\mathfrak{b}^\pm)$.
\end{lem}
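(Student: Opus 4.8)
The plan is to reduce the statement to an explicit computation on matrix coefficients of type-one modules, using the definition $l_\rf^+(a)=\rf(\cdot,a)$ and $l_\rf^-(a)=\rf(\sigma(a),\cdot)$ together with the formula (\ref{rf-def}) for the universal $r$-form $\rf$ in terms of the braiding $\rh=P_{12}\circ\Theta_{V,W}\circ f_{V,W}$. Since $\cqg$ is spanned by the $c_{g,w}$ with $w\in V(\lambda)$, $g\in V(\lambda)^\ast$, it suffices to show that each functional $l_\rf^+(c_{g,w})$ annihilates all of $\uqbm$ except possibly the part built from the $\tau(\mu)$ and $x_i$'s, i.e.\ that it kills $y_i\uqg$ in the appropriate sense; more precisely I want to show $l_\rf^+(c_{g,w})$, viewed as an element of $\cqg^\circ$, lies in the subalgebra $\uqbp=U_q(\mathfrak b^+)$ of $\cqg^\circ$, and dually for $l_\rf^-$.

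First I would unwind what $l_\rf^+(c_{g,w})$ does on a matrix coefficient $c_{h,u}$ of a module $W$: by (\ref{rf-def}) and (\ref{lpmdef}) it is (up to the pairing conventions) $\langle l_\rf^+(c_{g,w}), c_{h,u}\rangle = \rf(c_{h,u}, c_{g,w}) = (g\ot h)\big(\rh_{W,V(\lambda)}(u\ot w)\big)$. Because $\rh_{W,V(\lambda)}=P_{12}\circ\Theta_{W,V(\lambda)}\circ f_{W,V(\lambda)}$ and $\Theta=\sum_{\alpha\in Q^+}\Theta_\alpha$ with $\Theta_\alpha\in U^+_\alpha\ot U^-_{-\alpha}$, the key structural fact is that $\Theta$ expands into terms acting on the left tensor factor by elements of $U^+$ (raising operators) and on the right tensor factor by elements of $U^-$. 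Together with the diagonal factor $f_{W,V(\lambda)}$, which only involves weights and hence elements of $U^0$, one sees that $\rf(\cdot, c_{g,w})$ is a linear combination of functionals of the form $c_{h,u}\mapsto h(\xi u)$ for $\xi$ ranging over products of $x_i$'s and $\tau(\mu)$'s — that is, $l_\rf^+(c_{g,w})\in\uqbp$ as an element of $\cqg^\circ$. The analogous argument, now using $l_\rf^-(a)=\rf(\sigma(a),\cdot)$ and the fact that the first tensor slot of $\Theta$ carries $U^+$ but the $\sigma$ and the interchange of roles of the arguments swap which factor is being probed, gives $l_\rf^-(\cqg)\subseteq\uqbm$; one can also deduce the $l^-$ case from the $l^+$ case by applying (\ref{r-sigma}), (\ref{FrFl}) and the relation $\sigma(\uqbp)=\uqbm$, or by the identity $\rfb(a,b)=\rf(\sigma(a),b)$ recorded after the definition.

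The cleanest way to make the ``$\Theta$ acts by raising/lowering operators'' step rigorous is to work weight-by-weight: if $u$ is a weight vector in $W$ of weight $\nu$ and $w=v_\lambda$ is (or is decomposed into) weight vectors, then $\Theta_\alpha(u\ot w)$ has left component of weight $\nu+\alpha$, so pairing against $h$ only sees components of $\xi u$ with $\xi\in U^+$, and the $\tau(\mu)$-dependence enters solely through $f_{W,V(\lambda)}$, which is exactly the behaviour of an element of $\uqbp=U^{\geq 0}$. One should be slightly careful that $\Theta$ is an infinite formal sum, but on any finite-dimensional type-one module only finitely many $\Theta_\alpha$ act nontrivially, so all expressions are honest finite sums and no convergence issue arises. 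The main obstacle, and the only place real care is needed, is bookkeeping of the several conventions in play — the flipped tensor order in the definition of $\Theta_\alpha$ relative to \cite{b-Jantzen96}, the placement of $P_{12}$, and the pairing $\langle a,u\rangle=a(u)$ versus its opposite — to be sure that it is $l_\rf^+$ (not $l_\rf^-$) that lands in $\uqbp$ rather than in $\uqbm$; getting a sign or a flip wrong here would exchange the two inclusions. Once the conventions are pinned down, the proof is a short direct verification on matrix coefficients.
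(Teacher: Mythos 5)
Your proposal is correct and follows essentially the same route as the paper: the paper's proof likewise evaluates $l_\rf^{\pm}(c_{f,v})$ via the definition \eqref{rf-def} of $\rf$ through the braiding, obtaining the explicit expressions $l_\rf^+(c_{f,v})=\sum_{\alpha\in Q^+}(\id\ot c_{f,v})(\Theta_\alpha(\tau(-\mu)\ot 1))$ and its analogue for $l_\rf^-$, and reads off the inclusions from $\Theta_\alpha\in U^+_\alpha\ot U^-_{-\alpha}$ together with the $\tau$-contribution of the diagonal factor $f_{V,W}$. Your attention to the convention bookkeeping is exactly the point the authors flag in Remark \ref{R-choice-remark}.
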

\begin{proof}
Fix $V\in Ob(\cC)$. Choose $v\in V$ of weight $\mu\in P$ and $f\in
V^\ast$ of weight $-\nu\in P$. In the notations introduced at the end
of subsection \ref{Usection} one obtains from the definition
(\ref{rf-def}) of the universal $r$-form $\rf$ the
relations
\begin{equation}\label{lexplicit}
\begin{split}
  l_\rf^+(c_{f,v})&=\sum_{\alpha\in Q^+}(\id\ot
  c_{f,v})\big(\Theta_\alpha(\tau(-\mu)\ot 1)\big),\\
l_\rf^-(c_{f,v})&=\sum_{\alpha\in Q^+}(\sigma(c_{f,v})\ot\id)
\big(\Theta_\alpha(1\ot\tau(\nu))\big),
\end{split}
\end{equation}
which show that $l_\rf^{\pm}(\cqg)\subseteq U_q(\mathfrak{b}^\pm)$.
\end{proof}
\begin{rema}\label{R-choice-remark}
  The above lemma explains why we prefer to work with conventions
  concerning the braiding slightly different from those in
  \cite{b-Jantzen96}. In our conventions we get $l^+_\rf(a)\in \uqbp$
  for all $a\in \cqg$ while the braiding in Jantzen's book together
  with definition (\ref{lpmdef}) which agrees with \cite[10.1.3]{b-KS}
  would lead to $l^+_\rf(a)\in \uqbm$.
\end{rema}
\begin{rema}\label{tau-as-ltil}
Let $v_\lambda,v_{w_0\lambda}\in V(\lambda)$ be weight vectors of weights
$\lambda$ and $w_0\lambda$ respectively. Fix also $f_\lambda,f_{w_0\lambda}\in V(\lambda)^\ast$
of weight $-\lambda$ and $-w_0\lambda$ such that 
$f_\lambda(v_\lambda)=1=f_{w_0\lambda}(v_{w_0\lambda})$. By definition \eqref{ltildef}
and \eqref{lexplicit} one obtains
\[
\ltil_{\rf}(c_{f_\lambda,v_\lambda}^\lambda)=\tau(-2\lambda),
\qquad \ltil_{\overline{\rf}_{21}}(c_{f_{w_0\lambda},v_{w_0\lambda}}^\lambda)=
\tau(2w_0\lambda).
\]
These formulae are central in the proof of the following proposition.
\end{rema}
Note that the right adjoint action $\adr$ of $\Uq$ on itself
induces a  left action of $\Uq$ on $\cqg$, defined by
\begin{align}\label{adrs-def}
  \adrs(X)c:=\Delta(X)\cdot c=c\circ \adr(X)
\end{align}
for $c\in \cqg$ and $X\in \Uq$. The following proposition is in principle contained in
\cite{a-Caldero93} (cp.~also \cite[Proposition 2]{a-Kolb08}). We sketch
the proof for the convenience of the reader. 
\begin{prop}\label{CalderoProp}
The $l$-functional $\ltil_{\rf^\prime}$ defines an isomorphism of 
left $\Uq$-modules  $\ltil_{\rf^\prime}: \cqg\rightarrow F_l(\Uq)$, i.e.
$\ltil_{\rf^\prime}(\adrs(X)c)=\adl(X)\ltil_{\rf^\prime}(c)$ 
for all $X\in\Uq$ and $c\in\cqg$. Moreover,
\begin{align}\label{rf-corresp}
  \ltil_{\rf^\prime}\big(C^{V(\lambda)}\big)= (\adl \Uq) \tau(-2\lambda^\prime)
\end{align} 
for all $\lambda\in P^+$, where
$\lambda^\prime=\lambda$ if $\rf^\prime=\rf$ and $\lambda^\prime=-w_0\lambda$
if $\rf^\prime=\overline{\rf}_{21}$.
\end{prop}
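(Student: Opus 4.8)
The plan is to verify the $\Uq$-equivariance on matrix coefficients directly from the explicit formulae \eqref{lexplicit} for $l_\rf^\pm$, and then to pin down the image of each Peter--Weyl block $C^{V(\lambda)}$ using the special generators identified in Remark \ref{tau-as-ltil}. First I would unravel the definitions: by \eqref{adrs-def} the action of $X\in\Uq$ on $c\in\cqg$ is precomposition with $\adr(X)$, i.e.\ $(\adrs(X)c)(u)=c(\sigma(X_{(1)})uX_{(2)})$, so in terms of matrix coefficients $\adrs(X)c_{f,v}=c_{f,v}\circ\adr(X)$ translates into a linear combination of matrix coefficients of the same representation $V$. On the other side, $\ltil_{\rf'}$ is built from $l_{\rf'}^+$ and $l_{\rf'}^-$ via \eqref{ltildef}, and each of those is, by \eqref{lexplicit}, obtained by evaluating the canonical element $\Theta$ together with a $\tau(\cdot)$ against a matrix coefficient in one tensor slot; so $\ltil_{\rf'}(c_{f,v})$ is again an element of $\Uq$ obtained by pairing $\Theta$-type expressions against $c_{f,v}$. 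The equivariance $\ltil_{\rf'}(\adrs(X)c)=\adl(X)\ltil_{\rf'}(c)$ should then follow from the intertwining property of the universal $r$-form \eqref{rh-hom} (equivalently, the fact that $\rh_{V,W}$ is a $\Uq$-module map), together with the Hopf-algebra identities relating $\adr$ on $\cqg^\circ$ and $\adl$ on $\Uq$; concretely one rewrites both sides as the same element of $\Uq$ paired against $c$, using Sweedler calculus and \eqref{sigma2}. This is essentially the computation in \cite{a-Caldero93}, \cite[Proposition 2]{a-Kolb08}, so I would keep it brief.

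Granting equivariance, the second assertion \eqref{rf-corresp} becomes a matter of identifying, for each $\lambda\in P^+$, one nonzero element of $\ltil_{\rf'}(C^{V(\lambda)})$ and invoking the decomposition \eqref{Fl-decomp}. Here Remark \ref{tau-as-ltil} does the work: for $\rf'=\rf$ we have $\ltil_{\rf}(c^\lambda_{f_\lambda,v_\lambda})=\tau(-2\lambda)$, and for $\rf'=\overline{\rf}_{21}$ we have $\ltil_{\overline{\rf}_{21}}(c^\lambda_{f_{w_0\lambda},v_{w_0\lambda}})=\tau(2w_0\lambda)=\tau(-2(-w_0\lambda))$, i.e.\ $\tau(-2\lambda')$ in the notation of the statement. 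Since $C^{V(\lambda)}$ is an irreducible $\Uq^\cop\ot\Uq$-module, it is in particular a $\Uq$-module under $\adrs$ that is generated by any nonzero vector — in particular generated by $c^\lambda_{f_{\lambda'}\dots}$ — under the $\adrs$-action (one checks the relevant matrix coefficient is cyclic, using that $V(\lambda)$ is irreducible and that $C^{V(\lambda)}$ is spanned by the $c_{f,v}$ as $f,v$ range over $V(\lambda)^\ast,V(\lambda)$; the $\adrs$-action moves $v$ while the $\Uq^\cop$-factor moves $f$, and applying $\adrs(\Uq)$ already exhausts the $v$-index because $V(\lambda)$ is simple, with the $f$-index then filled in by the left $\Uq^\cop$-multiplications that commute with $\adrs$). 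Applying the equivariant map $\ltil_{\rf'}$ therefore sends $C^{V(\lambda)}$ onto $\adl(\Uq)\tau(-2\lambda')$, which is exactly \eqref{rf-corresp}.

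Finally, to conclude that $\ltil_{\rf'}$ is an isomorphism onto $F_l(\Uq)$: surjectivity is immediate from \eqref{rf-corresp} summed over $\lambda\in P^+$ together with the decomposition \eqref{Fl-decomp} (and the analogous decomposition indexed by $-w_0\lambda$, which ranges over the same set $P^+$). Injectivity follows because the sum $\cqg=\bigoplus_\lambda C^{V(\lambda)}$ from \eqref{PeterWeyl} is direct, the images $\adl(\Uq)\tau(-2\lambda')$ are the distinct summands of the direct sum \eqref{Fl-decomp}, and on each block $\ltil_{\rf'}$ is injective — this last point because $\ltil_{\rf'}$ restricted to $C^{V(\lambda)}$ is a $\Uq$-module map between two irreducible (hence simple, by semisimplicity of $\cC$ and the structure of \eqref{Fl-decomp}) modules that is nonzero, hence an isomorphism by Schur. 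I expect the main obstacle to be the equivariance computation in the first paragraph: keeping track of antipodes, the distinction between $\rf$ and $\overline{\rf}_{21}$, and the $\cop$ in \eqref{UcopU}, so that the Sweedler bookkeeping actually closes up — everything else is a clean application of semisimplicity of $\cC$ and the known decompositions \eqref{PeterWeyl} and \eqref{Fl-decomp}.
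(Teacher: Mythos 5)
Your reduction of \eqref{rf-corresp} to the generators singled out in Remark \ref{tau-as-ltil} is the right idea, and your treatment of equivariance matches the paper's (which simply defers to \cite[10.1.3, Proposition 11]{b-KS} and \cite{a-Caldero93}). The gap is in how you pass from the single element $\tau(-2\lambda^\prime)=\ltil_{\rf^\prime}(c^\lambda_{f,v})$ to the equality $\ltil_{\rf^\prime}(C^{V(\lambda)})=(\adl \Uq)\tau(-2\lambda^\prime)$ and to injectivity. You assert that $C^{V(\lambda)}$, being irreducible as a $\Uq^\cop\ot\Uq$-module, is generated under $\adrs$ by any nonzero vector. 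This is false: the quantum trace $\tr_{q,V(\lambda)}\in C^{V(\lambda)}$ of Example \ref{q-trace-eg} is $\adrs(\Uq)$-invariant, so it generates only a line. Your parenthetical repair does not work either, because $\adrs$ is the diagonal action on $V(\lambda)^\ast\ot V(\lambda)$ --- it moves $f$ and $v$ simultaneously --- and the $\Uq^\cop$-multiplications you invoke to ``fill in the $f$-index'' are not part of the $\adrs$-module structure, so they say nothing about the $\adrs$-submodule generated by your chosen matrix coefficient. The cyclicity you need is true, but it is essentially equivalent to the Joseph--Letzter theorem that $\dim\big((\adl\Uq)\tau(-2\lambda)\big)=(\dim V(\lambda))^2$; it cannot be ``checked'' from simplicity of $V(\lambda)$ alone.

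The same confusion undermines your injectivity step: under $\adrs$ and $\adl$ the blocks $C^{V(\lambda)}$ and $(\adl\Uq)\tau(-2\lambda^\prime)$ are isomorphic to $\End(V(\lambda))$ and hence contain the trivial module as a direct summand, so they are not irreducible $\Uq$-modules and Schur's lemma does not apply. The paper avoids both problems with one input you are missing: by \cite[Theorem 3.5]{a-JoLet2} one has $\dim\big((\adl \Uq)\tau(-2\lambda^\prime)\big)=\dim\big(C^{V(\lambda)}\big)$. Granting this, equivariance gives the inclusion $(\adl\Uq)\tau(-2\lambda^\prime)\subseteq\ltil_{\rf^\prime}(C^{V(\lambda)})$, and the dimension count forces this inclusion to be an equality and $\ltil_{\rf^\prime}$ to be injective on each Peter--Weyl block; summing over $\lambda\in P^+$ via \eqref{PeterWeyl} and \eqref{Fl-decomp} then finishes the proof. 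Without that dimension formula (or an independent proof that $C^{V(\lambda)}$ is $\adrs(\Uq)$-cyclic on the relevant vector), your argument does not close.
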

\begin{proof}
By (\ref{rrelation}) and by Lemma \ref{lplusmin} the
$l$-functional $\ltil_{\rf^\prime}$ defines a $k$-linear map $\ltil_{\rf^\prime}: \cqg\rightarrow \Uq$. 
It follows for instance from \cite[10.1.3, Proposition 11]{b-KS} that
$\ltil_{\rf^\prime}: \cqg\rightarrow \Uq$ is a morphism of left $\Uq$-modules 
(but be aware of the misprint in formula (28) of \cite[10.1.3]{b-KS}),
hence its image is contained in $F_l(\Uq)$. It now follows from Remark
\ref{tau-as-ltil} that $\ltil_{\rf^\prime}\big(C^{V(\lambda)}\big)=
(\adl \Uq) \tau(-2\lambda^\prime)$. Finally, by \cite[Theorem
  3.5]{a-JoLet2} one has $\dim(C^{V(\lambda)})=\dim((\adl \Uq)
\tau(-2\lambda^\prime))$ and hence $\ltil_{\rf^\prime}: \cqg\rightarrow F_l(\Uq)$ is an
isomorphism.
\end{proof}
  
\section{Transmutation}\label{transmutation}
The method of twisting braided bialgebras by a two-cocycle was
introduced by V.~Drinfeld \cite{a-Drin90} and extends to module
algebras over braided bialgebras. It is at the heart of S.~Majid's
construction of covariantized algebras \cite{a-Majid93} and has been
further investigated by J.~Donin and A.~Mudrov (e.g.~\cite{a-DM03},
\cite{a-DKM03}).

Here we recall this construction within the framework of S.~Majid's theory of transmutation.
We then restrict to a setting, outlined in
\ref{framework}, which is tailored to include $\cqg$ and FRT-algebras
\cite{a-FadResTak1}. The corresponding covariantized algebras are the
locally finite part $F_l(\Uq)$ and reflection equation algebras, respectively.
As a first application we use characters of covariantized algebras to
define quantum adjoint orbits in Subsection
\ref{q-conjugacy}. Finally, in \ref{CylFormDef}, we recall the notion of a universal
cylinder form introduced in \cite{a-tDHO98}. It will become clear in
Subsection \ref{CylForms} that this notion is also closely related to
characters of covariantized algebras. 

The material of this section is drawn from various references which
were developed for the most part independently. We feel that beyond
fixing our framework it is beneficial to give a unified presentation of
these results which are scattered across the literature.

\subsection{Transmutation of coquasitriangular bialgebras} \label{coquasiTrans}
In this subsection we recall S.~Majid's notion of transmutation as
introduced in \cite[Section 4]{a-Majid93} for coquasitriangular Hopf
algebras (cp.~also \cite[Section 7.4]{b-Majid1}). Note that Majid
uses the terminology \textit{dual quasitriangular} instead of
coquasitriangular. We follow the presentation in \cite[10.3]{b-KS}
in the setting of coquasitriangular bialgebras. 

Recall \cite[10.3.1]{b-KS} that a coquasitriangular bialgebra
$(A,\rf)$ over a field $\field$ is called
\textit{regular} if $\rf$ is convolution invertible as a $\field$-linear
functional on the tensor product coalgebra $A\ot A^{\cop}$. In
this case let $\sform$ denote the convolution inverse, and note that if
$A$ is a coquasitriangular Hopf algebra then $\sform(a,b)=\rf(a,\sigma(b))$. 
The \textit{quantum double} is a functor from the category of regular
coquasitriangular bialgebras to the category of bialgebras. By
definition \cite[8.2.1, 10.3.1]{b-KS} the quantum double $D(A,\rf)$
associated to the regular coquasitriangular bialgebra $(A,\rf)$
coincides with $A^{\cop}\ot A$ as a coalgebra. The multiplication
of $D(A,\rf)$ is defined by 
\[(b\otimes a)(b^\prime\otimes a^\prime)= \sum
  \rf(a_{(1)},b_{(3)}^\prime)\mathbf{s}(a_{(3)},b_{(1)}^\prime) 
   b_{(2)}^\prime b\otimes a_{(2)}a^\prime
\]
for $a,a^\prime,b,b^\prime\in A$ (cp.~\cite[8.2.1]{b-KS}). Hence the
canonical linear embeddings $A^{\op,\cop}\hookrightarrow D(A,\rf)$,
$a\mapsto a\ot 1$ and $A\hookrightarrow D(A,\rf)$, $a\mapsto 1\ot a$
are bialgebra homomorphisms. Note that
$A$ is a right $D(A,\rf)$-comodule by
\begin{align}\label{rightD}
  a\mapsto a_{(2)}\otimes (a_{(1)}\otimes a_{(3)})\quad
\mbox{for all $a\in A$}.
\end{align}
Observe furthermore that if $A$ is a Hopf algebra then the map
\begin{align}\label{thetaDef}
  \theta (b\otimes a):=\sigma(b)a,\qquad\mbox{for all $a,b\in A$}
\end{align}
defines a surjective morphism $\theta: D(A,\rf)\rightarrow A$ of bialgebras
(cp. \cite[10.3.2]{b-KS}). 

Transmutation defines a new algebra structure on the vector space $A$
such that (\ref{rightD}) becomes a comodule algebra structure. We
collect the main properties of transmutation in the following
proposition.
\begin{propdef}\cite[10.3.1, Proposition 30]{b-KS}\label{covariantized-pd}
  Let $(A,\rf)$ be a regular coquasitriangular bialgebra over $\field$ with unit $1\in
  A$. The vector space $A$ together with the multiplication $A\ot A
  \rightarrow A,\, a\ot b \mapsto a_{\rf} b$ defined by
  \begin{align}
    a_{\rf} b:=&\rf(a_{(1)},b_{(2)})
    \mathbf{s}(a_{(3)},b_{(1)}) a_{(2)}b_{(3)}\label{rrel1}\\
     =&\rf(a_{(2)},b_{(3)})\mathbf{s}(a_{(3)},b_{(1)})b_{(2)}a_{(1)}\label{rrel2}
  \end{align}
  is a $\field$-algebra with unit 1. This algebra is denoted by
  $\covA$ and is called the covariantized algebra associated to
  $(A,\rf)$. The covariantized algebra $\covA$ is a right
  $D(A,\rf)$-comodule algebra with respect to the coaction
  (\ref{rightD}). The multiplication of $A$ can be written in terms of
  the multiplication of $\covA$ as
  \begin{align}
    ab&=\overline{\rf}(a_{(1)},b_{(1)})\rf(a_{(3)},b_{(2)})
    a_{(2)}{}_{\mathbf{r}} b_{(3)}\label{rel1}\\
   &=\rf(b_{(2)},a_{(1)})\overline{\rf}(b_{(3)},a_{(3)})
    b_{(1)}{}_{\mathbf{r}} a_{(2)}\label{rel2} 
  \end{align}
  for $a,b\in A$. 
\end{propdef}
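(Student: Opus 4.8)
The statement to be proved is Proposition--Definition~\ref{covariantized-pd}, asserting that the formula \eqref{rrel1} (equivalently \eqref{rrel2}) endows the vector space $A$ with an associative unital algebra structure $\covA$, that $A$ becomes a right $D(A,\rf)$-comodule algebra via \eqref{rightD}, and that the original multiplication is recovered via \eqref{rel1}--\eqref{rel2}. Since this is exactly \cite[10.3.1, Proposition 30]{b-KS}, the cleanest route is to present the argument in our present bialgebra-level setting (rather than assuming the Hopf-algebra hypothesis of that reference), carefully tracking which steps use only regularity of $\rf$ and which would ordinarily invoke an antipode.

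The plan is as follows. First I would verify that $\covA$ has unit $1$: substituting $a=1$ or $b=1$ into \eqref{rrel1} and using \eqref{r-vep} together with the corresponding normalisation $\sform(a,1)=\sform(1,a)=\vep(a)$ (which follows from $\sform$ being the convolution inverse of $\rf$ on $A\ot A^\cop$) collapses the sum to $\vep(a)b$ resp.\ $a\vep(b)$, giving $1_{\rf}b=b$ and $a_{\rf}1=a$. Second, I would establish the equivalence of \eqref{rrel1} and \eqref{rrel2}; this is a direct consequence of the hexagon-type relation \eqref{rh-hom} applied after inserting the defining identities for $\rf$ and $\sform$, essentially the statement that $\rf$ intertwines the two orders of multiplication up to the scalar factors. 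Third --- and this is the computational heart --- I would check associativity $(a_{\rf}b)_{\rf}c = a_{\rf}(b_{\rf}c)$. Expanding both sides via \eqref{rrel1} produces an expression involving $\rf$ and $\sform$ evaluated on triples of tensor components; one reorganises using the multiplicativity relations \eqref{r-mult}, \eqref{r-mult2} (and their $\sform$-analogues, valid by regularity) to split the ``length-two'' arguments, and then uses \eqref{rh-hom} to commute the remaining products into a common normal form. The two sides then match coefficient by coefficient. Fourth, I would check the comodule-algebra compatibility: the coaction \eqref{rightD} is a morphism of algebras from $\covA$ to $\covA \ot D(A,\rf)$, where the target carries the tensor-product algebra structure; this amounts to the identity $\kow(a_{\rf}b)$-on-the-nose matching the product in $D(A,\rf)$ applied to the coacted tensor components, which unwinds to precisely the definition of the quantum double multiplication recalled before \eqref{rightD} --- so this step is essentially bookkeeping once the multiplication formula of $D(A,\rf)$ is in hand. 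Finally, for \eqref{rel1}--\eqref{rel2} I would invert the relation \eqref{rrel1}: apply $\rfb$ and $\rf$ to the appropriate components of $ab$ and use the convolution-inverse identities $\rf * \rfb = \vep\ot\vep = \sform * \rf$ (reading off $\sform$) to telescope the sum back to the covariantized product; \eqref{rel2} then follows from \eqref{rel1} by the same \eqref{rh-hom}-argument used in step two.

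The main obstacle is the associativity verification in step three: it is the one place where the scalar factors $\rf$ and $\sform$ genuinely interact with the comultiplication in a nontrivial way, and keeping the Sweedler indices straight across a six- or seven-fold coproduct while applying \eqref{r-mult}, \eqref{r-mult2}, \eqref{rh-hom} in the right order is delicate. I expect the cleanest bookkeeping device is to record, once and for all, that $\rf$ (resp.\ $\sform$) defines a left (resp.\ right) action of $A$ on itself ``up to the coproduct'' in the sense made precise by \eqref{rh-hom}, and then to phrase both sides of the associativity identity in terms of these actions; this reduces the problem to the compatibility already encoded in the quantum double, so that no further genuinely new identity is needed beyond the defining axioms of a coquasitriangular bialgebra and regularity.
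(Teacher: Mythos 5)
Your outline is correct and is essentially the standard direct verification; note that the paper itself supplies no proof here but simply quotes \cite[10.3.1, Proposition 30]{b-KS}, and your sketch (unit via \eqref{r-vep} and $\sform(1,\cdot)=\sform(\cdot,1)=\vep$, equivalence of \eqref{rrel1} and \eqref{rrel2} via \eqref{rh-hom}, associativity via \eqref{r-mult}, \eqref{r-mult2}, \eqref{rh-hom} and their $\sform$-analogues, the comodule-algebra property by unwinding the quantum double multiplication, and \eqref{rel1}--\eqref{rel2} by convolution-telescoping) reproduces exactly the argument of that reference. The only small inaccuracy is your suggestion that the cited result carries a Hopf-algebra hypothesis to be removed: \cite[10.3.1]{b-KS} already works with regular coquasitriangular bialgebras, which is why the paper can cite it verbatim.
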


\subsection{Framework}\label{framework} We now outline the general
framework in which we will remain for the rest of this paper with the
exception of Proposition-Definition \ref{TwistPropDef} and Subsection \ref{CylFormDef}.
Throughout this paper $(\cA,\Psi)$ denotes a pair consisting of a
coquasitriangular bialgebra $(\cA,\rf_\cA)$ and a fixed homomorphism
$\Psi:(\cA,\rf_\cA)\rightarrow (\cqg,\rf)$ of coquasitriangular
bialgebras where $\rf$ is the universal $r$-form defined by
\eqref{rf-def}. Note that in this setting $\rf_\cA=\rf\circ(\Psi\ot
\Psi)$. We will hence, by slight abuse of notation, suppress the
subscript $\cA$ and denote the universal $r$-form of $\cA$ also by
$\rf$. Similarly, we will write $(\cA,\rfb_{21})$ instead of
$(\cA,\rfb_{21}\circ(\Psi\ot \Psi))$. Note that both $(\cA,\rf)$ and
$(\cA,\rfb_{21})$ are regular coquasitriangular bialgebras and we can
hence form the covariantized algebras $\covAc$ and $\covAcb$
as outlined in Proposition-Definition \ref{covariantized-pd}.

The morphism $\Psi$ allows us to transfer key structures of $\cqg$ to
$\cA$. These structures involve the Hopf pairing (\ref{pairing}) and the
antipode of $\cqg$.
\begin{lem}\label{UU-mod}
    The algebra $\cA$ is a left $\Uq^\cop\ot \Uq$-module algebra
    with respect to the action defined by
    \begin{align}\label{acc}
      (X\otimes Y)\cdot a=\langle \Psi(a_{(1)}),\sigma(X)\rangle
      \langle \Psi(a_{(3)}),Y\rangle a_{(2)}
    \end{align}
    for $X,Y\in \Uq$ and $a\in\cA$. With respect to this action $\cA$ is a direct sum of finite dimensional,
simple, type one $\Uq\ot\Uq$-modules.
\end{lem}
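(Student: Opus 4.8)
The plan is to verify that the formula \eqref{acc} defines a module algebra structure by transporting the already-known $\Uq^\cop\ot\Uq$-module algebra structure on $\cqg$ given by \eqref{UcopU} along the bialgebra map $\Psi$, and then to deduce the decomposition statement from the Peter--Weyl decomposition \eqref{PeterWeyl} together with semisimplicity of $\cC$.

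First I would observe that $\Psi:\cA\to\cqg$ is a homomorphism of coalgebras, so $\Delta_\cqg(\Psi(a))=\Psi(a_{(1)})\ot\Psi(a_{(2)})$ and $\vep(\Psi(a))=\vep(a)$; since it is also an algebra map it intertwines multiplications. Comparing \eqref{acc} with \eqref{UcopU}, the $\Uq^\cop\ot\Uq$-action on $\cA$ is exactly the pullback of the action on $\cqg$ in the following precise sense: for $X\ot Y\in\Uq^\cop\ot\Uq$ and $a\in\cA$ one has $\Psi\big((X\ot Y)\cdot a\big)=(X\ot Y)\cdot\Psi(a)$, because applying $\Psi$ to the right-hand side of \eqref{acc} and using coalgebra-compatibility of $\Psi$ turns it into the right-hand side of \eqref{UcopU} evaluated at $\Psi(a)$. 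However $\Psi$ need not be injective, so I cannot simply pull back the module-algebra axioms; instead I would check them directly. The axioms to verify are: (i) $(X\ot Y)\cdot(ab)=\big((X\ot Y)_{(1)}\cdot a\big)\big((X\ot Y)_{(2)}\cdot b\big)$ using the coproduct of $\Uq^\cop\ot\Uq$, and (ii) $(X\ot Y)\cdot 1=\vep(X)\vep(Y)1$. Both follow by a routine Sweedler-notation computation from the fact that $\Psi$ is an algebra and coalgebra map, that $\sigma$ is an algebra anti-homomorphism (accounting for the $\cop$ on the first leg), and that $\langle\cdot,\cdot\rangle$ is a Hopf pairing so that $\langle\Psi(a)\Psi(b),Z\rangle=\langle\Psi(a),Z_{(1)}\rangle\langle\Psi(b),Z_{(2)}\rangle$. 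That $\cA$ is a module (associativity of the action) is the same style of check. I expect no obstacle here beyond bookkeeping; this is the transmutation-framework analogue of the standard fact that a Hopf dual acts on any comodule algebra.

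For the decomposition statement I would argue as follows. Because $\Uq$ acts on $\cA$ through $\Psi$ and the pairing \eqref{pairing}, and because $\langle\cqg,\Uq\rangle$ separates points, the $\Uq^\cop\ot\Uq$-submodules of $\cA$ are governed by the $\cqg$-coaction dual to \eqref{UcopU}: concretely, \eqref{acc} is the module structure associated to a right $\cqg^{\cop}\ot\cqg$-comodule structure on $\cA$, namely $a\mapsto a_{(2)}\ot(\Psi(a_{(1)})\ot\Psi(a_{(3)}))$, obtained by composing the iterated coproduct of $\cA$ with $\Psi$ on the outer legs. Since $\cqg$ is cosemisimple with coalgebra decomposition \eqref{PeterWeyl}, every $\cqg^{\cop}\ot\cqg$-comodule, in particular $\cA$, decomposes into finite-dimensional simple comodules, each of which corresponds under the pairing to a finite-dimensional simple type-one $\Uq^\cop\ot\Uq$-module (a tensor product $V(\mu)^*\ot V(\lambda)$ up to the identifications built into $\cC$). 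Restricting along the diagonal-type embedding relevant to the statement, or simply noting that each such $\Uq^\cop\ot\Uq$-module is a finite-dimensional simple type-one $\Uq\ot\Uq$-module, gives the claim. The only mild subtlety to get right is that $\cA$ need not be finite-dimensional, so I would phrase cosemisimplicity as: the image $\Psi(\cA)$ is a subcoalgebra of the cosemisimple coalgebra $\cqg$, hence cosemisimple, and comodules over it (with the $\cop$ twist on one leg, which does not affect cosemisimplicity) are completely reducible; the simples are as described.

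The main obstacle I anticipate is purely notational rather than conceptual: keeping the $\sigma$, the opposite coproduct on the first tensor factor, and the triple Sweedler legs $a_{(1)},a_{(2)},a_{(3)}$ correctly aligned when verifying axiom (i), and making sure the ``type one'' and ``simple'' claims for the $\Uq\ot\Uq$-modules are justified by reference to the structure of $\cC$ (semisimplicity, rigidity, simple objects $V(\lambda)$) recalled in Subsection~\ref{Usection} rather than re-proved. I would therefore present the module-algebra verification compactly, citing the analogous statement for $\cqg$ in \eqref{UcopU}, and devote the remaining space to the comodule/cosemisimplicity argument for the decomposition.
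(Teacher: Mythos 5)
Your proposal is correct, and the first half coincides with the paper's treatment: the paper likewise disposes of the module-algebra axioms by saying they are verified exactly as for the action \eqref{UcopU} on $\cqg$, so your direct Sweedler check (with the correct caveat that $\Psi$ need not be injective, so one verifies the axioms on $\cA$ rather than pulling them back) is the intended argument. For the decomposition statement the two proofs diverge mildly but genuinely. The paper argues in two steps: local finiteness of the $\Uq\ot\Uq$-action follows from Sweedler's fundamental theorem of coalgebras (every $a\in\cA$ lies in a finite-dimensional subcoalgebra, which is a submodule for \eqref{acc}), and the type-one property follows because $\cqg$ consists of matrix coefficients of type one modules; complete reducibility is then absorbed into the known semisimplicity of the category of finite-dimensional type one modules recalled in Subsection \ref{Usection}. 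You instead view \eqref{acc} as the module structure attached to the right $\cqg^{\cop}\ot\cqg$-comodule $a\mapsto a_{(2)}\ot(\Psi(a_{(1)})\ot\Psi(a_{(3)}))$ and invoke cosemisimplicity via \eqref{PeterWeyl}: since the simple subcoalgebras $C^{V(\lambda)}$ are matrix coalgebras, the tensor product coalgebra is again cosemisimple, all its comodules are completely reducible with finite-dimensional simples, and non-degeneracy of the pairing \eqref{pairing} converts simple comodules into the simple type one modules $V(\mu)^\ast\ot V(\lambda)$. Both routes are sound; the paper's is more elementary (it needs only the fundamental theorem of coalgebras plus facts already recorded about $\cC$), while yours obtains finiteness, complete reducibility, and the identification of the simple constituents in one stroke, at the price of justifying cosemisimplicity of the tensor product coalgebra and being careful that the coacting coalgebra is $\Psi(\cA)^{\cop}\ot\Psi(\cA)$ rather than $\Psi(\cA)$ itself, a point you do flag.
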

\begin{proof}
  The first claim is verified in the same way as one proves that
  (\ref{UcopU}) defines a $\Uq^\cop\ot \Uq$-module algebra structure on
  $\cqg$. To verify the second statement observe that $\cA$ is a
  locally finite $\Uq\ot\Uq$-module since any  element $a\in \cA$ is
  contained in a finite dimensional subcoalgebra of $\cA$
  \cite[Theorem 2.2.1]{b-Sweedler}. Moreover, $\mathcal{A}$ is a type
  one $\Uq\ot\Uq$-module  since $\cqg$ consists of the matrix
  coefficients of type one $\Uq$-modules.
\end{proof}
Recall that all through this paper we use the notation $\rf'$ to
denote either $\rf$ as defined by (\ref{rf-def}) or $\rfb_{21}$. In
the next lemma we transfer the left $\Uq$-module structure of $\cqg$
defined by (\ref{adrs-def}) to the covariantized algebra $\covAcp$.
\begin{lem}\label{transferAr}
  {\upshape (\bf i)} The covariantized algebra $\covAcp$  is a right $\cqg$-comodule
  algebra with the coaction $\delta:\covAcp\rightarrow \covAcp\ot
  \cqg$ defined by 
  \begin{align} \label{Ar-coact}
    \delta(a)=a_{(2)}\otimes \sigma(\Psi(a_{(1)}))\Psi(a_{(3)}).
  \end{align}
  {\upshape (\bf ii)} The covariantized algebra $\covAcp$ is a left $\Uq$-module algebra
  by
  \begin{align}\label{ad-def}
    \adrs(X)a:=\Delta(X)\cdot a= \langle
    \sigma(\Psi(a_{(1)})) \Psi(a_{(3)}),X \rangle a_{(2)} 
  \end{align}
  for $X\in \Uq$ and $a\in\cA$.
\end{lem}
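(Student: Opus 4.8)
The plan is to deduce both parts from Proposition-Definition~\ref{covariantized-pd} by transporting structure along $\Psi$, rather than by verifying the comodule-algebra and module-algebra axioms by hand. For part~(i): since $\rf_\cA=\rf\circ(\Psi\ot\Psi)$, the map $\Psi$ is a morphism of regular coquasitriangular bialgebras $(\cA,\rf')\rightarrow(\cqg,\rf')$, so functoriality of the quantum double supplies a bialgebra homomorphism $D(\Psi)\colon D(\cA,\rf')\rightarrow D(\cqg,\rf')$ which on the underlying coalgebra $\cA^\cop\ot\cA$ is $\Psi\ot\Psi$. As $\cqg$ is a Hopf algebra, \eqref{thetaDef} provides a bialgebra homomorphism $\theta\colon D(\cqg,\rf')\rightarrow\cqg$, $\theta(b\ot a)=\sigma(b)a$; composing, $\phi:=\theta\circ D(\Psi)\colon D(\cA,\rf')\rightarrow\cqg$ is a bialgebra homomorphism with $\phi(b\ot a)=\sigma(\Psi(b))\Psi(a)$. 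By Proposition-Definition~\ref{covariantized-pd}, $\covAcp$ is a right $D(\cA,\rf')$-comodule algebra via \eqref{rightD}; since the push-forward of a comodule algebra along a bialgebra homomorphism is again a comodule algebra, applying $\id\ot\phi$ to this coaction yields a right $\cqg$-comodule algebra structure on $\covAcp$, and its coaction sends $a$ to $a_{(2)}\ot\phi(a_{(1)}\ot a_{(3)})=a_{(2)}\ot\sigma(\Psi(a_{(1)}))\Psi(a_{(3)})$, which is exactly $\delta$ of \eqref{Ar-coact}.

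For part~(ii): use the Hopf pairing \eqref{pairing} to dualise the coaction of~(i), setting $\adrs(X)a:=(\id\ot\langle\,\cdot\,,X\rangle)\delta(a)$. Coassociativity of $\delta$ together with $\langle c,XY\rangle=\langle c_{(1)},X\rangle\langle c_{(2)},Y\rangle$ give $\adrs(XY)=\adrs(X)\adrs(Y)$, so $\covAcp$ becomes a left $\Uq$-module; the module-algebra identity for the covariantized product follows because $\delta$ is an algebra map by part~(i) and because $\langle cc',X\rangle=\langle c,X_{(1)}\rangle\langle c',X_{(2)}\rangle$. The explicit formula in \eqref{ad-def} is immediate from the formula for $\delta$, and the identification $\adrs(X)a=\Delta(X)\cdot a$ with the $\Uq^\cop\ot\Uq$-action \eqref{acc} of Lemma~\ref{UU-mod} follows from $\langle\sigma(c),u\rangle=\langle c,\sigma(u)\rangle$ and the pairing axioms; alternatively one notes directly that $\Delta\colon\Uq\rightarrow\Uq^\cop\ot\Uq$ is an algebra homomorphism since $\Uq^\cop$ carries the same multiplication as $\Uq$.

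There is no serious obstacle here; the content is bookkeeping. The one point demanding attention is that $\rf'$ stands for either $\rf$ or $\rfb_{21}$, so one has to invoke that $(\cqg,\rfb_{21})$ is again a regular coquasitriangular Hopf algebra, so that $\theta$ applies, and that $\Psi$ remains a morphism for the twisted $r$-form; both facts are recorded in Subsections~\ref{cqg-qtrace} and~\ref{framework}. Keeping the $\op$/$\cop$ and $\rf$/$\rfb_{21}$ conventions straight through the composition $\theta\circ D(\Psi)$ is the only place where a flip could slip in.
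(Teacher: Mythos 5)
Your proof is correct and follows essentially the same route as the paper: the paper likewise pushes the right $D(\cA,\rf')$-comodule algebra structure of Proposition-Definition~\ref{covariantized-pd} forward along $\Psi$ to a $D(\cqg,\rf')$-comodule algebra structure and then composes with the map $\theta$ of \eqref{thetaDef} for $A=\cqg$, deducing (ii) from the fact that \eqref{pairing} is a Hopf pairing. Your only cosmetic difference is composing the bialgebra maps first into $\phi=\theta\circ D(\Psi)$ before pushing forward, and spelling out the dualisation in (ii) in more detail.
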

\begin{proof}
  The bialgebra map $\Psi$ allows us the push the right $D(\cA,\rf_{\cA}^\prime)$-comodule algebra structure
on $\covAcp$ to a right $D(\cqg,\rf^\prime)$-comodule algebra structure 
\begin{align}\label{D-comod}  
  a\mapsto a_{(2)}\otimes (\Psi(a_{(1)})\otimes\Psi(a_{(3)}))
\end{align}
on $\covAcp$. Composing this map with the bialgebra homomorphism (\ref{thetaDef}) for $A=\cqg$
one obtains the right $\cqg$-comodule algebra structure $\delta$ on
$\covAcp$. This proves {\bf(i)} and {\bf(ii)} follows immediately from
the fact that (\ref{pairing}) is a Hopf pairing.
\end{proof}
At the beginning of this subsection we agreed to write $\rf(a,b)$ for
$a,b\in \cA$ instead of $\rf_\cA (a,b)=\rf(\Psi(a),\Psi(b))$.
To simplify notations in later calculations, we now push this convention
further and suppress the symbol $\Psi$ as far as possible.

\medskip

\noindent{\bf Convention: } We will allow elements of $\cA$ inside the
arguments of the universal $r$-form $\rf$ for $\cqg$, the
$l$-operators $l_{\rp}^+,l_\rp^-,l_\rp,\ltil_\rp$, and inside
the first entry of the pairing (\ref{pairing}). Whenever an element
$a\in \cA$ occurs in one of these places the homomorphism $\Psi$ has
to be applied to $a$ first. Similarly, we write $\sigma(a)$ instead of
$\sigma(\Psi(a))$ for any $a\in \cA$. For example,
$\ltil_{\rf^\prime}(\sigma(a)b)$ with $a,b\in \cA$, should be read as
$\ltil_{\rf^\prime}(\sigma(\Psi(a))\Psi(b))$. 

\medskip
It is important to remember that $\cA$ is only a bialgebra and not
necessarily a Hopf algebra. Note that in our conventions the
multiplication in the covariantized algebra $\cA_\rp$  can be
written as
\begin{align}
  a_\rp b&=\rp(a_{(1)},b_{(2)})\rp(a_{(3)},\sigma(b_{(1)})) a_{(2)}b_{(3)}\label{rrrel1}\\
  &=\rp(a_{(2)},b_{(3)})\rp(a_{(3)},\sigma(b_{(1)}))b_{(2)}a_{(1)} \label{rrrel2}
\end{align}
for $a,b\in\cA$, while on the other hand,
\begin{align}
ab&=\rp(\sigma(a_{(1)}),b_{(1)})\rp(a_{(3)},b_{(2)})a_{(2)}{}_\rp b_{(3)}\label{rel01}\\
&=\rp(b_{(2)},a_{(1)})\rp(\sigma(b_{(3)}),a_{(3)})b_{(1)} {}_\rp a_{(2)}\label{rel02}
\end{align}
for $a,b\in\mathcal{A}$.

We end this subsection with a discussion of the image of the map
$\Psi$. Note that $\Psi(\cA)$ is a subbialgebra of $\cqg$. As we
couldn't pinpoint the classification of subbialgebras of $\cqg$ in the
literature we provide the following proposition for the convenience of
the reader. For simplicity we assume that $\gfrak$ is a simple
Lie algebra. 

Recall that for any lattice $L\subset \hfrak^\ast$ with $Q\subseteq 
L\subseteq P$ there exists a uniquely determined affine algebraic group
$G_L$ with Lie algebra $\gfrak$ and fundamental group $P/L$
\cite[Chapter XI]{b-Humphreys75}. We define $\qfield_q[G_L]$ to be the
subalgebra of $\cqg$ generated by all matrix coefficients of type one
representations of $\Uq$ with highest weight in $L$.
\begin{prop}\label{cqGLprop}
  Assume that $\gfrak$ is simple and let $\cB\neq \qfield 1$ be a
  subbialgebra of $\qfield_q[G]$. Then $\cB=\qfield_q[G_L]$ for some
  lattice $L\subset \hfrak^\ast$ with  $Q\subseteq L\subseteq P$. 
\end{prop}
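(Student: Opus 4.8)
The plan is to use the Peter--Weyl decomposition \eqref{PeterWeyl} together with the fact that $\cB$, being a subbialgebra, is stable under the coaction of $\cqg$ on itself (equivalently, under the $\Uq^\cop\ot\Uq$-action \eqref{UcopU}). Since each $C^{V(\lambda)}$ is an irreducible $\Uq^\cop\ot\Uq$-module and these are pairwise non-isomorphic, $\cB$ must be a direct sum $\cB=\bigoplus_{\lambda\in S}C^{V(\lambda)}$ for some subset $S\subseteq P^+$; indeed, $\cB$ is a sub-$\Uq^\cop\ot\Uq$-module of $\cqg$ because it is a subcoalgebra, and a semisimple module that is a sum of irreducibles with multiplicity one is determined by the set of isotypic components it contains. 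So the task reduces to identifying which subsets $S$ can occur.

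Next I would extract the combinatorial constraint on $S$ coming from the fact that $\cB$ is a \emph{sub}\/algebra containing $1$. First, $0\in S$ since $\cB\neq\qfield 1$ forces $\cB$ to contain some $C^{V(\lambda)}$ with $\lambda\neq 0$, and $1\in\cB$ gives $C^{V(0)}=\qfield 1\subseteq\cB$. Second, for the multiplicativity: given $\lambda,\mu\in S$, the product $C^{V(\lambda)}\cdot C^{V(\mu)}$ lies in $\cB$, and by the standard decomposition of matrix coefficients this product spans $\bigoplus_{\nu} C^{V(\nu)}$ where $\nu$ ranges over the dominant weights appearing in $V(\lambda)\ot V(\mu)$ --- in particular it contains $C^{V(\lambda+\mu)}$ (the Cartan component) with nonzero coefficient. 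Hence $S$ is closed under addition, i.e.\ $S$ is a submonoid of $P^+$. A cleaner way to see the same thing: the highest matrix coefficient $c^{\lambda}_{f_\lambda,v_\lambda}$ generates $C^{V(\lambda)}$ under the $\Uq^\cop\ot\Uq$-action, $c^\lambda_{f_\lambda,v_\lambda}c^\mu_{f_\mu,v_\mu}=c^{\lambda+\mu}_{f_{\lambda+\mu},v_{\lambda+\mu}}$, so $\cB$ contains $C^{V(\lambda+\mu)}$ whenever it contains $C^{V(\lambda)}$ and $C^{V(\mu)}$.

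Finally I would argue that the additive submonoid $S\subseteq P^+$ with $0\in S$ is of the form $S=L\cap P^+$ for a lattice $L$ with $Q\subseteq L\subseteq P$, whence $\cB=\qfield_q[G_L]$ by the definition of $\qfield_q[G_L]$. Let $L$ be the subgroup of $P$ generated by $S$. One inclusion, $S\subseteq L\cap P^+$, is clear; for the reverse, one must show that if $\lambda\in P^+$ and $\lambda\in L$ then $\lambda\in S$. Writing $\lambda=\sum_i(\mu_i-\nu_i)$ with $\mu_i,\nu_i\in S$, one needs to ``absorb'' the minus signs: the key point is that for $\mu,\nu\in S$ the difference $\mu-\nu$, if dominant, again lies in $S$ --- this follows because $C^{V(\nu)}$ contains a nonzero matrix coefficient that, in the dual pairing, behaves like a nonzero character of the maximal torus in the $\nu^\ast$ direction, so multiplying the generator of $C^{V(\mu)}$ appropriately lands one in $C^{V(\mu-\nu)}$ modulo lower terms; more robustly, one uses that $Q\subseteq L$ always holds (take $\lambda,\mu\in S$ with $\lambda\neq\mu$, which exist since $|S|\geq 2$ and $S$ is a monoid in $P^+$, so that enough of $Q$ is generated --- here simplicity of $\gfrak$ ensures $P/Q$ is cyclic and the Weyl-group/$\ad$-stability of $S$ forces $S\supseteq Q^+$ once $S\neq\{0\}$). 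The cleanest route is: $S\neq\{0\}$ and $S$ is $W$-stable-closed under $\ad$ in the sense that $C^{V(\lambda)}\cdot C^{V(\lambda)^\ast}\ni C^{V(0)}$ and contains $C^{V(\mu)}$ for all $\mu\leq\lambda+\lambda^\ast$; iterating shows $Q^+\subseteq S$, so $L\supseteq Q$, and then $S=L\cap P^+$ because every dominant weight in $L$ differs from an element of $S$ by an element of $Q^+\subseteq S$.

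I expect the \textbf{main obstacle} to be this last step --- proving $S=L\cap P^+$, i.e.\ that a multiplicative (additive) submonoid of $P^+$ arising this way is ``saturated'' to the full cone of a lattice. The subtlety is genuinely representation-theoretic, not purely combinatorial: one must use that products of matrix coefficients realise \emph{all} dominant weights below the sum (not just the Cartan component), which is where the hypothesis that $\gfrak$ is simple (hence $P/Q$ cyclic) streamlines the bookkeeping. Everything else --- the Peter--Weyl/semisimplicity reduction and the identification with $\qfield_q[G_L]$ via \cite[Chapter XI]{b-Humphreys75} --- is routine.
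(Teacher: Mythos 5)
Your first two steps match the paper exactly: cosemisimplicity of $\qfield_q[G]$ gives $\cB=\bigoplus_{\lambda\in S}C^{V(\lambda)}$ for some $S\subseteq P^+$, and closure of $\cB$ under multiplication shows that $S$ contains every $\nu$ with $V(\nu)$ a constituent of $V(\lambda)\ot V(\mu)$ for $\lambda,\mu\in S$ (in particular $S$ is a submonoid containing $0$). The problem is the final step, which you yourself flag as the main obstacle: none of the routes you sketch actually closes it, and each one is circular at the same point. To show $\mu-\nu\in S$ when dominant, or to get $Q^+\subseteq S$ via $C^{V(\lambda)}\cdot C^{V(\lambda)^\ast}$, you need $C^{V(\lambda)^\ast}=C^{V(-w_0\lambda)}\subseteq\cB$, i.e.\ that $S$ is stable under $\lambda\mapsto -w_0\lambda$ --- equivalently that the subbialgebra $\cB$ is stable under the antipode. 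A subbialgebra of a Hopf algebra need not be a Hopf subalgebra in general, and you never prove this stability; the vague appeal to ``a nonzero character of the maximal torus in the $\nu^\ast$ direction'' does not substitute for it. Without duals, closure under Cartan components alone only gives a submonoid (e.g.\ $\N_0\lambda$), which is far from $L\cap P^+$.

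The paper's proof is built precisely around supplying this missing ingredient: for $V$ of dimension $N$, the module $V^{\ot(N-1)}$ contains a copy of $V^\ast$ (the quantum analogue of $\Lambda^{N-1}V\cong V^\ast\ot\Lambda^N V$ with $\Lambda^N V$ trivial), so closure under products and constituents already forces $C^{V^\ast}\subseteq\cB$, hence $\cB$ is a Hopf subalgebra. Once that is known, the paper does not redo the combinatorics of $S$ at all: it transfers the question to the classical coordinate ring $\C[G]$ and invokes the classification of connected semisimple affine algebraic groups from \cite[Chapter XI]{b-Humphreys75} to conclude $S=L\cap P^+$. If you add the lemma ``$V^\ast$ is a constituent of $V^{\ot(\dim V-1)}$'' your strategy can be completed (and would then essentially reproduce, by hand, the saturation argument that the classical classification encapsulates), but as written the proposal has a genuine gap.
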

\begin{proof}
 It follows from the fact that the coalgebra $\cqg$ is cosemisimple that
 \begin{align}\label{A=}
   \cB=\bigoplus_{\lambda\in M} C^{V(\lambda)}
 \end{align}
 for some subset $M\subseteq P^+$. For any $V\in Ob(\cC)$ of dimension $N$ the $\Uq$-module
 $V^{\ot(N-1)}$ contains a copy of the dual representation
 $V^\ast$. This implies that $\cB$ is a Hopf subalgebra of
 $\cqg$. Hence the span of all matrix coefficients of $\gfrak$-modules
 with highest weight in $M$ is a Hopf subalgebra of the coordinate
 ring $\C[G]$ of the affine algebraic group $G$.
 It now follows from the classification of semisimple affine algebraic groups \cite[Chapter
  XI]{b-Humphreys75} that $M=L\cap P^+$ for some lattice $L\subset
 \hfrak^\ast$ with $Q\subseteq L\subseteq P$. 
\end{proof} 
\subsection{Transmutation and Drinfeld twists}\label{twistingprop}
We give an alternative construction of the covariantized algebra
$\cA_\rp$ using Drinfeld twists \cite{a-Drin90}. While this is not
indispensable for the understanding of this paper, it allows us to relate later
results to the work of J.~Donin and A.~Mudrov (e.g.~\cite{a-DM03},
\cite{a-DKM03}, \cite{a-Mud07}). 
For the convenience of the reader we recall the construction of
twisted algebras in the following proposition. We denote
any bialgebra $H$ with unit $1_H$,
multiplication $\mu:H\otimes H\rightarrow H$, counit $\vep$, and coproduct
$\kow$ by $(H,1_H,\mu,\vep,\kow)$. Moreover, we freely make use of the well
known leg notation: For any unital algebra $B$ and any $i,j,n\in \N$
let $\phi_{ij}:B\ot B\rightarrow B^{\ot n}$ denote the algebra
homomorphism defined by $\phi_{ij}(x\ot y)=1\ot\dots \ot x \ot \dots\ot
y\ot \dots\ot 1$ with $x$ and $y$ in the $i$-th and $j$-th position,
respectively. For any element $F\in B\ot B$ we write
$F_{ij}:=\phi_{ij}(F)$ with $n$ being understood from the context.

\begin{propdef}\label{TwistPropDef}
   {\upshape \bf (i)} \cite{a-Drin90} Let $(H,1_H,\mu,\vep,\kow)$ be a bialgebra and let
       $F\in H\otimes H$ be an invertible element such that
       \begin{equation}\label{gauge}
         \begin{split}
           (\kow\otimes\textup{Id})(F)F_{12}&=(\textup{Id}\otimes\kow)(F)F_{23},\\
           (\epsilon\otimes\textup{Id})(F)&=1_H=(\textup{Id}\otimes\epsilon)(F).
         \end{split}
       \end{equation}
       For all $x\in H$ define 
      \[\kow_F(x)=F^{-1}\kow(x)F.
      \]
      Then  $H_F:=(H,1_H,\mu,\vep,\kow_F)$ is a bialgebra. We call $F$ a
      twist for $H$.\\
    {\upshape \bf (ii)} \cite{a-DM03}, \cite[Proposition 7]{a-DKM03} Let $H$ be a bialgebra and $F\in H\ot H$ a
      twist for $H$. Let $(A,1_A,m)$ be a unital left $H$-module algebra
      with multiplication  $m:A\ot A\rightarrow A$.
      Define a linear map
      \begin{align*}
        m_F:A\ot A\rightarrow A,\qquad m_F(a\otimes b)=m(F(a\otimes b)).
      \end{align*}  
      Then $A_F:=(A,1_A,m_F)$ is an algebra. The left $H$-module structure on $A$
      turns $A_F$ into a left $H_F$-module algebra. To simplify
      notation we define $a_F b:=m_F(a\ot b)$. \\
    {\upshape \bf (iii)} \cite{a-DM03}, \cite[Proposition 3]{a-DKM03} Let  $(\cU,1,\mu,\vep,\kow,\cR)$
      be a braided bialgebra with universal $R$-matrix $\cR$ and $H:=\cU^{\mathrm{cop}}\ot \cU$ the
      product bialgebra. Then the two elements
      \begin{align} \label{FFb}
         F:=\cR_{13} \cR_{23}\in H \ot H, \qquad
         \oF:=\cR_{24}^{-1}\cR_{14}^{-1}\in H\ot H 
      \end{align}
      are twists for $H$. 
\end{propdef}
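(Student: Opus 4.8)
The three parts are bookkeeping verifications whose only genuine ingredient is the cocycle identity \eqref{gauge}; in (i) and (ii) it is used as a hypothesis, in (iii) it must be established, so (iii) is where the real work lies. \emph{For (i)}, I would first observe that $\kow_F\colon x\mapsto F^{-1}\kow(x)F$ is the composite of the algebra homomorphism $\kow\colon H\to H\ot H$ with the inner automorphism $y\mapsto F^{-1}yF$ of $H\ot H$, hence is itself an algebra homomorphism, and $\kow_F(1_H)=F^{-1}F=1_{H\ot H}$. Applying the algebra map $\vep\ot\id$ to $FF^{-1}=1_H$ and using $(\vep\ot\id)(F)=1_H$ gives $(\vep\ot\id)(F^{-1})=1_H$, whence $(\vep\ot\id)\kow_F=\id=(\id\ot\vep)\kow_F$, which are the counit axioms. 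For coassociativity one computes, with $G:=(\kow\ot\id)(F)$, $G':=(\id\ot\kow)(F)$ (both invertible) and using that $\kow\ot\id$, $\id\ot\kow$ are algebra maps,
\begin{align*}
  (\kow_F\ot\id)\kow_F(x)&=(GF_{12})^{-1}\,(\kow\ot\id)\kow(x)\,(GF_{12}),\\
  (\id\ot\kow_F)\kow_F(x)&=(G'F_{23})^{-1}\,(\id\ot\kow)\kow(x)\,(G'F_{23}),
\end{align*}
and then invokes coassociativity of $\kow$ together with $GF_{12}=G'F_{23}$, which is precisely \eqref{gauge}.

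\emph{For (ii)}, writing $m_F(a\ot b)=m\big(F\cdot(a\ot b)\big)$ with $F\in H\ot H$ acting on $A\ot A$ through the module structure, the unit axiom $m_F(1_A\ot b)=b=m_F(a\ot 1_A)$ follows from the module-algebra relation $h\cdot 1_A=\vep(h)1_A$ and $(\vep\ot\id)(F)=1_H=(\id\ot\vep)(F)$. For associativity I would use $h\cdot m(u\ot v)=m\big(\kow(h)\cdot(u\ot v)\big)$ and associativity of $m$ to rewrite the two bracketings of the iterated $F$-product as
\begin{align*}
  m_F\big(m_F(a\ot b)\ot c\big)&=m^{(3)}\big((\kow\ot\id)(F)\,F_{12}\cdot(a\ot b\ot c)\big),\\
  m_F\big(a\ot m_F(b\ot c)\big)&=m^{(3)}\big((\id\ot\kow)(F)\,F_{23}\cdot(a\ot b\ot c)\big),
\end{align*}
where $m^{(3)}$ is the threefold product; these coincide by \eqref{gauge}. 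Finally the $H_F$-module-algebra axiom for $A_F$ reduces to $x\cdot m\big(F\cdot(a\ot b)\big)=m\big(F\cdot\kow_F(x)\cdot(a\ot b)\big)$, which holds because $\kow(x)F=F\kow_F(x)$ by the definition of $\kow_F$, while $x\cdot 1_A=\vep(x)1_A$ is inherited unchanged.

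\emph{For (iii)}, I would spell out $H=\cU^{\cop}\ot\cU$ with coproduct $\kow_H(x\ot y)=(x_{(2)}\ot y_{(1)})\ot(x_{(1)}\ot y_{(2)})$, so that $H\ot H$ identifies with $\cU^{\ot 4}$ carrying the opposite-coproduct convention on the two $\cU^{\cop}$-legs, and $\cR\in\cU\ot\cU$ is placed in the legs indicated in \eqref{FFb}. Invertibility of $F$ and $\oF$ is immediate from invertibility of $\cR$. Writing $\cR=\sum r^1\ot r^2$ one finds $F=\sum r^1\ot s^1\ot r^2s^2\ot 1$, and the counit conditions $(\vep_H\ot\id)(F)=1_H=(\id\ot\vep_H)(F)$ then follow from $(\vep\ot\id)(\cR)=1=(\id\ot\vep)(\cR)$ together with $\vep$ being an algebra map, and similarly for $\oF$.

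The one substantive point is the cocycle identity \eqref{gauge} for $F$ and for $\oF$: after unfolding $\kow_H$ it becomes an equality of elements of $\cU^{\ot 6}$ which I would derive by repeated use of the defining axioms of the universal $R$-matrix, $(\kow\ot\id)(\cR)=\cR_{13}\cR_{23}$, $(\id\ot\kow)(\cR)=\cR_{13}\cR_{12}$, and the quantum Yang--Baxter equation $\cR_{12}\cR_{13}\cR_{23}=\cR_{23}\cR_{13}\cR_{12}$, the case of $\oF$ being the mirror computation with $\cR^{-1}$. Keeping the opposite coproduct on alternate tensor legs straight while invoking these hexagon identities is the place where genuine care is needed; the manipulation itself, however, is entirely mechanical.
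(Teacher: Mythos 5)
The paper gives no proof of this Proposition--Definition: it is stated as a recollection of known results, with the burden of proof delegated to the citations (Drinfeld for (i), Donin--Mudrov and Donin--Kulish--Mudrov for (ii) and (iii)). Your verification is correct and is essentially the standard argument from those sources; in particular the part (iii) cocycle identity for $F=\cR_{13}\cR_{23}$ does unfold, after applying $\kow_H$ legwise with the opposite coproduct on the $\cU^{\cop}$ factors, to the identity $\cR_{35}\cR_{15}\cR_{25}\cR_{45}\cR_{13}\cR_{23}=\cR_{13}\cR_{15}\cR_{23}\cR_{25}\cR_{35}\cR_{45}$ in $\cU^{\ot 6}$, which follows from two applications of the Yang--Baxter equation together with commutation of factors supported on disjoint legs, exactly as you claim.
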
 
We now return to the setting of Subsection \ref{framework}. We want to apply
the above proposition for $\cU=\Uq$ and $A=\cA$. Here $\cR$ is given
formally via the braiding of $\cC$ as the family $\cR=\{P_{12}\circ \rh_{V,W}:V\ot
W\rightarrow V\ot W\,|\, V,W \in Ob(\cC)\}$ of linear maps. Hence all formulae involving $\cR$ have to be
interpreted as actions on suitable tensor products of finite
dimensional $\Uq$-modules. Recall from Lemma \ref{UU-mod} that $\cA$
is a $\Uq^\cop\ot \Uq$-module algebra. The algebras $(U^\cop\ot U)_F$ and
$(U^\cop\ot U)_\oF$ do not exist, but in view of the second half of Lemma \ref{UU-mod} it is
still possible to form the algebras $\cA_F$ and $\cA_\oF$.
\begin{lem}\label{gaugelem}
The identity map of $\cA$ defines isomorphisms $ \covAc\simeq \cA_F$ and $\covAcb\simeq 
(\cA_{\oF})^{op}$ of algebras.
\end{lem}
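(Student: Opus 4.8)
The plan is to compute the two twisted products $a_F b$ and $a_{\oF}b$ directly from the definitions and to identify them with the covariantized multiplications \eqref{rrrel1} and \eqref{rrrel2}; since $\covAc$, $\covAcb$, $\cA_F$ and $\cA_{\oF}$ are all the vector space $\cA$ with a new multiplication, the content of the statement is precisely that these multiplications coincide.

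First I would fix the dictionary between the braiding and the $r$-form. Writing $\cR=\sum_s a_s\ot b_s$ and $\cR^{-1}=\sum_s\bar a_s\ot\bar b_s$ for the (categorically defined) universal $R$-matrix and its inverse, the identity $\rh_{V,W}=P_{12}\circ(\cR|_{V\ot W})$ together with \eqref{rf-def} gives
\begin{align*}
  \rf(a,b)=\sum_s\langle a,a_s\rangle\langle b,b_s\rangle,
  \qquad
  \rfb(a,b)=\sum_s\langle a,\bar a_s\rangle\langle b,\bar b_s\rangle
\end{align*}
for $a,b\in\cA$ (with $\Psi$ suppressed in the first entries as usual), the second identity because $\langle\cdot,\cR\rangle$ and $\langle\cdot,\cR^{-1}\rangle$ are convolution inverse. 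By the second half of Lemma \ref{UU-mod} all these expressions, as well as the twists $F,\oF$ themselves, act well-definedly on $\cA\ot\cA$.

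Next I would expand the twists in the four tensor legs of $H\ot H=(\Uq^{\cop}\ot\Uq)\ot(\Uq^{\cop}\ot\Uq)$, where the first copy of $H$ acts on the left and the second on the right tensorand of $\cA\ot\cA$ via \eqref{acc}, and where legs $1$ and $3$ are the $\Uq^{\cop}$-legs on which \eqref{acc} applies the antipode. One obtains
\begin{align*}
  F=\cR_{13}\cR_{23}=\sum_{s,t}a_s\ot a_t\ot b_sb_t\ot1,
  \qquad
  \oF=\cR_{24}^{-1}\cR_{14}^{-1}=\sum_{s,t}\bar a_t\ot\bar a_s\ot1\ot\bar b_s\bar b_t .
\end{align*}
Applying $F$ to $a\ot b$, multiplying the result in $\cA$, and simplifying by means of $\sigma(b_sb_t)=\sigma(b_t)\sigma(b_s)$, coassociativity, $\langle c,1\rangle=\vep(c)$, the adjointness $\langle\sigma(c),u\rangle=\langle c,\sigma(u)\rangle$ of the pairing \eqref{pairing}, and $\rf(\sigma(c),\sigma(d))=\rf(c,d)$ (equation \eqref{r-sigma} applied in $\cqg$), the two $\cR$-factors contribute $\rf(a_{(1)},b_{(2)})$ and $\rf(a_{(3)},\sigma(b_{(1)}))$ respectively, so that
\begin{align*}
  a_F b=\rf(a_{(1)},b_{(2)})\,\rf(a_{(3)},\sigma(b_{(1)}))\,a_{(2)}b_{(3)} ,
\end{align*}
which is exactly \eqref{rrrel1}; hence $\id_\cA$ is an algebra isomorphism $\covAc\to\cA_F$. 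The same computation for $\oF$ yields
\begin{align*}
  a_{\oF}b=\rfb(\sigma(a_{(1)}),b_{(3)})\,\rfb(a_{(3)},b_{(2)})\,a_{(2)}b_{(1)} ,
\end{align*}
and since $(\cA_{\oF})^{\op}$ has product $a\ot b\mapsto b_{\oF}a$, while $\rfb_{21}(x,y)=\rfb(y,x)$ and the scalar $r$-form values commute, this coincides with the right-hand side of \eqref{rrrel2} for the $r$-form $\rfb_{21}$, i.e.\ with the product of $\covAcb$. Thus $\id_\cA$ is an algebra isomorphism $\covAcb\to(\cA_{\oF})^{\op}$.

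I expect the only genuine difficulty to be bookkeeping: correctly distributing the four tensor legs of $H\ot H$ — in particular keeping the $\Uq^{\cop}$-legs, hence the antipodes required by \eqref{acc}, in the right places — and consistently passing between $\cR,\cR^{-1}$ and $\rf,\rfb,\rfb_{21}$. No new idea beyond these translations is involved; once the dictionary of the second paragraph is in place, the matching with \eqref{rrrel1}–\eqref{rrrel2} is a direct verification in Sweedler calculus.
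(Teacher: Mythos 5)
Your proposal is correct and follows essentially the same route as the paper's proof: expand $F=\cR_{13}\cR_{23}$ and $\oF=\cR_{24}^{-1}\cR_{14}^{-1}$ against the action \eqref{acc}, translate the $\cR$-legs into $\rf$-values, and match the resulting expressions for $a_Fb$ and $b_{\oF}a$ with \eqref{rrrel1} and \eqref{rrrel2}. The paper merely states the two resulting formulae and compares; your version spells out the dictionary and the leg bookkeeping, and all the details (the placement of the antipodes, the identification of $\langle\cdot,\cR^{-1}\rangle$ with $\rfb$, and the passage to the opposite algebra for $\oF$) check out.
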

\begin{proof}
By the definition of the universal $r$-form $\rf$ 
and by the explicit expressions \eqref{FFb} 
of $F$ and $\oF$ we have
\begin{align}
a_Fb &=\rf(a_{(3)},\sigma(b_{(1)}))
\rf(a_{(1)},b_{(2)})a_{(2)}b_{(3)},\label{aFb1}\\
a_{\oF}b &=\rf(\sigma(a_{(3)}),b_{(2)})
         \rf(\sigma^2(a_{(1)}),b_{(3)})a_{(2)}b_{(1)} \label{aoFb1}
\end{align}
for $a,b\in\cA$. Comparing with \eqref{rrrel1} and \eqref{rrrel2}
we conclude that $a_Fb=a_{\rf}b$ and $b_{\oF}a=a_{\overline{\rf}_{21}}b$
for all $a,b\in\cA$.
\end{proof}
\begin{rema}
  In our setting the analogue of the $H_F$-module algebra structure from
  Proposition-Definition \ref{twistingprop}.(ii) is provided by the right
  $D(\cqg,\rf)$-comodule algebra structure \eqref{D-comod} of $\covAcp$.
\end{rema}
\subsection{Transmutation of $\cqg$}
  The first important class of examples for the general setting
  outlined in the Subsection \ref{framework} consists of the
  bialgebras $\cA:=\qfield_q[G_L]$ for a lattice $L\subset \hfrak^\ast$ with $Q\subseteq
  L\subseteq P$. In this case the map $\Psi$ is just the embedding. In
  this subsection we discuss the special case where $\cA=\cqg$ with
  universal $r$-form $\rf$. Recall from Lemma \ref{transferAr}.(ii) that
  $\cqg_\rp$ is a left $U$-module algebra with respect to the action
  $\adrs$. The left locally finite part $F_l(\Uq)$, on the other hand,
  is a left $U$-module algebra with respect to the left adjoint
  action $\adl$.
  \begin{prop}\label{lr-iso}
    The $l$-functional $\ltil_\rp:\cqg_\rp\rightarrow F_l(\Uq)$
    defines an isomorphism of left $\Uq$-module algebras. 
  \end{prop}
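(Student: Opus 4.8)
The plan is to verify that the map $\ltil_\rp$ is multiplicative from $\cqg_\rp$ to $F_l(\Uq)$, since by Proposition~\ref{CalderoProp} we already know it is a $k$-linear isomorphism of left $\Uq$-modules onto $F_l(\Uq)$. Once multiplicativity is established, it automatically intertwines the two module structures (this was already recorded in Proposition~\ref{CalderoProp}), and the fact that it sends $1$ to $1$ follows from \eqref{r-vep} applied to the defining formula \eqref{ltildef}. So the whole statement reduces to the single identity
\[
  \ltil_\rp(a\,{}_\rp\, b)=\ltil_\rp(a)\,\ltil_\rp(b)\qquad\text{for all }a,b\in\cqg.
\]

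First I would write out both sides using the explicit descriptions available. For the left-hand side, I would substitute the covariantized product \eqref{rrrel1} (or \eqref{rrrel2}) into the definition \eqref{ltildef} of $\ltil_\rp$, and then expand $\ltil_\rp$ of a product of elements of $\cqg$ using the antimultiplicativity relation $l_\rp^\pm(ab)=l_\rp^\pm(b)l_\rp^\pm(a)$ from \eqref{lpmmult}, together with \eqref{ldef}--\eqref{ltildef}. For the right-hand side, $\ltil_\rp(a)\ltil_\rp(b)=l_\rp^+(a_{(1)})l_\rp^-(\sigma^{-1}(a_{(2)}))\,l_\rp^+(b_{(1)})l_\rp^-(\sigma^{-1}(b_{(2)}))$, and the middle two factors $l_\rp^-(\sigma^{-1}(a_{(2)}))\,l_\rp^+(b_{(1)})$ should be commuted past one another using the ``cross'' commutation relation \eqref{lmprtt}, which introduces exactly the two $\rp$-factors that one expects to match those produced by the covariantized multiplication. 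The bookkeeping is made cleaner by Lemma~\ref{lplusmin}: $l_\rp^+$ lands in $\uqbp$ and $l_\rp^-$ in $\uqbm$, so the ordering of factors is under control. Alternatively — and this is probably the slicker route — I would use the Drinfeld-twist description from Subsection~\ref{twistingprop}: by Lemma~\ref{gaugelem} the covariantized product ${}_\rf$ coincides with the twisted product $m_F$ with $F=\cR_{13}\cR_{23}$, and $\ltil_\rf$ is essentially the map $c_{f,v}\mapsto (\text{evaluation against }c_{f,v}\text{ of }\rh\rh_{21})$; one then checks directly, on matrix coefficients, that applying $\ltil_\rf$ to $m_F(a\ot b)$ reproduces the product in $\End(V\ot W)$ of the corresponding operators, which is precisely the adjoint-action product in $F_l(\Uq)$ under the identification of Proposition~\ref{CalderoProp}.

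The cleanest organisation is probably to reduce to matrix coefficients $a=c_{f,v}^\lambda$, $b=c_{g,w}^\mu$ and work inside $\End(V(\lambda)\ot V(\mu))$, where $\ltil_\rf(c_{f,v})$ acts as the operator $(\id\ot c_{f,v})$ applied to (a variant of) the double braiding $\rh_{21}\rh$; the covariantized multiplication then corresponds, via \eqref{rf-def} and the definitions of $\Theta$ and $f_{V,W}$, exactly to composition of such operators, so multiplicativity becomes the naturality and hexagon identities for $\rh$ recalled in Subsection~\ref{Usection}. I expect the main obstacle to be purely combinatorial: tracking the several $\rp$- and $\sform$-factors through the Sweedler expansions and making sure the braiding-versus-inverse-braiding conventions (and the $\sigma$ versus $\sigma^{-1}$ in \eqref{ldef}--\eqref{ltildef}) are consistently applied, so that the factors produced by \eqref{rrrel1} cancel precisely against those produced by \eqref{lmprtt}. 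No genuinely new idea is needed beyond the structural results already assembled in Section~\ref{NotConv} and Lemma~\ref{gaugelem}; the content is in the careful computation.
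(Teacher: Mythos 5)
Your reduction is exactly the paper's: Proposition~\ref{CalderoProp} supplies the bijective $\Uq$-module map, and the only remaining point is multiplicativity with respect to the covariantized product, which the paper simply cites from \cite[10.1.3 (30)]{b-KS}. The direct computation you outline with \eqref{rrrel1}, \eqref{lpmmult}, \eqref{lpmrtt}, \eqref{lmprtt} is precisely the content of that citation (and is carried out verbatim, in a slightly stronger form, in the paper's proof of Lemma~\ref{comod-alg}, of which $\ltil_\rp$-multiplicativity is the $(\id\ot\vep)$-specialisation via \eqref{r-sigma}), so your proposal is correct and essentially the same argument.
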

\begin{proof}
  We have already seen in Proposition \ref{CalderoProp} that
  $\ltil_\rp:\cqg\rightarrow F_l(\Uq)$ is an isomorphism of left
  $\Uq$-modules. By \cite[10.1.3 (30)]{b-KS} the map
  $\ltil_\rp:\cqg_\rp\rightarrow F_l(\Uq)$ is also an algebra
  homomorphism.  
\end{proof}
\begin{rema}\label{STSPoisson}
It was pointed out in \cite{a-Mud07} that in the case $\cA=\cqg$ the
twisted algebra $\cA_F=\cA_\rf$ is a quantisation of the coordinate
ring $\C[G]$ of the affine algebraic group $G$ with respect to the so called
Semenov-Tian-Shansky Poisson bracket. We may hence view the locally
finite part $F_l(\Uq)$ as a quantisation of the coordinate ring of $G$
considered as a $G$-space under the action by conjugation. 

Moreover, from this perspective the analogue of the
Kostant-Richardson theorem obtained in \cite{a-Mud07} in the $h$-adic
setting corresponds to the separation of variables theorem obtained in
\cite{a-JoLet2} for $\Uq$.
\end{rema}

\subsection{FRT algebras and reflection equation algebras} \label{FRT-RE}
  We now discuss the second important class of examples for the
  general setting outlined in Subsection \ref{framework}.
  For any $V\in Ob(\cC)$ the braiding of $\cC$ defined in
  Subsection \ref{Usection} provides a linear automorphism
  $R_{V,V}:=P_{12}\circ\rh_{V,V}$ of $V\ot V$, where $P_{12}$
  denotes the flip of tensor factors. This automorphism satisfies the
  quantum Yang-Baxter equation and hence can
  be used to perform the FRT-construction \cite{a-FadResTak1},
 \cite[9.1]{b-KS}. One obtains a coquasitriangular bialgebra
  $\cA(R_{V,V})$ which is generated as an algebra by the linear space
  $V^\ast\ot V$. In this case the map $\Psi$ is induced by the
  canonical map $f\ot v\mapsto c_{f,v}$ of $V^\ast\ot V$ onto the
  linear space of matrix coefficients $C^V\subset \cqg$.  
    
  To make this construction more explicit assume $\dim(V)=N$ and choose a basis
  $\{v_1,\dots,v_N\}$ of $V$ with dual basis $\{f_1,\dots,f_N\}$. We
  denote the generator of $\cA(R_{V,V})$ corresponding to
  $f_i\ot v_j$ by $t^i_j$. Let $R$ denote the $(N^2\times N^2)$-matrix corresponding to
  $R_{V,V}$ with respect to the chosen bases and consider the
  generators $t^i_j$ as entries of an $(N\times N)$-matrix $T$. Then the defining relations of
  $\cA(R_{V,V})$ may be written as
    \begin{align}\label{FRT-rel}
      R T_1 T_2 = T_2  T_1 R
    \end{align}
  where $T_1=T\ot \id$, $T_2=\id \ot T$ are  $(N^2\times
  N^2)$-matrices with entries in $\cA(R_{V,V})$. It is well known \cite[9.1.1, 10.1.2]{b-KS}
  that  $\cA(R_{V,V})$ has the structure of a coquasitriangular
  bialgebra with coproduct
  \begin{align*}
    \kow(t^i_j)=\sum_{m=1}^N t^i_m\ot t^m_j 
  \end{align*}
  and universal $r$-form defined by
  $\rf_{\cA(R_{V,V})}(t^i_j,t^m_n)=\rf(c_{f_i,v_j},c_{f_m,v_n})$. In
  this case the homomorphism of coquasitriangular bialgebras
  $\Psi:\cA(R_{V,V})\rightarrow \cqg$ is given by $\Psi(t^i_j)=c_{f_i,v_j}$.
\begin{prop}\label{FRTRE}\cite[p.~334]{b-Majid1}, \cite[10.3.1, Example 18]{b-KS}, 
  \cite[Prop.~ 4.11]{a-DM03} 
  In the above setup, the covariantized algebra $\cA(R_{V,V})_\rp$ is
  isomorphic to the unital $\qfield$-algebra with generators $s^i_j$,
  for $1\le i,j\le N$, and relations 
  \begin{align}\label{refleqn}
    S_2R_{21}^\prime S_1R_{12}^\prime=R_{21}^\prime S_1 R_{12}^\prime S_2,
  \end{align}
  where $S$ is the $N\times N$ matrix with entries $s^i_j$ and $R'=R$
  if $\rf'=\rf$ while $R'=R^{-1}_{21}$ if $\rp=\rfb_{21}$.
  \end{prop}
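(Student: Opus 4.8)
The plan is to identify the covariantized algebra $\cA(R_{V,V})_{\rf'}$ explicitly by computing the multiplication rule \eqref{rrrel1} on the generators. Recall that $\cA(R_{V,V})$ is generated by the entries $t^i_j$ with coproduct $\kow(t^i_j)=\sum_m t^i_m\ot t^m_j$ and no antipode available, so throughout I must use the Hopf-algebra-free form \eqref{rrrel1}. I would set $s^i_j:=t^i_j$ viewed as an element of the covariantized algebra $\cA(R_{V,V})_{\rf'}$, and write $S$ for the matrix $(s^i_j)$. Since the underlying vector space (hence the underlying coalgebra) of $\cA(R_{V,V})_{\rf'}$ is unchanged, it suffices to show that the $s^i_j$ generate as an algebra and that they satisfy the reflection equation \eqref{refleqn}, and that these are the only relations; the last point will follow by a dimension/PBW comparison as in the cited references.

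First I would record, in the notation of Subsection \ref{FRT-RE}, the numerical identities $\rf(t^i_j,t^k_l)=R^{?}_{?}$ expressing the matrix $R$ (for $\rf'=\rf$) or $R^{-1}_{21}$ (for $\rf'=\rfb_{21}$) in terms of the universal $r$-form; concretely $\rf'(t^i_j,t^k_l)$ are the entries of the matrix $R'$ appearing in the statement, and likewise $\rf'(\sigma(t^i_j),t^k_l)$, which via $\rfb(a,b)=\rf(\sigma(a),b)$ is the convolution-inverse matrix $(R')^{-1}$. Substituting $a=s^i_j$, $b=s^k_l$ into \eqref{rrrel1} and using $\kow^{(2)}(t^i_j)=\sum_{m,n}t^i_m\ot t^m_n\ot t^n_j$, the covariantized product $s^i_j\,{}_{\rf'}\,s^k_l$ becomes a sum over intermediate indices weighted by one factor of $R'$ and one factor of $(R')^{-1}$, i.e. in matrix/leg notation something of the shape $S_1\,(R')^{-1}_{12}\,S_2\,R'_{12}$ after reindexing (the exact placement of $R'_{12}$ versus $R'_{21}$ and of inverses is the bookkeeping to be done carefully). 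The FRT relation $RT_1T_2=T_2T_1R$ of $\cA(R_{V,V})$ then gets transported, and after moving all the $R'$ and $(R')^{-1}$ factors to one side one arrives precisely at \eqref{refleqn}: $S_2R'_{21}S_1R'_{12}=R'_{21}S_1R'_{12}S_2$. I would present this as a direct matrix computation, citing \eqref{rrrel1}, the coquasitriangularity axioms \eqref{rh-hom}--\eqref{r-mult2}, and the FRT relations.

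The main obstacle is the careful translation of conventions: \eqref{rrrel1} is written with $\rf'(a_{(3)},\sigma(b_{(1)}))$, but $\cA$ is only a bialgebra, so one must use the regularity of $(\cA,\rf')$ (guaranteed in Subsection \ref{framework}) and the convolution inverse $\sform'$ in place of the naive $\rf'(\cdot,\sigma(\cdot))$, exactly as in Proposition-Definition \ref{covariantized-pd}. Equivalently, and more cleanly, I would invoke Lemma \ref{gaugelem}, which already identifies $\covAc\simeq\cA_F$ and $\covAcb\simeq(\cA_{\oF})^{\mathrm{op}}$ with $F=\cR_{13}\cR_{23}$, $\oF=\cR_{24}^{-1}\cR_{14}^{-1}$; then the computation of $a_F b$ on generators via \eqref{aFb1} is a finite-dimensional matrix manipulation with the $R$-matrix $R=P_{12}\circ\rh_{V,V}$, and the reflection equation drops out of the hexagon/Yang--Baxter identities for $\rh$. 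Finally, to see there are no further relations, I would note that transmutation (resp. the twist $m\mapsto m_F$) does not change the underlying graded vector space, so $\gr\cA(R_{V,V})_{\rf'}$ and $\gr\cA(R_{V,V})$ have the same Hilbert series; since the abstract algebra presented by generators $s^i_j$ and relations \eqref{refleqn} surjects onto $\cA(R_{V,V})_{\rf'}$ and has at most the same size (this is the standard fact used in \cite[10.3.1, Example 18]{b-KS} and \cite[Prop.~4.11]{a-DM03}), the surjection is an isomorphism.
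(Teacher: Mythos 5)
Your proposal follows essentially the same route as the paper: the paper derives \eqref{refleqn} by equating the two expressions \eqref{rel01} and \eqref{rel02} for the product $t^i_jt^k_l$ in terms of the covariantized product, which amounts to the same computation as your substitution into \eqref{rrrel1} followed by an application of the FRT relation (the equality of \eqref{rel01} and \eqref{rel02} on generators \emph{is} the coquasitriangularity axiom \eqref{rh-hom}, i.e.\ $RT_1T_2=T_2T_1R$); and for the completeness of the relations the paper, like you, defers to the literature, citing \cite[Appendix A]{a-Mud07}. One caution on your final step: the assertion that the abstract algebra on the $s^i_j$ with relations \eqref{refleqn} ``has at most the same size'' is the entire content of the deferred fact, and it does not follow from the surjection (which bounds that algebra from below) nor from the invariance of the graded vector space under transmutation (which only computes the Hilbert series of the \emph{target}). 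The argument in \cite[Appendix A]{a-Mud07} instead inverts the twist, showing that the FRT relations are consequences of \eqref{refleqn} under the inverse change of product and thereby producing the reverse surjection; phrased as a Hilbert-series bound, your citation of \cite{b-KS} and \cite{a-DM03} is a citation of the conclusion rather than of an independent input, so you should either carry out the untwisting or cite it explicitly for that step.
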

\begin{proof}
  By \eqref{rel01} and \eqref{rel02} the generators $s^i_j:=t^i_j$
  of $\cA(R_{V,V})_\rp$ satisfy the relations \eqref{refleqn}.
  The fact that \eqref{refleqn} are the defining relations for
  $\cA(R_{V,V})_\rp$ in terms of the algebraic generators
  $s^i_j:=t^i_j$ follows along the lines of say \cite[Appendix A]{a-Mud07}. 
\end{proof}
The covariantized algebra $\cA(R_{V,V})_\rp$ is called the reflection equation
algebra associated to $R^\prime$. It is also referred to as the
braided matrix algebra associated to $R^\prime$.  

\subsection{Quantum adjoint orbits}\label{q-conjugacy}
For any unital $\qfield$-algebra $\cB$ we denote by $\cB^\wedge$ the
set of characters of $\cB$. More explicitly, $\cB^\wedge$ is the set
of nonzero algebra homomorphisms $f:\cB\rightarrow \qfield$. Note in
particular that a character $f:\cB\rightarrow \qfield$ automatically
preserves the units.  
 
For any character $f\in \covAcpw$ the coaction $\delta$ defined by
  (\ref{Ar-coact}) allows us to construct an algebra homomorphism
  \begin{align*}
    \delta_f:\covAcp\rightarrow \cqg,\qquad \delta_f(a):=(f\ot \id)\delta(a).
  \end{align*}
We define 
\begin{align*}
  \cqof:=\delta_f(\covAcp)
\end{align*}  
and call $\cqof$ the quantum adjoint orbit associated to the character $f$.
\begin{lem}
  For any character $f\in\covAcpw$ the following hold:
  \begin{enumerate}
    \item $\cqof$ is a right coideal subalgebra of $\cqg$. 
     \item
    $\cqof=\{\sigma(\Psi(a_{(1)}))f(a_{(2)})\Psi(a_{(3)})\,|\,a\in
    \cA\}$.
   \end{enumerate}
\end{lem}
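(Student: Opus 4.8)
The plan is to establish both statements directly from the definition $\delta_f(a) = (f\ot\id)\delta(a)$ together with the coaction formula \eqref{Ar-coact}, which gives $\delta_f(a) = \sigma(\Psi(a_{(1)}))\,f(a_{(2)})\,\Psi(a_{(3)})$ at once; this immediately yields the description in (2) since $\cqof = \delta_f(\covAcp)$ by definition. For (1), I would argue in two steps: first that $\cqof$ is a subalgebra of $\cqg$, and second that it is a right coideal, i.e.\ $\kow(\cqof) \subseteq \cqof \ot \cqg$.

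For the subalgebra claim, the key observation is that $\delta$ is an algebra homomorphism from $\covAcp$ to $\covAcp\ot\cqg$ by Lemma \ref{transferAr}.(i), and $f\ot\id$ is an algebra homomorphism $\covAcp\ot\cqg\to\cqg$ because $f$ is a character of $\covAcp$; composing, $\delta_f$ is an algebra homomorphism, so its image $\cqof$ is a unital subalgebra of $\cqg$. This was already noted in the text preceding the lemma, so strictly only the coideal and explicit-description parts need real work.

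For the right coideal property, I would use coassociativity of $\delta$. Since $\delta$ is a right $\cqg$-comodule map we have $(\delta\ot\id)\circ\delta = (\id\ot\kow)\circ\delta$. Applying $f\ot\id\ot\id$ to both sides: the left side becomes $(\delta_f\ot\id)\delta(a)$ — but one must then push this into $\cqof\ot\cqg$ using that $\delta_f(a_{(2)})$ lies in $\cqof$; more cleanly, apply $f\ot\id$ to the first two tensor legs of $(\id\ot\kow)\delta(a) = (\delta\ot\id)\delta(a)$, giving $\kow(\delta_f(a)) = (f\ot\id\ot\id)(\delta\ot\id)\delta(a) = ((f\ot\id)\delta\ot\id)\delta(a)$, and since for each term $(f\ot\id)\delta(\text{something}) \in \cqof$, this lands in $\cqof\ot\cqg$. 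Concretely, writing $\delta(a) = a^{(0)}\ot a^{(1)}$ with $a^{(0)}\in\covAcp$, $a^{(1)}\in\cqg$, coassociativity gives $\delta(a^{(0)})\ot a^{(1)} = a^{(0)}\ot\kow(a^{(1)})$, and applying $f\ot\id\ot\id$ yields $\delta_f(a^{(0)})\ot a^{(1)} = f(a^{(0)})\otimes\kow(a^{(1)})$, whose right-hand side has first leg in $\cqg$; rearranging shows $\kow(\delta_f(a)) = \delta_f(a^{(0)})\ot a^{(1)} \in \cqof\ot\cqg$.

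I do not anticipate a genuine obstacle here — the lemma is essentially unwinding definitions — but the one point requiring care is making the coassociativity bookkeeping match the Sweedler-style formula \eqref{Ar-coact}, which involves a triple coproduct on $\cA$ and the bialgebra map $\Psi$; one should verify that the comodule coassociativity $(\delta\ot\id)\delta = (\id\ot\kow)\delta$ is indeed the identity used, rather than conflating it with coassociativity of $\kow$ on $\cA$. Once that identification is in place, both (1) and (2) follow in a few lines.
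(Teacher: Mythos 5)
Your proposal is correct and follows the same route as the paper, which simply observes that $\delta_f:\covAcp\rightarrow \cqg$ is a homomorphism of right $\cqg$-comodule algebras (so its image is a right coideal subalgebra) and that (2) holds by definition of $\delta_f$. Your argument merely unpacks what ``homomorphism of right $\cqg$-comodule algebras'' means -- the algebra-map composition $(f\ot\id)\circ\delta$ and the comodule coassociativity $(\delta\ot\id)\delta=(\id\ot\kow)\delta$ -- and the bookkeeping is carried out correctly.
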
 
\begin{proof}
The first statement of the lemma holds because
$\delta_f:\covAcp\rightarrow \cqg$ is a homomorphism of right $\cqg$-comodule
algebras. The second statement holds by definition of $\delta_f$.
\end{proof}  
\begin{rema}
Recall from Remark \ref{STSPoisson} that if $\cA=\cqg$ then $\covAcp$ is a
quantum analogue of the algebra of functions on $G$ considered as a
$G$-space with respect to the adjoint action. Hence in this case
$\cqof$ is a quantum analogue of the algebra of functions on
the conjugacy class of the classical point corresponding to the
character $f\in\covAcpw$. If, on the other hand, $\cA=\cA(R_{V,V})$ is
an FRT algebra as in Subsection \ref{FRT-RE}, then $\cqof$ is a quantum
analogue of the functions on the orbit of an element in $\End(V)$ under conjugation by $G$.
\end{rema}
\begin{rema}
  Quantum adjoint orbits for characters of the reflection equation
  algebra were defined and investigated in \cite{a-DM03a}. In that
  paper, for $G=SL(N)$ and the vector representation, these algebras are
  given explicitly in terms of generators and relations based on the
  classification in \cite{a-Mud02}. Moreover, their semiclassical limits are determined.
\end{rema}

\subsection{Universal cylinder forms}\label{CylFormDef}
In this subsection we give another motivation for the investigation
and construction of characters of covariantized algebras. 
The notion of a universal cylinder form for a coquasitriangular bialgebra
$(A,\rf_A)$ was introduced in \cite{a-tDHO98}. It appeared as a necessary
ingredient in order to find representations of the braid group of type
$B_n$ coming from quantum groups. Examples were constructed in
\cite{a-tDHO98} for $A=\qfield_q[SL(2)]$ and in \cite{a-tD99} for
$A=\qfield_q[SL(N)]$. We recall the main definition from \cite[(1.1)]{a-tDHO98}.
\begin{defi}
  Let $(A,\rf_A)$ be a coquasitriangular bialgebra over a field $\field$. A
  linear functional $f:A\rightarrow \field$ is called a universal cylinder form for
  $(A,\rf_A)$ if it is convolution invertible and satisfies
  \begin{align}
    f(ab)&= f(a_{(1)}) \rf_A(b_{(1)},a_{(2)})f(b_{(2)})\rf_A(a_{(3)},b_{(3)})\label{cyleq1}\\
         &= \rf_A(b_{(1)},a_{(1)})f(b_{(2)})\rf_A (a_{(2)},b_{(3)}) f(a_{(3)})\label{cyleq2}
  \end{align}
  for all $a,b\in A$. We denote by $CF(A,\rf_A)$ the set of universal cylinder forms
  for $(A,\rf_A)$.
\end{defi}
We will explain in subsection \ref{CylForms} that in our setting a
universal cylinder form is the same as a character of a covariantized
algebra. In view of the similarity of Relations (\ref{rel01}),
(\ref{rel02}) and Relations (\ref{cyleq1}), (\ref{cyleq2}), this is nearly
obvious, however, conventions in \cite{a-tDHO98} slightly differ from ours.

\section{Algebraic properties of covariantized algebras}\label{AlgProp}
We now begin the general investigation of the covariantized algebra
$\covAcp$ in the framework outlined in Subsection \ref{framework}. In
this section we collect algebraic properties of $\covAcp$ which are
obtained via the $\Uq^\cop\ot \Uq$-module algebra structure \eqref{acc} of $\cA$
and the $\Uq$-module algebra structure $\adrs$ of $\covAcp$ defined in
\eqref{ad-def}. In particular, we obtain results about zero divisors in
$\covAcp$ and relate the centre of $\covAcp$ to the space of
$\adrs(\Uq)$-invariant elements. Moreover, we collect properties of
characters of covariantized algebras and establish the relation to
universal cylinder forms.
\subsection{Zero divisors}

In this subsection we show that the covariantized algebra $\covAcp$
is a domain if and only if the algebra $\cA$ is a domain.
We start with the following convenient way to express the multiplication
\eqref{rrrel1}--\eqref{rel02} of $\covAcp$ in terms of the adjoint
action \eqref{ad-def} of the $l$-functionals $l^{\pm}_{\rf^\prime}$ on $\cA$.
\begin{lem}\label{compact-aFb}
  For all $a,b\in \cA$ the following relations hold:
  \begin{align}   
    a_{\rf^\prime}b &= \big(\adrs\,[l^+_{\rf^\prime}
    (\sigma(b_{(1)}))]\,a \big)b_{(2)}= \big(
    \adrs \,[l^-_{\rf^\prime}(\sigma^{-1}(a_{(2)}))]\,b \big)a_{(1)},\label{aFbL1}\\
    ab   &= b_{(1)}{_{\rf^\prime}} \big(\adrs \,[l^-_{\rf^\prime}(b_{(2)})]\,a \big)
          = \big(\adrs \,[l^+_{\rf^\prime}(b_{(1)})]\,a \big)_{\rf^\prime}b_{(2)}.
    \label{aFbL2}   
  \end{align}
\end{lem}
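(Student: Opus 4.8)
The plan is to verify each of the four claimed identities in \eqref{aFbL1}--\eqref{aFbL2} directly by unwinding the definitions, so the proof is essentially a bookkeeping exercise in Sweedler notation. First I would recall the explicit formula \eqref{ad-def} for the action $\adrs$, namely $\adrs(X)a=\langle\sigma(\Psi(a_{(1)}))\Psi(a_{(3)}),X\rangle a_{(2)}$, together with the definitions \eqref{lpmdef} of the $l$-operators, $l^+_{\rf'}(a)=\rf'(\cdot,a)$ and $l^-_{\rf'}(a)=\rf'(\sigma(a),\cdot)$, viewed as elements of $\Uq\subseteq\cqg^\circ$ via the pairing \eqref{pairing}. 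Then for the first identity in \eqref{aFbL1} I would compute
\begin{align*}
  \big(\adrs[l^+_{\rf'}(\sigma(b_{(1)}))]\,a\big)b_{(2)}
  &=\langle\sigma(a_{(1)})a_{(3)},\,l^+_{\rf'}(\sigma(b_{(1)}))\rangle\,a_{(2)}b_{(3)}\\
  &=\rf'\big(\sigma(a_{(1)})a_{(3)},\,\sigma(b_{(1)})\big)\,a_{(2)}b_{(3)},
\end{align*}
and then apply the multiplicativity \eqref{r-mult} of $\rf'$ in its first argument to split this as $\rf'(\sigma(a_{(1)}),\sigma(b_{(1)}))\,\rf'(a_{(3)},\sigma(b_{(2)}))\,a_{(2)}b_{(3)}$; using $\rf'(\sigma(x),\sigma(y))=\rf'(x,y)$ (which holds for $\cqg$ by \eqref{r-sigma}, and applies here after pushing through $\Psi$) and reindexing the coproduct of $b$, this becomes exactly $\rf'(a_{(1)},b_{(2)})\rf'(a_{(3)},\sigma(b_{(1)}))\,a_{(2)}b_{(3)}=a_{\rf'}b$ by \eqref{rrrel1}. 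The second identity in \eqref{aFbL1} is handled symmetrically, now using $l^-_{\rf'}$, the expansion $l^-_{\rf'}(\sigma^{-1}(a_{(2)}))=\rf'(a_{(2)},\cdot)$, multiplicativity \eqref{r-mult2} in the second argument, and comparison with \eqref{rrrel2}.

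For \eqref{aFbL2} I would argue by inverting: both expressions on the right-hand side are built from the covariantized product ${}_{\rf'}$ and the $\adrs$-action of $l^\pm_{\rf'}$, and one can either repeat the same kind of direct computation using the formulas \eqref{rel01}--\eqref{rel02} expressing $ab$ in terms of ${}_{\rf'}$, or, more slickly, substitute the already-proven \eqref{aFbL1} into the right-hand sides of \eqref{aFbL2} and check that the resulting $\rf'$-factors collapse via the convolution-inverse relation together with \eqref{r-vep} and \eqref{lpmmult}. Concretely, for the second identity in \eqref{aFbL2} I would set $a'=\adrs[l^+_{\rf'}(b_{(1)})]\,a$ and compute $a'_{\rf'}b_{(2)}$ using the first formula of \eqref{aFbL1}; the point is that the $l^+$ applied in forming $a'$ and the $l^+$ coming from the definition of ${}_{\rf'}$ combine, by \eqref{lpmmult}, into $l^+_{\rf'}$ of a product that telescopes against $\sigma$ to give the counit, leaving precisely $ab$. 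The first identity in \eqref{aFbL2} is the mirror statement using $l^-_{\rf'}$ on the left.

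The main obstacle is not conceptual but notational: keeping the Sweedler indices straight through the repeated use of coassociativity, and making sure the convention of Subsection \ref{framework} (that $\Psi$ is silently applied inside $\rf'$, the $l$-operators, and the first slot of the pairing) is invoked consistently, since $\cA$ is only a bialgebra and has no antipode of its own --- every $\sigma$ and every $\sigma^{-1}$ that appears must be read as $\sigma\circ\Psi$ on $\cqg$. A secondary subtlety is that \eqref{r-sigma} and the identity $\overline{\rf'}(x,y)=\rf'(\sigma(x),y)$ are Hopf-algebra facts about $\cqg$, so one should phrase the manipulations as taking place after applying $\Psi$; once that is granted, the four identities follow from \eqref{rrrel1}, \eqref{rrrel2}, \eqref{rel01}, \eqref{rel02} by the multiplicativity axioms \eqref{r-mult}, \eqref{r-mult2} of the $r$-form and the multiplicativity \eqref{lpmmult} of the $l$-operators, with no further input needed.
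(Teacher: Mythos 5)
Your proof is correct, and it is the verification the authors intend: the paper states Lemma \ref{compact-aFb} without proof, treating it as a routine unwinding of \eqref{ad-def}, \eqref{lpmdef} and the product formulae \eqref{rrrel1}--\eqref{rel02} via the $r$-form axioms \eqref{r-mult}, \eqref{r-mult2} and \eqref{r-sigma}, which is exactly what you do. One small slip in your first computation: since the antipode is an anti-coalgebra map, $\kow(\sigma(b_{(1)}))=\sigma(b_{(2)})\ot\sigma(b_{(1)})$, so applying \eqref{r-mult} yields $\rf^\prime(\sigma(a_{(1)}),\sigma(b_{(2)}))\,\rf^\prime(a_{(3)},\sigma(b_{(1)}))$ directly, with the two $b$-legs already in the order required by \eqref{rrrel1}; the intermediate expression you wrote has them swapped, and the subsequent ``reindexing'' is not a legal move on its own --- the correct order comes from the coproduct reversal at the splitting step, after which \eqref{r-sigma} finishes the argument as you say.
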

Recall that a weight vector $v$ in a $\Uq$-module is called a highest
weight vector, if $Xv=0$ for all $X\in \Uq^+$. For highest weight
vectors with respect to the action (\ref{ad-def}) the gauged
multiplication takes a very simple form.
\begin{lem}\label{ahwFb}
  Let $a\in \cA$ be a highest weight vector of weight $\lambda\in
  \Ppiplus$ with respect to the action $\adrs$. Then the following
  relations hold
  \begin{align*}
    a_{\rf}b&=a[(\tau(-\lambda)\ot 1)\cdot b],\\
    b_{\rfb_{21}}a&=a[(1\ot\tau(\lambda))\cdot b]
\end{align*} 
for all $b\in \cA$.
\end{lem}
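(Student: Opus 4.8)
The plan is to use the two ``compact'' expressions for the multiplication of $\covAcp$ from Lemma \ref{compact-aFb} together with the explicit description of the $l$-functionals on a highest weight vector. First I would recall that if $a\in\cA$ is a highest weight vector of weight $\lambda$ with respect to $\adrs$, then $\adrs(X)a=\vep(X)a$ for all $X\in\Uq^+$ (since $\adrs$ is the action $\Delta(X)\cdot a$, and the positive part annihilates a highest weight vector), and more precisely $\adrs(u)a = \langle u, \text{``weight part''}\rangle a$ for any $u$ in the Borel $\uqbp$ or $\uqbm$: on $a$ the functional $u$ only ``sees'' its $U^0$-component acting by the weight $\lambda$ (resp.\ the appropriate sign). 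The key computational input is then the formula \eqref{aFbL1}, namely $a_{\rf}b = \big(\adrs[l^+_{\rf}(\sigma(b_{(1)}))]\,a\big)\,b_{(2)}$. By Lemma \ref{lplusmin} we have $l^+_\rf(\sigma(b_{(1)}))\in\uqbp$, so acting on the highest weight vector $a$ it must reduce to a scalar times $a$; I would identify that scalar.

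The identification of the scalar is where the explicit form \eqref{lexplicit} of $l^\pm_\rf$ enters. Writing $l^+_\rf(c_{f,v}) = \sum_{\alpha\in Q^+}(\id\ot c_{f,v})\big(\Theta_\alpha(\tau(-\mu)\ot 1)\big)$ with $\mu = \wght(v)$, the term $\Theta_0 = 1\ot 1$ contributes the ``diagonal'' part $\tau(-\mu)$, while all $\alpha>0$ contribute elements of $U^+_\alpha\cdot U^0$ which annihilate the highest weight vector $a$ when fed through $\adrs$. Hence, on $a$, $\adrs[l^+_\rf(\sigma(b_{(1)}))]$ collapses to $\adrs$ of the diagonal piece, which evaluates $b_{(1)}$ (after applying $\sigma$ and $\Psi$, per the standing Convention) against $\tau(-\lambda)$ in the appropriate slot; using the Hopf pairing and the coproduct of $\tau$ this produces exactly the action $(\tau(-\lambda)\ot 1)\cdot b$ on the remaining tensor factor $b_{(2)}$. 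This gives $a_\rf b = a\,[(\tau(-\lambda)\ot 1)\cdot b]$. For the second identity I would run the analogous argument for $\rfb_{21}$: here Proposition \ref{FRTRE}/equation \eqref{rrrel2} or rather the second half of \eqref{aFbL1} written for $\rf' = \rfb_{21}$, combined with $l^-_{\rfb_{21}} = l^+_\rf$ from \eqref{rrelation} and Lemma \ref{lplusmin}, shows $b_{\rfb_{21}}a = \big(\adrs[l^-_{\rfb_{21}}(\sigma^{-1}(a_{(2)}))]\,b\big)a_{(1)}$ collapses in the same way, now picking out $(1\ot\tau(\lambda))\cdot b$.

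The main obstacle I anticipate is purely bookkeeping: correctly tracking (i) the suppressed $\Psi$'s and the $\sigma$ (resp.\ $\sigma^{-1}$) inside the $l$-operators as dictated by the standing Convention, (ii) which Borel the relevant $l$-functional lands in and hence that it acts on a highest (as opposed to lowest) weight vector via $\adrs$, and (iii) getting the signs on $\lambda$ right, so that the diagonal part of $l^+_\rf$ evaluated against a weight-$\lambda$ highest weight vector really yields $\tau(-\lambda)$ and not $\tau(\lambda)$ — this is exactly the sort of place where the convention chosen in Remark \ref{R-choice-remark} (so that $l^+_\rf(\cqg)\subseteq\uqbp$) is doing work. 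Once the reduction ``$\adrs$ of a Borel element on a highest weight vector $=$ $\adrs$ of its $U^0$-part'' is justified cleanly, the rest is a one-line Sweedler computation using $\kow(\tau(\nu))=\tau(\nu)\ot\tau(\nu)$ and the counit axiom.
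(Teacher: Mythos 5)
Your proposal is correct and matches the paper's proof exactly: the paper records $\adrs(l^+_\rf(b))a=\langle b,\tau(-\lambda)\rangle a$ from \eqref{lexplicit} (only the $\Theta_0$-term survives on a highest weight vector) and substitutes this into the two expressions of \eqref{aFbL1}, using $l^-_{\rfb_{21}}=l^+_\rf$ for the second identity. The one thing to fix is a transcription slip in your second displayed formula: it should read $b_{\rfb_{21}}a=\big(\adrs[l^-_{\rfb_{21}}(\sigma^{-1}(b_{(2)}))]\,a\big)b_{(1)}$, so that the Borel element acts on the highest weight vector $a$ rather than on the arbitrary element $b$.
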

\begin{proof}
  It follows from \eqref{lexplicit} that
  \[\adrs(l_{\rf}^+(b))a=\langle b,\tau(-\lambda)\rangle a
  \]
  for all $b\in\cA$. If $\rf^\prime=\rf$, then \eqref{aFbL1} implies
  \[a_{\rf}b=(\adrs[l_{\rf}^+(\sigma(b_{(1)}))]a)b_{(2)}=
    a\langle \sigma(b_{(1)}),\tau(-\lambda)\rangle b_{(2)}=
    a[(\tau(-\lambda)\ot 1)\cdot b].
  \]
  If $\rf^\prime=\overline{\rf}_{21}$, then \eqref{rrelation} and
  \eqref{aFbL1} give
  \[b_{\overline{\rf}_{21}}a=(\adrs[l_{\rf}^+(\sigma^{-1}(b_{(2)}))]a)b_{(1)}=
    a \langle \sigma^{-1}(b_{(2)}),\tau(-\lambda)\rangle b_{(1)}=
    a[(1\ot\tau(\lambda))\cdot b].
  \]
\end{proof}
\begin{prop}\label{AdAFd}
  The algebra $\cA$ is a domain if and only if $\covAcp$ is a domain.
\end{prop}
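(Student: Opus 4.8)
The plan is to prove both implications by a single filtration/leading-term argument. Since \eqref{rel01}--\eqref{rel02} recover the multiplication of $\cA$ from that of $\covAcp$ by a twist of exactly the shape of \eqref{rrrel1}--\eqref{rrrel2}, the two directions are symmetric, so I would spell out only that $\cA$ being a domain forces $\covAcp$ to be a domain. The base case is Lemma \ref{ahwFb}: for a nonzero $\adrs$-highest weight vector $a$ of weight $\lambda$, left multiplication by $a$ in $\covAc$ is left multiplication by $a$ in $\cA$ precomposed with the invertible operator $(\tau(-\lambda)\otimes 1)\cdot$, hence injective whenever $\cA$ is a domain; symmetrically, right multiplication by an $\adrs$-highest weight vector in $\covAcb$ is injective by the second formula of Lemma \ref{ahwFb}. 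Thus the statement already holds when one of the two factors is a highest weight vector.

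For general elements I would use the $\adrs$-module structure of Lemma \ref{transferAr}(ii): decompose $\cA=\bigoplus_{\lambda\in P^+}\cA(\lambda)$ into isotypic components and equip $\cA$ with the dominance filtration $\cF_{\leq\lambda}\cA=\bigoplus_{\mu\leq\lambda}\cA(\mu)$, which is an exhaustive algebra filtration for the original product and, because $\covAcp$ is an $\adrs$-module algebra, also for $\cdot_{\rf'}$. The heart of the proof is to show that the associated-graded multiplications on $\gr\cA=\bigoplus_\lambda\cA(\lambda)$ induced by $\cdot$ and by $\cdot_{\rf'}$ coincide up to the invertible rescalings of Lemma \ref{ahwFb}. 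One checks this by rewriting $\cdot_{\rf'}$ as in Lemma \ref{compact-aFb}, using that $l^+_{\rf}(\cqg)\subseteq\uqbp$ (Lemma \ref{lplusmin}) and that the components $\Theta_\alpha$ with $\alpha>0$ of the quasi-$R$-matrix annihilate an $\adrs$-highest weight vector, because the $V(\lambda)$ it generates has only weights $\leq\lambda$. Together with the fact that the ``Cartan multiplication'' algebra $\gr(\cA,\cdot)$ is a domain whenever $\cA$ is, passing to leading terms then yields $x_{\rf'}y\neq0$ for all nonzero $x,y\in\cA$, and the same argument with the roles of the two products exchanged gives the converse.

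The main obstacle I foresee is exactly this bookkeeping on $\gr\cA$: neither the rescaling operators $(\tau(-\lambda)\otimes 1)\cdot$ nor the $\Theta_\alpha$ respect the isotypic decomposition, so one must argue carefully that their contributions land in $\cF_{<\lambda+\mu}$ and hence do not affect the top graded piece of $a_{\rf'}b$ for $a\in\cA(\lambda)$, $b\in\cA(\mu)$; and one must separately establish that $\gr(\cA,\cdot)$ is a domain when $\cA$ is, which for $\cA=\cqg$ is the classical statement that the quantum base affine space is a domain but in general needs its own argument. A cleaner route that sidesteps the associated graded is to run an induction on the size of the $\adrs$-weight support of $x$: split off the highest weight vector occurring in the top isotypic component of $x$, apply the injectivity obtained from Lemma \ref{ahwFb} to that vector, and verify that leading terms multiply correctly under $\cdot_{\rf'}$; this keeps the use of Lemmas \ref{compact-aFb} and \ref{ahwFb} explicit but still requires the same control on how the non-diagonal part of the braiding lowers the dominance degree.
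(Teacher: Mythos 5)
Your identification of Lemma \ref{ahwFb} as the computational heart of the argument is correct, and your ``base case'' is exactly the paper's: for an $\adrs$-highest weight vector $a$ of weight $\lambda$ the twisted product $a_{\rp}b$ is the untwisted product of $a$ with $(\tau(-\lambda)\ot 1)\cdot b$ (resp.\ $(1\ot\tau(\lambda))\cdot b$), and the operator $(\tau(-\lambda)\ot 1)\cdot$ is invertible. The gap is in how you pass from highest weight vectors to general elements. The paper needs no filtration at all: it invokes the standard fact (the argument of \cite[9.1.9(1)]{b-Joseph}) that a locally finite, type one module algebra which has zero divisors already contains \emph{highest weight} vectors $a,b$ with $ab=0$. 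Applied to $\covAcp$ with the action $\adrs$, this gives one implication immediately from Lemma \ref{ahwFb}, since $(\tau(-\lambda)\ot 1)\cdot b\neq 0$; applied to $\cA$ with the $\Uq^\cop\ot\Uq$-action \eqref{acc}, it gives the other, the point being that $\Uq\ot\Uq$-highest weight vectors are in particular $\adrs$-highest weight vectors and are rescaled by a nonzero \emph{scalar} under $(\tau(-\lambda)\ot 1)\cdot$, so $ab=0$ forces $a_{\rf}b=0$ and $b_{\rfb_{21}}a=0$. Note that the two directions are run with different module structures, so ``the two directions are symmetric'' is a little too quick.

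Your proposed substitute --- the dominance filtration by isotypic components and a leading-term comparison of the two products on $\gr\cA$ --- does not close the argument as stated. Even granting the bookkeeping that the non-Cartan part of the braiding strictly lowers the dominance degree, the conclusion $x_{\rp}y\neq 0$ requires the top graded component of $x_{\mathrm{top}}y_{\mathrm{top}}$ (the Cartan product) to be nonzero, i.e.\ that $\gr(\cA,\cdot)$ is a domain. This does \emph{not} follow from $\cA$ being a domain (associated graded algebras of domains can acquire zero divisors), and you yourself flag that this step ``needs its own argument''; for a general coquasitriangular bialgebra $\cA$ mapping to $\cqg$ no such argument is supplied, so the proof is incomplete precisely at its load-bearing step. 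The same issue undermines the ``cleaner route'' by induction on the weight support: splitting off the top isotypic component still requires knowing that the leading terms do not multiply to zero in the top degree. The highest-weight-vector reduction is exactly the device that makes this issue disappear, because it replaces an arbitrary pair of zero divisors by a pair to which Lemma \ref{ahwFb} applies on the nose.
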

\begin{proof}
  Suppose that $\covAcp$ has zero divisors. Recall that $\covAcp$ is a type
  one, locally finite $\Uq$-module algebra with respect to the action
  $\adrs$. Hence there exist $\adrs(\Uq)$ highest weight vectors $a,b\in
  \covAcp$ satisfying $a_{\rp}b=0$ (cp.~the argument given in the
  proof of \cite[9.1.9(1)]{b-Joseph}). By Lemma \ref{ahwFb} we
  conclude that $a$ or $b$ is a zero divisor in $\cA$.

  Conversely, suppose that $\cA$ has zero divisors. Considering this time
  $\cA$ as the type one, locally finite $\Uq^{\cop}\ot\Uq$-module algebra
  with respect to the action \eqref{acc}, there exist $\Uq\ot\Uq$ 
  highest weight vectors
  $a,b\in\cA$ satisfying $ab=0$. Note that in this case
  $(\tau(-\lambda)\ot 1)\cdot b$ and $(1\ot \tau(\lambda))\cdot b$ are
  nonzero scalar multiples of $b$ for any $\lambda\in P$. Moreover, $a$ and $b$ are also highest weight
  vectors for the action $\adrs$ and hence Lemma \ref{ahwFb} implies that
  $a_\rf b=0$ and $b_{\rfb_{21}}a=0$.
\end{proof}

\subsection{Invariants and centres}
For any  left $\Uq$-module $\cB$ define the space of invariants
$\cB^{inv}$ by
\begin{align*}
  \cB^{inv}:=\{ b\in \cB\,|\, Xb=\vep(X)b \,\mbox{ for all $X\in \Uq$}\}.
\end{align*}
Note that if $\cB$ is a $\Uq$-module algebra, then $\cB^{inv}\subseteq
\cB$ is a subalgebra. We remain in the setting of Subsection
\ref{framework}. In the present subsection we describe properties of the subspace $\cA^{inv}$ of
invariants of $\cA$ with respect to the $\Uq$-action $\adrs$ defined by (\ref{ad-def}). Note first
that in terms of the coaction $\delta$ defined
by (\ref{Ar-coact}) one has
\begin{align*}
  \cA^{inv}=\{a\in \cA\,|\, \delta(a)=a\ot 1\}.
\end{align*}
As $\cA$ and the covariantized algebra $\covAcp$ coincide as vector
spaces, we may consider $\cA^{inv}$ alternatively as a subspace of
$\cA$ or $\covAcp$. By Lemma \ref{transferAr}(ii) $\covAcp$ is a
$\Uq$-module algebra with respect to the action $\adrs$ and hence
$\cA^{inv}\subseteq \covAcp$ is a subalgebra. 
In the following proposition and in
Proposition \ref{ZAF=AFinv} we relate $\cA^{inv}$ to the centre
$Z(\covAcp)$ of the covariantized algebra.
\begin{prop}\label{invprops}
  The following relations hold:
  \begin{enumerate}
    \item $a_{\rf^\prime}b=ab=b_{\rf^\prime}a$ for $a\in\cA^{inv}$ and $b\in \cA$.\label{enn1}
    \item $\cA^{inv}\subseteq Z(\covAcp)$.\label{enn2}
    \item $\cA^{inv}$ is a commutative subalgebra of
    $\cA$.\label{enn3}
    \item $\cA^{inv}=\{a\in \cA\,|\,(1\ot X)\cdot a= (\sigma(X) \ot 1)
    \cdot a\,\mbox{ for all $X\in \Uq$}\}$.\label{enn4}
    \item For any $a\in\cA^{inv}$ and $X,Y\in \Uq$ one has
      \begin{align}\label{form}
        a_{(1)}\ot ((X\ot Y)\cdot a_{(2)})=((\sigma(Y)\ot \sigma(X))\cdot a_{(1)})\ot a_{(2)}
      \end{align}
      and in particular, 
      $a_{(1)}\ot \adrs(X)a_{(2)}=(\adrs(\sigma(X))a_{(1)})\ot a_{(2)}$. \label{enn5}
    \item $\cA^{inv}=\{a\in \cA\,|\, a_{(1)} \ot \Psi(a_{(2)})=a_{(2)}\ot
          \sigma^2(\Psi(a_{(1)}))\}$. \label{enn6}
\end{enumerate}
\end{prop}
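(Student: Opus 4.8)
The six statements are really a chain of mutually reinforcing identities, so the plan is to prove them essentially in the order given, feeding each into the next, and to treat (6) as a reformulation of (4)--(5) once the groundwork is laid. Throughout I would work in the $\rf'=\rf$ case first (the $\rfb_{21}$ case being entirely parallel via \eqref{rrelation}), and I would freely use the explicit formulas \eqref{lexplicit} for the action of $l^\pm_\rf$ together with the compact multiplication formulas \eqref{aFbL1}--\eqref{aFbL2} from Lemma \ref{compact-aFb}.

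\emph{Steps (1)--(3).} For (1), take $a\in\cA^{inv}$, so $\adrs(X)a=\vep(X)a$ for all $X$; equivalently $\delta(a)=a\ot 1$, i.e. $\sigma(\Psi(a_{(1)}))\Psi(a_{(3)})\ot a_{(2)}=1\ot a$ after applying the pairing, which in particular gives $l^\pm_{\rf'}(\sigma^{\pm 1}(a_{(j)}))$-type expressions collapsing to counits when acting on $a$. Substituting this into \eqref{aFbL1} shows $a_{\rf'}b=(\adrs[\,l^+_{\rf'}(\sigma(b_{(1)}))\,]a)\,b_{(2)}$; but since $a$ is $\adrs$-invariant, $\adrs[\,l^+_{\rf'}(\sigma(b_{(1)}))\,]a=\vep(l^+_{\rf'}(\sigma(b_{(1)})))\,a=\vep(b_{(1)})a$ (using $\vep\circ l^+_{\rf'}=\vep$, which follows from \eqref{r-vep}), hence $a_{\rf'}b=ab$. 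The identity $b_{\rf'}a=ab$ is proved the same way from the second half of \eqref{aFbL1}. Then (2) is immediate: (1) says $a$ is central in $\covAcp$ for every $a\in\cA^{inv}$, i.e. $\cA^{inv}\subseteq Z(\covAcp)$. And (3) follows because, by (1), the multiplication of $\cA^{inv}$ induced from $\cA$ agrees with that induced from $\covAcp$, and the latter is commutative on $\cA^{inv}$ since $\cA^{inv}$ lies in the centre of $\covAcp$ by (2); so $\cA^{inv}$ is a commutative subalgebra of $\cA$.

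\emph{Steps (4)--(6).} For (4), unwind the definition of $\adrs$ via \eqref{ad-def}: $a\in\cA^{inv}$ iff $\langle\sigma(\Psi(a_{(1)}))\Psi(a_{(3)}),X\rangle\,a_{(2)}=\vep(X)a$ for all $X$, and since the pairing \eqref{pairing} is non-degenerate and separates the finite-dimensional coalgebra containing $a$, this is equivalent to $\sigma(\Psi(a_{(1)}))\Psi(a_{(3)})\ot a_{(2)}=1\ot a$ in $\cqg\ot\cA$; rearranging using the antipode (multiply the $\cqg$-leg on the left by $\Psi(a_{(1)})$ and re-index) turns this into $\Psi(a_{(1)})\ot a_{(2)}$-versus-$\Psi(a_{(2)})\ot a_{(1)}$ comparisons against arbitrary $X$, which unpacks precisely to $(1\ot X)\cdot a=(\sigma(X)\ot 1)\cdot a$ for all $X\in\Uq$ once one writes the $\Uq^\cop\ot\Uq$-action \eqref{acc} out. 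For (5), \eqref{form} is obtained by applying the characterisation in (4) to $a_{(2)}$ inside a coproduct: since the defining relation of $\cA^{inv}$ is a coideal-type identity, one can ``push'' one $X$ or $Y$ across the tensor symbol at the cost of an antipode, and iterating on both legs yields the symmetric formula; the special case is the choice $Y=1$ (or $X=1$) combined with $\adrs(X)=(\id\ot X)\cdot(\cdot)$ after composing with the counit appropriately. Finally (6) is just the "coaction form" of (4)--(5): feeding $Y=1$ into \eqref{form} and applying $\Psi$ to the appropriate leg, together with \eqref{sigma2} to identify $\sigma^2$ on $\Psi(a_{(1)})$, converts the module-theoretic statement into $a_{(1)}\ot\Psi(a_{(2)})=a_{(2)}\ot\sigma^2(\Psi(a_{(1)}))$; conversely this identity, paired against all $X\in\Uq$ and using non-degeneracy, recovers the condition in (4).

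\emph{Expected main obstacle.} The only genuinely delicate point is the bookkeeping in (4) and (6): translating between the ``coaction is trivial'' formulation, the $\Uq^\cop\ot\Uq$-module formulation \eqref{acc}, and the $\sigma^2$-twisted coaction identity requires carefully tracking which Sweedler leg gets hit by $\sigma$ versus $\sigma^{-1}$, and using \eqref{sigma2} to absorb the $\tau(\pm 2\rho)$ that would otherwise appear. Everything else is a short computation once Lemma \ref{compact-aFb} and \eqref{lexplicit} are in hand; I would be careful to note that $\Psi$ need not be injective, so all ``non-degeneracy'' arguments must be run on $\cA$ itself (via the local finiteness from Lemma \ref{UU-mod}) rather than on its image in $\cqg$.
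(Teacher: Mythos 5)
Your plan follows the paper's proof almost step for step: (1) from Lemma \ref{compact-aFb} together with $\vep(l_\rp^{\pm}(a))=\vep(a)$, (2) and (3) as immediate consequences, (5) by first moving $X$ across the tensor leg via a general Sweedler identity valid for all $a\in\cA$ and then applying $\kow$ to the identity from (4), and (6) as the reformulation of (4) obtained by non-degeneracy of the pairing. So the architecture is sound and matches the paper.

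The one step that would fail as literally written is the converse inclusion in (4). Multiplying the $\cqg$-leg of $a_{(2)}\ot\sigma(\Psi(a_{(1)}))\Psi(a_{(3)})=a\ot 1$ on the left by $\Psi(a_{(1)})$ produces the combination $\Psi(a_{(2)})\sigma(\Psi(a_{(1)}))$, which is \emph{not} $\vep(a_{(1)})1$ in a non-cocommutative Hopf algebra, so nothing cancels; moreover the target you name, a comparison of $\Psi(a_{(1)})\ot a_{(2)}$ against $\Psi(a_{(2)})\ot a_{(1)}$ with no antipode, is not equivalent to (4) --- the correct dual form is exactly (6), with the $\sigma^2$. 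Two repairs: either multiply on the left by $\sigma^2(\Psi(a_{(1)}))$ instead, using $\sigma^2(c_{(2)})\sigma(c_{(1)})=\vep(c)1$, which lands you directly on (6) and hence on (4); or, as the paper does, stay entirely on the module side and compute $(1\ot X)\cdot a=(\sigma(X_{(1)})\ot 1)\cdot\big((X_{(2)}\ot X_{(3)})\cdot a\big)=(\sigma(X_{(1)})\ot 1)\cdot\vep(X_{(2)})a=(\sigma(X)\ot 1)\cdot a$, since $(X_{(2)}\ot X_{(3)})\cdot a=\adrs(X_{(2)})a=\vep(X_{(2)})a$ for $a\in\cA^{inv}$. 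Two smaller points: \eqref{sigma2} is not needed anywhere --- the $\sigma^2$ in (6) arises simply from $\langle\Psi(a_{(1)}),\sigma^2(X)\rangle=\langle\sigma^2(\Psi(a_{(1)})),X\rangle$ --- and the non-degeneracy used to pass between (4) and (6) is that of the pairing \eqref{pairing} applied to the $\cqg$-leg of an element of $\cA\ot\cqg$, so injectivity of $\Psi$ and the local finiteness of Lemma \ref{UU-mod} play no role there.
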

\begin{proof}
  Property (\ref{enn1}) follows from Lemma
  \ref{compact-aFb} and the fact that $\vep(l_\rp^\pm(a))=\vep(a)$ for all
  $a\in \cA$. Properties (\ref{enn2}) and (\ref{enn3}) follow
  immediately from property (\ref{enn1}).
 
  To verify (\ref{enn4}) note that by definition of the action $\adrs$ the
  set on the right hand side of (\ref{enn4}) is contained in $\cA^{inv}$.
  To obtain the converse inclusion one verifies for any $a\in \cA^{inv}$ the relation
  \begin{align*}
    (1\otimes X)\cdot a=(\sigma(X_{(1)})\otimes 1)(X_{(2)}\otimes X_{(3)})\cdot a
                       =(\sigma(X)\otimes 1)\cdot a.
  \end{align*}  
  This completes the proof of (\ref{enn4}) which also implies property
  (\ref{enn6}) via the non-degenerate Hopf pairing
  $\langle\cdot,\cdot\rangle$ between  $\Uq$ and $\cqg$.

  It remains to prove (\ref{enn5}). Let $a\in\cA$ and $X\in\Uq$. Then
  \[a_{(1)}\ot ((X\ot 1)\cdot a_{(2)})=
    \langle \sigma(a_{(2)}),X\rangle a_{(1)}\ot a_{(3)}=
    ((1\ot \sigma(X))\cdot a_{(1)})\ot a_{(2)}.
  \]
  Hence it suffices to prove \eqref{form} for $X=1$. Let $a\in\cA^{inv}$
  and $Y\in\Uq$, then the desired formula follows by applying the comultiplication
  $\kow$ of $\cA$ to the identity $(1\ot Y)\cdot a=(\sigma(Y)\ot 1)\cdot a$
  from (\ref{enn4}).
\end{proof}
\begin{eg}\label{q-trace-eg}
  For any $V\in Ob(\cC)$ define a quantum trace $\tr_{q,V}\in C^V$ by
  \begin{align*}
    \tr_{q,V}(X):=\tr_V(X\tau(2\rho))
  \end{align*}
  where $\tr_V$ denotes the trace on the linear space $V$ and the
  argument $X\tau(2\rho)$ is considered as an endomorphism of $V$. It
  follows from \eqref{sigma2} that $\tr_{q,V}\in \cqg^{inv}$ where
  $\cqg$ is a left $\Uq$-module via the action $\ad_r^\ast$ defined by
  \eqref{adrs-def}. The Peter-Weyl decomposition \eqref{PeterWeyl} and
  the fact that $C^{V(\lambda)}\cong V(\lambda)^\ast\ot V(\lambda)$ as
  left $\Uq$-modules imply that the quantum traces
  $\{c_\lambda:=\tr_{q,V(\lambda)}\,|\, \lambda\in P^+\}$ form a
  linear basis of $\cqg^{inv}$.
\end{eg}
One may ask if the inclusions in Proposition \ref{invprops}.(\ref{enn2}) are
equalities. This does indeed hold if one assumes $\cA$ to be a domain.
\begin{prop}\label{ZAF=AFinv}
 If $\cA$ is a domain then $\cA^{inv}=Z(\covAcp)$.
\end{prop}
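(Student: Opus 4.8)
Since we already know from Proposition \ref{invprops}.(\ref{enn2}) that $\cA^{inv}\subseteq Z(\covAcp)$, the only thing that needs proof is the reverse inclusion $Z(\covAcp)\subseteq \cA^{inv}$, under the domain hypothesis. The plan is to take $z\in Z(\covAcp)$ and show it is $\adrs(\Uq)$-invariant. Because $\covAcp$ is a type one, locally finite $\Uq$-module algebra with respect to $\adrs$ (Lemma \ref{transferAr}.(ii)), it decomposes as a direct sum of finite dimensional simple type one $\Uq$-modules, so it suffices to prove that any $z\in Z(\covAcp)$ lying in a single isotypic component $V(\lambda)$ must have $\lambda=0$. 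I would decompose $z$ as a sum of $\adrs(\Uq)$-highest weight vectors plus lower terms, or better, reduce immediately to the case where $z$ itself is a highest weight vector of some weight $\lambda\in P^+$: indeed the span of the highest weight vectors in $Z(\covAcp)$, together with the fact that $Z(\covAcp)$ is an $\adrs(\Uq)$-submodule (the centre is stable under the module algebra action since $\adrs(X)(zb)=(\adrs(X_{(1)})z)(\adrs(X_{(2)})b)$ and $z$ central forces each piece central when one runs over the coproduct carefully — more cleanly, $Z(\covAcp)^{inv\text{-part}}$ is what we want), lets us assume $z$ is a highest weight vector of weight $\lambda$ and prove $\lambda=0$.

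So let $z\in Z(\covAcp)$ be an $\adrs(\Uq)$-highest weight vector of weight $\lambda\in P^+$. The key computational input is Lemma \ref{ahwFb}: for all $b\in\cA$,
\begin{align*}
  z_{\rf}b=z[(\tau(-\lambda)\ot 1)\cdot b],\qquad
  b_{\rfb_{21}}z=z[(1\ot\tau(\lambda))\cdot b].
\end{align*}
First take $\rf'=\rf$. Centrality gives $z_\rf b=b_\rf z$ for all $b$. Applying Lemma \ref{ahwFb} to the left side we get $z\cdot[(\tau(-\lambda)\ot 1)\cdot b]=b_\rf z$ in the algebra $\cA$ (ordinary multiplication on the left, since the two factors come from a highest weight vector computation). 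I would then choose $b$ to be a highest weight vector with respect to the $\Uq^{\cop}\ot\Uq$ action \eqref{acc} of weight $\mu$, and also compute $b_\rf z$ via \eqref{aFbL1} or directly; comparing the two expressions, the domain property of $\cA$ is what lets us cancel $z$ (it is nonzero) and deduce a numerical identity forcing $q^{(\lambda,\wght(b))}$-type scalars to be trivial for all choices of $b$, hence $\lambda=0$. The symmetric argument using $b_{\rfb_{21}}z=z_{\rfb_{21}}b$ handles $\rf'=\rfb_{21}$. The mechanism mirrors exactly the proof of Proposition \ref{AdAFd}: there one produced a zero divisor from highest weight vectors; here one produces, from a central highest weight vector, a scalar relation that can only hold when the weight vanishes.

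\textbf{Main obstacle.} The delicate point is making the cancellation rigorous: from $z\cdot[(\tau(-\lambda)\ot 1)\cdot b]=b_{\rf}z$ one wants to strip off $z$, but $z$ sits on the left on one side and is entangled in the gauged product on the other. The clean route is to pick $b$ also a highest weight vector for $\adrs$ and apply Lemma \ref{ahwFb} a second time (now with $b$'s weight), so that both sides become $z\cdot(\text{scalar}\cdot b)$ or $(\text{scalar}\cdot b)\cdot z$ in the \emph{ordinary} algebra $\cA$, where commutativity of these two specific elements is itself something to be checked — but for highest weight vectors $a,b$ the products $ab$ and $ba$ are related by the $\rh$-braiding scalar, and one gets a clean scalar equation $q^{(\lambda,\mu)}=1$ (up to sign conventions in $N$) for all dominant $\mu$ in the support of $\cA$. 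Since $\Psi(\cA)$ is a nontrivial subbialgebra, by Proposition \ref{cqGLprop} its support spans a lattice $L\supseteq Q$, and $(\lambda,\mu)\in N^{-1}\Z$ being an integer (so that $q^{(\lambda,\mu)}=1$ as a function of the transcendental $q^{1/N}$) for all $\mu\in L\cap P^+$ forces $\lambda\in L^\perp$, and combined with $\lambda\in P^+$ this gives $\lambda=0$. Verifying that the relevant scalar really is $q^{(\lambda,\mu)}$ with no stray correction term, and that one genuinely ranges over enough weights $\mu$, is where care is needed; everything else is bookkeeping with Lemmas \ref{compact-aFb} and \ref{ahwFb}.
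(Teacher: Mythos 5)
Your overall strategy --- reduce to a central $\adrs(\Uq)$-highest weight vector $z$ of weight $\lambda$ and force $\lambda=0$ by a scalar computation --- founders at the reduction step, and this is a genuine gap rather than bookkeeping. The centre $Z(\covAcp)$ is \emph{not} an $\adrs(\Uq)$-submodule: only the torus part of the action preserves centrality, because $\adrs(\tau(\mu))$ acts by algebra automorphisms of $\covAcp$ while $\adrs(x_i)$ acts by a twisted derivation. Concretely, if $z$ is central and $\adrs(t_i)z=z$, then comparing $\adrs(x_i)(z_\rp b)$ with $\adrs(x_i)(b_\rp z)$ using $\kow(x_i)=x_i\ot 1+t_i\ot x_i$ yields only $(\adrs(x_i)z)_\rp b=(\adrs(t_i)b)_\rp(\adrs(x_i)z)$, i.e.\ $\adrs(x_i)z$ is $t_i$-twisted central, not central. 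Hence the highest weight vectors of the submodule generated by a central element need not themselves be central, and your plan of applying Lemma \ref{ahwFb} to $z$ has no legitimate starting point. (There is a secondary problem even granting the reduction: rewriting $b_\rp z$ via Lemma \ref{ahwFb} produces $b[(\tau(-\mu)\ot 1)\cdot z]$, and this is a scalar multiple of $bz$ only if $z$ is a weight vector for the \emph{left} regular action, which an $\adrs$-weight vector need not be; likewise the claim that $zb$ and $bz$ differ by a single braiding scalar requires $z$ and $b$ to be highest weight vectors for the full $\Uq^\cop\ot\Uq$-action, a condition you cannot impose on the fixed central element $z$.)

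The paper's proof is built precisely to sidestep this trap. It reduces to $\Uq=U_q(\slfrak_2(\C))$ and takes a central \emph{weight} vector $a$ (weight components of a central element are central, by the torus remark above), chooses $n$ with $x^na\neq 0=x^{n+1}a$, and acts by $x^n$ on the centrality relation $a(x^na)=(x^na)a$ instead of assuming $x^na$ is central; this gives $(x^na)^2=(q^{2n}c)^n(x^na)^2$ with $c$ the $t$-eigenvalue of $a$, and the hypothesis that there are no nonzero square-zero elements --- which is where the domain assumption enters, via Proposition \ref{AdAFd} --- pins down $c$ and forces $x^na$ to be invariant, contradicting $n\neq 0$. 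If you want to salvage your route you must run an analogous manoeuvre: keep $z$ a mere central weight vector and apply powers of $\adrs(x_i)$ to the identity $z_\rp b=b_\rp z$ before invoking any highest-weight simplification.
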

In view of Proposition \ref{AdAFd} and Proposition \ref{invprops}.(\ref{enn2})
the identity $\cA^{inv}=Z(\covAcp)$ is verified by applying the
following lemma to the locally finite, type one, left $\Uq$-module 
algebra $\covAcp$.

\begin{lem}\label{ZBinv}
  Let $\cB$ be a locally finite, type one, left $\Uq$-module algebra and assume
  that $b^2\neq 0$ for all nonzero elements $b\in \cB$. Then $Z(\cB)\subseteq
  \cB^{inv}$. 
\end{lem}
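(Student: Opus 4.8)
The plan is to exploit the weight decomposition of $\cB$ under the $\Uq$-action together with the assumption that $\cB$ has no nilpotents of order two. Since $\cB$ is a locally finite type one $\Uq$-module, it decomposes into finite dimensional simple submodules, and in particular $\cB=\bigoplus_{\mu\in P}\cB_\mu$ into weight spaces for the Cartan part $U^0$. First I would observe that the centre $Z(\cB)$ is a $\Uq$-submodule: indeed, if $z\in Z(\cB)$ and $X\in\Uq$, then for any $b\in\cB$ one computes, using that the action is by algebra automorphisms "up to coproduct", that $Xz$ again commutes with all of $\cB$; concretely $(Xz)b = (X_{(1)}z)(\varepsilon(X_{(2)})b) = (X_{(1)}z)(X_{(2)}\sigma(X_{(3)})b) = X_{(1)}\big(z\,(\sigma(X_{(2)})b)\big) = X_{(1)}\big((\sigma(X_{(2)})b)\,z\big) = \dots = b(Xz)$, shifting the action across the product since $z$ is central. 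Hence $Z(\cB)$ is a locally finite type one submodule, so it too decomposes into finite dimensional simple $\Uq$-modules.

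The heart of the argument is then: a nonzero central element $z$ of weight $\mu$ forces $\mu=0$, and moreover a simple submodule of $Z(\cB)$ must be trivial. For the weight statement, suppose $z\in Z(\cB)_\mu$ is nonzero. For any weight vector $b\in\cB_\nu$ we have $\tau(\lambda)\cdot(zb) = \tau(\lambda)\cdot(bz)$; expanding the action via the coproduct $\kow\tau(\lambda)=\tau(\lambda)\ot\tau(\lambda)$ gives $(\tau(\lambda)\cdot z)(\tau(\lambda)\cdot b)$ on both sides trivially, so that tells us nothing directly — instead I would use centrality more cleverly: since $z$ commutes with the weight vector $b$, and $zb$ has weight $\mu+\nu$ while $bz=zb$ also has weight $\mu+\nu$, no contradiction yet. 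The real input must come from $x_i$ and $y_i$. Consider a highest weight vector $z\in Z(\cB)$, so $x_i\cdot z=0$ for all $i$, of weight $\mu$. Apply $y_i$ to the product $z\cdot z$: since $z$ is central, $z^2$ lies in $Z(\cB)$ and $z^2\neq 0$ by hypothesis; also $z^2$ has weight $2\mu$ and is again (a sum of) highest weight vectors. Iterating, $z^n\neq 0$ of weight $n\mu$ and highest weight for all $n$, so $Z(\cB)$ contains highest weight vectors of weights $n\mu$ for all $n\geq 1$. But $\cB$ is a \emph{direct sum of finite dimensional} $\Uq$-modules, each of which contributes highest weights in $P^+$; there is no a priori bound — so I need a finer argument.

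The correct finer argument: work inside a single finite dimensional simple submodule $V(\lambda)\subseteq Z(\cB)$ containing $z$. Pick $z$ to be a lowest weight vector of $V(\lambda)$, i.e. $y_i\cdot z=0$ for all $i$, of weight $w_0\lambda$. Then for any $b\in\cB$, apply $y_i$ to $zb$: using $\kow y_i = y_i\ot t_i^{-1}+1\ot y_i$ we get $y_i\cdot(zb) = (y_i\cdot z)(t_i^{-1}\cdot b) + z(y_i\cdot b) = z(y_i\cdot b)$, and similarly $y_i\cdot(bz) = (y_i\cdot b)(t_i^{-1}\cdot z) + b(y_i\cdot z) = (y_i\cdot b)(t_i^{-1}\cdot z)$. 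Since $zb=bz$, we get $z(y_i\cdot b) = (y_i\cdot b)(t_i^{-1}\cdot z) = q^{-(\alpha_i,w_0\lambda)}(y_i\cdot b)\,z$. As this holds for all $b$ and $y_i\cdot b$ ranges over enough of $\cB$ (here I would need that $\cB_{y_i\text{-translates}}$ together with $z$ is not annihilated), combined with $z$ being a nonzero non-nilpotent, one extracts $q^{-(\alpha_i,w_0\lambda)}=1$, hence $(\alpha_i,\lambda)=0$ for all $i$ (using $w_0$ is an involution permuting simple roots up to sign), forcing $\lambda=0$ and $V(\lambda)$ trivial, i.e. $z\in\cB^{inv}$. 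Running this over all simple summands of $Z(\cB)$ gives $Z(\cB)\subseteq\cB^{inv}$.

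\emph{Main obstacle.} The delicate point is extracting $q^{-(\alpha_i,w_0\lambda)}=1$ from the relation $z(y_i\cdot b) = q^{-(\alpha_i,w_0\lambda)}(y_i\cdot b)z$: one needs to produce some $b$ with $y_i\cdot b\neq 0$ such that $(y_i\cdot b)\cdot z\neq 0$, which is where the hypothesis $c^2\neq 0$ for all $c\neq 0$ (no square-zero elements) should enter — probably via taking $b=z$ itself or a translate and playing off the weight spaces, or by noting that otherwise $z$ would multiply a whole nonzero submodule to zero and then deriving a square-zero element. I expect the cleanest route is: if $(\alpha_i,\lambda)\neq 0$ for some $i$, then $z$ together with its $\Uq$-translates generates a finite dimensional subalgebra on which left multiplication by $z$ is, on weight-space grounds, a nilpotent operator hitting $z$ itself, contradicting $z^2\neq 0$ (or more carefully, producing a nonzero $c$ with $c^2=0$). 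Pinning down exactly which element becomes square-zero is the step requiring care; the rest is bookkeeping with the coproduct formulas for $x_i,y_i,\tau(\lambda)$ recalled in Section~\ref{Usection}.
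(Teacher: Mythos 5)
There are two genuine gaps here, and together they mean the argument does not go through as written. The first is your opening claim that $Z(\cB)$ is a $\Uq$-submodule, which you need in order to find a simple submodule $V(\lambda)\subseteq Z(\cB)$ and take $z$ to be a \emph{central} lowest weight vector. The computation you sketch does not close: carrying it out one arrives at $(Xz)b=(X_{(1)}\sigma(X_{(3)})b)(X_{(2)}z)$, whereas $b(Xz)=(X_{(1)}\sigma(X_{(2)})b)(X_{(3)}z)$, and these agree for cocommutative Hopf algebras but not for $\Uq$; the ``$\dots$'' hides exactly the step that fails. (A posteriori the submodule property is true, but only because it follows from the lemma itself, so it cannot be an input.) The only fact available at the outset is the elementary one the paper uses: the weight components of a central element are again central, so one may assume $\tau(\lambda)a=q^{(\lambda,\mu)}a$ — but one may \emph{not} assume that $x_i\cdot a$ or $y_i\cdot a$ is central, nor that a central $z$ with $y_i\cdot z=0$ for all $i$ exists. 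Without that, your key identity $z(y_i\cdot b)=q^{-(\alpha_i,\wght(z))}(y_i\cdot b)z$ has no starting point. The second gap is the one you flag yourself: even granting such a $z$, the relation $\bigl(1-q^{-(\alpha_i,\wght(z))}\bigr)(y_i\cdot b)z=0$ only gives $(\alpha_i,\wght(z))=0$ once you exhibit some $b$ with $(y_i\cdot b)z\neq 0$, and the hypothesis that $\cB$ has no square-zero elements is never actually brought to bear on this.

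The paper's proof is arranged precisely so that the no-square-zero hypothesis applies to a \emph{single} element multiplied by itself. It reduces to $U_q(\slfrak_2)$ (acting through each $x_i,y_i,t_i$ separately), takes a central weight vector $a$ with $ta=ca$, chooses $n$ minimal with $x^{n+1}a=0$, and applies $x^n$ to the centrality identity $a(x^na)=(x^na)a$. Because $x(x^na)=0$, all cross terms in $\kow(x^n)$ vanish and one is left with $(x^na)^2=(q^{2n}c)^n(x^na)^2$; the hypothesis $(x^na)^2\neq 0$ then forces $c=q^{-2n}$, so $x^na$ is a $\Uq$-invariant lying in $x\cB$, contradicting $\cB^{inv}\cap x\cB=\{0\}$ unless $n=0$. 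To salvage your route you would need an analogous device — an identity in which only a product of an element with itself survives — rather than the inequality $(y_i\cdot b)z\neq 0$ for some unspecified $b$.
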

\begin{proof}
  Without loss of generality we may assume that
  $\Uq=U_q(\slfrak_2(\C))$. During this proof, to simplify notation,
  we write $x$, $y$, and $t$ instead of $x_1$, $y_1$, and
  $t_1=\tau(2\alpha_1)$, respectively.  
  As $\cB^{inv}$ is the isotypical component of $\cB$ corresponding to
  the trivial representation of $\Uq$ one has
  \begin{align}\label{trivial}
    \cB^{inv}\cap x\cB=\{0\}.
  \end{align}

  Consider now a central element $a\in Z(\cB)$. As all weight
  components of $a$ are themselves central we may assume that $ta=c\,a$
  for some $c\in \{q^m\,|\,m\in \Z\}$. Choose $n\in \N_0$ minimal such
  that $x^n a\neq 0$ and $x^{n+1}a=0$. We want to show that
  $n=0$. Assume on the contrary that $n\neq 0$. The centrality of $a$ implies
  \begin{align*}
    a(x^n a)=(x^n a)a.
  \end{align*}
  Acting by $x^n$ from the left and using $\kow(x)=x\ot 1 + t\ot x$ one obtains
  \begin{align*}
    (x^n a)(x^n a)=(q^{2n}c)^n(x^n a)(x^n a).
  \end{align*}  
  By assumption the nonzero element $b:=x^n a$ satisfies $b^2\neq 0$ and hence
  $c=q^{-2n}$ since $n\neq 0$. One thus obtains $t x^n a=x^n a$ which, together
  with $x^{n+1}a=0$ and the locally finiteness of $\cB$ implies
  $y(x^n a)=0$. Hence $x^n a\in \cB^{inv}\setminus\{0\}$ which by
  \eqref{trivial} is a contradiction to $n\neq 0$. In the same way one
  shows $ya=0$ and hence $c=1$ since $\cB$ is a locally finite, type
  one $\Uq$-module. This implies $a\in \cB^{inv}$.
\end{proof}
Observe that in the setting of Lemma \ref{ZBinv}, we do not necessarily
have the inclusion $\cB^{inv}\subseteq Z(\cB)$.
\begin{eg}
By Proposition \ref{lr-iso} one has an isomorphism
$\ltil_\rp:\cqg_\rp\rightarrow F_l(U)$ of $\Uq$-module algebras. Hence
Proposition \ref{ZAF=AFinv} for $\cA=\cqg$ implies the well known fact that
$Z(F_l(\Uq))=F_l(\Uq)^{inv}=\ltil_{\rp}(\cqg^{inv})$.  
\end{eg}
\begin{eg}
  Let $V$ be the vector representation of $U_q(\slfrak_n)$ and let
  $\cA(R_{V,V})$ be the corresponding FRT algebra considered in
  subsection \ref{FRT-RE}. It is well known that $\cA(R_{V,V})$ is a
  domain. Hence Proposition \ref{ZAF=AFinv} implies $Z(\cA(R_{V,V})_\rp)=\cA(R_{V,V})^{inv}$ for the
 corresponding reflection equation algebra $\cA(R_{V,V})_\rp$.
\end{eg}
\subsection{The comultiplication as an algebra homomorphism of
  $\cA^{inv}$}\label{comultHom}
The following proposition links the invariant subalgebra $\cA^{inv}$
in a different way with the covariantized algebras $\covAc$ and
$\covAcb$. Let $\mu:= \kow|_{\mathcal{A}^{inv}}$ denote the
restriction of the comultiplication $\kow$ to $\cA^{inv}$. We identify
its range with the product algebra $\covAc\ot \covAcb^\op$.  
\begin{prop}\label{AinvHom}
The map $\mu:=\kow|_{\mathcal{A}^{inv}}$ defines an injective
algebra homomorphism
\[\mu: \mathcal{A}^{inv}\rightarrow \covAc\otimes
\covAcb^\op.
\]
\end{prop}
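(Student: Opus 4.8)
The plan is to verify directly that $\mu=\kow|_{\cA^{inv}}$ respects the relevant multiplications, using the explicit description of the covariantized multiplications \eqref{rrrel1}--\eqref{rrrel2} together with the characterisation of invariants in Proposition \ref{invprops}. First I would observe that $\mu$ is injective because $\kow$ is coassociative and counital: applying $\vep\ot\id$ recovers $a$, so $\kow$ is injective on all of $\cA$, a fortiori on $\cA^{inv}$. Next I would note that for $a\in\cA^{inv}$, by Proposition \ref{invprops}.(\ref{enn1}) the product $ab$ in $\cA$ agrees with $a_{\rf'}b$ for every $b\in\cA$ and for both choices of $\rf'$; this is the key simplification that makes the iterated coproduct manageable.

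The heart of the argument is the computation. Given $a,b\in\cA^{inv}$, I want to show
\[
\kow(ab)=\kow(a)\,\big(\text{mult.\ in }\covAc\ot\covAcb^\op\big)\,\kow(b),
\]
i.e.\ writing $\kow(a)=a_{(1)}\ot a_{(2)}$, $\kow(b)=b_{(1)}\ot b_{(2)}$, that
$(ab)_{(1)}\ot(ab)_{(2)}=\big(a_{(1)}{}_{\rf}b_{(1)}\big)\ot\big(b_{(2)}{}_{\rfb_{21}}a_{(2)}\big)$,
where the second leg uses the opposite of $\covAcb$. Since $\kow$ is an algebra map into $\cA\ot\cA$, the left side is $a_{(1)}b_{(1)}\ot a_{(2)}b_{(2)}$. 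One then rewrites each $a_{(i)}b_{(i)}$ using \eqref{rel01}--\eqref{rel02} to pass to the covariantized products, or alternatively expands $a_{(1)}{}_{\rf}b_{(1)}$ and $b_{(2)}{}_{\rfb_{21}}a_{(2)}$ using \eqref{rrrel1} and \eqref{rrrel2}, and shows the two sides coincide. The crucial input is formula \eqref{form} of Proposition \ref{invprops}.(\ref{enn5}): the invariance of $a$ (and of $b$) lets one move the $r$-form arguments between the two tensor legs — in effect, the $\rf$-factors produced in the first leg are exactly cancelled or converted into the $\rfb_{21}$-factors needed in the second leg, because an $\rf$-braiding on the $a_{(1)}$-side corresponds, via \eqref{form} and the relation $\rfb(a,b)=\rf(\sigma(a),b)$, to the reversed $\rfb_{21}$-braiding on the $a_{(2)}$-side. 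Keeping careful track of which Sweedler components of $a$ and $b$ pair with which is the bookkeeping one must do.

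The main obstacle I anticipate is precisely this bookkeeping: the multiplication in $\covAc$ involves $\rf$ applied to pairs of Sweedler legs with one antipode inserted, $\covAcb$ carries $\rfb_{21}$, and taking the opposite algebra $\covAcb^\op$ reverses the order once more — so the naive expansion produces a six-fold coproduct of $a$ and of $b$ with four $r$-form factors, and one must repeatedly invoke \eqref{form}, the identities \eqref{r-mult}--\eqref{r-mult2}, \eqref{r-vep}, and \eqref{rrelation} to collapse it. The cleanest route is probably to avoid brute force: use Lemma \ref{compact-aFb} to express $a_{(1)}{}_{\rf}b_{(1)}=(\adrs[l^+_{\rf}(\sigma(b_{(1)}))]a_{(1)})b_{(2)}$ and similarly for the $\rfb_{21}$-product, then apply the ``in particular'' part of \eqref{enn5}, namely $c_{(1)}\ot\adrs(X)c_{(2)}=(\adrs(\sigma(X))c_{(1)})\ot c_{(2)}$ for invariant $c$, to shuttle the $l$-functional action from one leg of $b$ to the other and from one leg of $a$ to the other, after which everything telescopes. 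That the resulting element again lies in the domain of the product, and that the computation is independent of the choices, then follows formally.
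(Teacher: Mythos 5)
Your proposal is correct and, in its final form, is essentially the paper's own proof: the paper verifies
$a_{(1)}{}_{\rf}b_{(1)}\ot b_{(2)}{}_{\rfb_{21}}a_{(2)}=a_{(1)}b_{(1)}\ot a_{(2)}b_{(2)}$
exactly by expanding the first leg via Lemma \ref{compact-aFb} and shuttling the $l$-functional action between tensor legs using Proposition \ref{invprops}, parts (\ref{enn5}) and (\ref{enn6}), with injectivity immediate from counitality as you say. The only caveat is that your appeal to Proposition \ref{invprops}.(\ref{enn1}) as ``the key simplification'' plays no role (it cannot be applied to individual Sweedler legs, which are not themselves invariant); the actual inputs are parts (\ref{enn5}) and (\ref{enn6}).
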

\begin{proof}
  Assume that $a,b\in \cA^{inv}$. By  Proposition \ref{invprops}.(\ref{enn6})
 we have
  \begin{align}\label{aux1}
    b_{(1)}\ot b_{(2)}\ot \Psi(b_{(3)})=b_{(2)}\ot b_{(3)}\ot \sigma^2(\Psi(b_{(1)}))
  \end{align}
  Using Lemma \ref{compact-aFb} one now calculates
  \begin{align*}
     a_{(1)}{}_\rf b_{(1)}\ot  b_{(2)}{}_{\rfb_{21}} a_{(2)}&\stackrel{(\ref{aFbL1})}{=}
     \big(\adrs[l_\rf^+(\sigma(b_{(1)}))] a_{(1)}\big)b_{(2)}\ot b_{(3)}
     {}_{\rfb_{21}} a_{(2)}\\
    &\stackrel{(\ref{aux1})}{=}
   \big(\adrs[l_\rf^+(\sigma^{-1}(b_{(3)}))] a_{(1)}\big)b_{(1)}\ot b_{(2)}
     {}_{\rfb_{21}} a_{(2)}\\
    &\stackrel{\phantom{(\ref{aux1})}}{=}
   a_{(1)} b_{(1)}\ot b_{(2)} {}_{\rfb_{21}}
     \big(\adrs[l_\rf^+(b_{(3)})]a_{(2)}\big) \\
    &\stackrel{\phantom{(\ref{aFbL1})}}{=}  a_{(1)} b_{(1)}\ot  a_{(2)} b_{(2)}
  \end{align*}
  where we used Proposition \ref{invprops}.(\ref{enn5}) for the third equality
  and relations (\ref{rrelation}) and (\ref{aFbL2}) to obtain the last
  equality. This completes the proof of the proposition.
\end{proof}
\begin{rema}
  The theory of transmutation as introduced by S.~Majid includes the
  construction of a new algebra structure on $\covAcp^{\ot 2}$ which turns the
  comultiplication map $\kow$ of $\cA$ into and algebra homomorphism
  $\kow:\covAcp\rightarrow \covAcp^{\ot 2}$ \cite[10.3.1,
  Proposition 32]{b-KS}. In Majid's terminology
  this turns $\covAcp$ into a so-called braided bialgebra (not to be
  confused with the notion of a quasitriangular bialgebra). The algebra
  homomorphism in Proposition \ref{AinvHom} bears no immediate relation to the
  braided bialgebra structure on $\covAcp$.
\end{rema}
\subsection{Elementary properties of characters}\label{charProp}
To simplify later considerations on characters of the covariantized
algebra $\covAcp$, we first collect some general properties of algebra
homomorphisms from $\covAcp$ to an arbitrary algebra.
\begin{lem}\label{AlgHomLem}
  Let $\cB$ be a $\qfield$-algebra and $f:\cA\rightarrow \cB$ a linear
  map. \\
  {\bf (i)} The following are equivalent:
  \begin{enumerate}
    \item The map $f:\covAcp\rightarrow \cB$ is an algebra homomorphism.
    \item For all $a,b\in \cA$ one has
      \begin{align}
        f(ab)=\rp(\sigma(a_{(1)}),b_{(1)})\rp(a_{(3)},b_{(2)})\, f(a_{(2)})
         f(b_{(3)}).\label{fhom1}
      \end{align} 
    \item For all $a,b\in \cA$ one has
      \begin{align}
        f(ab)&= \rp(b_{(2)},a_{(1)}) \rp(\sigma(b_{(3)}),a_{(3)})\,
           f(b_{(1)}) f(a_{(2)}).\label{fhom2}
     \end{align} 
  \end{enumerate}
 {\bf (ii)} If $f:\cA_\rp\rightarrow \cB$ is an algebra homomorphism then the relations
     \begin{align}
        f(a)f(b)&= \langle
                \sigma(a_{(1)})a_{(3)},l_\rp^-(b_{(3)})\sigma^{-1}(l_\rp^+(b_{(1)}))\rangle
                \,f(b_{(2)})f(a_{(2)}) \label{fafb1}\\
                &= \langle
                \sigma(b_{(1)})b_{(3)},l_\rp^+(a_{(1)})\sigma(l_\rp^-(a_{(3)}))\rangle
                \,f(b_{(2)})f(a_{(2)}) \label{fafb2}
     \end{align}
     hold for all $a,b\in \cA$.
\end{lem}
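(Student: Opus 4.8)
\textbf{Proof plan for Lemma \ref{AlgHomLem}.}

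\emph{Part (i).} The plan is to verify the equivalences directly from the explicit formulas for the covariantized multiplication. For (1)$\Leftrightarrow$(2), I would start from \eqref{rel01}, which expresses $ab$ in terms of ${}_{\rf^\prime}$ and the $r$-form $\rf^\prime$; applying the (would-be) algebra homomorphism $f$ to both sides of \eqref{rel01} and using $f(x {}_{\rf^\prime} y)=f(x)f(y)$ gives exactly \eqref{fhom1}. Conversely, to recover (1) from (2) one inverts the relation: the multiplication $a {}_{\rf^\prime} b$ can be written in terms of the ordinary multiplication of $\cA$ via \eqref{rrrel1}, so substituting \eqref{fhom1} for each ordinary product and collapsing the $r$-form factors using the cocycle-type identities \eqref{r-mult}, \eqref{r-mult2}, \eqref{rh-hom}, \eqref{r-vep} (together with the convolution-inverse description $\rfb(a,b)=\rf(\sigma(a),b)$) yields $f(a {}_{\rf^\prime} b)=f(a)f(b)$. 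The equivalence (1)$\Leftrightarrow$(3) is handled identically using \eqref{rel02} and \eqref{rrrel2} in place of \eqref{rel01} and \eqref{rrrel1}. The bookkeeping here is routine but tedious: the main care needed is in tracking the Sweedler legs through the coassociativity rearrangements and in applying the antipode identities for $\rf^\prime$ in the correct tensor slots.

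\emph{Part (ii).} Here the natural route is to combine the two already-established homomorphism identities \eqref{fhom1} and \eqref{fhom2}. Since both express $f(ab)$ — and $\cA$ is a \emph{bialgebra} so $ab$ is genuinely the underlying associative product — one may equate the right-hand side of \eqref{fhom1} with the right-hand side of \eqref{fhom2} with the roles of $a$ and $b$ swapped, i.e.\ compute $f(ab)$ two ways: once via \eqref{fhom1} applied to the product $ab$, once via \eqref{fhom2} applied to $ba$ after first rewriting $ab$ in terms of $ba$ using the $r$-form relation \eqref{rh-hom} (this is where the ``braiding'' of the two factors enters). Rearranging to isolate $f(a)f(b)$ against $f(b)f(a)$ produces a relation in which the prefactor is a convolution product of $\rf^\prime$-values; rewriting these via the definitions \eqref{lpmdef}, \eqref{lexplicit} of the $l$-operators and the Hopf pairing \eqref{pairing} converts the scalar prefactor into the pairing $\langle \sigma(a_{(1)})a_{(3)},\, l_\rp^-(b_{(3)})\sigma^{-1}(l_\rp^+(b_{(1)}))\rangle$, giving \eqref{fafb1}; the symmetric manipulation (starting from the other order) gives \eqref{fafb2}.

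\emph{Expected main obstacle.} The conceptual content is light — everything follows from the defining formulas — so the real difficulty is purely combinatorial: keeping the Sweedler indices aligned while repeatedly applying coassociativity and the multiplicativity properties \eqref{r-mult}, \eqref{r-mult2} of $\rf^\prime$, and being careful that $\cA$ is only a bialgebra so that $\sigma$, $\sigma^{-1}$ appearing in \eqref{lpmdef}, \eqref{ldef}, \eqref{fafb1}, \eqref{fafb2} refer to the antipode of $\cqg$ pulled back along $\Psi$ (per the Convention preceding this subsection) rather than to a nonexistent antipode on $\cA$. I would organize the computation by first proving a clean intermediate identity rewriting a product of two $\rf^\prime$-factors as a single pairing against an $l$-functional, and then invoke it mechanically; this keeps each displayed line short and avoids the blank-line-in-\texttt{align} pitfall.
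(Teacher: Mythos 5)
Your part (i) is exactly the paper's argument: the paper verifies that condition (2) is equivalent, via the multiplicativity properties \eqref{r-mult}, \eqref{r-mult2}, \eqref{r-vep} of $\rp$ (applied after pushing everything into $\cqg$ through $\Psi$, where the antipode identities are available), to the single intermediate identity $\rp(a_{(1)},b_{(2)})\rp(a_{(3)},\sigma(b_{(1)}))f(a_{(2)}b_{(3)})=f(a)f(b)$, whose left-hand side is $f(a\,{}_{\rp}b)$ by \eqref{rrrel1}; the equivalence with (3) uses \eqref{rrrel2} in the same way. This is the computation you describe, and your cautionary remarks about Sweedler bookkeeping and about $\sigma$ living on $\cqg$ rather than on $\cA$ are exactly the right points of care.

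For part (ii) your underlying idea is also the paper's, but your description contains one misstep worth correcting: there is no need to ``rewrite $ab$ in terms of $ba$ using \eqref{rh-hom}.'' Both \eqref{fhom1} and \eqref{fhom2} already express $f(ab)$ for the \emph{same} product $ab$; formula \eqref{fhom2} is not \eqref{fhom1} with $a$ and $b$ interchanged but an independent expression (coming from \eqref{rrrel2} rather than \eqref{rrrel1}) in which the $f$-factors happen to appear in the order $f(b_{(1)})f(a_{(2)})$. Invoking \eqref{rh-hom} would only introduce extra $\rp$-factors to be cancelled later. The paper's route, which you should adopt, is: rewrite \eqref{rrrel1} as $a_{\rp}b=\langle\sigma(a_{(1)})a_{(3)},\sigma^{-1}(l_{\rp}^+(b_{(1)}))\rangle\, a_{(2)}b_{(2)}$ and \eqref{fhom2} as $f(ab)=\langle\sigma(a_{(1)})a_{(3)},l_{\rp}^-(b_{(2)})\rangle f(b_{(1)})f(a_{(2)})$ --- this is precisely the ``clean intermediate identity converting $\rp$-factors into a single pairing against an $l$-functional'' that you anticipate --- and then compose the two: $f(a)f(b)=f(a_{\rp}b)$ gives \eqref{fafb1}, and the symmetric computation with \eqref{rrrel2} and \eqref{fhom1} gives \eqref{fafb2}. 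With that correction your plan carries through and is essentially the published proof.
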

\begin{proof}
  {\bf (i)} Using properties (\ref{r-mult}), (\ref{r-mult2}), and (\ref{r-vep}) of the
  universal $r$-form $\rp$ one verifies that condition (2) is
  equivalent to the relation
  \begin{align*}
    \rp(a_{(1)},b_{(2)})\rp(a_{(3)},\sigma(b_{(1)}))f(a_{(2)}b_{(3)})=f(a)f(b)
  \end{align*}
  for all $a,b\in \cA$. In view of (\ref{rrrel1}) this relation is
  equivalent to $f:\covAcp\rightarrow \cB$ being an algebra
  homomorphism. The equivalence of properties (1) and (3) follows in
  the same way from (\ref{rrrel2}).

  \noindent {\bf (ii)} Note that \eqref{rrrel1} can be rewritten as
  \[a_{\rf^\prime}b=\langle\sigma(a_{(1)})a_{(3)},
    \sigma^{-1}(l_{\rf^\prime}^+(b_{(1)}))\rangle a_{(2)}b_{(2)}
  \]
  and \eqref{fhom2} as
  \[f(ab)= \langle
  \sigma(a_{(1)})a_{(3)},l_{\rf^\prime}^-(b_{(2)}) 
    \rangle f(b_{(1)})f(a_{(2)}).
  \]
  Combining these expressions we get for $a,b\in\cA$ the relation
  \begin{align*}
    f(a)f(b)=f(a_{\rf^\prime}b)= \langle\sigma(a_{(1)})a_{(3)},
    l_{\rf^\prime}^-(b_{(3)})\sigma^{-1}(l_{\rf^\prime}^+(b_{(1)}))
    \rangle f(b_{(2)})f(a_{(2)}),
  \end{align*}
  which coincides with \eqref{fafb1}. Formula \eqref{fafb2} is verified in a
  similar manner, now using \eqref{rrrel2} and \eqref{fhom1}
  instead of \eqref{rrrel1} and \eqref{fhom2}.
\end{proof}
Recall that we denote the set of characters of a unital
$\qfield$-algebra $\cB$ by $\cB^\wedge$. 
For any two linear functionals $f,g$ on a coalgebra $\cD$ we denote
their convolution product by $f\ast g$. Recall that the convolution
product turns the linear dual space $\cD^\ast$ into a unital algebra.
In the following lemma we view characters of $\cA_\rp$ as linear
functionals on the coalgebra $\cA$.  
\begin{lem}\label{characterLem}
{\bf (i)} Suppose $f\in\cA^\ast$ is convolution invertible with convolution inverse $\fbar\in\cA^\ast$.
Then $f\in \covAc^\wedge$ if and only if $\fbar\in \covAcb^\wedge$.

\noindent{\bf (ii)} If $\mathcal{A}$ is a Hopf algebra then any $f\in
\covAcp^\wedge$ is convolution invertible with convolution inverse $\fbar$
satisfying
      \begin{equation*}
        \begin{split}
          \fbar(a)&= 
\rf^\prime(\sigma(a_{(1)}),a_{(5)})f(\sigma(a_{(2)}))
\rf^\prime(\sigma^2(a_{(3)}),a_{(4)})\\
               &=
\rf^\prime(\sigma^2(a_{(2)}),a_{(3)})f(\sigma^{-1}(a_{(4)}))
\rf^\prime(\sigma(a_{(1)}),a_{(5)})
                    \quad \mbox{for all $a\in \cA$.}
        \end{split}
       \end{equation*}
\end{lem}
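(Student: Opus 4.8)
The plan is to prove part (i) first and then derive part (ii) from it by exhibiting an explicit candidate for the convolution inverse. For part (i), I would argue directly from the characterisation of algebra homomorphisms in Lemma \ref{AlgHomLem}(i): the condition that $f\in\covAc^\wedge$ is, by \eqref{fhom1} applied with $\rf'=\rf$, a closed identity relating $f(ab)$ to $f(a_{(2)})f(b_{(3)})$ weighted by values of $\rf$. The key step is to convolve this identity with $\fbar$ on both sides and use that $f\ast\fbar=\fbar\ast f=\vep$ together with the multiplicativity properties \eqref{r-mult}, \eqref{r-mult2} and \eqref{r-vep} of the universal $r$-form. After rearranging, the resulting identity should be exactly \eqref{fhom2} for $\fbar$ with $\rf'=\rfb_{21}$ — using the relation $\rfb_{21}(a,b)=\rf(\sigma(b),a)$ on a Hopf algebra, or more directly $\rfb_{21}(a,b)=\rfb(b,a)$ in the bialgebra setting, together with \eqref{r-sigma} to move antipodes around. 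By Lemma \ref{AlgHomLem}(i) (applied with $\rf'=\rfb_{21}$), this says precisely that $\fbar\in\covAcb^\wedge$. The converse is symmetric, since $\fbar$ is the convolution inverse of $f$ and $(\rfb_{21})$ plays the role for $\covAcb$ that $\rf$ plays for $\covAc$.

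For part (ii), when $\cA$ is a Hopf algebra I would first observe that a character $f$ of $\covAcp$ is in particular a nonzero linear functional with $f(1)=1$; the claim is that the stated formula
\[
\fbar(a):=\rf'(\sigma(a_{(1)}),a_{(5)})\,f(\sigma(a_{(2)}))\,\rf'(\sigma^2(a_{(3)}),a_{(4)})
\]
is a two-sided convolution inverse. I would verify $f\ast\fbar=\vep$ by writing out $f(a_{(1)})\fbar(a_{(2)})$, expanding $\fbar$, and then recognising the inner expression: since $f$ is multiplicative for $_\rf'$, the combination $f(a_{(1)})f(\sigma(a_{(3)}))$ weighted by the two $\rf'$-factors should reassemble, via \eqref{fhom1} or \eqref{fhom2} read backwards, into $f$ applied to something of the form $a_{(1)}{}_{\rf'}\sigma(a_{(2)})$ — and then one uses that $\sigma$ is a (braided) antipode-type map so that this collapses to $f(1)\vep(a)=\vep(a)$. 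Concretely this is the standard computation that the antipode of the braided Hopf algebra $\covAcp$ (the transmuted antipode in the sense of \cite[10.3.1]{b-KS}) composed with $f$ gives $\fbar$; the two displayed formulas for $\fbar$ correspond to the two equivalent forms \eqref{rrrel1} and \eqref{rrrel2} of the product, and the equality between them follows from \eqref{sigma2}-type manipulations with $\sigma^2$. The second form is obtained from the first by substituting $a_{(2)}\mapsto\sigma^{-1}(a_{(2)})$ after reindexing, using $\sigma\circ\sigma^{-1}=\id$ and the $\rf'$-multiplicativity to shuffle the tensor legs.

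The main obstacle I anticipate is purely bookkeeping: managing the five-fold (and in intermediate steps up to seven- or eightfold) Sweedler legs while keeping track of which entries of $\rf'$ carry antipodes and which carry $\sigma^2$. The conceptual content is light — everything is forced once one knows that $\covAcp$ is a braided Hopf algebra with a known antipode formula — but the risk is a sign or an inverse-antipode slip when passing between the $\rf=\rf$ and $\rf'=\rfb_{21}$ conventions. To contain this I would do part (i) abstractly (convolving identities, never touching explicit legs) and only expand legs in part (ii), and there I would check the candidate $\fbar$ against the short forms of the product in Lemma \ref{compact-aFb} rather than against the fully expanded \eqref{rrrel1}, since the $l$-functional formulation makes the antipode-cancellation transparent via $l_{\rf'}^\pm(a_{(1)})\sigma(l_{\rf'}^\pm(a_{(2)}))=\vep(a)$.
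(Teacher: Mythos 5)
Your part (i) is essentially the paper's proof: the paper likewise rewrites the character condition of Lemma \ref{AlgHomLem}(i) as the convolution identity $f\circ m=\rfb\ast(f\ot\vep)\ast\rf\ast(\vep\ot f)$ in $(\cA\ot\cA)^\ast$, takes convolution inverses of both sides (reversing the order of the factors), and recognises the result as condition \eqref{fhom2} for $\fbar$ with respect to $\rfb_{21}$, i.e.\ $\fbar\in\covAcb^\wedge$; your ``convolve with $\fbar$ and never touch explicit legs'' plan is exactly this.

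For part (ii) your route differs from the paper's and contains one claim that is false as literally stated. The paper does not invoke the braided Hopf algebra structure at all: it simply evaluates the character identities on the \emph{ordinary} products $\sigma(a_{(1)})a_{(2)}=\vep(a)1$ and $\sigma^{-1}(a_{(2)})a_{(1)}=\vep(a)1$, applying \eqref{fhom1} to the first to read off a left convolution inverse $\fbar_l$ and \eqref{fhom2} to the second to read off a right convolution inverse $\fbar_r$, which then coincide. Your proposed contraction into $f\bigl(a_{(1)}{}_{\rf^\prime}\sigma(a_{(2)})\bigr)$ followed by ``$\sigma$ is a (braided) antipode-type map so this collapses'' does not work: the ordinary antipode $\sigma$ is \emph{not} an antipode for the transmuted product, and $a_{(1)}{}_{\rf^\prime}\sigma(a_{(2)})\neq\vep(a)1$ in general. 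You must either contract to $f$ of an \emph{ordinary} product (which is what \eqref{fhom1}, \eqref{fhom2} read backwards actually give you, and where the genuine antipode axiom applies --- the paper's route), or genuinely use the braided antipode $\underline{\sigma}$ of $\covAcp$ from \cite[10.3.1]{b-KS}, which is a twisted modification of $\sigma$ and not $\sigma$ itself; in the latter case one checks that the displayed formula for $\fbar$ is exactly $f\circ\underline{\sigma}$ and that the braided antipode axiom for the unchanged coproduct $\kow$ yields $(f\circ\underline{\sigma})\ast f=\vep=f\ast(f\circ\underline{\sigma})$ after applying the multiplicative $f$. Either repair is routine, but as written the collapsing step is the one place where your argument would break.
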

\begin{proof}
{\bf (i)} Let $f\in \cA^\ast$ be convolution invertible with
convolution inverse $\fbar$. By Lemma \ref{AlgHomLem}.(i) the functional $f$ is a character
of $\cA_\rf$ if and only if $f(1)=1$ and
\begin{align}\label{f-condition}
  f\circ m=\rfb\ast(f\ot \vep)\ast \rf\ast(\vep\ot f)
\end{align}
where $m$ denotes the multiplication of $\cA$ and $\ast$ is the
convolution product of $(\cA\ot \cA)^\ast$. Note that $f$ satisfies
(\ref{f-condition})  if an only if $\fbar$ satisfies the relation
\begin{align*}
 \fbar\circ m
  =(\vep\ot \fbar)\ast\rfb\ast(\fbar\ot \vep)\ast\rf.
\end{align*}
The latter relation is equivalent to
\begin{align}\label{characterFb}
  \fbar(ab)=\fbar(b_{(1)})\rfb_{21}(b_{(2)},a_{(1)})\fbar(a_{(2)})\rfb_{21}(\sigma(b_{(3)}),a_{(3)})
\end{align}
for all $a,b\in \cA$, which again by Lemma \ref{AlgHomLem}.(i) holds
if and only if $\fbar:\covAcb\rightarrow \qfield$ is an algebra
homomorphism. Moreover, $f(1)=1$ if and only if $\fbar(1)=1$.

{\bf (ii)} If $\cA$ is a Hopf algebra and $f\in \covAcp^\wedge$ then we can define two functionals $\fbar_r$ and
$\fbar_l$ on $\cA$ by
\begin{align*}
  \fbar_r(a)&=\rp(\sigma^2(a_{(2)}),a_{(3)})f(\sigma^{-1}(a_{(4)}))\rp(\sigma(a_{(1)}),a_{(5)}),\\
  \fbar_l(a)&= \rp(\sigma(a_{(1)}),a_{(5)})f(\sigma(a_{(2)}))\rp(\sigma^2(a_{(3)}),a_{(4)})
\end{align*}
for all $a\in \cA$. Applying (\ref{fhom2}) to
$f(\sigma^{-1}(a_{(2)})a_{(1)})$ one obtains that $\fbar_r$ is a right
convolution inverse of $f$. Similarly, applying (\ref{fhom1}) to
$f(\sigma(a_{(1)})a_{(2)})$ one obtains that $\fbar_l$ is a left
convolution inverse of $f$. Hence $f$ is convolution invertible with
inverse $\fbar=\fbar_r=\fbar_l$.
\end{proof}
\subsection{Characters and universal cylinder forms}\label{CylForms}

As an application of Lemma \ref{AlgHomLem}.(i) we now explain that
in our setting a universal cylinder form is the same as a character of a
covariantized bialgebra.
Recall the notion of a universal cylinder form defined in subsection \ref{CylFormDef}.
Restrict again to the case where $\field=\qfield$ and
$(A,\rf_A)$ is of the form considered in Subsection \ref{framework}.
To obtain the desired identification we need the linear twist
functional $u:\cqg\rightarrow \qfield$ defined by 
\begin{align*}
  u(c):=q^{-(\mu+2\rho,\mu)} \vep(c)
\end{align*}
for all $c\in C^{V(\mu)}$, $\mu\in P^+$. Note that by construction 
\begin{align}\label{u-homom}
  c_{(1)}u(c_{(2)})=u(c_{(1)})c_{(2)}
\end{align}
for all $c\in \cqg$. Moreover, it is a well known analogue of
\cite[8.4.3, Proposition 22]{b-KS} that in our setting the relation
\begin{align}\label{rSquared}
  \rf(d_{(1)},c_{(1)}) \rf(c_{(2)},d_{(2)})  u(c_{(3)}d_{(3)})=u(c)u(d)
\end{align}
holds for all $c,d\in \cqg$. Following the general conventions of
Subsection \ref{framework} we consider $u$ as a functional on $\cA$
and again suppress the
homomorphism $\Psi$ in our notation. We are now able to formulate the desired identification.
Let $\covAc^\times$ denote the set of convolution invertible
characters on $\covAc$. Note that $(\cA^\op,\rf_{21})$ is a
coquasitriangular bialgebra.
\begin{prop}\label{CFA}
  The map
  \begin{align}\label{CFAmap}
    CF(\cA^\op,\rf_{21})\rightarrow \covAc^\times,\qquad f\mapsto
    f\ast u
  \end{align}
  is a bijection.
\end{prop}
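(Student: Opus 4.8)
The plan is to verify directly that the assignment $f\mapsto f\ast u$ sends universal cylinder forms for $(\cA^\op,\rf_{21})$ to convolution invertible characters of $\covAc$, and that it is invertible with inverse $g\mapsto g\ast\ubar$, where $\ubar$ is the convolution inverse of $u$. First I would record the basic properties of the twist functional $u$: it is convolution invertible (since on each $C^{V(\mu)}$ it is a nonzero scalar times $\vep$), it is central for convolution in the sense of \eqref{u-homom}, and it satisfies the cocycle-type identity \eqref{rSquared}. From \eqref{u-homom} one also gets $\ubar(c_{(1)})c_{(2)}=c_{(1)}\ubar(c_{(2)})$, and applying $u$ and $\ubar$ to \eqref{rSquared} appropriately yields the ``inverse'' identity $\rfb(c_{(1)},d_{(1)})\rfb(d_{(2)},c_{(2)})\,\ubar(c_{(3)}d_{(3)})=\ubar(c)\ubar(d)$ (here using that $\rfb$ is the convolution inverse of $\rf$, together with \eqref{rh-hom} to commute the arguments); equivalently $\ubar(cd)=\rf(c_{(1)},d_{(1)})\rf(d_{(2)},c_{(2)})\ubar(c_{(3)})\ubar(d_{(3)})$. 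These are the only facts about $u$ I will need.

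Next I would translate both sides into the criterion of Lemma~\ref{AlgHomLem}.(i). A linear functional $g$ on $\cA$ with $g(1)=1$ lies in $\covAc^\wedge$ if and only if it satisfies \eqref{fhom1} (with $\rf'=\rf$). On the other hand, unwinding the definition of $CF(\cA^\op,\rf_{21})$: the multiplication of $\cA^\op$ is $b\ot a\mapsto ab$ and its $r$-form is $\rf_{21}(a,b)=\rf(b,a)$, so writing out \eqref{cyleq1} for $(\cA^\op,\rf_{21})$ shows that $f\in CF(\cA^\op,\rf_{21})$ exactly when $f$ is convolution invertible and $f(ab)=f(b_{(1)})\rf(a_{(1)},b_{(2)})f(a_{(2)})\rf(b_{(3)},a_{(3)})$ for all $a,b\in\cA$ (and the second relation \eqref{cyleq2} gives the ``op'' version, which will be consistent by the same kind of argument as in Lemma~\ref{AlgHomLem}.(i)). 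Now set $g=f\ast u$, so $g(a)=f(a_{(1)})u(a_{(2)})$. I would plug this into \eqref{fhom1} and, using the multiplicativity rules \eqref{r-mult}, \eqref{r-mult2}, \eqref{r-vep} for $\rf$, the identity \eqref{rSquared} for $u$, and \eqref{u-homom} to slide the $u$-factors past the matrix-coefficient legs, reduce the target identity for $g$ to precisely the cylinder-form identity for $f$. The convolution invertibility of $g=f\ast u$ is automatic from that of $f$ and $u$ (with inverse $\ubar\ast\fbar$), so the map \eqref{CFAmap} is well defined into $\covAc^\times$; conversely, given $g\in\covAc^\times$, the same computation run backwards (multiplying by $\ubar$ instead of $u$, and using the inverse identity for $\ubar$ derived above) shows $g\ast\ubar\in CF(\cA^\op,\rf_{21})$. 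Since $u\ast\ubar=\vep=\ubar\ast u$ is the convolution unit, $f\mapsto f\ast u$ and $g\mapsto g\ast\ubar$ are mutually inverse, proving bijectivity.

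The main obstacle will be the bookkeeping in the central computation: matching \eqref{fhom1} for $f\ast u$ against the cylinder identity for $f$ requires carefully tracking six or seven Sweedler legs of a single $a$ (and of $b$), deciding in which order to apply \eqref{rSquared} versus \eqref{u-homom}, and making sure the $r$-form arguments that appear are exactly those that \eqref{r-mult}/\eqref{r-mult2} produce after the reorganisation; a wrong grouping leads to $\rf$-factors with the tensor legs in the wrong slots. A secondary subtlety is that $\cA$ is only a bialgebra, not a Hopf algebra, so I must avoid using the antipode anywhere in this argument — in particular the only place $\sigma$ enters \eqref{fhom1} is through the first leg of $\rf$, which is fine since $\rf$ is defined on all of $\cA$ via $\Psi$ into the Hopf algebra $\cqg$, where $\sigma$ does exist; I would be careful to keep all antipodes inside $\rf$-arguments. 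Once the leg-matching is set up correctly the identity \eqref{rSquared} does essentially all the work, so I expect this to be a somewhat lengthy but conceptually routine verification.
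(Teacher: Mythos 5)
Your proposal follows the paper's own proof essentially step for step: well-definedness via Lemma~\ref{AlgHomLem}.(i), the central computation absorbing $u$ through \eqref{rSquared} and \eqref{u-homom} to match the cylinder identity for $f$ with the character criterion for $f\ast u$, and bijectivity via the analogous computation for $g\mapsto g\ast\ubar$. One correction to your translation step: passing from \eqref{cyleq1} for $(\cA^\op,\rf_{21})$ back to $(\cA,\rf)$ requires two swaps (one for the opposite multiplication, one for $\rf_{21}(x,y)=\rf(y,x)$), and you have performed only one, so your unwound condition has both $\rf$-arguments transposed; the correct form is $f(ab)=f(b_{(1)})\,\rf(b_{(2)},a_{(1)})\,f(a_{(2)})\,\rf(a_{(3)},b_{(3)})$, whose shape then lines up naturally with \eqref{fhom2} rather than \eqref{fhom1} once the $u$-factors are absorbed.
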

\begin{proof} 
  Note first that $u$ is convolution invertible with inverse $\ubar$
  defined by  
  \begin{align*}
    \ubar(c):=q^{(\mu+2\rho,\mu)} \vep(c)\qquad \mbox{for all $c\in C^{V(\mu)}$, $\mu\in
    P^+$.}
  \end{align*}
  Given $f\in  CF(\cA^\op,\rf_{21})$ define $g:=f\ast u$ and note that
  $g$ is convolution invertible because both $f$ and $u$ are. Using
  (\ref{cyleq1}) and (\ref{rSquared}) one calculates
  \begin{align*}
    g(ab)& \stackrel{\phantom{(\ref{u-homom})}}{=} f(a_{(1)}b_{(1)})u(a_{(2)}b_{(2)})\\
         & \stackrel{\phantom{(\ref{u-homom})}}{=}  
              f(b_{(1)})\rf_{21}(a_{(1)},b_{(2)}) f(a_{(2)}) \rf_{21}(b_{(3)},a_{(3)})\\
        &\qquad\qquad\times \rf(\sigma(a_{(4)}),b_{(4)}) \rf(\sigma(b_{(5)}),a_{(5)})
              u(a_{(6)})u(b_{(6)})\\
         &\stackrel{(\ref{u-homom})}{=}f(b_{(1)})u(b_{(2)})\rf(b_{(3)},a_{(1)})
         f(a_{(2)})u(a_{(3)})\rf(\sigma(b_{(4)}),a_{(4)})\\
         &\stackrel{\phantom{(\ref{u-homom})}}{=}  g(b_{(1)})\rf(b_{(2)},a_{(1)})
         g(a_{(2)})\rf(\sigma(b_{(3)}),a_{(3)}).
  \end{align*}
  Hence Lemma \ref{AlgHomLem}.(i) implies $g\in \covAc^\times$.
  One checks analogously to the above calculation that the
  map $\covAc^\times\rightarrow CF(\cA^\op,\rf_{21})$, $g\mapsto g\ast
  \ubar$ is well defined. Hence (\ref{CFAmap}) is indeed a bijection.
\end{proof}

\section{Noumi coideal subalgebras of $\Uq$} \label{Noumi}
The essential ingredient in Noumi's construction of quantum symmetric
pairs (e.g.~\cite{a-Noumi96}, \cite{a-NS95}, \cite{a-Dijk96}, \cite{a-NDS97}, \cite{a-DS99}) is a
solution of the reflection equation. We have seen in
Subsection \ref{FRT-RE} that a solution of the reflection equation is the
same as a character of the reflection equation algebra, which in turn
is obtained from the FRT algebra via transmutation. In this section we formulate a
generalised Noumi type construction of coideal subalgebras of $\Uq$ in
terms of characters of the covariantized algebra $\covAcp$. 
\subsection{$\Uq$-comodule algebra structure on $\covAcp$}
The left locally finite part $F_l(\Uq)$ is a left coideal subalgebra
of $\Uq$. Using the $l$-functionals from Subsection \ref{locfinl} and
Proposition \ref{CalderoProp} the coproduct of elements in $F_l(\Uq)$ can
be written as
\begin{align}\label{ltil-kow}
  \kow(\ltil_{\rp}(a))=l_\rp^+(a_{(1)})\sigma( l_\rp^-(a_{(3)}))\ot \ltil_\rp(a_{(2)})
\end{align}
for all $a\in \cqg$ \cite[10.1.3, Proposition 11]{b-KS}. The left
$\Uq$-comodule structure $\kow|_{F_l(\Uq)}$ of $F_l(\Uq)$ can 
be lifted to a left $\Uq$-comodule structure on the covariantized
algebra $\covAcp$. The map $\ltil_\rp:\cA_\rp\rightarrow F_l(\Uq)$ will turn
out to be $\Uq$-comodule algebra homomorphisms. 
Recall our conventions concerning $l$-functionals on $\cA$ from
Subsection \ref{framework} and define a linear map
\begin{align}
  d_\rp&:\covAcp \rightarrow \Uq\ot \cA_\rp, &
  d_\rp(a)&:=l_\rp^+(a_{(1)})\sigma(l_\rp^-(a_{(3)}))\ot a_{(2)}.\label{dr-def}
\end{align}
It is checked that $d_\rp$ defines a $\Uq$-comodule
structure on $\cA$. According to the following lemma, this structure is compatible with the algebra structure of
$\cA_\rp$.
\begin{lem}\label{comod-alg}
  The left coaction $d_\rp$ turns $\covAcp$ into a left $\Uq$-comodule
  algebra. The map $\ltil_\rp:\covAcp\rightarrow F_l(\Uq)$ is a
  homomorphism of left $\Uq$-comodule algebras. 
\end{lem}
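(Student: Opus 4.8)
The plan is to verify directly that $d_{\rf^\prime}$ is an algebra homomorphism from $\covAcp$ to the tensor product algebra $\Uq\ot\covAcp$, and then that $\ltil_{\rf^\prime}$ intertwines $d_{\rf^\prime}$ with the restricted coproduct $\kow|_{F_l(\Uq)}$. First I would record that the coassociativity and counit axioms for $d_{\rf^\prime}$ follow from the corresponding properties of the maps $l_{\rf^\prime}^\pm$ together with the multiplicativity relations \eqref{lpmmult} and the identity $\vep(l_{\rf^\prime}^\pm(a))=\vep(a)$; this is the same bookkeeping that proves \eqref{ltil-kow} gives a genuine $\Uq$-comodule structure on $F_l(\Uq)$, so it is essentially a copy of \cite[10.1.3, Proposition 11]{b-KS} transported along $\Psi$ using our conventions from Subsection \ref{framework}. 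The content is the multiplicativity $d_{\rf^\prime}(a\,_{\rf^\prime}b)=d_{\rf^\prime}(a)d_{\rf^\prime}(b)$, where the product on the right is the tensor product of the algebra $\Uq$ with the algebra $\covAcp$.

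To establish multiplicativity I would start from the definition \eqref{dr-def}, apply $d_{\rf^\prime}$ to the expression \eqref{rrrel1} for $a\,_{\rf^\prime}b$, and expand using that $l_{\rf^\prime}^\pm$ are anti-multiplicative in the sense of \eqref{lpmmult}, that they are coalgebra maps into $\Uq$ so that $\kow(l_{\rf^\prime}^\pm(a))$ unpacks via \eqref{lpmdef}, and that $\sigma$ reverses products. The key commutation input is the braid-type relation \eqref{lmprtt} between $l_{\rf^\prime}^-$ and $l_{\rf^\prime}^+$, which is exactly what is needed to move the $\Uq$-factors past each other and assemble $l_{\rf^\prime}^+((a\,_{\rf^\prime}b)_{(1)})\sigma(l_{\rf^\prime}^-((a\,_{\rf^\prime}b)_{(3)}))$ in the first tensor leg while the second leg reassembles into $a_{(2)}\,_{\rf^\prime}b_{(2)}$. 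A cleaner route, which I would prefer to present, is to observe that $d_{\rf^\prime}$ is the pushforward along $\Psi$ and $\theta$ of the right $D(\cqg,\rf^\prime)$-comodule algebra structure \eqref{D-comod} on $\covAcp$ composed with the bialgebra antihomomorphism-type data for $\cqg$: more precisely, $d_{\rf^\prime}$ is obtained from \eqref{D-comod} by applying $(l_{\rf^\prime}^+\ot\id\ot (\sigma\circ l_{\rf^\prime}^-))$ and multiplying the first and last legs, and since each of \eqref{D-comod}, the $l$-operators, and the multiplication of $\Uq$ is compatible with the relevant algebra structures (the first by Proposition-Definition \ref{covariantized-pd} transported via $\Psi$ as in the proof of Lemma \ref{transferAr}, the second by \eqref{lpmmult}), multiplicativity of $d_{\rf^\prime}$ follows formally. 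This avoids writing out the full Sweedler computation.

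For the second assertion, that $\ltil_{\rf^\prime}$ is a homomorphism of left $\Uq$-comodule algebras, I would combine three facts already available: $\ltil_{\rf^\prime}:\covAcp\to F_l(\Uq)$ is an algebra homomorphism by Proposition \ref{lr-iso} (its proof, which invokes \cite[10.1.3 (30)]{b-KS}, applies verbatim after transport along $\Psi$); formula \eqref{ltil-kow} states precisely that $(\id\ot\ltil_{\rf^\prime})\circ d_{\rf^\prime}=\kow\circ\ltil_{\rf^\prime}$ once one reads $d_{\rf^\prime}$ off \eqref{dr-def}; and $F_l(\Uq)$ is a left coideal subalgebra of $\Uq$ so $\kow|_{F_l(\Uq)}$ is the ambient left $\Uq$-comodule algebra structure. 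Thus $\ltil_{\rf^\prime}$ intertwines the coactions and is a ring map, which is the claim.

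The main obstacle I anticipate is purely notational rather than conceptual: keeping the many Sweedler indices aligned when one proves multiplicativity of $d_{\rf^\prime}$ by hand, and in particular correctly tracking the occurrences of $\sigma$ and $\sigma^{-1}$ introduced by $l_{\rf^\prime}^-$ and by passing between $\rf$ and $\rfb_{21}$ via \eqref{rrelation}. Because of this I would lean on the conceptual pushforward argument from \eqref{D-comod} as the primary proof, relegating any explicit Sweedler verification to a remark or to a reference to \cite[10.1.3]{b-KS} and \cite{a-Mud07}. No genuinely new estimate or construction is required; everything reduces to the established properties of the $l$-operators and of transmutation.
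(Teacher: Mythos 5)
Your fallback route---the direct Sweedler computation applying $d_{\rf^\prime}$ to \eqref{rrrel1} and commuting the $l$-functionals---is exactly the paper's proof; note only that the paper needs \emph{both} exchange relations, \eqref{lpmrtt} to reorder $l^+_{\rf^\prime}(b_{(3)})l^+_{\rf^\prime}(a_{(2)})$ and \eqref{lmprtt} to move $\sigma(l^-_{\rf^\prime}(a_{(4)}))$ past $l^+_{\rf^\prime}(b_{(1)})$, whereas you name only \eqref{lmprtt}. The paper also deduces the second assertion more economically than you do: since $\ltil_{\rf^\prime}=(\id\ot\vep)\circ d_{\rf^\prime}$ and $\vep$ is a character of $\covAcp$, the algebra-homomorphism property of $\ltil_{\rf^\prime}$ is a formal consequence of the first statement together with the comparison of \eqref{ltil-kow} with \eqref{dr-def}; your appeal to Proposition \ref{lr-iso} transported along $\Psi$ also works but is not needed.

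The route you say you would prefer to present, however, is not complete as stated. Pushing \eqref{D-comod} forward to $\Uq$ gives multiplicativity of $d_{\rf^\prime}$ only once you know that the assembled map $D(\cqg,\rf^\prime)\rightarrow\Uq$, $b\ot a\mapsto l^+_{\rf^\prime}(b)\,\sigma(l^-_{\rf^\prime}(a))$, is an algebra homomorphism for the quantum double multiplication of \cite[8.2.1]{b-KS}. The anti-multiplicativity \eqref{lpmmult} that you cite only handles the two canonical subalgebras $\cqg^{\op,\cop}\ot 1$ and $1\ot\cqg$; the cross terms $(1\ot a)(b^\prime\ot 1)$ in the double produce exactly the $\rf^\prime$- and $\mathbf{s}$-factors that must be absorbed by \eqref{lmprtt} (and the reordering within each leg by \eqref{lpmrtt}). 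So the ``formal'' argument does not avoid the computation; it relocates it into an unproved lemma about the double, which is essentially equivalent to the Sweedler calculation you were trying to bypass. If you want to present the conceptual version, you must either prove that homomorphism property or cite it explicitly (it is the dual form of the standard statement that $\Uq$ is a quotient of the quantum double of its Borel halves); otherwise fall back on the direct computation, which is what the paper does.
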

\begin{proof}
  Note that the second statement follows immediately from the first statement
  and from comparison of \eqref{ltil-kow} with \eqref{dr-def}.
  We now verify that $(\covAcp,d_\rp)$ is a comodule algebra. For any $a,b\in \cA$ one calculates
  \begin{align*} 
    &d_\rp(a_\rp b)=\\
    &\stackrel{(\ref{rrrel1})}{=}l_\rp^+(a_{(2)}b_{(3)})\sigma(l_\rp^-(a_{(4)}b_{(5)}))
    \rp(a_{(5)},\sigma(b_{(1)}))\rp(a_{(1)},b_{(2)})\ot
    a_{(3)}b_{(4)}\\ 
&\stackrel{(\ref{lpmmult})}{=}l_\rp^+(b_{(3)})l_\rp^+(a_{(2)})\sigma(l_\rp^-(a_{(4)}))\sigma(l_\rp^-(b_{(5)}))
    \rp(a_{(5)},\sigma(b_{(1)}))\rp(a_{(1)},b_{(2)})\ot
    a_{(3)}b_{(4)}\\ 
&\stackrel{(\ref{lpmrtt})}{=}l_\rp^+(a_{(1)})l_\rp^+(b_{(2)})\sigma(l_\rp^-(a_{(4)}))\sigma(l_\rp^-(b_{(5)}))
    \rp(a_{(5)},\sigma(b_{(1)}))\rp(a_{(2)},b_{(3)})\ot
    a_{(3)}b_{(4)}\\ 
&\stackrel{(\ref{lmprtt})}{=}l_\rp^+(a_{(1)})\sigma(l_\rp^-(a_{(5)}))l_\rp^+(b_{(1)})\sigma(l_\rp^-(b_{(5)}))
    \rp(a_{(4)},\sigma(b_{(2)}))\rp(a_{(2)},b_{(3)})\ot
    a_{(3)}b_{(4)}\\ 
&\stackrel{(\ref{rrrel1})}{=}l_\rp^+(a_{(1)})\sigma(l_\rp^-(a_{(3)}))l_\rp^+(b_{(1)})\sigma(l_\rp^-(b_{(3)}))
    \ot a_{(2)}{}_\rp b_{(2)}\\ 
&\stackrel{\phantom{(\ref{aFb1})}}{=} d_\rp(a)d_\rp(b)
  \end{align*}
which completes the proof of the lemma.
\end{proof}

\subsection{Coideal subalgebras of $\Uq$}
Consider $f\in \covAcp^\wedge$. Analogously to the construction of $\cqof$ in subsection \ref{q-conjugacy}
one can use the coaction $d_\rp$ defined by (\ref{dr-def}) to obtain an
algebra homomorphism
\begin{align*}
    d_f:\covAcp\rightarrow \Uq,\qquad d_f(a):=(\id\ot f)d_\rp(a).
  \end{align*}
We define 
\begin{align*} 
  B_f:=d_f(\cA_\rp).
\end{align*}
Note that $d_f$ and hence $B_f$ depend on our choice of universal
$r$-form $\rp=\rf$ or $\rp=\rfb_{21}$. This dependence is only
implicit in our notation via the choice of the character $f$.
\begin{lem}\label{NoumiProps}
  For any character $f\in\covAcp^\wedge$ the following hold:
  \begin{enumerate}
    \item $B_f$ is a left coideal subalgebra of $\Uq$. \label{g1}
    \item $B_f=\{l_\rp^+(a_{(1)})f(a_{(2)})\sigma(l_\rp^-(a_{(3)}))\,|\,a\in
    \cA\}$.\label{g2}
    \item $d_f(\cA^{inv})\subseteq Z(B_f)$.\label{g3}
  \end{enumerate}
\end{lem}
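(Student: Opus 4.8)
\textbf{Proof plan for Lemma \ref{NoumiProps}.}

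\emph{Part (1).} The plan is to show first that $B_f$ is a subalgebra and then that it is a left coideal. Being a subalgebra is immediate: by Lemma \ref{comod-alg} the coaction $d_\rp$ is an algebra homomorphism $\covAcp\to\Uq\ot\covAcp$, and $f:\covAcp\to\qfield$ is an algebra homomorphism, so $d_f=(\id\ot f)\circ d_\rp$ is an algebra homomorphism $\covAcp\to\Uq$; hence its image $B_f$ is a subalgebra of $\Uq$. For the coideal property I would use that $d_\rp$ is a coaction, i.e.\ $(\kow\ot\id)\circ d_\rp=(\id\ot d_\rp)\circ d_\rp$. Applying $\id\ot\id\ot f$ to both sides and using that $d_f=(\id\ot f)d_\rp$ gives $\kow(d_f(a))=\sum d_\rp(a)_{(1)}{}'\ot d_f\big(d_\rp(a)_{(2)}{}'\big)$, which lands in $\Uq\ot B_f$. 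Thus $\kow(B_f)\subseteq\Uq\ot B_f$, i.e.\ $B_f$ is a left coideal subalgebra.

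\emph{Part (2).} This is simply unwinding the definitions. By \eqref{dr-def} we have $d_\rp(a)=l_\rp^+(a_{(1)})\sigma(l_\rp^-(a_{(3)}))\ot a_{(2)}$, and applying $\id\ot f$ gives
\[
d_f(a)=l_\rp^+(a_{(1)})f(a_{(2)})\sigma(l_\rp^-(a_{(3)})),
\]
so $B_f=d_f(\covAcp)$ is exactly the set described.

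\emph{Part (3).} Here I would combine Proposition \ref{invprops}.(\ref{enn2}), which says $\cA^{inv}\subseteq Z(\covAcp)$, with the fact that $d_f$ is an algebra homomorphism. If $a\in\cA^{inv}$ and $b\in\cA$ arbitrary, then in $\covAcp$ we have $a_\rp b=b_\rp a$, hence $d_f(a)d_f(b)=d_f(a_\rp b)=d_f(b_\rp a)=d_f(b)d_f(a)$; since $d_f$ is surjective onto $B_f$, $d_f(a)$ commutes with every element of $B_f$, i.e.\ $d_f(a)\in Z(B_f)$. The only point needing a word is that $d_f$ genuinely maps $a_\rp b$ to the product $d_f(a)d_f(b)$ in $\Uq$, which is exactly the content of $d_f$ being an algebra homomorphism as established for Part (1). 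None of the three parts presents a real obstacle; the mild care needed is in Part (1) to check that the coaction-compatibility genuinely forces $\kow(B_f)\subseteq\Uq\ot B_f$ rather than merely $\Uq\ot\covAcp$, but this follows formally from $d_f$ factoring through $d_\rp$.
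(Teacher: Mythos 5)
Your argument is correct and is essentially the paper's own proof, just with the details written out: the paper likewise derives (1) and (2) directly from the definition of $d_f$ together with Lemma \ref{comod-alg}, and (3) from the inclusion $\cA^{inv}\subseteq Z(\covAcp)$ of Proposition \ref{invprops}. Your explicit verification of the coideal property via coassociativity of $d_\rp$ is exactly the step the paper leaves implicit.
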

\begin{proof}
  The first two claims hold by definition of $d_f$ and Lemma \ref{comod-alg}. The last claim
  follows from the relation $\cA^{inv}\subseteq Z(\covAcp)$ in Proposition \ref{invprops}.(\ref{enn2}).
\end{proof}  
We call $B_f$ the \textit{Noumi coideal subalgebra} of $\Uq$
corresponding to the character $f$ of $\covAcp$. 

Property (\ref{g2}) of Lemma \ref {NoumiProps} implies in particular
that $B_f=\qfield 1$ if $\Psi(\cA)=\qfield 1$. We give a slight generalisation
of this fact which will be useful in the proof of Proposition
\ref{inLocFinProp}.
To this end note that if $\gfrak=\gfrak_1\oplus \gfrak_2$ is a
decomposition into semisimple Lie subalgebras then there is a
tensor product decomposition of Hopf algebras $\Uq\cong U_1\ot U_2$
where $U_i\cong U_q(\gfrak_i)$ for $i=1,2$. Moreover, in this case
$\cqg\cong\qfield_q[G_1]\ot \qfield_q[G_2]$ for the semisimple, simply connected
affine algebraic groups $G_1$ and $G_2$ corresponding to $\gfrak_1$
and $\gfrak_2$, respectively. Finally, if $\rf_1$ and $\rf_2$ denote
the universal $r$-forms of $\qfield_q[G_1]$ and $\qfield_q[G_2]$,
respectively, then $\rf=\rf_1\ot \rf_2$. 
\begin{lem}\label{semiLem}
  Assume that $\gfrak=\gfrak_1\oplus \gfrak_2$ and $U\cong U_1\ot U_2$ as
  above. If $\Psi(\cA)$ is trivial as a left (or right) $U_1$-module
  with respect to the regular $\Uq$-action on $\cqg$ then $B_f\subseteq
  1\ot U_2 \subseteq \Uq$. 
\end{lem}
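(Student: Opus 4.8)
The plan is to use the explicit description of $B_f$ from Lemma \ref{NoumiProps}.(\ref{g2}), namely $B_f=\{l_\rp^+(a_{(1)})f(a_{(2)})\sigma(l_\rp^-(a_{(3)}))\mid a\in\cA\}$, together with the tensor decomposition $U\cong U_1\ot U_2$ and $\cqg\cong\qfield_q[G_1]\ot\qfield_q[G_2]$, $\rf=\rf_1\ot\rf_2$. First I would observe that under these identifications the $l$-operators factor: for $c=c_1\ot c_2\in\qfield_q[G_1]\ot\qfield_q[G_2]$ one has $l_\rp^\pm(c)=l_{\rf_1,\rp}^\pm(c_1)\ot l_{\rf_2,\rp}^\pm(c_2)$, and the antipode also respects the tensor decomposition. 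Hence every element of $B_f$ lies in (the image in $U_1\ot U_2$ of) $l_{\rf_1}^\pm(\qfield_q[G_1])\sigma(l_{\rf_1}^\mp(\qfield_q[G_1]))\ot(\text{something in }U_2)$, so it is enough to show that the $U_1$-leg is always a scalar, i.e.\ lands in $\qfield 1\subseteq U_1$.

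The key point is the hypothesis that $\Psi(\cA)$ is trivial as a left (or right) $U_1$-module under the regular action on $\cqg$. The plan is to translate this: the regular left $U_1$-action on a matrix coefficient $c_{g,v}$ is dual to multiplication in $U_1$, so $\Psi(\cA)$ being $U_1$-trivial means that for every $a\in\cA$ the first tensor leg of $\Psi(a)\in\qfield_q[G_1]\ot\qfield_q[G_2]$ is a scalar multiple of the counit-type element $1\in\qfield_q[G_1]$ (equivalently $\Psi(\cA)\subseteq\qfield 1\ot\qfield_q[G_2]$, up to the identification of the trivial $U_1$-isotypic component of $\qfield_q[G_1]$ with $\qfield$). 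Granting this, for $a\in\cA$ we may write $\Psi(a)=1\ot b$ with $b\in\qfield_q[G_2]$, and then $l_\rp^+(a)=l_{\rf_1}^+(1)\ot l_{\rf_2}^+(b)=1\ot l_{\rf_2}^+(b)$ by \eqref{r-vep}, and similarly for $l_\rp^-$ and the antipode. Plugging into the formula of Lemma \ref{NoumiProps}.(\ref{g2}) gives $d_f(a)=1\ot\big(l_{\rf_2}^+(b_{(1)})f(a_{(2)})\sigma(l_{\rf_2}^-(b_{(3)}))\big)\in 1\ot U_2$, and since this holds for all $a\in\cA$ we conclude $B_f\subseteq 1\ot U_2$.

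Concretely I would structure the argument as: (i) record the factorisation of $l_\rp^\pm$ and $\sigma$ under the tensor decomposition, which is immediate from $\rf=\rf_1\ot\rf_2$, the multiplicativity/comultiplicativity properties \eqref{lpmdef}, and the tensor form of the antipode; (ii) show the $U_1$-triviality hypothesis forces $\Psi(\cA)$ into $\qfield 1\ot\qfield_q[G_1]^{?}$—more precisely into the subspace on which $l_{\rf_1}^\pm$ act as $\vep$—using the nondegeneracy of the Hopf pairing and cosemisimplicity of $\qfield_q[G_1]$ to identify the trivial isotypic component with the span of $1$; (iii) substitute into the formula of Lemma \ref{NoumiProps}.(\ref{g2}). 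The left-vs-right alternative in the hypothesis is handled symmetrically: left-triviality controls $l^+$ (or $l^-$) and right-triviality controls the other, but in either case both $l$-operators on the $\qfield_q[G_1]$-leg collapse to $\vep$ because a matrix coefficient whose representation restricts trivially to $U_1$ has trivial $U_1$-leg on both sides. The main obstacle I anticipate is step (ii): making precise the passage from ``$\Psi(\cA)$ is $U_1$-trivial as a module'' to ``the $l_{\rf_1}$-operators act as the counit on $\Psi(\cA)$,'' which requires keeping careful track of which $U_1$-action (left regular versus right regular) is assumed trivial and checking that $l_{\rf_1}^\pm$ indeed factor through the trivial $U_1$-isotypic quotient — but given Lemma \ref{lplusmin} and the explicit formula \eqref{lexplicit}, this is essentially the observation that $l_{\rf_1}^\pm(c_{g,v})$ only depends on $v$ and $g$ through the $U_1$-module structure of the underlying representation, and is $\vep$ when that structure is trivial.
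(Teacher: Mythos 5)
Your proposal is correct and follows essentially the same route as the paper: the paper's own (very terse) proof likewise observes that $U_1$-triviality forces $\Psi(\cA)\subseteq 1\ot \qfield_q[G_2]$, hence $l^{\pm}_{\rf'}(a)\in 1\ot U_2$, and then concludes from the description of $B_f$ in Lemma \ref{NoumiProps}.(\ref{g2}). The extra details you supply (factorisation of the $l$-operators over the tensor decomposition and the isotypic-component argument) are exactly the steps the paper leaves implicit.
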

\begin{proof}
  If $a\in \cqg$ is invariant under the action of $U_1$ then $a\in
  1\ot \qfield_q[G_2]$. Hence $l_\rp^+(a)\in 1\ot U_2$ and $l_\rp^-(a)\in 1\ot
  U_2$. The claim now follows from Lemma \ref {NoumiProps}.(\ref{g2}).
\end{proof}

\subsection{Duality}
Let $f$ be a character of $\covAcp$. We collect some elementary
relations between the Noumi coideal subalgebra $B_f$ and the quantum
adjoint orbit $\cqof$ defined in subsection \ref{q-conjugacy}.
Note that by definition $\vep\circ \delta_f=\vep\circ d_f=f$.
  \begin{lem}\label{dualLem}
    For any $f\in \covAcp^\wedge$ the following hold:
    \begin{enumerate}
      \item $a(X)=\vep(a)\vep(X)$ for all $X\in B_f$ and $a\in \cqof$.\label{f1}
      \item $f(\adrs(X)a)=\vep(X)f(a)$ for all $X\in B_f$ and $a\in
      \covAcp$. \label{f2}
    \end{enumerate}
  \end{lem}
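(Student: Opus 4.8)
The plan is to unwind the two statements directly from the definitions of $d_f$, $\delta_f$, $B_f$ and $\cqof$, using the explicit descriptions of these objects in Lemma \ref{NoumiProps}.(\ref{g2}) and in the lemma of Subsection \ref{q-conjugacy}, together with the fact that $\ltil_{\rp}$ is a homomorphism of left $\Uq$-comodule algebras (Lemma \ref{comod-alg}) and that $\ltil_{\rp}(a)=l_\rp^+(a_{(1)})\sigma(l_\rp^-(a_{(2)}))$ — more precisely, using the comodule coaction formula $d_\rp(a)=l_\rp^+(a_{(1)})\sigma(l_\rp^-(a_{(3)}))\ot a_{(2)}$ from \eqref{dr-def}.

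For \textbf{(ii)}, observe that $f(\adrs(X)a)$ can be rewritten using the definition \eqref{ad-def} of $\adrs$ and the definition of $d_f$: one has $\adrs(X)a=\langle \sigma(\Psi(a_{(1)}))\Psi(a_{(3)}),X\rangle a_{(2)}$, so applying $f$ and recognising $\langle\,\cdot\,,X\rangle$ paired against the first leg of $d_\rp$ gives $f(\adrs(X)a)=\langle d_f(a),X\rangle$ for $X\in\Uq$ (here I am using that $d_f(a)=l_\rp^+(a_{(1)})f(a_{(2)})\sigma(l_\rp^-(a_{(3)}))$ and that $l^\pm_\rp$ live in $\Uq\subseteq\cqg^\circ$, so pairing with $X$ just evaluates). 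Since $B_f=d_f(\cA_\rp)$ is a left coideal subalgebra of $\Uq$ (Lemma \ref{NoumiProps}.(\ref{g1})) and $X\in B_f$ means $X=d_f(b)$ for some $b$, the claim $f(\adrs(X)a)=\vep(X)f(a)$ amounts to showing $\langle d_f(a),d_f(b)\rangle=\vep(d_f(b))\,f(a)$. I expect this to follow from $d_f$ being an algebra homomorphism $\covAcp\to\Uq$ combined with the character property of $f$, after pairing; the key identity to exploit is that $f$ is a character of $\covAcp$ and $\vep\circ d_f=f$. The analogous statement \textbf{(i)} is the dual: $a(X)=\vep(a)\vep(X)$ for $a\in\cqof$, $X\in B_f$. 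Writing $a=\delta_f(c)=(f\ot\id)\delta(c)=f(c_{(2)})\sigma(\Psi(c_{(1)}))\Psi(c_{(3)})$ and $X=d_f(b)$, one expands $a(X)=\langle \delta_f(c),d_f(b)\rangle$ and again reduces, via the character property of $f$ and the pairing between $\cqof\subseteq\cqg$ and $B_f\subseteq\Uq$, to the claimed triviality.

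The cleanest route is probably to establish a single adjointness identity first — namely that $\langle \delta_f(c),X\rangle = f(\adrs(X)c)$ for all $c\in\covAcp$ and $X\in\Uq$, which is essentially the statement that $\delta_f$ and $d_f$ are adjoint with respect to the Hopf pairing and the coaction/action duality $\adrs(X)c=\Delta(X)\cdot c=c\circ\adr(X)$ from \eqref{adrs-def} — and then read both parts of the lemma off from it. Indeed, once that identity is in hand, \textbf{(i)} reads $\langle a,X\rangle=f(\adrs(X)c)$ for $a=\delta_f(c)$, and since $X=d_f(b)\in B_f$ we need $f(\adrs(d_f(b))c)=\vep(c)\vep(X)$; and \textbf{(ii)} is $f(\adrs(X)c)=\langle\delta_f(c),X\rangle$, which for $X=d_f(b)\in B_f$ should collapse using $\vep\circ\delta_f=f$ and the coideal property. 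The main obstacle I anticipate is bookkeeping: carefully tracking the Sweedler indices through the $l$-functional formulas and making sure the homomorphism $\Psi$ (suppressed by the Convention in Subsection \ref{framework}) is inserted in exactly the right places, and verifying that the ``triviality on $B_f$'' really does follow — i.e. that $\vep$ restricted to $B_f$ composed with the pairing behaves as expected. I would also double-check that no Hopf-algebra axiom for $\cA$ is secretly needed, since $\cA$ is only a bialgebra; the relevant computations should only use the bialgebra structure of $\cA$, the Hopf structure of $\Uq$ and $\cqg$, and the character identities \eqref{fhom1}–\eqref{fhom2}.
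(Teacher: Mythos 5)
Your overall architecture is sound and, modulo one type slip, matches the paper's: both parts do reduce, via the identity $\langle \delta_f(c),X\rangle=f(\adrs(X)c)$ (immediate from \eqref{ad-def} and the definition of $\delta_f$), to the single claim that
$\langle \sigma(a_{(1)})a_{(3)},\,l_\rp^+(b_{(1)})\sigma(l_\rp^-(b_{(3)}))\rangle f(a_{(2)})f(b_{(2)})=f(b)f(a)$,
combined with $\vep\circ d_f=\vep\circ\delta_f=f$. (Two small slips: the formula $f(\adrs(X)a)=\langle d_f(a),X\rangle$ does not typecheck, since $d_f(a)\in\Uq$ cannot occupy the first slot of the pairing \eqref{pairing} --- you mean $\delta_f(a)$, as you correctly write later; and for $a=\delta_f(c)$ one has $\vep(a)=f(c)$, not $\vep(c)$.)

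The genuine gap is the central claim itself, which you leave at ``I expect this to follow from $d_f$ being an algebra homomorphism combined with the character property of $f$'' and, for part (ii), ``should collapse using $\vep\circ\delta_f=f$ and the coideal property''. Neither of these mechanisms delivers the identity. What the paper actually uses is formula \eqref{fafb2} of Lemma \ref{AlgHomLem}.(ii), i.e.\ the character equation $f(a_{\rp}b)=f(a)f(b)$ re-expressed through the $l$-functionals:
$f(a)f(b)=\langle \sigma(b_{(1)})b_{(3)},l_\rp^+(a_{(1)})\sigma(l_\rp^-(a_{(3)}))\rangle f(b_{(2)})f(a_{(2)})$.
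Applied with the roles of $a$ and $b$ exchanged, this is exactly the displayed pairing, and both parts of the lemma drop out. You do list \eqref{fhom1}--\eqref{fhom2} among the likely ingredients at the end, so you are in the right neighbourhood, but the passage from those to \eqref{fafb1}--\eqref{fafb2} (rewriting \eqref{rrrel1} and \eqref{rrrel2} via $l^\pm_\rp$ and combining with the homomorphism property) is the entire content of the proof and is not carried out; neither the coideal property of $B_f$ nor the multiplicativity of $d_f$ enters anywhere.
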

\begin{proof}
  Let 
  \begin{align}
    X&=d_f(b)=l_\rp^+(b_{(1)})f(b_{(2)})\sigma(l_\rp^-(b_{(3)}))\label{X-def}\\
    a&=\delta_f(c)=\sigma(\Psi(c_{(1)}))f(c_{(2)})\Psi(c_{(3)})\nonumber
  \end{align} 
  for some $b,c\in \covAcp$. Then formula (\ref{fafb2}) implies
  \begin{align*}
    a(X)=\langle \sigma(c_{(1)})c_{(3)},
    l_\rp^+(b_{(1)})\sigma(l_\rp^-(b_{(3)})\rangle f(c_{(2)})f(b_{(2)})=f(b)f(c)=\vep(a)\vep(X)
  \end{align*} 
  and hence (\ref{f1}) holds. Moreover, for $X$ as above and any $a\in
  \covAcp$ one calculates
  \begin{align*}
    f(\adrs(X)a)&\stackrel{(\ref{ad-def})}{=}\langle \sigma(a_{(1)})
     a_{(3)},X \rangle f(a_{(2)})\\
     &\stackrel{(\ref{X-def})}{=}\langle
   \sigma(a_{(1)})a_{(3)}, l_\rp^+(b_{(1)})\sigma(l_\rp^-(b_{(3)}))\rangle
     f(a_{(2)})f(b_{(2)})\\
    &\stackrel{(\ref{fafb2})}{=}f(b)f(a)\\
      &\stackrel{\phantom{(\ref{fafb1})}}{=}\vep(X)f(a)
  \end{align*}
  which proves (\ref{f2}). 
\end{proof}
  For any subset $B\subseteq \Uq$ define
  \begin{align*}
    \cqg^B:=\{a\in\cqg\,|\,\langle a_{(1)},X\rangle\, a_{(2)}=\vep(X)a
    \,\mbox{ for all $X\in B$}\}. 
  \end{align*}
  Note that by construction $\cqg^B$ is a right coideal of
  $\cqg$. Moreover, if $B$ is a (left or right) coideal of $\Uq$ then
  $\cqg^B$ is a right coideal subalgebra of $\cqg$.
  \begin{cor}
    For any $f\in \covAcp^\wedge$ one has $\cqof\subseteq \cqg^{B_f}$.
  \end{cor}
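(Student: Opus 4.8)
The plan is to derive the inclusion directly from part (\ref{f1}) of Lemma \ref{dualLem} together with the fact that $\cqof$ is a right coideal of $\cqg$. Recall that the lemma in Subsection \ref{q-conjugacy} already records that $\cqof$ is a right coideal subalgebra of $\cqg$, so that $\kow(\cqof)\subseteq\cqof\otimes\cqg$; thus for $a\in\cqof$ one may write $\kow(a)=a_{(1)}\otimes a_{(2)}$ with the first leg $a_{(1)}$ again lying in $\cqof$. This is the only structural ingredient needed on top of Lemma \ref{dualLem}.

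I would then fix $a\in\cqof$ and $X\in B_f$ and compute $\langle a_{(1)},X\rangle\, a_{(2)}$. Since $a_{(1)}\in\cqof$, part (\ref{f1}) of Lemma \ref{dualLem} applies to the pair $(a_{(1)},X)$ and yields $\langle a_{(1)},X\rangle=\vep(a_{(1)})\vep(X)$. Substituting this and using the counit axiom gives
\[
\langle a_{(1)},X\rangle\, a_{(2)}=\vep(X)\,\vep(a_{(1)})a_{(2)}=\vep(X)\,a,
\]
which is exactly the defining relation of $\cqg^{B_f}$. As $X\in B_f$ was arbitrary this shows $a\in\cqg^{B_f}$, and as $a\in\cqof$ was arbitrary we conclude $\cqof\subseteq\cqg^{B_f}$.

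I do not expect any genuine obstacle here: the corollary is a short formal consequence of Lemma \ref{dualLem}.(\ref{f1}). The one point that deserves a moment's care is confirming that the first tensor factor $a_{(1)}$ of an element of $\cqof$ still belongs to $\cqof$, so that Lemma \ref{dualLem}.(\ref{f1}) is legitimately applicable to it; this is precisely the right coideal property of $\cqof$ noted in Subsection \ref{q-conjugacy}.
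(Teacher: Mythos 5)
Your argument is correct and is essentially identical to the paper's own proof: both apply Lemma \ref{dualLem}.(\ref{f1}) to the pair $(a_{(1)},X)$, using the right coideal property of $\cqof$ to ensure $a_{(1)}\in\cqof$, and then conclude via the counit axiom. No further comment is needed.
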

  \begin{proof}
    For $a\in \cqof$ and $X\in B_f$ Lemma \ref{dualLem}.(\ref{f1})
    implies
    \begin{align*}
      \langle a_{(1)},X\rangle \,a_{(2)}=\vep(a_{(1)})\vep(X)a_{(2)}=\vep(X)a
    \end{align*}
    because $\cqof$ is a right coideal subalgebra of $\cqg$.
  \end{proof}
\begin{rema}
  It would be desirable to have a general condition for characters
  $f\in \covAcp^\wedge$ which implies the equality
  $\cqof=\cqg^{B_f}$. For quantum Grassmann manifolds this equality
  holds by \cite[Proposition 2.4]{a-NDS97} which was given without proof.
\end{rema}
\subsection{The centre of Noumi coideal subalgebras}\label{NoumiCentre}
We give a second construction of the central subalgebra
$d_f(\cA^{inv})$ of  $B_f$ obtained in Lemma
\ref{NoumiProps}.(\ref{g3}). Recall from Proposition \ref{AinvHom}
that the coproduct can be  
used to define an algebra homomorphism $\mu: \mathcal{A}^{inv}\rightarrow \covAc\otimes
\covAcb^\op$. Recall moreover that
$\sigma(F_l(U))=F_r(\Uq)$ and that by Proposition \ref{lr-iso} the map
$\sigma\circ\ltil_{\rfb_{21}}:\covAcb^\op\rightarrow F_r(\Uq)$ is an algebra
homomorphism. For $f\in \cA^\wedge_\rf$ we now define an algebra homomorphism
$\Phi_f:\cA^{inv}\rightarrow F_r(\Uq)$ as the composition
\begin{align*}
  \Phi_f: \cA^{inv}\stackrel{\mu}{\longrightarrow}\covAc\ot \covAcb^\op
  \stackrel{f\ot \mathrm{id}}{\longrightarrow} \covAcb^\op\stackrel{\sigma\circ\ltil_{\rfb_{21}}}{\longrightarrow}
  F_r(\Uq),
\end{align*}
or more explicitly
\begin{align*}
  \Phi_f(a):= f(a_{(1)})\,\sigma(\ltil_{\rfb_{21}}(a_{(2)}))
\end{align*}
for all $a\in \cA^{inv}$.
\begin{lem}\label{df=Phif}
  {\bf (i)} For all $c\in \cA^{inv}$ and $f\in \covAc^\wedge$ one has $d_f(c)=\Phi_f(c)$.\\
  {\bf (ii)} For all  $f\in \covAc^\wedge$ one has $d_f(\cA^{inv})\subseteq Z(B_f)\cap F_r(\Uq)$.
\end{lem}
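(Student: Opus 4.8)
The plan is to prove (i) by a short Sweedler-calculus computation and then read off (ii) at once. For (ii): by construction $\Phi_f$ is the composite of $\mu$ with $f\otimes\id$ and with the algebra homomorphism $\sigma\circ\ltil_{\rfb_{21}}:\covAcb^\op\to F_r(\Uq)$, so $\Phi_f(\cA^{inv})\subseteq F_r(\Uq)$; combining this with $d_f(\cA^{inv})=\Phi_f(\cA^{inv})$ from (i) and with the inclusion $d_f(\cA^{inv})\subseteq Z(B_f)$ of Lemma~\ref{NoumiProps}.(\ref{g3}) yields $d_f(\cA^{inv})\subseteq Z(B_f)\cap F_r(\Uq)$.

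For (i), I would first write out both sides. Unwinding the definition of $d_\rf$ gives $d_f(c)=l^+_{\rf}(c_{(1)})\,f(c_{(2)})\,\sigma(l^-_{\rf}(c_{(3)}))$, while \eqref{ltildef} together with \eqref{rrelation} yields $\ltil_{\rfb_{21}}(a)=l^-_{\rf}(a_{(1)})\,l^+_{\rf}(\sigma^{-1}(a_{(2)}))$, so that, since $\sigma$ reverses products,
\[
\Phi_f(c)=f(c_{(1)})\,\sigma(\ltil_{\rfb_{21}}(c_{(2)}))
=f(c_{(1)})\,\sigma\big(l^+_{\rf}(\sigma^{-1}(c_{(3)}))\big)\,\sigma\big(l^-_{\rf}(c_{(2)})\big).
\]
The two expressions differ only in that here $f$ sits on the first Sweedler leg rather than the middle one, and the $l^+$-factor carries an extra $\sigma^{-1}$. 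Both discrepancies are absorbed by invariance of $c$: viewing the right-hand side above as the image of $c_{(1)}\otimes c_{(2)}\otimes\Psi(c_{(3)})$ under the trilinear map $a\otimes b\otimes d\mapsto f(a)\,\sigma(l^+_{\rf}(\sigma^{-1}(d)))\,\sigma(l^-_{\rf}(b))$, I would substitute the iterated invariance identity
\[
c_{(1)}\otimes c_{(2)}\otimes\Psi(c_{(3)})=c_{(2)}\otimes c_{(3)}\otimes\sigma^2(\Psi(c_{(1)})),
\]
which is \eqref{aux1}, obtained from Proposition~\ref{invprops}.(\ref{enn6}) by applying $\kow\otimes\id$. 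This moves $f$ onto $c_{(2)}$, the $l^-$-factor onto $c_{(3)}$, and turns the $l^+$-contribution into $\sigma(l^+_{\rf}(\sigma(\Psi(c_{(1)}))))$, since $\sigma^{-1}\circ\sigma^2=\sigma$.

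The proof is then finished by the identity $\sigma(l^+_{\rf}(\sigma(a)))=l^+_{\rf}(a)$ for $a\in\cqg$: viewing $l^+_{\rf}(a)=\rf(\cdot,a)$ as an element of $\Uq\subseteq\cqg^\circ$, relation \eqref{r-sigma} gives $l^+_{\rf}(\sigma(a))=\sigma^{-1}(l^+_{\rf}(a))$, because under the pairing \eqref{pairing} the antipode of $\cqg^\circ$ restricts to that of $\Uq$. Substituting this and pulling the scalar $f(c_{(2)})$ to the front recovers $l^+_{\rf}(c_{(1)})\,f(c_{(2)})\,\sigma(l^-_{\rf}(c_{(3)}))=d_f(c)$, as required. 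The main point requiring care is the Sweedler bookkeeping: applying the three-fold identity \eqref{aux1} to tensor legs that do not occur in their natural left-to-right order in the product, and correctly tracking the chain of antipodes. Beyond this I do not anticipate a genuine obstacle.
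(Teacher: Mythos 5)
Your proof is correct and follows essentially the same route as the paper: both hinge on the iterated invariance identity $c_{(1)}\otimes c_{(2)}\otimes\Psi(c_{(3)})=c_{(2)}\otimes c_{(3)}\otimes\sigma^2(\Psi(c_{(1)}))$ from Proposition \ref{invprops}.(\ref{enn6}), combined with unwinding $\ltil_{\rfb_{21}}$ via \eqref{ltildef} and \eqref{rrelation}, the only cosmetic difference being that the paper rewrites $\sigma(l^+_\rf(\sigma^{-1}(\,\cdot\,)))$ as $\sigma^2(l^+_\rf(\,\cdot\,))$ before substituting whereas you substitute first and simplify afterwards. Part (ii) is handled identically in both arguments.
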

\begin{proof}
  Applying the coproduct to the relation in Proposition \ref{invprops}.(\ref{enn6}) one obtains
  \begin{align}\label{c3rel}
     c_{(1)}\ot c_{(2)} \ot \Psi(c_{(3)})= c_{(2)}\ot c_{(3)}\ot \sigma^2(\Psi( c_{(1)}))
  \end{align}
  for all $c\in \cA^{inv}$. By definition of $\Phi_f$ and
  (\ref{ltildef}), (\ref{rrelation}) one now gets
  \begin{align*}
    \Phi_f(c)= f(c_{(1)})\sigma^2(l_\rf^+( c_{(3)}))\sigma(l_\rf^-(c_{(2)}))
    \stackrel{(\ref{c3rel})}{=}l_\rf^+( c_{(1)})
    f(c_{(2)})\sigma(l_\rf^-(c_{(3)}))
    =d_f(c)
  \end{align*}
   for all $c\in \cA^{inv}$ which proves {\bf (i)}. Claim {\bf (ii)}
   follows from {\bf (i)} and the inclusion $\cA^{inv}\subseteq Z(\covAc)$.
\end{proof}

The subalgebra $d_f(\cA^{inv})$ of $Z(B_f)\cap
F_r(\Uq)$ is of particular interest if the character $f$ factors
through $\Psi$ or, more explicitly, if $\ker(\Psi)\subseteq \ker(f)$. In
this case, if in addition $\gfrak$ is simple, we may assume that
$\cA=\qfield_q[G_L]$ for some lattice $L\subset \hfrak^\ast$ such that
$Q\subseteq L\subseteq P$ by Proposition  \ref{cqGLprop}. Throughout
this subsection, for arbitrary semisimple $\gfrak$, we denote by
$L\subset \hfrak^\ast$ a lattice such that $Q\subseteq L\subseteq P$.
\begin{lem}\label{inj-homom}
   If $\cA=\qfield_q[G_L]$ and $f\in \covAc^\wedge$ then the map
   \begin{align*}
     d_f|_{\cA^{inv}}:\cA^{inv}\rightarrow Z(B_f)\cap F_r(\Uq)
   \end{align*}
    is an injective algebra homomorphism.
\end{lem}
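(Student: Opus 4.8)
The plan is to deduce injectivity of $d_f|_{\cA^{inv}}$ from the already-established facts that $d_f|_{\cA^{inv}} = \Phi_f$ (Lemma \ref{df=Phif}.(i)) and that $\Phi_f = (\sigma\circ\ltil_{\rfb_{21}})\circ(f\ot\id)\circ\mu$, by checking that each of the three maps in this composition is injective when $\cA = \qfield_q[G_L]$. The map $\mu = \kow|_{\cA^{inv}}$ is injective by Proposition \ref{AinvHom}. The map $\sigma\circ\ltil_{\rfb_{21}}\colon\covAcb^\op\to F_r(\Uq)$ is injective because $\ltil_{\rfb_{21}}\colon\cqg\to F_l(\Uq)$ is an isomorphism by Proposition \ref{CalderoProp} (and $\sigma$ is a bijection), and when $\cA=\qfield_q[G_L]$ the homomorphism $\Psi$ is the inclusion, so $\ltil_{\rfb_{21}}$ restricted to $\cA$ is still injective. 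So the whole burden falls on the middle map $f\ot\id\colon\covAc\ot\covAcb^\op\to\covAcb^\op$ \emph{restricted to the image $\mu(\cA^{inv})$}: one must show that if $c\in\cA^{inv}$ and $f(c_{(1)})\,c_{(2)} = 0$ in $\cA$, then $c = 0$.

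The key step, then, is the following: \textbf{for $c\in\cA^{inv}$, the element $\of(c) := f(c_{(1)})\,c_{(2)}$ determines $c$.} The natural tool is the convolution inverse $\fbar$ of $f$, which exists because $f\in\covAc^\times$ — here I would need to know that the characters under consideration are convolution invertible. Since $\cA = \qfield_q[G_L]$ is a Hopf algebra, Lemma \ref{characterLem}.(ii) guarantees that \emph{every} $f\in\covAc^\wedge$ is automatically convolution invertible; this removes the need for any hypothesis beyond $f\in\covAc^\wedge$. Given $\fbar$, I would recover $c$ from $\of(c)$: compute $\fbar(\of(c)_{(1)})\,\of(c)_{(2)} = \fbar(c_{(1)})f(c_{(2)})\,c_{(3)} = \vep(c_{(1)})\,c_{(2)} = c$, using coassociativity and $f\ast\fbar = \vep$. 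Hence $\of(c) = 0$ forces $c = 0$, i.e.\ $(f\ot\id)|_{\mu(\cA^{inv})}$ is injective.

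Combining the three injectivity statements, $\Phi_f = (\sigma\circ\ltil_{\rfb_{21}})\circ(f\ot\id)\circ\mu$ is injective on $\cA^{inv}$, and by Lemma \ref{df=Phif}.(i) this is exactly $d_f|_{\cA^{inv}}$; that it is an algebra homomorphism into $Z(B_f)\cap F_r(\Uq)$ is Lemma \ref{df=Phif}.(ii) together with $d_f$ being an algebra map and $\cA^{inv}\subseteq Z(\covAc)$ (Proposition \ref{invprops}.(\ref{enn2})). The main obstacle I anticipate is purely bookkeeping: making sure the convolution-inverse argument is carried out with the coproduct of the \emph{bialgebra} $\cA$ (not the covariantized algebra) and that the identity $f\ast\fbar = \vep$ is applied in $\cA^\ast$ with the correct Sweedler indices; the substantive input (automatic convolution invertibility) is handed to us by Lemma \ref{characterLem}.(ii), so no genuinely new difficulty arises.
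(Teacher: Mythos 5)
Your proposal is correct and follows essentially the same route as the paper: both reduce injectivity via Lemma \ref{df=Phif}.(i) and the injectivity of $\sigma\circ\ltil_{\rfb_{21}}$ (Proposition \ref{CalderoProp}) to showing that $f(a_{(1)})a_{(2)}=0$ forces $a=0$, which is handled by the convolution inverse supplied by Lemma \ref{characterLem}.(ii), and both obtain the homomorphism property from $\cA^{inv}\subseteq Z(\covAc)$ and the fact that $d_\rf$ is an algebra map. (Your Sweedler indices in $\fbar(c_{(1)})f(c_{(2)})c_{(3)}$ have $f$ and $\fbar$ in the opposite order from the direct computation, but since $\fbar$ is a two-sided convolution inverse this is harmless.)
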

\begin{proof}
  Assume $d_f(a)=0$ for some $a\in \cA^{inv}$. This implies
  $f(a_{(1)})\sigma(\ltil_{\rfb_{21}}(a_{(2)}))=0$ by Lemma
  \ref{df=Phif}.(i). By Proposition \ref{CalderoProp} the map $\sigma\circ
  \ltil_{\rfb_{21}}$ is injective and
 hence $f(a_{(1)})a_{(2)}=0$. We now use the fact that $f$ is
  convolution invertible from Lemma \ref{characterLem}.(ii) to obtain
  \begin{align*}
    a=f(a_{(1)})\fbar(a_{(2)})a_{(3)}=(\fbar\ot \id)\kow(f(a_{(1)})a_{(2)})=0.
  \end{align*}
  Hence $d_f|_{\cA^{inv}}$ is indeed injective. The fact that
  $d_f|_{\cA^{inv}}$ is an algebra homomorphism follows from
  Proposition \ref{invprops}.(1),(2) and from the fact that $d_\rf$ is
  an algebra homomorphism by Lemma \ref{comod-alg}.
\end{proof}
Let $\rep(\gfrak)$ denote the representation ring of $\gfrak$,
i.e.~the $\C$-algebra with basis $\{r_\lambda\}_{\lambda\in
P^+}$ and product
\begin{align*}
  r_\lambda r_\mu =\sum_{\nu\in P^+}m^\nu_{\lambda,\mu} r_\nu, \qquad
  \mbox{where  }\quad
  m_{\lambda,\mu}^\nu:=\dim\big(\Hom_\Uq(V(\nu),V(\lambda)\ot V(\mu))\big).
\end{align*}
For any lattice $L\subset \hfrak^\ast$ such that $Q\subseteq L\subseteq P$
let $\rep(\gfrak)_L$ denote the subalgebra of $\rep(\gfrak)$ with
basis $\{r_\lambda\,|\,\lambda\in
L\cap P^+\}$. It was proved for instance in \cite[8.6]{a-JoLet1} that
there exists a basis $\{z_\lambda\}_{\lambda\in P^+}$ of the centre
$Z(\Uq)$ with $z_\lambda\in (\adr \Uq)\tau(2\lambda)$ such that the
map
\begin{align*}
  Z(\Uq)\rightarrow \rep(\gfrak), \qquad z_\lambda\mapsto r_\lambda
\end{align*}
defines an isomorphism of algebras (cp.~also \cite{a-Baumann98}). By
Propositions \ref{CalderoProp}, \ref{lr-iso}, and \ref{ZAF=AFinv} the elements
\begin{align} \label{c'lambda-def}
  c_\lambda':=\ltil_{\rfb_{21}}^{-1}(\sigma^{-1}(z_\lambda))\in
  C^{V(-w_0\lambda)}\cap \cqg^{inv}
\end{align} 
also satisfy $c'_\lambda c'_\mu =\sum_{\nu\in P^+}m^\nu_{\lambda,\mu}
c'_\nu$ and hence yield a realisation of $\rep(\gfrak)$ inside
$\cqg^{inv}$. Note that $c'_\lambda$ is $(\adrs \Uq)$-invariant and
hence coincides with the quantum trace 
$c_{-w_0\lambda}$ defined in Example \ref{q-trace-eg} up to a scalar
factor. The following theorem is now 
an immediate consequence of the fact that for $f\in \cqg^\wedge_\rf$
the map $d_f|_{\cqg^{inv}}$ is an injective algebra homomorphism by
Lemma \ref{inj-homom}. 

\begin{thm}\label{RepThm}
  If $\cA=\qfield_q[G_L]$ and $f\in \covAc^\wedge$ then the map
  \begin{align*}
    \rep(\gfrak)_L\rightarrow Z(B_f)\cap F_r(\Uq), \qquad r_\lambda\mapsto d_f(c'_\lambda)
  \end{align*}
  is an injective homomorphism of algebras.
\end{thm}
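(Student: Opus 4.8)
The plan is to reduce the statement to a composition of two algebra homomorphisms: the realisation of $\rep(\gfrak)_L$ inside $\cA^{inv}$ furnished by the central elements $z_\lambda$, followed by the injective algebra homomorphism $d_f|_{\cA^{inv}}$ of Lemma~\ref{inj-homom}.

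First I would show that $r_\lambda\mapsto c'_\lambda$ is an algebra isomorphism $\rep(\gfrak)_L\stackrel{\sim}{\longrightarrow}\cA^{inv}$. Since $\Psi$ is the inclusion, $\cA^{inv}=\cA\cap\cqg^{inv}$, and as $\cA=\qfield_q[G_L]=\bigoplus_{\mu\in L\cap P^+}C^{V(\mu)}$, Example~\ref{q-trace-eg} shows that the quantum traces $c_\mu$ with $\mu\in L\cap P^+$ form a basis of $\cA^{inv}$. By construction $c'_\lambda\in C^{V(-w_0\lambda)}\cap\cqg^{inv}$, so it is a nonzero scalar multiple of $c_{-w_0\lambda}$; hence $\{c'_\lambda\mid\lambda\in L\cap P^+\}$ is again a basis of $\cA^{inv}$, \emph{provided} $-w_0(L)=L$. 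This lattice fact is the one genuinely new point: $-w_0$ permutes the simple roots and the fundamental weights, hence stabilises $Q$ and $P$ and induces an automorphism of $P/Q$; since $w_0\mu\equiv\mu\pmod Q$ for all $\mu\in P$, that automorphism is inversion, and every subgroup of the finite abelian group $P/Q$ is stable under inversion, so $-w_0(L)=L$. Together with the product rule $c'_\lambda c'_\mu=\sum_{\nu}m^\nu_{\lambda,\mu}c'_\nu$ recorded above — which holds in $\cqg$ and, by Proposition~\ref{invprops}.(1), also in $\covAc$ because the $c'_\lambda$ are $\adrs(\Uq)$-invariant — this makes $r_\lambda\mapsto c'_\lambda$ an algebra isomorphism onto $\cA^{inv}$.

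Then I would invoke Lemma~\ref{inj-homom}: for $\cA=\qfield_q[G_L]$ and $f\in\covAc^\wedge$ the map $d_f|_{\cA^{inv}}\colon\cA^{inv}\to Z(B_f)\cap F_r(\Uq)$ is an injective algebra homomorphism. Composing it with the isomorphism of the previous step yields the injective algebra homomorphism
\[
\rep(\gfrak)_L\longrightarrow Z(B_f)\cap F_r(\Uq),\qquad r_\lambda\longmapsto d_f(c'_\lambda),
\]
which is exactly the claim.

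The only real obstacle is to make sure the lattice identity $-w_0(L)=L$ is in place, so that the elements $c'_\lambda$ with $\lambda\in L\cap P^+$ genuinely lie in $\cA=\qfield_q[G_L]$ and $d_f$ may be applied to them; the rest is bookkeeping with results already established (Propositions~\ref{CalderoProp}, \ref{lr-iso}, \ref{ZAF=AFinv}, \ref{invprops}, Example~\ref{q-trace-eg}, and Lemma~\ref{inj-homom}).
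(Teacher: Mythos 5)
Your proposal is correct and follows essentially the same route as the paper: realise $\rep(\gfrak)_L$ inside $\cA^{inv}$ via the elements $c'_\lambda$ and then compose with the injective algebra homomorphism $d_f|_{\cA^{inv}}$ from Lemma~\ref{inj-homom}. The one point you make explicit that the paper leaves implicit is the verification that $-w_0(L)=L$ (so that $c'_\lambda\in C^{V(-w_0\lambda)}$ indeed lies in $\qfield_q[G_L]$ for $\lambda\in L\cap P^+$); your argument for it via inversion on $P/Q$ is correct and is a worthwhile addition.
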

\begin{rema}
  Note that by Proposition \ref{CalderoProp} and Lemma \ref{df=Phif} one has for any
  $\lambda\in L\cap P^+$ the relation
  \begin{align}\label{inTau2lambda}
    d_f(c'_\lambda)=\Phi_f(c'_\lambda)\in \sigma\left( (\adl
    \Uq)\tau(-2\lambda)\right)=(\adr \Uq)\tau(2\lambda).
  \end{align}
  Theorem \ref{RepThm} hence states that the centre of the Noumi
  algebra $B_f$ contains a canonical
  subalgebra $d_f(\cA^{inv})$ which is naturally (with respect to the grading coming
  from $F_r(\Uq)$) isomorphic to the representation ring $\rep(\gfrak)_L$. 
\end{rema}

\subsection{Local finiteness}\label{locFiness}
In view of Lemma  \ref{df=Phif}.(ii), Lemma \ref{inj-homom}, and
Theorem \ref{RepThm} it is natural to ask if $Z(B_f)$
is contained in $F_r(U)$ for any Noumi coideal subalgebra $B_f$. We
attack this question in this subsection using a convenient criterion
to determine whether an element in $\Uq$ is contained in $F_r(\Uq)$.
We first recall the following preparatory lemma, which is valid
for an arbitrary Hopf algebra $H$.
\begin{lem}\label{centreLem}
  Let $B\subseteq H$ be a left coideal subalgebra and $C\subseteq H$ a
  right coideal subalgebra of a Hopf algebra $H$. Then
  \begin{align}
    Z(B)=\{b\in B\,|\, (\adr x)b=\vep(x)b\, \mbox{ for all $x\in
    B$}\},\label{ZB}\\
    Z(C)=\{c\in C\,|\, (\adl x)c=\vep(x)c\, \mbox{ for all $x\in
    C$}\}.\label{ZC}
  \end{align}
where $(\adl x)h=x_{(1)}h\sigma(x_{(2)})$ and
$(\adr x)h=\sigma(x_{(1)})hx_{(2)}$ for $x,h\in H$.
\end{lem}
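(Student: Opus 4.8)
\textbf{Proof plan for Lemma \ref{centreLem}.}
The two statements are mirror images of each other under the antipode (or under passing to $H^{\op}$), so I would prove \eqref{ZB} in detail and indicate that \eqref{ZC} follows symmetrically. First I would fix the general fact that makes this work: for a \emph{left} coideal subalgebra $B\subseteq H$, the right adjoint action $\adr$ restricts to an action of $B$ on $B$. Indeed, if $\kow(b)=b_{(1)}\ot b_{(2)}$ with $b_{(1)}\in H$ and $b_{(2)}\in B$ (this is the left coideal condition $\kow(B)\subseteq H\ot B$), then for $x\in B$ one has $(\adr x)b=\sigma(x_{(1)})b\,x_{(2)}$; the subtlety is that $x_{(1)}$ need not lie in $B$, so I would instead observe that $B$ being a left coideal subalgebra means $B$ is a right $H$-comodule algebra via $\kow$, and that $\adr$ coincides with the standard right-adjoint-type action associated to this comodule structure, hence preserves $B$. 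Concretely, $(\adr x)b = \sigma(x_{(1)})b x_{(2)}$ and using $\kow(x)\in H\ot B$ and $b\in B$ we need $\sigma(x_{(1)})\ot x_{(2)}\cdot(\text{stuff in }B)$ to land in $B$; the cleanest route is to note $(\adr x)b=\sigma(x_{(1)})bx_{(2)}$ and rewrite this via the comodule structure so that only products of elements of $B$ appear. This is the one genuinely fiddly point.

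Granting that $\adr$ maps $B\ot B\to B$, the inclusion $\supseteq$ in \eqref{ZB} is immediate: if $(\adr x)b=\vep(x)b$ for all $x\in B$, then for any $x\in B$,
\begin{align*}
 xb &= x_{(1)}\vep(x_{(2)})b = x_{(1)}\big((\adr \sigma^{-1}(x_{(3)})\cdot\text{?})\big)\dots
\end{align*}
— rather than chase inverses of the antipode, I would argue directly: for $x\in B$ write $x_{(1)}\ot x_{(2)}$ and compute
$x_{(1)}\big((\adr x_{(2)})b\big) = x_{(1)}\sigma(x_{(2)})b x_{(3)} = \vep(x_{(1)})b x_{(2)} = bx$,
so if $(\adr x_{(2)})b = \vep(x_{(2)})b$ then the left side collapses to $x_{(1)}\vep(x_{(2)})b = xb$, giving $xb = bx$. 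Hence $b\in Z(B)$.

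For the reverse inclusion $\subseteq$, suppose $b\in Z(B)$. Then for all $x\in B$,
\[
 (\adr x)b = \sigma(x_{(1)})b x_{(2)} = \sigma(x_{(1)}) x_{(2)} b = \vep(x) b,
\]
where the middle equality uses centrality of $b$ together with the left coideal condition $x_{(2)}\in B$ (so $x_{(2)}b = bx_{(2)}$), and the last equality is the antipode axiom. This direction is essentially formal. Thus \eqref{ZB} holds, and \eqref{ZC} follows by applying \eqref{ZB} to the Hopf algebra $H^{\op}$, under which right coideal subalgebras become left coideal subalgebras and $\adl$ becomes $\adr$; alternatively one repeats the argument verbatim with the roles of the two legs of $\kow$ exchanged. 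The only step requiring care is verifying that the relevant adjoint action genuinely preserves the coideal subalgebra, i.e.\ that all intermediate expressions can be written using products within $B$ by exploiting the one-sided coideal property; the rest is a direct application of the counit and antipode axioms.
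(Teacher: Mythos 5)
Your proof is correct and follows essentially the same route as the paper: the inclusion $Z(B)\subseteq\{\dots\}$ is the one-line computation $(\adr x)b=\sigma(x_{(1)})bx_{(2)}=\sigma(x_{(1)})x_{(2)}b=\vep(x)b$ using $x_{(2)}\in B$, and the reverse inclusion is exactly the identity $bx=x_{(1)}\bigl((\adr x_{(2)})b\bigr)$ that the paper records. One remark: the ``genuinely fiddly point'' you flag, namely that $\adr$ restricted to $B$ should preserve $B$, is neither needed anywhere in your argument (all identities are equalities in $H$, and the hypothesis is only ever applied to the leg $x_{(2)}$, which lies in $B$ by the left coideal condition) nor automatic for a general one-sided coideal subalgebra, since $\sigma(x_{(1)})$ need not lie in $B$ --- so that paragraph should simply be dropped. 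Also, the clean formal duality for \eqref{ZC} is via $H^{\op,\cop}$ (which keeps the antipode $\sigma$ and turns right coideal subalgebras into left ones) rather than $H^{\op}$ alone, though your alternative of repeating the argument with the two legs of $\kow$ exchanged is perfectly fine.
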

\begin{proof}
  This result is proved in complete analogy to \cite[Lemma
  1.3.3]{b-Joseph}. To see that the right hand sides of (\ref{ZB}) and
  (\ref{ZC}) are contained in $Z(B)$ and $Z(C)$, respectively, one uses
\begin{align*}
  hx=x_{(1)}((\adr x_{(2)})h),\qquad xh=((\adl x_{(1)})h)x_{(2)}
\end{align*}
for any $x,h\in H$. 
\end{proof}
\begin{prop}\label{Ci-prop}
   For $i=1,\dots,r$ let $C_i\in \Uq$  be elements such that
   \begin{align}\label{Ci-form}
     C_i\in x_i\tau(\mu_i)+\bigoplus_{\alpha\le 0} \Uq_{\alpha}
   \end{align} 
   for some $\mu_i\in P$.
   If  $u\in \Uq$ satisfies $(\adl C_i) u=\vep(C_i)u$ for all  $i=1,\dots,r$  
   then $u\in F_l(\Uq)$. Similarly, if $u\in \Uq$
   satisfies  $(\adr C_i) u=\vep(C_i)u$  for all  $i=1,\dots,r$  
   then $u\in F_r(\Uq)$.
\end{prop}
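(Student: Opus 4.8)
\emph{Proof plan.} The plan is to reduce everything to the standard characterisation of the left locally finite part: an element $u\in\Uq$ lies in $F_l(\Uq)$ if and only if $\dim\big((\adl U^+)u\big)<\infty$, equivalently $(\adl x_i)^{N}u=0$ for some $N\in\N$ and every $i=1,\dots,r$. One direction is immediate, for if $u\in F_l(\Uq)$ then $(\adl\Uq)u$ is a finite-dimensional object of $\cC$ on which each $x_i$, hence $\adl(x_i)$, acts nilpotently; the converse is the substantial input and I would quote it from \cite{b-Joseph} (see also \cite{a-JoLet2}). From this I would first record the auxiliary observation that if $(\adl x_i)v\in F_l(\Uq)$ for all $i$, then already $v\in F_l(\Uq)$: indeed $\adl(x_i)$ is then locally nilpotent on the element $\adl(x_i)v\in F_l(\Uq)$, hence on $v$, and this holds for every $i$. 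Since each summand $(\adl\Uq)\tau(-2\lambda)$ of \eqref{Fl-decomp} is stable under the $Q$-grading $\Uq=\bigoplus_{\alpha\in Q}\Uq_\alpha$, the quotient module $\Uq/F_l(\Uq)$ is again $Q$-graded, and the auxiliary observation says that no nonzero weight vector of $\Uq/F_l(\Uq)$ is annihilated by all the operators $\adl(x_i)$.

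I would then argue by contradiction. Suppose $u\notin F_l(\Uq)$, so that $\bar u:=u+F_l(\Uq)$ is a nonzero element of $\Uq/F_l(\Uq)$; write $\bar u=\sum_{\gamma\in S}\bar u_\gamma$ for its (finite, nonempty) weight decomposition and fix a maximal element $\gamma^\ast$ of the finite poset $(S,\le)$. The point is to isolate the leading term $x_i\tau(\mu_i)$ of $C_i$ by projecting the relation $(\adl C_i)\bar u=\vep(C_i)\bar u$ onto the weight space of weight $\gamma^\ast+\alpha_i$. The right-hand side contributes nothing there, since $\gamma^\ast+\alpha_i>\gamma^\ast$ cannot lie in $S$, being strictly larger than the maximal element $\gamma^\ast$. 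Writing $C_i=x_i\tau(\mu_i)+L_i$ with $L_i\in\bigoplus_{\alpha\le 0}\Uq_\alpha$, the operator $\adl(x_i\tau(\mu_i))=\adl(x_i)\circ\adl(\tau(\mu_i))$ contributes exactly $q^{(\mu_i,\gamma^\ast)}\adl(x_i)\bar u_{\gamma^\ast}$ in degree $\gamma^\ast+\alpha_i$, whereas $\adl(L_i)\bar u$ contributes nothing: a summand $\adl\big((L_i)_\delta\big)\bar u_\gamma$ with $\delta\le 0$ has degree $\gamma+\delta$, and $\gamma+\delta=\gamma^\ast+\alpha_i$ would force $\gamma\ge\gamma^\ast+\alpha_i>\gamma^\ast$, contradicting the maximality of $\gamma^\ast$. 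Hence $\adl(x_i)\bar u_{\gamma^\ast}=0$ in $\Uq/F_l(\Uq)$ for every $i$, so $\bar u_{\gamma^\ast}=0$ by the previous paragraph---contradicting $\gamma^\ast\in S$. Therefore $u\in F_l(\Uq)$.

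For the second assertion I would run the same argument with $\adl$ replaced throughout by $\adr$: the decomposition \eqref{Fr-decomp} shows that $F_r(\Uq)$ is $Q$-graded, the analogous local-nilpotence criterion for $F_r(\Uq)$ follows from the one for $F_l(\Uq)$ via \eqref{ad-leftright} and \eqref{FrFl}, and $\adr(x_i)$ also raises the $Q$-degree by $\alpha_i$ while $\adr(\tau(\mu_i))$ acts as a nonzero scalar on each weight space, so the weight bookkeeping is identical. The only genuinely delicate step---and the reason the local-nilpotence criterion, rather than the decomposition \eqref{Fl-decomp} alone, is needed---is that the weight comparison only shows $\adl(x_i)\bar u_{\gamma^\ast}=0$ in the quotient, i.e.\ that $\adl(x_i)$ carries the weight-$\gamma^\ast$ component of $u$ into $F_l(\Uq)$ rather than annihilating it in $\Uq$. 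Should quoting that criterion in exactly the required shape prove awkward, the same conclusion can be obtained instead by peeling off the maximal-weight components of $u$ one at a time and inducting on the size of the weight support $S$, carrying an error term in $F_l(\Uq)$ through the induction.
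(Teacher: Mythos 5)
Your argument is correct and follows essentially the same route as the paper's: both isolate the leading term $x_i\tau(\mu_i)$ of $\adl(C_i)$ by comparing $Q$-weight components at the top of the support, and both then reduce to the quoted fact that local finiteness under the positive Borel (equivalently, nilpotence of each $\adl(x_i)$ on the element) already forces membership in $F_l(\Uq)$. The only cosmetic differences are that the paper runs a downward induction over the weight support using the criterion $\dim\big((\adl \uqbp)u\big)<\infty\Rightarrow u\in F_l(\Uq)$ quoted from Fauquant-Millet, whereas you argue by contradiction with a maximal weight in the graded quotient $\Uq/F_l(\Uq)$ and quote the single-generator nilpotence form of the criterion, which is the version the paper itself attributes to Letzter in the remark following the proposition.
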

\begin{proof}
  Note that if $x\in \Uq$ satisfies $\dim((\adl \uqbp) x)<\infty$ then $x\in
  F_l(\Uq)$.  The proof of this fact is nicely written up as the first step of the proof of
  \cite[Lemma 3.1.1]{a-Fauqu98}. 
  We now use this fact to prove the first statement of the
  proposition.  Decompose the element $u=\sum_{\gamma\in
  Q} u_\gamma$ where $u_\gamma\in \Uq_\gamma$. It suffices to show by
  induction on $\gamma$ that $\dim((\adl \uqbp) u_\gamma)<\infty$. Fix
  $\beta\in Q$ and assume that  $\dim((\adl \uqbp)
  u_\gamma)<\infty$ for all $\gamma>\beta$. The relation  $(\adl C_i) u=\vep(C_i)u$ and the
  special form (\ref{Ci-form}) of the elements $C_i$ imply
  \begin{align*}
    \vep(C_i)u_{\beta+\alpha_i}\in q^{(\mu_i,\beta)}(\adl x_i)u_\beta
    + \sum_{\gamma\geq\beta+\alpha_i}(\adl U_{\beta-\gamma+\alpha_i})u_\gamma
  \end{align*}
  and hence
  \begin{align*}
    (\adl x_i)u_\beta\in \sum_{\gamma>\beta}(\adl \Uq)u_\gamma
  \end{align*}
  for $i=1,\dots,r$. By induction hypothesis the right hand side of the above expression
  is contained in $F_l(\Uq)$ and hence $\dim((\adl \uqbp)
  u_\beta)<\infty$. This completes the proof of the first
  statement. Using the relations (\ref{ad-leftright}) one
  immediately obtains the second claim of the proposition.
\end{proof}
\begin{rema}
  The second half of the above proof resembles an argument
  given in the proof of \cite[Lemma  4.4]{a-Letzter97}. Following
  G.~Letzter's more general setting one can even show that any element $u\in
  \Uq$ such that $\mathrm{span}\{(\adl C_i^m) u\,|\,m\in \N_0\}$ is
  finite-dimensional for all  $i=1,\dots,r$, belongs to $ F_l(\Uq)$.
\end{rema}
Lemma \ref{centreLem} and Proposition \ref{Ci-prop} now imply
the following local finiteness result for Noumi coideal subalgebras
in $\Uq$.
\begin{prop}\label{inLocFinProp}
  Let $f\in \covAc^\wedge$ be convolution invertible. Then $Z(B_f)\subset F_r(\Uq)$.
\end{prop}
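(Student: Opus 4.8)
The plan is to combine Lemma \ref{centreLem} with the second statement of Proposition \ref{Ci-prop}. By Lemma \ref{NoumiProps}.(\ref{g1}) the Noumi coideal subalgebra $B_f$ is a left coideal subalgebra of $\Uq$, so Lemma \ref{centreLem} gives $Z(B_f)=\{u\in B_f\mid (\adr x)u=\vep(x)u\text{ for all }x\in B_f\}$. Hence it suffices to exhibit, for each $i=1,\dots,r$, an element $C_i\in B_f$ with $C_i\in x_i\tau(\mu_i)+\bigoplus_{\alpha\le 0}\Uq_\alpha$ for some $\mu_i\in P$; applying Proposition \ref{Ci-prop} with these $C_i$ to an arbitrary $u\in Z(B_f)$ (which satisfies $(\adr C_i)u=\vep(C_i)u$ because $C_i\in B_f$) then yields $u\in F_r(\Uq)$. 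First I would reduce, by a successive application of Lemma \ref{semiLem}, to the case in which $\Psi(\cA)$ is non-trivial on every simple ideal of $\gfrak$; since $F_r$ respects the tensor factorisation $\Uq\cong\bigotimes_j U_j$, nothing is lost. Under this assumption $\Psi(\cA)$ is a Hopf subalgebra of $\cqg$ (as in the proof of Proposition \ref{cqGLprop}) which contains all matrix coefficients of the adjoint representation, in particular of $V(2\rho)$; recall that $2\rho-\alpha_i$ is a weight of $V(2\rho)$ since $\langle 2\rho,\alpha_i^\vee\rangle=2$.

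To build $C_i$, fix $i$, let $v_+\in V(2\rho)$ be a highest weight vector and, in a weight basis of $V(2\rho)$, let $g^{(i)}$ be the functional dual to $y_iv_+$ (of weight $2\rho-\alpha_i$). Choose $a_i\in\cA$ with $\Psi(a_i)=c^{2\rho}_{g^{(i)},v_+}$ and set $C_i':=d_f(a_i)=l_\rf^+((a_i)_{(1)})\,f((a_i)_{(2)})\,\sigma(l_\rf^-((a_i)_{(3)}))\in B_f$. Feeding the explicit formulae \eqref{lexplicit} for $l_\rf^\pm$ into this expression and using that $\kow^2$ on $C^{V(2\rho)}$ is matrix comultiplication, one finds that $l_\rf^+(c^{2\rho}_{g^{(i)},v_m})$ vanishes unless $v_m$ has weight $2\rho$ or $2\rho-\alpha_i$, where it equals a non-zero multiple of $x_i\tau(-2\rho)$, respectively $\tau(\alpha_i-2\rho)$, while $\sigma(l_\rf^-(c^{2\rho}_{g_n,v_+}))$ vanishes unless $v_n$ has weight $2\rho$, where it equals $\tau(-2\rho)$. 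Consequently every weight component of $C_i'$ other than those in $\Uq_{\alpha_i}$ and $\Uq_0$ is zero, and
\[
C_i'=\kappa_i\,x_i\tau(-4\rho)+f(a_i)\,\tau(\alpha_i-4\rho)
\]
for a scalar $\kappa_i$, the summand in $\Uq_0$ being pinned down by $\vep(C_i')=f(a_i)$.

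The heart of the matter is to check that $\kappa_i$ can be arranged to be non-zero. By the computation above $\kappa_i$ is a fixed non-zero multiple of the value of $f$ on the $V(2\rho)$-component occurring in the middle leg of $\kow^2(a_i)$; since $f$ need not factor through $\Psi$, verifying $\kappa_i\neq 0$ is exactly the point at which the hypothesis that $f$ be convolution invertible is used (it is vacuous when $\cA$ is a Hopf algebra, e.g.\ $\cA=\cqg$, by Lemma \ref{characterLem}.(ii), and there the verification is immediate). Granting $\kappa_i\neq 0$ for every $i$, the element $C_i:=\kappa_i^{-1}C_i'\in x_i\tau(-4\rho)+\Uq_0$ lies in $B_f$ and has the form demanded by Proposition \ref{Ci-prop}, and the argument of the first paragraph finishes the proof. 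I expect this non-degeneracy step — showing $\kappa_i\neq 0$, i.e.\ that $B_f$ is ``large enough'' in each simple direction — to be the main obstacle.
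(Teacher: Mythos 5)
Your skeleton (reduce via Lemma \ref{semiLem}, then feed test elements of the form \eqref{Ci-form} into Lemma \ref{centreLem} and Proposition \ref{Ci-prop}) is exactly the paper's, but your construction of the $C_i$ has a genuine gap, and it sits precisely where you flagged the ``main obstacle'': the non-vanishing of $\kappa_i$ is \emph{false} in general, and convolution invertibility of $f$ does not rescue it. With your choice $\Psi(a_i)=c^{2\rho}_{g^{(i)},v_+}$, the weight analysis forces both outer legs to the highest weight, so $\kappa_i$ is a fixed nonzero multiple of $f\bigl(c^{2\rho}_{g_+,v_+}\bigr)$ with $g_+$ dual to $v_+$. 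Take $\cA=\cqg$: under the algebra isomorphism $\ltil_\rf:\cqg_\rf\rightarrow F_l(\Uq)$ of Proposition \ref{lr-iso} and Remark \ref{tau-as-ltil} this value is $f(\tau(-4\rho))=\prod_j f(\tau(-2\omega_j))^{2}$, and since $\tau(4\rho)\notin F_l(\Uq)$ nothing forces a character to be nonzero there. The paper's own main example kills it: the Grassmannian character of Subsection \ref{Grassmann} has $g_{C,V}(s^1_1)=0$, so the induced (automatically convolution-invertible, by Lemma \ref{characterLem}.(ii)) character of $F_l(U_q(\slfrak_n))$ vanishes on $\tau(-2\omega_1)$ and hence on $\tau(-4\rho)$; thus $\kappa_i=0$ for every $i$. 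In particular your parenthetical that the verification is ``immediate'' when $\cA$ is a Hopf algebra is wrong. Nor can you repair this by replacing $v_+$ with a lower weight vector $v_j$ (using invertibility of the matrix $(f(c_{g_m,v_n}))$ to find a nonzero entry in the relevant row): the third leg then contributes components of weights $-\alpha_k$, so $d_f(c^{2\rho}_{g^{(i)},v_j})$ acquires components in $\Uq_{\alpha_i-\alpha_k}$ for $k\neq i$, which are neither $x_i\tau(\mu_i)$ nor of weight $\le 0$, destroying the shape \eqref{Ci-form}.

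The missing idea is that the test elements need not lie in $B_f$. Lemma \ref{centreLem} gives more than $(\adr \beta)b=\vep(\beta)b$: since $\adr\bigl((\beta-\vep(\beta))X\bigr)b=\adr(X)\bigl(\adr(\beta-\vep(\beta))b\bigr)$, any $b\in Z(B_f)$ is annihilated by $\adr\bigl((\beta-\vep(\beta))X\bigr)$ for all $\beta\in B_f$ \emph{and all} $X\in\Uq$. The paper exploits this freedom with $\beta=d_f(a_{(1)})$ and $X=l^-_\rf(a_{(2)})\fbar(a_{(3)})$: because $l^-_\rf$ is a coalgebra map one has $\sigma(l^-_\rf(a_{(1)}))\,l^-_\rf(a_{(2)})=\vep(a)$, so the $\sigma(l^-_\rf)$-tail of $d_f$ cancels and one is left with $C_j=l^+_\rf(a)-f(a_{(1)})l^-_\rf(a_{(2)})\fbar(a_{(3)})$, which satisfies $\vep(C_j)=0$ and $(\adr C_j)b=0$. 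Choosing $a$ with $\Psi(a)=c^{\lambda}_{f,v}$ so that $l^+_\rf(a)=x_j\tau(-\wght(v))$, the leading term has coefficient $1$ independently of any values of $f$, while the correction lies in $\uqbm\subseteq\bigoplus_{\alpha\le 0}\Uq_\alpha$. Convolution invertibility is used only to form $\fbar$, not to force any non-degeneracy; this cancellation step is what your argument is missing.
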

\begin{proof}
  By Lemma \ref{centreLem} any $b\in Z(B_f)$ satisfies
  $\adr((\beta-\vep(\beta))X)b=0$ for all $\beta\in B_f$ and $X\in
  \Uq$. In particular for any $a\in \cA$ one obtains
  \begin{align*}
    \adr\big([d_f(a_{(1)})-f(a_{(1)})]\,l_\rf^-(a_{(2)})\fbar(a_{(3)})\big)b=0.
  \end{align*}
  This relation may be rewritten as
  \begin{align}\label{ainv}
    \adr\big(l_\rf^+(a)-f(a_{(1)})l_\rf^-(a_{(2)})\fbar(a_{(3)})\big)b=0
  \end{align}
  for all $a\in \cA$. Take any generator $x_j\in \Uq$. As above Lemma
  \ref{semiLem}, we can decompose the Hopf algebra $\Uq$ in the form
  $\Uq\cong U_1\ot U_2$ where $x_j\in U_1$ and and $U_i\cong U(\gfrak_i)$,
  $i=1,2$, with simple $\gfrak_1$ and semisimple or trivial $\gfrak_2$. By Lemma
  \ref{semiLem} we may assume that the left or right regular action of
  $U_1$ on $\Psi(\cA)$ is non-trivial. Hence there exists $\lambda\in \Ppiplus$ such that
  $U_1$ acts non-trivially on $V(\lambda)$ and $C^{V(\lambda)}\subset \Psi(\cA)$.
  It follows from (\ref{lexplicit}) that there
  exist weight vectors $v\in V(\lambda)$ and $f\in V(\lambda)^\ast$ such that
  $l_\rf^+(c^\lambda_{f,v})=x_j\tau(-\wght(v))$. Choose $a\in \cA$ such
  that $\Psi(a)=c^\lambda_{f,v}$ and consider the element
  \begin{align*}
    C_j:=l_\rf^+(a)-f(a_{(1)})l_\rf^-(a_{(2)})\fbar(a_{(3)}).
  \end{align*}
  By construction $C_j$ is of the form (\ref{Ci-form}), and by
  (\ref{ainv}) we have $(\adr C_j)b=\vep(C_j)b$. As this construction
  is possible for any $x_j$ one may now apply Proposition \ref{Ci-prop}
  to obtain $b\in F_r(\Uq)$.
\end{proof}

\section{Constructing characters of $\covAc$}\label{ConstrChar}
In Section \ref{Noumi} we explained the relevance of characters of the
covariantized algebra $\covAc$. They are the main ingredient in the
construction of Noumi coideal subalgebras and quantum adjoint orbits.
As explained in Section \ref{CylForms} universal cylinder forms
coincide with characters of covariantized algebras. The case when
$\cA=\cqg$ is of particular interest because it leads to realisations
of $\rep(\gfrak)$ inside $\Uq$. We now address the immediate question 
of how to obtain such characters. 

\subsection{Solutions of the reflection equation from central
  elements}\label{solvingRE}
  Consider an element $C\in F_r(\Uq)$. It follows from the direct sum
  decomposition (\ref{Fr-decomp}) that there exists a finite subset
  $P_C^+$ of $P^+$ such that 
  $C\in \bigoplus _{\mu\in P^+_C}(\adr \Uq)\tau(-2w_0\mu)$. Define
  $c_C:=\sum_{\mu\in P^+_C}c_\mu\in \cqg^{inv}$ as sum of the quantum traces
  $c_\mu$. Note that by definition of quantum traces in Example
  \ref{q-trace-eg} and by Proposition \ref{CalderoProp} there exists a uniquely determined linear
  functional $f_C:\cqg\rightarrow \qfield$ such that
  \begin{align}\label{fC-def}
     C=f_C(c_{C(1)})\sigma\big(\ltil_{\rfb_{21}}(c_{C(2)}) \big)
  \end{align}
  and $f_C(C^{V(\mu)})=0$ for all $\mu\notin P^+_C$.
  Note, moreover, that $f_C$ depends on $C$ only and not on the choice of $P^+_C$.
  We use the functional $f_C$ to reformulate and generalise the observation made in
  \cite[Corollary 2]{a-Kolb08} that suitable central elements in coideal subalgebras
  of $\Uq$ lead to solutions of the reflection equation.
  \begin{prop}\label{RE-Prop}
    Let $B\subseteq \Uq$ be a left coideal subalgebra and $C\in Z(B)\cap
    F_r(\Uq)$. Then the functional $f_C:\cqg\rightarrow \qfield$ defined
    above satisfies the relation
    \begin{align}
            f_C(b_{(1)})\,\rf(b_{(2)},a_{(1)})& \,
      f_C(a_{(2)})\,\rf(\sigma(b_{(3)}),a_{(3)}) \nonumber\\
      &=  \rf(\sigma(a_{(1)}),b_{(1)})\, f_C(a_{(2)})\,\rf(a_{(3)},b_{(2)})\,
         f_C(b_{(3)})\label{RE-form}
    \end{align}
    for all $a,b\in \cqg$.
  \end{prop}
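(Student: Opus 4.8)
The plan is to translate the statement about the central element $C\in Z(B)$ into the cylinder-form type identity \eqref{RE-form} by using the explicit description of $B_f$-type elements via $l$-functionals together with the two ``commutation'' formulas \eqref{fafb1}--\eqref{fafb2} from Lemma \ref{AlgHomLem}.(ii). First I would observe that \eqref{RE-form} is exactly the statement that $f_C$, viewed as a linear functional on $\cqg$, satisfies \eqref{fhom1} and \eqref{fhom2} of Lemma \ref{AlgHomLem}.(i); equivalently, by Proposition \ref{CFA} and the relation between \eqref{rel01}--\eqref{rel02} and \eqref{cyleq1}--\eqref{cyleq2}, that a suitable twist of $f_C$ is a character of the covariantized algebra $\cqg_\rf$. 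So the real content is to show that $f_C\in\cqg_\rf^\wedge$. Actually the cleanest route avoids even that detour: I would show directly that the element $C$ can be written as $C=d_{f_C}(c_C)$ in the sense of the map $d_f$ from Subsection \ref{NoumiCentre} with $f=f_C$, so that $C\in B_{f_C}$ once $f_C$ is known to be a character.

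The key steps, in order, would be: (1) Rewrite the defining relation \eqref{fC-def} for $f_C$ using $\sigma\circ\ltil_{\rfb_{21}}=\sigma(l^-_\rf(\,\cdot\,))\sigma^2(l^+_\rf(\,\cdot\,))$ (via \eqref{ltildef}, \eqref{rrelation}) and the fact that $c_C\in\cqg^{inv}$, so that relation \eqref{c3rel} (i.e. Proposition \ref{invprops}.(\ref{enn6}) after applying $\kow$) lets us move the $\sigma^2$ onto a different leg; this is the same manipulation as in the proof of Lemma \ref{df=Phif}.(i) and yields $C=l^+_\rf(c_{C(1)})f_C(c_{C(2)})\sigma(l^-_\rf(c_{C(3)}))$. (2) Since $C\in Z(B)$ and $B$ is a left coideal subalgebra, Lemma \ref{centreLem} gives $(\adr x)C=\vep(x)C$ for all $x\in B$; but more useful is simply that $aC=Ca$ inside $B$-module terms, i.e. $Cb=bC$ for $b$ ranging over $B$ — however we do not know $B$ is large. (3) The crucial point: use that $C$ itself lies in $F_r(\Uq)$ and is built from quantum traces $c_\mu\in\cqg^{inv}$, so $C$ is central in \emph{all} of $Z(\Uq)$? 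No — rather, I would use that the whole construction of $f_C$ only used $C\in F_r(\Uq)$ plus Proposition \ref{CalderoProp}, and now feed in the hypothesis $C\in Z(B)$.

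The honest approach is: apply the pairing of $\cqg$ with $\Uq$. Pair both sides of the desired identity \eqref{RE-form} against an arbitrary $u\in\Uq$ in the appropriate way; using the definitions of $l^\pm_\rf$ and \eqref{fC-def}, the left-hand side of \eqref{RE-form} computes $\langle ?, (\adr\text{-something})C\rangle$ and the right-hand side computes $\langle ?, (\text{conjugation})C\rangle$, and the two agree precisely because $C$ commutes with the elements $l^+_\rf(a_{(1)})f_C(a_{(2)})\sigma(l^-_\rf(a_{(3)}))=d_{f_C}(a)$, which are exactly the generators of the putative $B_{f_C}$. So the logical skeleton is: (i) show $C\in Z(B)\cap F_r(\Uq)$ forces the identity $d_{f_C}(a)\,C = C\,d_{f_C}(a)$ for all $a\in\cqg$ — this uses that $C$ is central in $B$ together with $d_{f_C}(a)\in B$ whenever $C\in Z(B)$, which in turn needs one to check $d_{f_C}(\cqg)\subseteq B$; and (ii) translate that commutativity, via the Hopf pairing and formulas \eqref{fafb1}--\eqref{fafb2}, into \eqref{RE-form}. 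I expect step (i) — establishing that the functional $f_C$ attached abstractly to $C$ actually reconstructs elements of $B$, i.e. that $d_{f_C}(\cqg)\subseteq B$ — to be the main obstacle, since $f_C$ is defined only through the isomorphism $\ltil_{\rfb_{21}}$ and not a priori as a character; one likely has to argue that $C\in Z(B)$ implies $C$ lies in the image of $d_f$ for the \emph{right} $f$ by a uniqueness/injectivity argument (Proposition \ref{CalderoProp}), and then that $f=f_C$. Alternatively, and probably more cleanly, one proves \eqref{RE-form} by a direct computation: expand $Cu=uC$ for $u$ ranging over a set of $l$-functionals that generate enough of $\Uq$, insert \eqref{fC-def}, and recognise the resulting scalar identity as \eqref{RE-form} after using \eqref{lpmmult}, \eqref{lpmrtt}, \eqref{lmprtt} and the antipode relation \eqref{sigma2}; the bookkeeping with $\sigma$, $\sigma^{-1}$ and the $\rho$-shift in $c_\mu$ will be the delicate part but is routine in spirit.
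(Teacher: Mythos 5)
There is a genuine gap, and it sits exactly where you place your ``main obstacle''. Your plan reduces \eqref{RE-form} to the commutativity of $C$ with $d_{f_C}(a)$ for all $a\in\cqg$, which forces you to prove $d_{f_C}(\cqg)\subseteq B$ first; you leave this unresolved, and the tools you propose for the translation step are unavailable at this stage: \eqref{fafb1}--\eqref{fafb2} are properties of \emph{characters} of the covariantized algebra, and you do not know that $f_C$ is one. Indeed it usually is not: $f_C$ vanishes on $C^{V(\mu)}$ for $\mu\notin P^+_C$, so for instance $f_C(1)=0$ unless $0\in P^+_C$. The identity \eqref{RE-form} only asserts that the right-hand sides of \eqref{fhom1} and \eqref{fhom2} agree, not that either equals $f_C(ab)$, so recasting the problem as ``$f_C\in\cqg_\rf^\wedge$'' is both circular and false in general. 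Likewise your closing suggestion to expand $Cu=uC$ for $u$ ranging over $l$-functionals that generate enough of $\Uq$ fails because $C$ is central only in $B$, not in $\Uq$, and $B$ may be small.

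The idea you are missing is that no enlargement of $B$ is needed: the only elements of $B$ one must commute $C$ past are the coproduct legs of $C$ itself, and these lie in $B$ automatically because $B$ is a coideal subalgebra. Concretely, the paper passes to $B_r=\sigma^{-1}(B)$ and $D=\sigma^{-1}(C)\in Z(B_r)\cap F_l(\Uq)$; since $\kow(D)\in B_r\ot\Uq$, Lemma \ref{centreLem} immediately gives $(\adl D_{(1)})D\ot D_{(2)}=D\ot D$ with no further input. Writing $D=\ltil_{\rfb_{21}}(n)$, expanding $\kow(D)$ by \eqref{ltil-kow}, and cancelling the outer leg by the injectivity of $\ltil_{\rfb_{21}}$ (Proposition \ref{CalderoProp}) turns this into a bilinear identity in $\cqg\ot\cqg$ for the single element $m=n=f_C(c_{C(1)})c_{C(2)}$ (Lemma \ref{hilfslem}). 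Finally the $\adrs$-invariance of the quantum trace $c_C$, i.e.\ Proposition \ref{invprops}.(\ref{enn6}) in the leg-shifted form \eqref{c3rel}, redistributes the coproduct legs so that the free legs of the two copies of $c_C$ sweep out all of $\bigoplus_{\mu\in P^+_C}C^{V(\mu)}$ in each tensor factor; this yields \eqref{RE-form} on those blocks, and both sides vanish elsewhere. Your step (1) (the rewriting of \eqref{fC-def} as in Lemma \ref{df=Phif}) and your appeal to the injectivity of $\ltil_{\rfb_{21}}$ are correct ingredients, but without the observation about the coproduct legs the argument does not close.
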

To make the proof of the above proposition more manageable we separate
the main technical step in the following lemma.
\begin{lem}\label{hilfslem}
  Let $B_r\subseteq \Uq$ be a right coideal subalgebra and $D\in Z(B_r)\cap F_l(\Uq)$.
  Then the relation
  \begin{align*}
     \rf(\sigma(m_{(1)})m_{(3)}, \sigma^{-1}(n_{(2)})) \,m_{(2)} \ot
    n_{(1)} = \rf(\sigma^2(n_{(1)}), \sigma(m_{(1)})m_{(3)})\,
    m_{(2)}\ot n_{(2)}
  \end{align*}
  holds for $m=n= \ltil_{\rfb_{21}}^{-1}(D)\in \cqg$.
\end{lem}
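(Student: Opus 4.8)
The plan is to translate the asserted identity, by means of the $l$-functionals, into a commutation statement inside $\Uq$, and then to read off that statement from the centrality hypothesis via the comodule-algebra structure of Lemma~\ref{comod-alg}.

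\emph{Reformulation.} Write $\ltil:=\ltil_{\overline{\rf}_{21}}$, so that by Proposition~\ref{CalderoProp} and Proposition~\ref{lr-iso} the map $\ltil\colon\cqg_{\overline{\rf}_{21}}\to F_l(\Uq)$ is an injective isomorphism of left $\Uq$-module algebras satisfying $\ltil(\adrs(X)c)=\adl(X)\ltil(c)$. Since on $l$-functionals the pairing \eqref{pairing} is evaluation, \eqref{lpmdef} gives $\rf(d,a)=\langle d,l^+_\rf(a)\rangle$ and $\rf(\sigma^2(a),d)=\langle d,l^-_\rf(\sigma(a))\rangle$ for all $a,d\in\cqg$; combining this with the formula $\adrs(X)c=\langle\sigma(c_{(1)})c_{(3)},X\rangle c_{(2)}$ coming from \eqref{acc} and \eqref{ad-def}, the two sides of the claimed identity become $\adrs(l^+_\rf(\sigma^{-1}(n_{(2)})))\,m\otimes n_{(1)}$ and $\adrs(l^-_\rf(\sigma(n_{(1)})))\,m\otimes n_{(2)}$. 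Applying $\ltil$ in the first tensor slot and using its injectivity, it suffices to prove
\[
  \adl\!\big(l^+_\rf(\sigma^{-1}(n_{(2)}))\big)D\otimes n_{(1)}
  = \adl\!\big(l^-_\rf(\sigma(n_{(1)}))\big)D\otimes n_{(2)} .
\]

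\emph{Input from the hypothesis.} As $B_r$ is a right coideal subalgebra we have $Z(B_r)\subseteq B_r$, so $D\in B_r$ and $\kow(D)\in B_r\otimes\Uq$. Because $\ltil$ is a homomorphism of left $\Uq$-comodule algebras (Lemma~\ref{comod-alg}) and $l^\pm_{\overline{\rf}_{21}}=l^\mp_\rf$ by \eqref{rrelation}, the formula \eqref{dr-def} yields
\[
  \kow(D)=(\id\otimes\ltil)\,d_{\overline{\rf}_{21}}(m)
        = l^-_\rf(m_{(1)})\,\sigma\!\big(l^+_\rf(m_{(3)})\big)\otimes\ltil(m_{(2)}),
\]
so every first leg $l^-_\rf(m_{(1)})\sigma(l^+_\rf(m_{(3)}))$ lies in $B_r$ and hence commutes with the central element $D$. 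Applying $\ltil^{-1}$ in the second factor, this says
\[
  D\;l^-_\rf(m_{(1)})\sigma\!\big(l^+_\rf(m_{(3)})\big)\otimes m_{(2)}
  = l^-_\rf(m_{(1)})\sigma\!\big(l^+_\rf(m_{(3)})\big)\;D\otimes m_{(2)}
  \qquad\text{in }\Uq\otimes\cqg .
\]

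\emph{Bridging the two statements.} It remains to deduce the reformulated identity from this commutation relation. I would unfold $\adl(X)D=X_{(1)}D\sigma(X_{(2)})$ on both sides, replace $D$ by $\ltil(m)=l^-_\rf(m_{(1)})\sigma(l^+_\rf(m_{(2)}))$, and carry the outer $l$-functional through this product using the coproduct formula $\kow(l^\pm_\rf(a))=l^\pm_\rf(a_{(1)})\otimes l^\pm_\rf(a_{(2)})$, the antipode relation $\sigma(l^\pm_\rf(a))=l^\pm_\rf(\sigma^{-1}(a))$, and the commutation relations \eqref{lpmrtt}, \eqref{lmprtt}, the resulting numerical $\rf$-factors being rearranged by the axioms \eqref{r-mult}, \eqref{r-mult2}, \eqref{r-sigma}; the commutation relation above supplies the one non-formal step matching what is left. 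Equivalently, and perhaps more transparently, one pairs the $\Uq$-factors of both displayed identities against an arbitrary $a\in\cqg$, uses $\langle a,uv\rangle=\langle a_{(1)},u\rangle\langle a_{(2)},v\rangle$, $\langle\sigma(a),u\rangle=\langle a,\sigma(u)\rangle$ and the explicit formula $\langle a,D\rangle=\rf(\sigma(m_{(1)}),a_{(1)})\rf(\sigma(a_{(2)}),m_{(2)})$ for $D=\ltil(m)$, and concludes by non-degeneracy of \eqref{pairing}. The genuinely laborious point, and the main obstacle, is the bookkeeping of Sweedler indices in this last step: one must keep track of the two independent copies of $\ltil^{-1}(D)=m=n$ — one carried by $D$ through $\ltil$, the other by the explicit factor $n$ — and verify that the centrality relation is exactly the interchange needed; all the remaining manipulations are mechanical applications of the identities recalled in Section~\ref{NotConv}.
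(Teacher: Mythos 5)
Your reformulation is correct and in fact lands exactly on the paper's own intermediate identity: since $\sigma(l^+_\rf(a))=l^+_\rf(\sigma^{-1}(a))$ and $\sigma^{-1}(l^-_\rf(a))=l^-_\rf(\sigma(a))$, the statement you reduce the lemma to is
\begin{align*}
\adl\big(\sigma(l^+_\rf(n_{(2)}))\big)D\ot n_{(1)}=\adl\big(\sigma^{-1}(l^-_\rf(n_{(1)}))\big)D\ot n_{(2)},
\end{align*}
which is precisely the penultimate display of the paper's proof, and your expansion of $\kow(D)$ via the comodule structure agrees with the paper's use of \eqref{ltil-kow} and \eqref{rrelation}. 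The problem is that the passage from your centrality input to this identity --- which is the entire mathematical content of the lemma --- is left as a plan. You yourself call the Sweedler bookkeeping ``the main obstacle'', and neither of your two suggested routes (unfolding $\adl(X)D=X_{(1)}D\sigma(X_{(2)})$ and pushing $l$-functionals through with \eqref{lpmrtt}, \eqref{lmprtt}, or pairing against an arbitrary $a\in\cqg$ and invoking non-degeneracy) is actually carried out; there is no verification that the accumulated $\rf$-factors cancel as hoped. That is a genuine gap, not a routine omission.

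The missing idea is to use centrality of $D$ in the \emph{adjoint} form rather than the plain commutation form, which makes the bridge a two-line cancellation with no RTT relations at all. From your relation $D_{(1)}D\ot D_{(2)}=DD_{(1)}\ot D_{(2)}$ (which, as a side remark, should be justified not by saying each Sweedler leg lies in $B_r$ but by $\kow(D)\in B_r\ot\Uq$ together with $D\in Z(B_r)$), Lemma \ref{centreLem} --- equivalently the identity $xh=((\adl x_{(1)})h)x_{(2)}$ --- gives $(\adl D_{(1)})D\ot D_{(2)}=D\ot D$, i.e.
\begin{align*}
\adl\big(l^-_\rf(n_{(1)})\sigma(l^+_\rf(n_{(3)}))\big)D\ot\ltil_{\rfb_{21}}(n_{(2)})=D\ot\ltil_{\rfb_{21}}(n).
\end{align*}
Applying $\ltil_{\rfb_{21}}^{-1}$ and $\kow$ in the second leg and then hitting the first $\cqg$-leg with $a\mapsto\adl(\sigma^{-1}(l^-_\rf(a)))$ kills the $l^-_\rf$-factor, because $l^-_\rf$ is a coalgebra map and $\sigma^{-1}(u_{(2)})u_{(1)}=\vep(u)1$; what remains is exactly the displayed identity above. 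Without this (or a completed version of your computational alternative) the proof is not done.
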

\begin{proof}
   It follows from Lemma \ref{centreLem} and the coideal property $\kow (B_r)\subseteq B_r\ot
   \Uq$ that $(\adl D_{(1)})D \ot D_{(2)}=D\ot D$. In view of relations
   (\ref{ltil-kow}) and (\ref{rrelation}) this can be rewritten as 
   \begin{align*}
     \adl  \big( l_\rf^-(n_{(1)})\sigma(l_\rf^+(n_{(3)}))\big)D\ot
     \ltil_{\rfb_{21}}(n_{(2)})=D\ot \ltil_{\rfb_{21}}(n).
   \end{align*}
  In view of Proposition \ref{CalderoProp} the above relation implies
  \begin{align*}
   \adl   \big(l_\rf^-(n_{(1)})\sigma(l_\rf^+(n_{(4)}))\big)D\ot
     n_{(2)}\ot n_{(3)}=D\ot n_{(1)}\ot n_{(2)}
  \end{align*}
  and hence
  \begin{align*}
     \adl   \big(\sigma(l_\rf^+(n_{(2)}))\big)D\ot
     n_{(1)}= \adl \big(\sigma^{-1}(l_\rf^-(n_{(1)}))\big)D \ot  n_{(2)}.
  \end{align*}
  Inserting $D=\ltil_{\rfb_{21}}(m)$ and using the fact that
  $\ltil_{\rfb_{21}}:\cqg\rightarrow F_l(\Uq)$ is an isomorphism of left $\Uq$-modules one obtains
  \begin{align*}
    \langle \sigma(m_{(1)})m_{(3)}, \sigma(l_\rf^+(n_{(2)})) \rangle
    \,m_{(2)} \ot n_{(1)}=  \langle \sigma(m_{(1)})m_{(3)}, \sigma^{-1}(l_\rf^-(n_{(1)})) \rangle\,
    m_{(2)}\ot n_{(2)}.
  \end{align*}
  By definition of $l_\rf^+$ and $l_\rf^-$ this is equivalent to the the desired formula.
\end{proof}
\begin{proof}[Proof of Proposition \ref{RE-Prop}]
  We apply Lemma \ref{hilfslem} to the right coideal subalgebra
  $B_r:=\sigma^{-1}(B)$ of $\Uq$ and to the element
  $D:=\sigma^{-1}(C)$. To simplify notation we define $f:=f_C$, $e:=c_C$, and
  $c:=c_C$. Note that in the notation of Lemma \ref{hilfslem} one
  has $m=f(c_{(1)})c_{(2)}$ and $n=f(e_{(1)})e_{(2)}$ and hence one
  gets
  \begin{align}
    f(c_{(1)}) f(e_{(1)}) &\rf(\sigma(c_{(4)}),e_{(3)})
    \rf(\sigma^2(c_{(2)}),e_{(4)})\, c_{(3)}\ot e_{(2)}\nonumber\\
    &= f(c_{(1)}) f(e_{(1)})\rf(\sigma^2(e_{(2)}),c_{(4)})
    \rf(\sigma(e_{(3)}),c_{(2)})\, c_{(3)}\ot e_{(4)}.\label{firststep}
  \end{align}
  Using Proposition \ref{invprops}.(\ref{enn6}) we now apply
  \begin{align*}
    c_{(1)} \ot c_{(2)} \ot c_{(3)} \ot c_{(4)} = c_{(2)} \ot
    c_{(3)}\ot c_{(4)}\ot \sigma^2(c_{(1)})
  \end{align*}
  to both sides of (\ref{firststep}), and we apply
   \begin{align*}
    e_{(1)} \ot e_{(2)} \ot e_{(3)} \ot e_{(4)} = e_{(3)}\ot
    e_{(4)}\ot \sigma^2(e_{(1)})\ot \sigma^2(e_{(2)})
  \end{align*}
  to the left hand side of (\ref{firststep}). One obtains
  \begin{align*}
     f(c_{(2)}) f(e_{(3)}) &\rf(\sigma(c_{(1)}),e_{(1)})
    \rf(c_{(3)},e_{(2)})\, c_{(4)}\ot e_{(4)}\\
    &= f(c_{(2)}) f(e_{(1)})\rf(e_{(2)},c_{(1)})
    \rf(\sigma(e_{(3)}),c_{(3)})\, c_{(4)}\ot e_{(4)}.
  \end{align*}
  In view of the special form  of quantum trace $c=e=c_C$ this
  proves (\ref{RE-form}) for all $a,b\in \bigoplus_{\mu\in P^+_C}
  C^{V(\mu)}$ and hence for all $a,b\in \cqg$.
\end{proof}
Proposition \ref{RE-Prop} provides characters of the reflection equation
algebra from Subsection \ref{FRT-RE} via suitable central elements in
any coideal subalgebra of $\Uq$. Recall that for any $V\in Ob(\cC)$
the corresponding FRT algebra $\cA(R_{V,V})$ is generated by the
linear space $V^\ast\ot V$ as an algebra. Let $t^h_v\in \cA(R_{V,V})$
denote the generator corresponding to $h\ot v\in V^\ast\ot V$.  The
following Corollary is a direct consequence of Proposition \ref{FRTRE}
and Proposition \ref{RE-Prop}. 
\begin{cor}\label{RE-character}
  Let $B\subseteq \Uq$ be a left coideal subalgebra and let $C\in
  Z(B)\cap F_r(\Uq)$. For any $V\in Ob(\cC)$ there exists a
  unique character
  $g_{C,V}\in\mathcal{A}(R_{V,V})_{\mathbf{r}}^\wedge$ such that 
  $g_{C,V}(t^h_v)=f_C(c_{h,v})$ for all $h\in V^\ast, v\in V$.  
\end{cor}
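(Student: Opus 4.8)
The plan is to invoke the explicit presentation of $\cA(R_{V,V})_\rf$ as a reflection equation algebra obtained in Proposition \ref{FRTRE}. Fix the basis $\{v_1,\dots,v_N\}$ of $V$ with dual basis $\{f_1,\dots,f_N\}$ as in Subsection \ref{FRT-RE}, write $t^i_j=t^{f_i}_{v_j}$ and $c^i_j=c_{f_i,v_j}=\Psi(t^i_j)$, and put $\sigma^i_j:=f_C(c^i_j)$. By Proposition \ref{FRTRE} (in our case $R'=R$) the algebra $\cA(R_{V,V})_\rf$ is presented by the generators $s^i_j=t^i_j$ subject only to the reflection equation \eqref{refleqn}. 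Consequently, specifying a character $g\in\cA(R_{V,V})_\rf^\wedge$ is the same as specifying a matrix $\Sigma=(\sigma^i_j)$ of scalars which solves the \emph{numerical} reflection equation obtained from \eqref{refleqn} by the substitution $S\mapsto\Sigma$, and such a character is uniquely determined by its values on the $s^i_j$. Since the $t^h_v$ depend bilinearly on $(h,v)$ and $c_{\cdot,\cdot}$ is bilinear, the requirement $g_{C,V}(t^h_v)=f_C(c_{h,v})$ is equivalent to $g_{C,V}(s^i_j)=\sigma^i_j$; indeed, once $\Sigma$ is known to solve the numerical reflection equation the resulting character satisfies $g_{C,V}(t^h_v)=\sum_{i,j}h(v_i)f_j(v)\sigma^i_j=f_C(c_{h,v})$. (Note that one cannot simply take $g_{C,V}=f_C\circ\Psi$, since this map need not be unital; $g_{C,V}$ is genuinely built from the presentation.) So the whole statement reduces to showing that $\Sigma$ solves the numerical reflection equation.

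For this I would specialise the identity \eqref{RE-form} of Proposition \ref{RE-Prop} to $a=c^i_j$ and $b=c^k_l$. One expands the iterated coproducts using $\kow(c^i_j)=\sum_m c^i_m\ot c^m_j$; evaluates $\rf$ on matrix coefficients of $V$ via \eqref{rf-def}, these being precisely the entries of the matrix $R$ attached to $R_{V,V}$ in Subsection \ref{FRT-RE}, so that the slots $\rf(\sigma(\cdot),\cdot)$ contribute entries of $R^{-1}$; and reads off $f_C$ on the middle Sweedler legs of $a$ and $b$ as the entries $\sigma^a_b$. Under these substitutions \eqref{RE-form} becomes a bilinear identity in the entries of $\Sigma$ and of $R^{\pm1}$, and a direct rearrangement (clearing the inverse $R$-matrices against $R$) identifies it with the numerical reflection equation coming from \eqref{refleqn}. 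This is exactly the manipulation carried out, at the level of abstract generators, in the proof of Proposition \ref{FRTRE} in passing from \eqref{rel01}, \eqref{rel02} to \eqref{refleqn}; here it is done with scalars. Hence $\Sigma$ solves the numerical reflection equation and $g_{C,V}$ exists.

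The only real obstacle is this last bit of bookkeeping: matching the Sweedler-notation identity \eqref{RE-form}, restricted to $C^V$, with the matrix form \eqref{refleqn} of the reflection equation. It is routine given the computation already performed for Proposition \ref{FRTRE}, and all the conceptual content of the corollary is carried by Propositions \ref{FRTRE} and \ref{RE-Prop}: the former reduces characters of the covariantized FRT algebra to solutions of the reflection equation and shows that it suffices to test the defining relation on the generating subspace $C^V$, while the latter supplies the required identity \eqref{RE-form} precisely there.
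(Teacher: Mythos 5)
Your proposal is correct and is essentially the paper's own argument: the paper offers no proof beyond declaring the corollary ``a direct consequence of Proposition \ref{FRTRE} and Proposition \ref{RE-Prop}'', and your write-up — reducing via the presentation of $\cA(R_{V,V})_\rf$ to the numerical reflection equation for $\Sigma=(f_C(c_{f_i,v_j}))$ and verifying that equation by specialising \eqref{RE-form} to $a,b\in C^V$ — is exactly the intended expansion of that remark. Your observation that $f_C\circ\Psi$ need not be unital, so the character must genuinely be built from the presentation, is a correct and worthwhile point that the paper leaves implicit.
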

To make the Noumi coideal subalgebra corresponding to the character $g_{C,V}$
more explicit, we will use the following auxiliary observation. For
any $\mu\in P^+$ let $p_\mu: F_r(U)\rightarrow (\adr \Uq)\tau(2\mu)$
denote the projection map with respect to the direct sum decomposition
\eqref{Fr-decomp}. 
\begin{lem}
  Let $B\subseteq \Uq$ be a left coideal subalgebra and $\mu\in
  P^+$. Then 
  \begin{align*} 
     p_\mu(B\cap F_r(\Uq))\subseteq B\qquad\mbox{ and }\qquad
     p_\mu(Z(B)\cap F_r(\Uq))\subseteq Z(B).
  \end{align*}
\end{lem}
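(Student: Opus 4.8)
The plan is to establish the first inclusion and then read off the second; the key idea for the first is to sidestep the adjoint action — under which $B$ need not be stable — and argue with the comultiplication instead. First I would record what the two hypotheses say comodule-theoretically. Since $B$ is a \emph{left} coideal subalgebra, $\kow(B)\subseteq\Uq\ot B$; and since $F_r(\Uq)=\sigma(F_l(\Uq))$ while $\kow(F_l(\Uq))\subseteq\Uq\ot F_l(\Uq)$ (the left coideal property visible from \eqref{ltil-kow}), applying $\kow\circ\sigma=(\sigma\ot\sigma)\circ\kow^{\cop}$ shows $F_r(\Uq)$ is a \emph{right} coideal, $\kow(F_r(\Uq))\subseteq F_r(\Uq)\ot\Uq$. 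Intersecting, for $b\in B\cap F_r(\Uq)$ one obtains $\kow(b)\in(\Uq\ot B)\cap(F_r(\Uq)\ot\Uq)=F_r(\Uq)\ot B$.

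The second ingredient is that $p_\mu$ is an endomorphism of the \emph{right} $\Uq$-comodule $F_r(\Uq)$, i.e.\ $\kow\circ p_\mu=(p_\mu\ot\id)\circ\kow$ on $F_r(\Uq)$, and here the structure of the decomposition enters. I would first check that each summand $(\adl\Uq)\tau(-2\nu)$ of $F_l(\Uq)$ is a subcomodule of the left $\Uq$-comodule $F_l(\Uq)$: for $a\in C^{V(\nu)}$ all Sweedler components $a_{(i)}$ lie in the subcoalgebra $C^{V(\nu)}$, so by \eqref{ltil-kow} the second tensor leg $\ltil_\rf(a_{(2)})$ of $\kow(\ltil_\rf(a))$ again lies in $\ltil_\rf(C^{V(\nu)})=(\adl\Uq)\tau(-2\nu)$ by \eqref{rf-corresp}, whence $\kow\big((\adl\Uq)\tau(-2\nu)\big)\subseteq\Uq\ot(\adl\Uq)\tau(-2\nu)$ and the projection $p^l_\nu$ is a left-comodule map. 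Transporting through $\sigma$ — which, via \eqref{ad-leftright} and $\sigma(\tau(-2\nu))=\tau(2\nu)$, carries \eqref{Fl-decomp} to \eqref{Fr-decomp} and a left-comodule map to a right-comodule map — one gets that each $(\adr\Uq)\tau(2\nu)$ is a subcomodule of the right $\Uq$-comodule $F_r(\Uq)$ and the projections $p_\nu$ are right-comodule maps.

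Combining the two steps yields the first inclusion: for $b\in B\cap F_r(\Uq)$,
\[
\kow\big(p_\mu(b)\big)=(p_\mu\ot\id)\kow(b)\in(p_\mu\ot\id)\big(F_r(\Uq)\ot B\big)\subseteq\Uq\ot B,
\]
so that $p_\mu(b)=(\vep\ot\id)\kow\big(p_\mu(b)\big)\in B$. For the second inclusion I would additionally use that \eqref{Fr-decomp} is a decomposition into $\adr(\Uq)$-submodules, so $p_\mu$ is also $\adr(\Uq)$-equivariant. Given $c\in Z(B)\cap F_r(\Uq)$, Lemma \ref{centreLem} gives $(\adr x)c=\vep(x)c$ for all $x\in B$, hence $(\adr x)p_\mu(c)=p_\mu((\adr x)c)=\vep(x)p_\mu(c)$; since $p_\mu(c)\in B$ by the first part, Lemma \ref{centreLem} then gives $p_\mu(c)\in Z(B)$.

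The one point requiring care — and what I would regard as the heart of the argument — is the verification in the second step that the summands $(\adr\Uq)\tau(2\nu)$ are genuine \emph{subcomodules} of $F_r(\Uq)$, not merely $\adr$-submodules; this is exactly what makes $p_\mu$ commute with $\kow$, and it rests on the compatibility of the Peter--Weyl-type decomposition \eqref{rf-corresp} with the coproduct formula \eqref{ltil-kow}. Everything else is a short formal manipulation with Sweedler notation, and in particular no property of $B$ beyond being a left coideal subalgebra is used.
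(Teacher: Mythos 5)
Your proof is correct and follows essentially the same route as the paper's: the paper's (very terse) argument likewise rests on the observation that $p_\mu$ is a homomorphism of right coideals of $\Uq$, combines it with the left coideal property of $B$ for the first inclusion, and deduces the second from Lemma \ref{centreLem}. Your write-up simply supplies the details the paper leaves implicit, in particular the verification via \eqref{ltil-kow} and \eqref{rf-corresp} that each summand $(\adr\Uq)\tau(2\nu)$ is a right coideal.
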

\begin{proof}
   Note that $p_\mu$ is a homomorphism of right coideals in
   $\Uq$. Together with  the fact that $B$ is a left coideal 
   subalgebra this implies the first inclusion. The second inclusion
   is now an immediate consequence of Lemma \ref{centreLem}.
\end{proof}
By the above lemma any $C\in Z(B)\cap F_r(\Uq)$ can be written as a finite sum 
\begin{align}\label{C-decomp}
  C=\sum_{\mu\in P_C^+} C_\mu\qquad\mbox{with $C_\mu\in (\adr
  \Uq)\tau(-2w_0\mu)\cap Z(B)$.}
\end{align}
The Noumi coideal subalgebra $B_{g_{C,V}}$ corresponding to the character from Corollary \ref{RE-character} is obtained from the central elements $C_\mu$ as follows.
\begin{prop}\label{Noumi-identify}
Let $B\subseteq\Uq$ be a left coideal subalgebra and let
$C\in Z(B)\cap F_r(\Uq)$. For $V\in Ob(\cC)$ let $g_{V,C}\in
\cA(R_{V,V})_\rf^\wedge$ be the character obtained in Corollary
\ref{RE-character}. Let $C=\sum_{\mu\in P_C^+} C_\mu$ be the
decomposition \eqref{C-decomp}. Then the Noumi coideal subalgebra
$B_{g_{C,V}}$ is the left coideal subalgebra of $\Uq$ generated (as a
left coideal subalgebra) by the elements $C_\mu$ for all $\mu \in
P^+_C$ with $\Hom_\Uq(V(\mu),V)\neq \{0\}.$ In particular,
one has $B_{g_{C,V}}\subseteq B$. 
\end{prop}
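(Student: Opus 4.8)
The plan is to unwind the definitions of $B_{g_{C,V}}$, $g_{C,V}$, and $f_C$ and show that the resulting generators of $B_{g_{C,V}}$ are, up to scalars, exactly the central elements $C_\mu$ appearing with a matching component of $V$. First I would recall from Lemma~\ref{NoumiProps}.(\ref{g2}) that, for the FRT algebra $\cA=\cA(R_{V,V})$ with $\Psi(t^h_v)=c_{h,v}$, the Noumi coideal subalgebra is
\[
  B_{g_{C,V}}=\{l_\rf^+(c_{(1)})\,g_{C,V}(c_{(2)})\,\sigma(l_\rf^-(c_{(3)}))\,\mid\, c\in C^V\},
\]
after pushing along $\Psi$. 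Since $g_{C,V}(t^h_v)=f_C(c_{h,v})$ by Corollary~\ref{RE-character}, the generating set of $B_{g_{C,V}}$ is exactly
\[
  \{l_\rf^+(c_{(1)})\,f_C(c_{(2)})\,\sigma(l_\rf^-(c_{(3)}))\,\mid\, c\in C^V\}.
\]
Next I would observe that $f_C$ is supported on $\bigoplus_{\mu\in P_C^+}C^{V(\mu)}$ by construction, so only those matrix coefficients $c\in C^V$ whose image under $C^V\cong\bigoplus_\mu \mathrm{Hom}_\Uq(V(\mu),V)^*\otimes(\text{stuff})$ lands in a block $C^{V(\mu)}$ with $\mu\in P_C^+$ and $\mathrm{Hom}_\Uq(V(\mu),V)\neq\{0\}$ contribute; here I would use the Peter--Weyl-type decomposition \eqref{PeterWeyl} of $\cqg$ restricted to $C^V$ and the fact that a copy of $V(\mu)$ occurs in $V$ iff $\mathrm{Hom}_\Uq(V(\mu),V)\neq\{0\}$.

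The heart of the argument is identifying, for $\mu\in P_C^+$ with $\mathrm{Hom}_\Uq(V(\mu),V)\neq\{0\}$, the subspace $\{l_\rf^+(c_{(1)})f_C(c_{(2)})\sigma(l_\rf^-(c_{(3)}))\mid c\in C^{V(\mu)}\}$ with the space spanned by $C_\mu$, or rather showing it is a left coideal generating the same thing. By the defining equation \eqref{fC-def} of $f_C$, together with Proposition~\ref{CalderoProp}, Proposition~\ref{lr-iso}, and the definition of $c_\mu$ as the quantum trace, one has $C_\mu=f_C(c_{\mu(1)})\sigma(\ltil_{\rfb_{21}}(c_{\mu(2)}))$. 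I would then expand $\sigma(\ltil_{\rfb_{21}}(c_{\mu(2)}))$ using \eqref{ltildef}, \eqref{rrelation}, and the invariance relation Proposition~\ref{invprops}.(\ref{enn6}) applied to $c_\mu\in\cqg^{inv}$ (which gives $c_{\mu(1)}\otimes c_{\mu(2)}\otimes\Psi(c_{\mu(3)})=c_{\mu(2)}\otimes c_{\mu(3)}\otimes\sigma^2(\Psi(c_{\mu(1)}))$) to rewrite $C_\mu$ in the form $l_\rf^+(c_{\mu(1)})f_C(c_{\mu(2)})\sigma(l_\rf^-(c_{\mu(3)}))$, mirroring the computation in the proof of Lemma~\ref{df=Phif}.(i). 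This shows $C_\mu$ itself lies in the generating set, and more generally that the generating set for the block $\mu$ is precisely the $\Uq$-subcomodule generated by $C_\mu$ under the coaction $d_\rf$, i.e. $C_\mu$ together with its left-coideal closure. Summing over the relevant $\mu$ gives that $B_{g_{C,V}}$ is the left coideal subalgebra generated by $\{C_\mu\mid \mu\in P_C^+,\ \mathrm{Hom}_\Uq(V(\mu),V)\neq\{0\}\}$.

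The final inclusion $B_{g_{C,V}}\subseteq B$ then follows because each $C_\mu$ lies in $Z(B)\cap F_r(\Uq)\subseteq B$ by the decomposition \eqref{C-decomp}, and $B$ is closed under the operations (multiplication, and passing to the left-coideal-subalgebra generated by a subset of $B$: the coideal closure stays in $B$ since $B$ is a left coideal). I expect the main obstacle to be the bookkeeping in the second paragraph: carefully matching the coproduct/Sweedler indices when converting between the $\ltil_{\rfb_{21}}$-description of $C_\mu$ (coming from $f_C$) and the $l_\rf^\pm$-description (coming from $d_{g_{C,V}}$), which requires the invariance identity for $c_\mu$ and the relation \eqref{rrelation} between $l$-operators for $\rf$ and $\rfb_{21}$, exactly as in Lemma~\ref{df=Phif}. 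A secondary subtlety is verifying that ``generated as a left coideal subalgebra by the $C_\mu$'' is the right description rather than merely ``contained in'': this needs that the image $\{l_\rf^+(c_{(1)})f_C(c_{(2)})\sigma(l_\rf^-(c_{(3)}))\mid c\in C^{V(\mu)}\}$, as $c$ ranges over the whole isotypic block, is the $d_\rf$-comodule span of $C_\mu$, which follows from $\Uq$-comodule-algebra compatibility of $d_\rf$ (Lemma~\ref{comod-alg}) and the fact that $C^{V(\mu)}$ is generated as a coalgebra-with-$\Uq$-action by the quantum trace $c_\mu$ together with the semisimplicity of $\cC$.
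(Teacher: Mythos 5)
Your proposal is correct and follows essentially the same route as the paper: identify the algebra generators of $B_{g_{C,V}}$ via Lemma \ref{NoumiProps}.(\ref{g2}) as $d_{f_C}(C^V)$, split into isotypic blocks $C^{V(\mu)}$ on which $f_C$ is supported, and match each block with the left coideal generated by $C_\mu$ by rewriting $C_\mu=f_C(c_{\mu(1)})\sigma(\ltil_{\rfb_{21}}(c_{\mu(2)}))$ using \eqref{ltil-kow}, \eqref{rrelation}, and Proposition \ref{invprops}.(\ref{enn6}), exactly as in Lemma \ref{df=Phif}. The paper organizes the key step as a direct computation of $\kow(C_\mu)$, whose first tensor leg $\sigma^{-1}(\ltil_{\rfb_{21}}(c_{\mu(1)}))$ is injective by Proposition \ref{CalderoProp}, which is the precise justification of your claim that the block equals the coideal closure of $C_\mu$; otherwise the arguments coincide.
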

\begin{proof}
  Let $B_\mu\subseteq \Uq$ denote the left coideal generated by the
  element $C_\mu$ and as before let $f_C$ be the linear functional defined by
  \eqref{fC-def}. The relation 
  $C_\mu=f_C(c_{\mu(1)})\sigma(\ltil_{\rfb_{21}}(c_{\mu(2)}))$
  implies in view of relation \eqref{ltil-kow} that
  \begin{align*}
    \kow(C_\mu)&=f_C(c_{\mu(1)})\,\sigma(\ltil_{\rfb_{21}}(c_{\mu(3)}))\ot
    \sigma^2(l^-_{\rfb_{21}}(c_{\mu(4)}))\sigma(l_{\rfb_{21}}^+(c_{\mu(2)}))
    \\
               &=\sigma^{-1}(\ltil_{\rfb_{21}}(c_{\mu(1)}))\ot
    l^+_{\rf}(c_{\mu(2)})f_C(c_{\mu(3)})\sigma(l_{\rf}^-(c_{\mu(4)})) 
  \end{align*}
  where we used Proposition \ref{invprops}.(\ref{enn6}) and
  \eqref{rrelation} for the last equality.
  Hence we obtain
  \begin{align*}
    B_\mu=\{l^+_\rf(e_{(1)})f_C(e_{(2)})\sigma(l^-_\rf (e_{(3)}))\,|\,
    e\in C^{V(\mu)}\}.
  \end{align*}
  On the  other hand, by Lemma \ref{NoumiProps}.(\ref{g2}) the
  algebra $B_{g_{C,V}}$ is generated as an algebra by the subspace
  \begin{align*}
    \{l^+_\rf(e_{(1)})f_C(e_{(2)})\sigma(l_{\rf}^- (e_{(3)}))\,|\,
    e\in C^{V}\}.
  \end{align*}
  This subspace coincides with the span of all $B_\mu$ for $\mu\in
  P^+_C$ such that $V(\mu)$ occurs as a direct summand in $V$.
\end{proof}


\begin{rema}\label{QSP-remark}
  G.~Letzter's family of quantum symmetric pair coideal subalgebras $B$ of $\Uq$
  \cite{MSRI-Letzter} is a very interesting class of examples to
  which Propositions \ref{RE-Prop} and \ref{Noumi-identify} apply. The
  centre of these left coideal subalgebras was determined in
  \cite{a-KL08}. It follows from  \cite[Footnote to Corollary
  8.3]{a-KL08} that for each of these left coideal subalgebras $B$ of $\Uq$
  there exists a subset $P_{Z(B)}^+\subseteq P^+$ such that
  \begin{align}\label{ZB1}
     \dim\big(Z(B)\cap (\adr \Uq)\tau(-2w_0\mu)
     \big)=\begin{cases}1&\mbox{if $\mu\in P_{Z(B)}^+$,} \\ 0
     & \mbox{else}\end{cases}
  \end{align}
  and
  \begin{align}\label{ZB2}
     Z(B)=\bigoplus_{\mu\in P_{Z(B)}^+} Z(B)\cap (\adr \Uq)\tau(-2w_0\mu).
  \end{align}
  Note that this subset was denoted by $P_{Z(B)}$ in \cite{a-KL08}.
  The set $P_{Z(B)}^+$ is explicitly determined in \cite[Proposition
  9.1]{a-KL08}. In many cases, in particular if $\gfrak$ has no
  diagram automorphisms, one has $P_{Z(B)}^+=P^+$. Moreover,
  $P_{Z(B)}^+$ is invariant under taking
  dual weights. Proposition \ref{RE-Prop} implies that for any quantum
  symmetric pair coideal subalgebra $B$ and any $\mu\in P_{Z(B)}^+$ one
  obtains a solution of the reflection equation (\ref{refleqn}) for $V=V(\mu)$. By
  \cite[Proposition 4]{a-Kolb08} this solution is non-diagonal and
  hence no multiple of the identity. In Subsection \ref{Grassmann} we will
  explicitly discuss the quantum symmetric pair corresponding to the
  Grassmannian manifold $Gr(m,2m)$ of $m$-dimensional subspaces in $\C^{2m}$.
 \end{rema}
\subsection{Characters of $F_l(\Uq_q(\slfrak_n))$}\label{characters}
For the rest of this section we restrict to the case where
$\gfrak=\slfrak_n=\slfrak_n(\C)$ and $V=V(\omega_1)$ is the vector
representation of $\Uq=U_q(\slfrak_n)$. Note that
$r=\rk(\slfrak_n)=n-1$. We choose the root system for $\slfrak_n$ and the simple roots
$\{\alpha_1,\dots,\alpha_r\}$ as in \cite[12.1]{b-Humphreys}. Recall that $V$ has a basis 
$\{v_1,\dots,v_n\}$ such that 
\begin{align}\label{vec-rep}
  x_iv_j=\delta_{i,j-1}v_{j-1},\qquad
  y_iv_j=\delta_{i,j}v_{j+1},\qquad 
  t_iv_j=q^{-\delta_{i+1,j}+\delta_{i,j}}v_j.
\end{align}
As in previous sections we let $\{f_1,\dots,f_n\}$ denote the basis of $V^\ast$
dual to $\{v_1,\dots,v_n\}$. To shorten notation we define
$c_{i,j}:=c_{f_i,v_j}$. 

As in subsection \ref{FRT-RE} let $R:=R_{V,V}=P_{12}\circ \rh_{V,V}$ be
the $R$-matrix corresponding to $V$. If we define an $(n^2\times
n^2)$-matrix $(R^{ij}_{kl})$ by  $R^{ij}_{kl}=\rf(c_{i,k},c_{j,l})$
then one has $R(v_k\ot v_l)=\sum_{i,j=1}^n v_i\ot v_j R^{ij}_{kl}$.
It follows from \cite[3.15]{b-Jantzen96} translated to our conventions
that
\begin{align}\label{R-explicit}
  R^{ij}_{kl}=q^{1/n}\begin{cases} q^{-1}& \mbox{if $i=j=k=l$,}\\
                                   1&\mbox{if $i=k\neq j=l$,}\\
                                   q^{-1}-q&\mbox{if $i=l<j=k$,}\\
                                   0&\mbox{else.}                     
                     \end{cases}
\end{align}
Note that this matrix coincides with the matrix in \cite[8.4
  (60)]{b-KS} up to the overall factor and taking the inverse of the transpose. 

Recall the FRT-algebra $\cA(R)$ considered in Subsection
\ref{FRT-RE}. As in that subsection we denote the generators
$\{t^i_j\}$ of $\cA(R)$ by $\{s^i_j\}$ if we consider them as
generators of the reflection equation algebra $\cA(R)_\rf$.
For any element $\sigma$ in the symmetric group $S_n$ let $l(\sigma)$ be its
length. Recall that the quantum determinant 
\begin{align*}
   {\det}_q:=\sum_{\sigma\in S_n} (-q)^{l(\sigma)}t^1_{\sigma(1)} \dots
    t^n_{\sigma(n)}
\end{align*}
is a central element in $\cA(R)$ such that $\cA(R)/(\det_q-1)\cong
\qfield_q[SL(n)]$ (cp.~\cite[9.2.3, 11.2.3]{b-KS}).  If one considers $\det_q$ as an
element in the covariantized algebra $\cA(R)_\rf$ then the latter relation
implies $\cA(R)_\rf/(\det_q-1)\cong \qfield_q[SL(n)]_\rf$. This proves the
first part of the following lemma. 
\begin{lem}\label{charSLn}
   There is a one-to-one correspondence between the set
   $\qfield_q[SL(n)]_\rf^\wedge$ and the set
   $\{f\in\cA(R)_\rf^\wedge\,|\,f(\det_q)=1\}$.  
If $f\in\cA(R)_\rf^\wedge$ is a character such that
   $f(\det_q)=\beta^n$ for some $\beta\in\qfield\setminus\{0\}$ then the map
$g:\qfield_q[SL(n)]_\rf\rightarrow \qfield$ defined by
$g(c_{i,j}):=\beta^{-1} f(s^i_j)$ 
defines a character of $\qfield_q[SL(n)]_\rf\cong F_l(\Uq_q(\slfrak_n))$.    
\end{lem}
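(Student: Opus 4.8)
The plan is to handle the second assertion of Lemma \ref{charSLn}, since the first assertion has already been reduced to the isomorphism $\cA(R)_\rf/(\det_q-1)\cong \qfield_q[SL(n)]_\rf$ established just before the statement. So suppose $f\in\cA(R)_\rf^\wedge$ satisfies $f(\det_q)=\beta^n$ with $\beta\neq 0$. The first step is to rescale. I would introduce the algebra automorphism of $\cA(R)$ (and hence of $\cA(R)_\rf$, since rescaling the generators $s^i_j$ is compatible with the covariantized product — the defining relations \eqref{refleqn} are homogeneous of bidegree $(1,1)$ in $S$) given by $t^i_j\mapsto \beta^{-1}t^i_j$. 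Under this automorphism $\det_q\mapsto \beta^{-n}\det_q$, so the composite character $f':=f\circ(\text{rescaling})^{-1}$ — equivalently the character $f'$ with $f'(s^i_j)=\beta^{-1}f(s^i_j)$ — satisfies $f'(\det_q)=\beta^{-n}f(\det_q)=1$.

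The second step is to invoke the first assertion: since $f'\in\cA(R)_\rf^\wedge$ with $f'(\det_q)=1$, the one-to-one correspondence yields a unique character $g\in\qfield_q[SL(n)]_\rf^\wedge$ with $g\circ\Psi_{SL}=f'$, where $\Psi_{SL}:\cA(R)_\rf\to\qfield_q[SL(n)]_\rf$ is the quotient map (recall $\Psi(t^i_j)=c_{f_i,v_j}=c_{i,j}$). Concretely this gives $g(c_{i,j})=f'(s^i_j)=\beta^{-1}f(s^i_j)$, which is exactly the formula in the statement; one should note that $c_{i,j}$ for $1\le i,j\le n$ generate $\qfield_q[SL(n)]$ as an algebra, so this formula does determine $g$ on all of $\qfield_q[SL(n)]_\rf$ and the candidate map in the lemma is well defined and is a character.

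The third step is the identification $\qfield_q[SL(n)]_\rf\cong F_l(U_q(\slfrak_n))$. Here I would apply Proposition \ref{lr-iso}: for $\cA=\cqg$ with $\gfrak=\slfrak_n$ (so $G=SL(n)$, as $\slfrak_n$ is simply connected of adjoint-free type in the sense that the fundamental group is all of $P/Q$ only in the weight-lattice picture — in any case $\cqg=\qfield_q[SL(n)]$ by the conventions of Subsection \ref{cqg-qtrace}), the $l$-functional $\ltil_\rf:\cqg_\rf\to F_l(U)$ is an isomorphism of left $\Uq$-module algebras, and in particular an algebra isomorphism. Transporting $g$ along $\ltil_\rf$ gives a character of $F_l(U_q(\slfrak_n))$, which is what is claimed. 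I do not expect any serious obstacle here: the only point requiring a little care is the compatibility of the scalar rescaling with the covariantized multiplication, i.e.\ that $t^i_j\mapsto\beta^{-1}t^i_j$ really is an algebra map $\cA(R)_\rf\to\cA(R)_\rf$ and not merely of $\cA(R)$ — this follows because, by Proposition \ref{FRTRE}, $\cA(R)_\rf$ is presented by the generators $s^i_j$ and the relations \eqref{refleqn}, and both sides of \eqref{refleqn} scale by $\beta^2$ under $S\mapsto\beta^{-1}S$, so the relations are preserved. Everything else is an immediate concatenation of already-established isomorphisms and the first assertion of the lemma.
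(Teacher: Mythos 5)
Your argument is correct and is essentially the paper's own proof: the paper likewise defines $g(s^i_j):=\beta^{-1}f(s^i_j)$, notes that this is well defined because the defining relations of $\cA(R)_\rf$ are homogeneous, computes $g(\det_q)=\beta^{-n}f(\det_q)=1$, and concludes that $g$ factors through $\cA(R)_\rf/(\det_q-1)\cong \qfield_q[SL(n)]_\rf$, with the identification $\qfield_q[SL(n)]_\rf\cong F_l(U_q(\slfrak_n))$ coming from Proposition \ref{lr-iso} exactly as you say. Your extra care about why the rescaling respects the covariantized product is a harmless elaboration of the paper's one-line homogeneity remark.
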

\begin{proof}
  For any character $f\in\cA(R)_\rf^\wedge$ such that
  $f(\det_q)=\beta^{n}$ one defines a character $g
  \in\cA(R)_\rf^\wedge$ by $g(s^i_j):=\beta^{-1} f(s^i_j)$. Note that $g$ is
  well-defined because all relations of $\cA(R)_\rf$ are
  homogeneous. Moreover, one has  $g(\det_q)=\beta^{-n} f(\det_q)=1$ and
  hence $g$ factors to a character of $\qfield_q[SL(n)]_\rf$.     
\end{proof}
 By Lemma \ref{charSLn} a character $f$ of the reflection equation
 algebra $\cA(R)_\rf$ gives rise to a character of the locally finite
 part $F_l(\Uq)$ if and only if $f(\det_q)\neq 0$ and the base field
 contains an $n$-th root of $f(\det_q)$. The following proposition
 provides an elementary yet somewhat surprising criterion for the first
 condition to hold. 
  \begin{prop}\label{neq0cond}
    For any character $f\in \cA(R)^\wedge_\rf$ the following are equivalent:
    \begin{enumerate}
       \item $f(\det_q)\neq 0$. \label{det1}
       \item $f:\cA(R)\rightarrow \qfield$ is convolution invertible.\label{det2}
       \item The matrix $M:=\big(f(s^i_j)\big)$ is invertible.\label{det3}
    \end{enumerate}
  \end{prop}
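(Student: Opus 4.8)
The plan is to establish the cyclic chain $(2)\Rightarrow(3)\Rightarrow(1)\Rightarrow(2)$, which makes all three conditions equivalent (the implication $(2)\Rightarrow(1)$ will also fall out directly). Two structural facts about $\cA(R)$ enter. First, $\cA(R)$ is $\N_0$-graded by the degree in the generators $t^i_j$ — the relations \eqref{FRT-rel} being homogeneous of degree two — and since $\kow(\cA(R)_d)\subseteq\cA(R)_d\otimes\cA(R)_d$ the decomposition $\cA(R)=\bigoplus_{d\ge 0}\cA(R)_d$ is a direct sum of subcoalgebras; hence $\cA(R)^\ast=\prod_{d\ge 0}(\cA(R)_d)^\ast$ as convolution algebras, a linear functional on $\cA(R)$ is convolution invertible iff all of its homogeneous restrictions are, and in degree one $\cA(R)_1=\mathrm{span}\{t^i_j\}$ is the $n\times n$ comatrix coalgebra, on whose dual the convolution product is matrix multiplication of the matrices $(\phi(t^i_j))$ and the counit is the identity matrix. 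Second, $\det_q$ is grouplike in $\cA(R)$ with $\vep(\det_q)=1$; moreover $\Psi(\det_q)$ is the quantum determinant of $\cqg=\qfield_q[SL(n)]$, so $\Psi(\det_q)=1$, and then the coaction formula \eqref{Ar-coact} gives $\det_q\in\cA(R)^{inv}$. By Proposition \ref{invprops}.(1) this means $\det_q\in Z(\cA(R)_\rf)$ and its multiplication in $\cA(R)_\rf$ coincides with its multiplication in $\cA(R)$.

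$(2)\Rightarrow(3)$ and $(2)\Rightarrow(1)$: Let $\fbar$ be the convolution inverse of $f$. Restricting $f\ast\fbar=\fbar\ast f=\vep$ to the comatrix coalgebra $\cA(R)_1$, the matrix $N:=(\fbar(t^i_j))$ satisfies $MN=NM=I$, so $M$ is invertible; applying $f\ast\fbar=\vep$ to the grouplike element $\det_q$ gives $f(\det_q)\fbar(\det_q)=1$, so $f(\det_q)\neq 0$.

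$(1)\Rightarrow(2)$: Suppose $f(\det_q)\neq 0$. Since $\cA(R)$ is a domain, so is $\cA(R)_\rf$ by Proposition \ref{AdAFd}, so $\det_q$ is a central non-zero-divisor of $\cA(R)_\rf$ whose multiplication is ordinary, and the Ore localisation $\cA(R)_\rf[\det_q^{-1}]$ exists. As an algebra it is the covariantized algebra $\qfield_q[GL(n)]_\rf$ of the regular coquasitriangular Hopf algebra $\qfield_q[GL(n)]=\cA(R)[\det_q^{-1}]$ — the universal $r$-form of $\cA(R)$ extends to the localisation because $\det_q$ is grouplike, and the antipode is the usual one of $\qfield_q[GL(n)]$. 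Now $f$ extends uniquely to a character $\hat f$ of $\cA(R)_\rf[\det_q^{-1}]$ with $\hat f(\det_q^{-1})=f(\det_q)^{-1}$, and the computation in the proof of Lemma \ref{characterLem}.(ii), which only uses that one deals with a coquasitriangular Hopf algebra, applies to $\qfield_q[GL(n)]$ and shows that $\hat f$ is convolution invertible. Since $\cA(R)$ is a subcoalgebra of $\qfield_q[GL(n)]$, restricting the convolution inverse of $\hat f$ to $\cA(R)$ yields a convolution inverse of $f$, proving $(2)$.

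$(3)\Rightarrow(1)$: This is the main obstacle and the surprising content of the statement: invertibility of $M=(f(s^i_j))$ must force $f(\det_q)\neq 0$, although $f$ is an algebra map only for the covariantized product and not for the product of $\cA(R)$ used to define $\det_q$. The plan is to compute $f(\det_q)$ outright in terms of $M$. One starts from the Laplace expansion $\sum_k t^i_k\,\tilde t^k_j=\delta_{ij}\det_q$ valid in $\cA(R)$, with quantum cofactors $\tilde t^k_j\in\cA(R)_{n-1}$, rewrites $f$ of the products $t^i_k\,\tilde t^k_j$ by means of identity \eqref{fhom1} of Lemma \ref{AlgHomLem} (which replaces such a value by a combination of values of $f$ and of the universal $r$-form $\rf$), and evaluates the $r$-form contributions using the explicit Hecke-type matrix \eqref{R-explicit} together with the $q$-antisymmetriser identities that characterise $\det_q$; the expected conclusion is that $f(\det_q)$ equals $\det(M)$ up to a fixed nonzero power of $q$, so it vanishes precisely when $M$ is singular. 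This explicit identification of $f(\det_q)$ with the ordinary determinant of $M$ is where the real work lies; an alternative, less self-contained route is to invoke the classification of solutions of the reflection equation for the vector representation of $U_q(\slfrak_n)$ in \cite{a-Mud02}.
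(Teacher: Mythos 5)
Your implications $(2)\Rightarrow(3)$ and $(2)\Rightarrow(1)$ are fine, and your route for $(1)\Rightarrow(2)$ is a legitimate variant of the paper's argument: instead of rescaling $f$ by an $n$-th root $\beta$ of $f(\det_q)$ so that it factors through $\qfield_q[SL(n)]_\rf$ (and then undoing the rescaling degree by degree, which is how the paper avoids both the field extension and the localisation), you localise at the central non-zero-divisor $\det_q$ to land in $\qfield_q[GL(n)]_\rf$ and apply Lemma \ref{characterLem}.(ii) there. Both arguments ultimately rest on the same lemma; yours needs the (plausible but unverified) statement that $\cA(R)[\det_q^{-1}]$ with the $r$-form pulled back along the extension of $\Psi$ (using $\Psi(\det_q)=1$) is a regular coquasitriangular Hopf algebra in the framework of Subsection \ref{framework}, and that covariantization commutes with this localisation. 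These points should be checked, but they are not where the problem lies.

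The genuine gap is $(3)\Rightarrow(1)$, which you yourself identify as ``where the real work lies'' and then do not carry out. What you offer is a plan (Laplace expansion into quantum cofactors, rewriting via \eqref{fhom1}, evaluation of the $r$-form contributions) together with an \emph{expected} conclusion, namely that $f(\det_q)$ equals $\det(M)$ up to a fixed power of $q$. That identity is not established, and it is doubtful as stated: since $f$ is multiplicative only for the covariantized product, $f(\det_q)$ picks up $r$-form corrections and is a priori some other polynomial in the entries of $M$. The paper's proof deliberately avoids computing $f(\det_q)$ in closed form. It instead builds the operator $D=M_n\cdots M_1$ on $V^{\ot n}$, where $M_k=\id_{V^{\ot n-k}}\ot\bigl(R^{-1}_{V,V^{\ot k-1}}(M\ot\id_{V^{\ot k-1}})R_{V,V^{\ot k-1}}\bigr)$, shows via \eqref{fhom1} that $D(v_{i_1}\ot\cdots\ot v_{i_n})=\sum v_{j_1}\ot\cdots\ot v_{j_n}\,f(t^{j_1}_{i_1}\cdots t^{j_n}_{i_n})$, and then applies $D$ to the $q$-antisymmetriser $Y=\sum_{\sigma}(-q)^{l(\sigma)}v_{\sigma(1)}\ot\cdots\ot v_{\sigma(n)}$ to obtain $D(Y)=f(\det_q)\,Y$; invertibility of $M$ makes each $M_k$, hence $D$, invertible, so $f(\det_q)\neq 0$ without ever identifying it with $\det(M)$. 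You should either supply this eigenvalue argument (or a complete substitute) for $(3)\Rightarrow(1)$; appealing to the classification in \cite{a-Mud02} would make the proposition circular with the remark following it, which uses the proposition to deduce the classification of characters of $F_l(U_q(\slfrak_n))$ from that paper.
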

\begin{proof} 
Note first that (\ref{det2}) implies (\ref{det3}) by definition of the
coproduct of the bialgebra $\cA(R)$. We now show that (\ref{det1})
implies (\ref{det2}). Assume that $f(\det_q)\neq 0$ and let $\beta$ be
an $n$-th root of $f(\det_q)$ in a field extension $\qfield'$ of
$\qfield$. For now we consider all algebras to be defined over
$\qfield'$, yet the convolution inverse of $f$ we construct will be
defined over $\qfield$.
The character $g\in \cA(R)^\wedge_\rf$ defined by $g(s^i_j):=\beta^{-1}
f(s^i_j)$ factors to a character $g'$ of $\qfield'_q[SL(n)]_\rf$ because $g(\det_q)=1$. By
Lemma \ref{characterLem}.(2) the functional $g'$ is convolution
invertible, and we denote its convolution inverse by $\gbar'$. Define
$\gbar(a):=\gbar'(\Psi(a))$ where $\Psi:\cA(R)\rightarrow
\qfield'_q[SL(n)]$ denotes the canonical bialgebra homomorphism from
Subsection \ref{FRT-RE}. One checks that $g:\cA(R)\rightarrow \qfield'$
is convolution invertible with convolution inverse $\gbar$. For $m\in
\N_0$ let $\cA(R)_m\subseteq \cA(R)$ denote the homogeneous component of degree
$m$ of $\cA(R)$ defined over $\qfield$. Note that $\cA(R)_m$ is a
$\qfield$-subcoalgebra of $\cA(R)$ 
and hence $g(\cA(R)_m)\subseteq \beta^{-m}k$ implies
$\gbar(\cA(R)_m)\subseteq \beta^{m}k$. Indeed, any element $a\in
\cA(R)_m$ is contained in a subcoalgebra $A=span_k\{a_{ij}\,|\,1\le
  i,j\le N\}$ for some $a_{ij}\in  \cA(R)_m$ with
$\kow(a_{ij})=\sum_{h=1}^N a_{ih}\ot a_{hj}$. The implication
$\gbar(a)\in \beta^mk$ now follows from the fact that the matrix
$(\gbar(a_{ij}))$ is the inverse of the matrix $(g(a_{ij}))\in Mat_N(\beta^{-m}k)$.
Define now a functional $\fbar:\cA(R)\rightarrow
\qfield$ by $\fbar|_{\cA(R)_m}=\beta^{-m}\gbar|_{\cA(R)_m}$ and linearity. Note that
$\fbar$ is defined over $\qfield$ and not only over $\qfield'$. By
construction $\fbar$ is a convolution inverse of $f$.

It remains to show that (\ref{det3}) implies (\ref{det1}).  To this end
we first recall some well known properties of $\det_q$ and related
expressions in $\cA(R)$ (compare e.g.~\cite[9.2.1, 9.2.2]{b-KS}). 
\begin{enumerate}
  \item[(a)]  For any $\mu\in S_n$ the relation
    \begin{align*}
      \sum_{\sigma\in S_n} (-q)^{l(\sigma)}t^{\mu(1)}_{\sigma(1)} \dots
    t^{\mu(n)}_{\sigma(n)}=(-q)^{l(\mu)}{\det}_q 
    \end{align*}
    holds in $\cA(R)$.
  \item[(b)] For any elements $i_1,i_2,\dots,i_n\in
  \{1,2,\dots,n\}$ such that $i_k=i_l$ for some $k\neq l$ the relation
    \begin{align*}
      \sum_{\sigma\in S_n} (-q)^{l(\sigma)}t^{i_1}_{\sigma(1)} \dots
      t^{i_n}_{\sigma(n)} =0
  \end{align*}
  holds in $\cA(R)$.
\end{enumerate}
Assume that $M$ is invertible. We consider $M$ as an endomorphism of
$V$ defined by $M(v_i)=\sum_j v_j f(s^j_i)$. For $k=1,\dots,n$ define automorphisms
of $V^{\ot n}$ by
\begin{align*}
  M_k:=\id_{V^{\ot n-k}}\ot \big( R^{-1}_{V,V^{\ot k-1}}(M\ot \id_{V^{\ot k-1}}) R_{V,V^{\ot k-1}}\big)
\end{align*}
and set
\begin{align*}
  D:=M_n\dots M_2 M_1.
\end{align*}
Note that by construction and relations (\ref{fhom1}) one has
\begin{align*}
  D(v_{i_1}\ot \dots\ot v_{i_n})=\sum_{j_1,\dots,j_n}
 v_{j_1}\ot \dots\ot v_{j_n} f(t_{i_1}^{j_1}\dots t_{i_n}^{j_n}). 
\end{align*}
Applying this to the element 
  \begin{align*}
     Y:=\sum_{\sigma\in S_n} (-q)^{l(\sigma)}v_{\sigma(1)} \ot \dots \ot
     v_{\sigma(n)}\in V^{\ot n}
  \end{align*} 
and using properties (a) and (b) above one obtains
\begin{align*}
  D(Y)&=\sum_{j_1,\dots,j_n} v_{j_1}\ot \dots\ot v_{j_n} \,f\left(
      \sum_{\sigma\in S_n} (-q)^{l(\sigma)}t^{j_1}_{\sigma(1)} \dots t^{j_n}_{\sigma(n)}\right)\\
      &=f({\det}_q)\sum_{\mu\in S_n}(-q)^{l(\mu)}v_{\mu(1)}\ot \dots v_{\mu(n)}.
\end{align*}
As $D$ is injective one obtains $f(\det_q)\neq 0$.
\end{proof}
\begin{rema} 
The above proof is inspired by the arguments in
\cite[Section 3]{a-tD99}. Note, however, that in his paper T.~tom Dieck
restricts to a class of bottom-right triangular block-matrices.
\end{rema}
\begin{rema} It would be interesting to write the quantum
determinant explicitly in terms of the generators $\{s^i_j\}$ of the
reflection equation algebra $\cA(R)_\rf$. This  seems to be a
non-trivial combinatorial task.
\end{rema}
\begin{rema}
  Characters of $\cA(R)_\rf$ were classified explicitly in \cite{a-Mud02}. In
  view of Lemma \ref{charSLn} and Proposition \ref{neq0cond} this also
  gives the classification of characters of $F_l(U_q(\slfrak_n))$.
\end{rema}

\subsection{An example: quantum Grassmann manifolds $Gr(m,2m)$}\label{Grassmann}
Left coideal subalgebras $B$ of $\Uq$ corresponding to symmetric pairs of
Lie algebras are given explicitly in \cite[Section 7]{a-Letzter03}. As
an example we consider here the symmetric pair $\big(\slfrak_{2m},
\slfrak_{2m}\cap(\mathfrak{gl}_m\oplus \mathfrak{gl}_m)\big)$ labelled by
AIII(case 2) in the general classification \cite{a-Araki62}. The
corresponding symmetric space is the Grassmann manifold $Gr(m,2m)$ of
$m$-dimensional subspaces in $\C^{2m}$. We prefer here to work with the
corresponding right coideal subalgebra obtained via the antipode.

Assume that $n=2m$ is even. Fix a parameter $s\in \qfield$ and consider the
subalgebra $B^s$ of $\Uq$ generated by the following elements:
\begin{enumerate}
    \item[(i)] $\tau(\omega_i-\omega_{p(i)})$ \qquad for $i\neq m$,
    \item[(ii)] $B_i= y_i + t_i^{-1} x_{p(i)}$ for
    $i \neq  m$,
    \item[(iii)] $B_m= y_m  +t_m^{-1}x_{m}+ s\,t_m^{-1}$ 
\end{enumerate}
where $p(i):=n-i$. One verifies that $B^s$ is a right
coideal subalgebra of $\Uq$.  The pair $(\Uq,B^s)$ is a quantum
analogue of the pair $\big(U(\slfrak_n),
U(\slfrak_n\cap(\mathfrak{gl}_m\oplus \mathfrak{gl}_m))\big)$. 
Note that the family $\sigma(B^s)$, $s\in \qfield$, coincides with the family of
left coideal subalgebras for AIII(case 2) given in \cite[p.~284]{a-Letzter03}, up
to extension by elements in $\Uq^0$. Note moreover, that the subalgebra
of $\Uq^0$ generated by the elements in (i) coincides with
$span_k\{\tau(\lambda)\,|\,\lambda\in P, w_0\lambda=\lambda\}$ and
hence $t_it_{p(i)}^{-1}\in B^s$ for all $1\le i\le r$.

Recall that $r=\rank(\slfrak_n)=n-1$ and let $\omega_r:=-w_0\omega_1$
be the fundamental weight such that $V(\omega_r)$ is dual to the
vector representation $V(\omega_1)$. Recall the structure of centre of
the left coideal subalgebra $\sigma(B^s)$ described in Remark
\ref{QSP-remark}. By \cite[Proposition 9.1]{a-KL08} one has
$P^+_{Z(\sigma(B^s))}=P^+$ and hence 
\begin{align*}
  \dim(Z(B^s)\cap (\adl \Uq)\tau(-2\omega_r)\big)=1.
\end{align*}
Let $D\in Z(B^s)\cap (\adl \Uq)\tau(-2\omega_r)$ be a
nonzero element and set $C:=\sigma(D)$. By the results of Subsection \ref{solvingRE} there
exists a linear functional $f_C:C^V\rightarrow k$ such that
\begin{align}\label{f-def}
  C=f_C(c_{(1)})\sigma(\ltil_{\rfb_{21}}(c_{(2)}))
\end{align}
where $c=c_{\omega_1}$ is the quantum trace of $V(\omega_1)$. By
Corollary \ref{RE-character} one obtains a corresponding character
$g_{C,V}$ of the reflection equation algebra $\cA(R)_\rf$. In particular
the matrix $M:=(g_{C,V}(s^i_j))$ is a numerical solution of the reflection
equation \eqref{refleqn} with $R$ given by (\ref{R-explicit}). We will
prove the following lemma in the next subsection where we determine $M$ explicitly.  
\begin{lem}\label{inv-lem}
  The matrix $M:=(g_{C,V}(s^i_j))$ is invertible.
\end{lem}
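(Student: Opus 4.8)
\textbf{Proof proposal for Lemma \ref{inv-lem}.}
The plan is to invoke Proposition \ref{neq0cond}, so that invertibility of $M=(g_{C,V}(s^i_j))$ is equivalent to $g_{C,V}(\det_q)\neq 0$, which in turn is equivalent to $g_{C,V}$ being convolution invertible on $\cA(R)$. Hence it suffices to show that the character $g_{C,V}\in\cA(R)_\rf^\wedge$ obtained from Corollary \ref{RE-character} is convolution invertible. I would deduce this from the construction of $g_{C,V}$: by Corollary \ref{RE-character} one has $g_{C,V}(t^h_v)=f_C(c_{h,v})$, where $f_C:\cqg\rightarrow\qfield$ is the functional of \eqref{fC-def} attached to the central element $C\in Z(B^s)\cap F_r(\Uq)$. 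Since $C$ lies in a single graded component $(\adr\Uq)\tau(-2w_0\omega_1)=(\adr\Uq)\tau(2\omega_r)$ (as $P^+_{Z(\sigma(B^s))}=P^+$ forces $P_C^+=\{\omega_1\}$ for a nonzero $D$), the functional $f_C$ is supported on $C^{V(\omega_1)}=C^V$, so $g_{C,V}$ is genuinely determined by $f_C|_{C^V}$.

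The key step is then to show $f_C$ is convolution invertible on $\cqg$. Here I would use Proposition \ref{RE-Prop}: $f_C$ satisfies the relation \eqref{RE-form}, which by Lemma \ref{AlgHomLem}.(i) (comparing \eqref{RE-form} with \eqref{fhom1}, \eqref{fhom2}) says precisely that $f_C\in\covAc^\wedge$, i.e.\ $f_C$ is a character of $\covAc=\cqg_\rf$. Now $\cqg=\qfield_q[SL(n)]$ is a Hopf algebra, so by Lemma \ref{characterLem}.(ii) every character of $\cqg_\rf$ is automatically convolution invertible, with explicit inverse $\fbar_C$. Since $g_{C,V}$ is the pullback of (a normalisation of) $f_C$ along the bialgebra map $\Psi:\cA(R)\rightarrow\cqg$, and the graded components $\cA(R)_m$ are finite-dimensional subcoalgebras, convolution invertibility of $f_C$ on $\cqg$ transfers to convolution invertibility of $g_{C,V}$ on $\cA(R)$ by the same argument used in the proof of Proposition \ref{neq0cond} (the matrix $(\gbar(a_{ij}))$ inverts $(g(a_{ij}))$ on each finite-dimensional subcoalgebra).

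One should check the normalisation does not cause trouble: $g_{C,V}$ as produced by Corollary \ref{RE-character} coincides with $f_C$ composed with $\Psi$ up to the identification $c_{h,v}\leftrightarrow t^h_v$, and convolution invertibility is unaffected by such rescalings as in Proposition \ref{neq0cond}. Putting the chain together: $C\in Z(B^s)\cap F_r(\Uq)$ nonzero $\Rightarrow$ $f_C$ is a character of $\cqg_\rf$ $\Rightarrow$ $f_C$ convolution invertible on the Hopf algebra $\cqg$ $\Rightarrow$ $g_{C,V}$ convolution invertible on $\cA(R)$ $\Rightarrow$ $g_{C,V}(\det_q)\neq 0$ $\Rightarrow$ $M$ invertible. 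The main obstacle I anticipate is purely bookkeeping: making sure that $f_C$ is nonzero on $C^V$ (equivalently that $g_{C,V}$ is a genuine, nonzero character and not the counit), which follows because $D\neq 0$ and the map $\ltil_{\rfb_{21}}$ in \eqref{f-def} is an isomorphism by Proposition \ref{CalderoProp}, together forcing $f_C|_{C^V}\neq 0$; but since $\det_q\neq 0$ is what we want, it is worth double-checking that this nonvanishing of $f_C$ is what the argument actually needs and that no degenerate case (e.g.\ $f_C$ supported on a proper coideal) sneaks in. In fact this is handled automatically: the relation \eqref{RE-form} holds for \emph{all} $a,b\in\cqg$, so $f_C$ is a character of the full $\cqg_\rf$, and Lemma \ref{characterLem}.(ii) applies verbatim.
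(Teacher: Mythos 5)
Your argument breaks at the step where you claim that Proposition \ref{RE-Prop} together with Lemma \ref{AlgHomLem}.(i) shows $f_C\in\cqg_\rf^\wedge$. Relation \eqref{RE-form} only asserts that the right-hand side of \eqref{fhom1} equals the right-hand side of \eqref{fhom2}; it does not assert that either of them equals $f_C(ab)$. So \eqref{RE-form} is strictly weaker than the character property, and in fact $f_C$ is provably \emph{not} a character of $\cqg_\rf$: by its definition \eqref{fC-def} it vanishes on $C^{V(\mu)}$ for $\mu\notin P_C^+=\{\omega_1\}$, so in particular $f_C(1)=0\neq 1$. What \eqref{RE-form} does give (via the presentation of $\cA(R)_\rf$ by generators and relations, Proposition \ref{FRTRE}) is exactly Corollary \ref{RE-character}: a character of the \emph{reflection equation algebra} $\cA(R)_\rf$, not of $\cqg_\rf$. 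Passing from the former to the latter is precisely Lemma \ref{charSLn}, which requires $g_{C,V}(\det_q)\neq 0$ — i.e.\ the very statement you are trying to prove. So the chain ``$f_C$ character of $\cqg_\rf$ $\Rightarrow$ convolution invertible by Lemma \ref{characterLem}.(ii) $\Rightarrow$ $g_{C,V}(\det_q)\neq 0$'' is circular once the first link is removed, and Proposition \ref{neq0cond} cannot be entered from the side of convolution invertibility without independent input.

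The paper does not argue abstractly at all: it proves Lemma \ref{inv-lem} in the following subsection by computing the matrix $\Omega$ (equivalently $M$, via \eqref{fOm}) explicitly, using \eqref{domr-in}, the $\adl(t_it_{p(i)}^{-1})$- and $\adl(B_i)$-invariance of $D$, and the relations \eqref{OmRel1}--\eqref{OmRel3}. The outcome is a matrix whose anti-diagonal entries are all nonzero and whose only other nonzero entries sit on the diagonal of the lower-right block, which is visibly invertible. Some computation of this kind seems unavoidable: invertibility of $M$ is not a formal consequence of $C$ being central in a coideal subalgebra, and indeed it is exactly the nontrivial content that feeds Proposition \ref{neq0cond} and Lemma \ref{charSLn} in Theorem \ref{centreMult}. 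If you want to avoid the full computation you would still need some concrete structural fact about $D$ (e.g.\ the anti-triangular shape \eqref{Oms-matrix} with nonvanishing codiagonal) rather than the abstract properties you invoke.
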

We now summarise the results obtained about $B^s$, its centre, the
element $C=\sigma(D)$ and the corresponding character $g_{C,V}\in \cA(R)_\rf^\wedge$. 
In the following theorem we consider all algebras to be defined over a
suitable field extension $\qfield'$ of $\qfield$.
\begin{thm}  \label{centreMult}
  Let $\qfield'$ be a field extension of $\qfield$
  which contains an element  $\beta$ with $\beta^n=g_{C,V}(\det_q)$. Then
  the following hold:
  \begin{enumerate}
    \item $g(c_{i,j}):=\beta^{-1} g_{C,V}(s^i_j)$ defines a character of
    $\qfield'_q[SL(n)]_\rf\cong F_l(U_q(\slfrak_n))$.
    \item There exists a basis $\{D_\lambda\,|\,\lambda\in P^+\}$ of the
    centre $Z(B^s)$ such that the following hold for all
    $\mu,\lambda\in P^+$:
    \begin{enumerate}
      \item $D_\lambda\in Z(B^s)\cap (\adl \Uq)\tau(-2\lambda)$.
      \item $D_\lambda D_\mu=\sum_{\nu\in P^+}
      m^\nu_{\lambda,\mu}D_\nu$ \\where
      $m^\nu_{\lambda,\mu}:=\dim(\mathrm{Hom}_{\Uq}(V(\nu),V(\lambda)\ot V(\mu)))$. 
    \end{enumerate}
  \end{enumerate}
\end{thm}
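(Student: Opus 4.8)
The plan is to deduce both parts from the machinery already assembled. Part (1) is immediate: by Lemma \ref{inv-lem} the matrix $M=(g_{C,V}(s^i_j))$ is invertible, so by Proposition \ref{neq0cond} we have $g_{C,V}(\det_q)\neq 0$, hence $\beta$ with $\beta^n=g_{C,V}(\det_q)$ exists in a suitable field extension $\qfield'$ and, by Lemma \ref{charSLn}, $g(c_{i,j}):=\beta^{-1}g_{C,V}(s^i_j)$ defines a character of $\qfield'_q[SL(n)]_\rf\cong F_l(U_q(\slfrak_n))$. I would state this first, since it sets up $\qfield'$ once and for all and gives, via Lemma \ref{charSLn} together with Proposition \ref{CFA} and the identification $\qfield_q[SL(n)]_\rf\cong F_l(\Uq)$, a character $g$ of $\covAc=F_l(\Uq)$ whose associated Noumi coideal subalgebra we will connect to $B^s$.

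For part (2), the key is Proposition \ref{Noumi-identify} applied to the right coideal subalgebra $\sigma(B^s)$ of $\Uq$ — more precisely, I would work with $\sigma(B^s)$ as the \emph{left} coideal subalgebra in the hypothesis of Proposition \ref{Noumi-identify} (Letzter's datum is a left coideal subalgebra), bearing in mind that $B^s$ itself was introduced as a right coideal subalgebra via the antipode. Since $C=\sigma(D)$ with $D\in Z(B^s)\cap(\adl\Uq)\tau(-2\omega_r)$, we have $C\in Z(\sigma(B^s))\cap F_r(\Uq)$ by \eqref{FrFl} and Lemma \ref{centreLem}; and by Remark \ref{QSP-remark} with \cite[Proposition 9.1]{a-KL08}, $P^+_{Z(\sigma(B^s))}=P^+$, so \eqref{ZB1}–\eqref{ZB2} give a one-dimensional graded piece of $Z(\sigma(B^s))$ in each degree $-2w_0\lambda$. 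Choose a nonzero $C_\lambda\in Z(\sigma(B^s))\cap(\adr\Uq)\tau(-2w_0\lambda)$ for each $\lambda\in P^+$, normalised so that $C_{\omega_1}$ matches the component of $C$ in that degree; then set $D_\lambda:=\sigma^{-1}(C_{-w_0\lambda})$, which lies in $Z(B^s)\cap(\adl\Uq)\tau(-2\lambda)$ by the same translation formulae \eqref{ad-leftright}, \eqref{FrFl}. This establishes 2(a) and the fact that $\{D_\lambda\}$ is a linear basis of $Z(B^s)$ (using \eqref{ZB2}).

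It remains to prove the multiplicativity 2(b). The idea is to transport the product structure through the isomorphisms of the earlier sections. By Proposition \ref{RE-Prop} applied to $\sigma(B^s)$ and $C\in Z(\sigma(B^s))\cap F_r(\Uq)$, the functional $f_C$ of \eqref{f-def} satisfies the universal-cylinder-form-type relation \eqref{RE-form}, so by Lemma \ref{AlgHomLem}.(i) it is a character of $\covAc=F_l(\Uq)$; moreover $f_C$ is convolution invertible — this follows from part (1) via Proposition \ref{neq0cond} and Lemma \ref{charSLn}, or directly since $g_{C,V}(\det_q)\neq0$. Now Theorem \ref{RepThm} with $\cA=\cqg$ (lattice $L=P$) says that $r_\lambda\mapsto d_{f_C}(c'_\lambda)$ is an injective algebra homomorphism $\rep(\slfrak_n)\to Z(B_{f_C})\cap F_r(\Uq)$, where $c'_\lambda\in C^{V(-w_0\lambda)}\cap\cqg^{inv}$ is the element of \eqref{c'lambda-def}. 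On the other hand, Proposition \ref{Noumi-identify} identifies $B_{f_C}$ — or rather the coideal subalgebra generated by the relevant graded pieces — inside $\sigma(B^s)$: writing $C=\sum_{\mu\in P^+_C}C_\mu$ as in \eqref{C-decomp}, each $B_\mu$ is $\{l^+_\rf(e_{(1)})f_C(e_{(2)})\sigma(l^-_\rf(e_{(3)}))\mid e\in C^{V(\mu)}\}$, and for $\mu=\omega_1$ this recovers $C_{\omega_1}$, hence (by the one-dimensionality \eqref{ZB1} and the normalisation) the element $C$ itself up to scalar. The crucial computation, which I would carry out via \eqref{inTau2lambda} and the defining formula $d_{f_C}(c'_\lambda)=\Phi_{f_C}(c'_\lambda)=f_C(c'_{\lambda(1)})\sigma(\ltil_{\rfb_{21}}(c'_{\lambda(2)}))$, is that $d_{f_C}(c'_\lambda)$ lands in $(\adr\Uq)\tau(2\lambda)=\sigma((\adl\Uq)\tau(-2\lambda))$, is nonzero, and lies in $Z(\sigma(B^s))$; by the one-dimensionality of that graded piece it must be a nonzero scalar multiple of $C_{-w_0\lambda}=\sigma(D_\lambda)$. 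Fixing the normalisation of the $D_\lambda$ so that $\sigma(D_\lambda)=d_{f_C}(c'_{-w_0\lambda})$ exactly, the injective algebra homomorphism $r_\lambda\mapsto d_{f_C}(c'_\lambda)$ from Theorem \ref{RepThm} then transports the product $r_\lambda r_\mu=\sum_\nu m^\nu_{\lambda,\mu}r_\nu$ of $\rep(\slfrak_n)$ to $\sigma(D_\lambda)\sigma(D_\mu)=\sum_\nu m^\nu_{\lambda,\mu}\sigma(D_\nu)$ in $Z(\sigma(B^s))$, i.e. (applying $\sigma^{-1}$, which is an anti-automorphism but acts as a bijection on $Z$, and reindexing $\lambda\mapsto-w_0\lambda$, using $m^\nu_{\lambda,\mu}=m^{-w_0\nu}_{-w_0\lambda,-w_0\mu}$) $D_\lambda D_\mu=\sum_\nu m^\nu_{\lambda,\mu}D_\nu$, which is 2(b).

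The main obstacle I anticipate is bookkeeping the three flips that all interact: the antipode swapping left/right coideal subalgebras (so $B^s$ versus $\sigma(B^s)$), the passage $\lambda\leftrightarrow-w_0\lambda$ forced because $c'_\lambda\in C^{V(-w_0\lambda)}$ while the grading of $F_r(\Uq)$ is by $\tau(2\lambda)$, and the anti-automorphism property of $\sigma$ reversing products. None of these is deep, but getting the normalisations of $D_\lambda$ consistent so that the transported product has \emph{no} sign or scalar discrepancy — in particular verifying $d_{f_C}(c'_\lambda)\neq0$, which rests on the injectivity in Theorem \ref{RepThm} and hence on convolution-invertibility of $f_C$ — is where care is needed. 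A secondary point is confirming that $P^+_{Z(\sigma(B^s))}=P^+$ genuinely applies here (no diagram automorphism obstruction for $\slfrak_{2m}$ in case 2 of AIII), which I would quote from \cite[Proposition 9.1]{a-KL08} as recorded in Remark \ref{QSP-remark}.
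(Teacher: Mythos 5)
Your part (1) and the overall strategy for part (2) match the paper, but there is a genuine gap at the step you yourself flag as ``the crucial computation'': you assert that $d_{f_C}(c'_\lambda)$ lies in $Z(\sigma(B^s))$, and none of the tools you cite delivers this. Theorem \ref{RepThm} and Lemma \ref{df=Phif} only place this element in $Z(B_{g_{C,V}})\cap(\adr \Uq)\tau(2\lambda)$, where $B_{g_{C,V}}$ is the Noumi coideal subalgebra, and Proposition \ref{Noumi-identify} gives $B_{g_{C,V}}\subseteq\sigma(B^s)$ --- an inclusion that a priori may be proper. Centrality in a \emph{smaller} subalgebra is a \emph{weaker} condition, and the graded piece $Z(B_{g_{C,V}})\cap(\adr \Uq)\tau(2\lambda)$ could well be larger than the one-dimensional space $Z(\sigma(B^s))\cap(\adr \Uq)\tau(2\lambda)$, so you cannot conclude that $d_g(c'_\lambda)$ is proportional to your chosen $C_{-w_0\lambda}$. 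The paper closes exactly this gap by running the argument in the opposite order: it first obtains the multiplicativity 2(b) for $D_\lambda:=\sigma^{-1}(d_g(c'_\lambda))$ directly from Theorem \ref{RepThm}, and then uses it to prove centrality in $B^s$. Namely, $D_{\omega_r}$ is a nonzero multiple of $D$, which \emph{is} central in $B^s$ by construction; the $\adl(B^s)$-invariant elements of $\Uq$ form a subalgebra (because $B^s$ is a right coideal subalgebra), so every power $D_{\omega_r}^k=\sum_\nu \dim\Hom_\Uq(V(\nu),V(\omega_r)^{\ot k})\, D_\nu$ is $\adl(B^s)$-invariant; the summands lie in distinct $\adl(\Uq)$-stable components $(\adl \Uq)\tau(-2\nu)$, hence each $D_\nu$ with $V(\nu)$ occurring in some tensor power of $V(\omega_r)$ --- that is, every $D_\nu$, since $V(\omega_r)$ is faithful --- is individually $\adl(B^s)$-invariant, whence $D_\nu\in Z(B^s)$ by Lemma \ref{centreLem}. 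Only after this do \eqref{ZB1}--\eqref{ZB2} yield the basis statement. You need to supply this (or an equivalent) argument.

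A secondary slip: your claim that Proposition \ref{RE-Prop} together with Lemma \ref{AlgHomLem}.(i) makes $f_C$ itself a character of $F_l(\Uq)$ is incorrect. Relation \eqref{RE-form} asserts the equality of the two right-hand sides of \eqref{fhom1} and \eqref{fhom2}; it is not the multiplicativity condition, and indeed $f_C$ vanishes on $C^{V(\mu)}$ for $\mu\notin P_C^+$ (in particular $f_C(1)=0$ here), so it is not a character of $\cqg_\rf$. The functional to feed into Theorem \ref{RepThm} is the character $g$ produced in part (1); the expressions $d_{f_C}(c'_\lambda)$ for general $\lambda$ are not meaningful.
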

\begin{proof}
 By Lemma \ref{inv-lem} the matrix $(g_{C,V}(s^i_j))$ is invertible and thus
 $g_{C,V}(\det_q)\neq 0$ by Proposition \ref{neq0cond}. Hence we may apply
 Lemma \ref{charSLn} to obtain the desired character of
 $\qfield'_q[SL(n)]_\rf\cong F_l(U_q(\slfrak_n))$ which proves (1). 

 To verify part (2) define $D_\lambda:=\sigma^{-1}(d_g(c_\lambda'))$
 for all $\lambda\in P^+$, where
 $c_\lambda'\in \qfield_q[SL(n)]^{inv}$ is the invariant element
 defined in (\ref{c'lambda-def}). Theorem \ref{RepThm} and
 \eqref{inTau2lambda} imply that
 \begin{align}\label{Dlambda}
   0\neq D_\lambda\in (\adl U)\tau(-2\lambda) \cap Z(\sigma^{-1}(B_{g_{C,V}})).
 \end{align}
 Note that up to a nonzero scalar factor the
 element $D_{\omega_r}$ defined in this way coincides with the element
 $D$. 
 The elements $\{D_\lambda\,|\,\lambda\in P^+\}$ satisfy the relations
 in (b) because the elements $\{c'_\lambda\,|\,\lambda\in P^+\}$ do,
 as explained in subsection \ref{NoumiCentre}. 

 It remains to show that the elements $\{D_\lambda\,|\,\lambda\in
 P^+\}$ form a basis of $Z(B^s)$. By Proposition \ref{Noumi-identify}
 one has $D_\lambda\in B^s$ for all $\lambda\in P^+$. Moreover, as
 $D\in Z(B^s)$ and (b) holds, all elements $D_\lambda$ for $\lambda\in
 P^+$ are invariant under the left adjoint action of $B^s$. By Lemma
 \ref{centreLem} and (\ref{Dlambda}) one hence obtains
 \begin{align*}
   D_\lambda\in Z(B^s)\cap (\adl \Uq)\tau(-2\lambda) \setminus \{0\}
 \end{align*}
 for all $\lambda\in P^+$. In view of (\ref{ZB1}) and (\ref{ZB2}) this
 implies that  $\{D_\lambda\,|\,\lambda\in P^+\}$ is a basis of $Z(B^s)$.
\end{proof}
\begin{rema}
  With some additional technical effort Lemma \ref{inv-lem} and hence
  Theorem \ref{centreMult} can also be shown 
  to hold for quantum symmetric pairs corresponding to arbitrary
  Grassmannian manifolds $Gr(m,n)$ where $2m\le n$. The corresponding
  quantum symmetric pair coideal subalgebras are defined in
  \cite[p.~284]{a-Letzter03} as AIII(case 1)/AIV. 
\end{rema}

\subsection{An explicit solution of the reflection equation}
In this subsection we will prove Lemma \ref{inv-lem} and explicitly
determine the numerical solution $M=(g_{C,V}(s^i_j))$ of the reflection
equation for the functional $f_C$ defined in 
(\ref{f-def}). For further calculations we first provide some explicit
formulae. Recall the basis $\{v_1,\dots,v_n\}$ with dual basis
$\{f_1,\dots,f_n\}$ chosen in Subsection \ref{characters}.
Note that
$\wght(v_i)=\omega_1-\alpha_1-\dots-\alpha_{i-1}=-\omega_{i-1}+\omega_i$
where we have set $\omega_0=\omega_n=0$. Hence one obtains
\begin{align}\label{2rhovi}
  (2\rho,\wght(v_i)) =n-2i+1.
\end{align}
It follows from (\ref{vec-rep}) that the matrix coefficients
$c_{i,j}:=c_{f_i,v_j}$ satisfy the relations
\begin{align*}
  c_{k,l}(x_iu)&= \delta_{i,k}c_{k+1,l}(u),&  c_{k,l}(ux_i)&=
  \delta_{i,l-1}c_{k,l-1}(u),\\
  c_{k,l}(y_iu)&= \delta_{i,k-1}c_{k-1,l}(u),&  c_{k,l}(uy_i)&=
  \delta_{i,l}c_{k,l+1}(u)
\end{align*}
for all $u\in \Uq$, and hence
\begin{align}
  (\ad_r^\ast x_i)c_{k,l}&=-\delta_{i,k}q^{-1}c_{k+1,l} +
  \delta_{i,l-1} q^{\delta_{k,l}-\delta_{k,l-1}} c_{k,l-1},\nonumber\\
  (\ad_r^\ast y_i)c_{k,l}&=-\delta_{i,k-1}
  q^{1+\delta_{k,l}-\delta_{k-1,l}}c_{k-1,l}+\delta_{i,l}c_{k,l+1},\label{adastckl}\\
  (\ad_r^\ast t_i)c_{k,l}&=q^{-\delta_{i+1,l}+\delta_{i,l}+\delta_{i+1,k}-\delta_{i,k}}c_{k,l}. \nonumber
\end{align} 
Define an $(n\times n)$-matrix $\Omega$ by  
\begin{align}\label{Om-def}
  D=\sum_{i,j=1}^n \Omega_{i,j} \,\ltil_{\rfb_{21}}(c_{i,j}).
\end{align}
Note that by definition (\ref{f-def}) of the functional $f_C$ and by (\ref{2rhovi}) we have 
\begin{align}\label{fOm}
  g_{C,V}(s^i_j)=f_C(c_{i,j})=q^{-(2\rho,\wght(v_i))} \Omega_{j,i}=q^{2i-n-1}\Omega_{j,i}.
\end{align}
Hence it suffices to determine $\Omega$ in order to determine $g_{C,V}$.
  Define an involutive automorphism of the weight lattice
  $\Theta:P\rightarrow P$, $\Theta(\mu)=w_0 \mu$. Note that
  $\Theta(\alpha_i)=-\alpha_{n-i}$.
  By \cite[Lemma 5]{a-Kolb08} the central element $D\in Z(B^s) \cap
  (\adl \Uq)\tau(-2\omega_r)$ satisfies  
  \begin{align}\label{domr-in}
    D\in \sum_{\makebox[0cm]{$\zeta,\xi\in Q^+\atop -\Theta(\zeta)+\xi\le \omega_1+\omega_r$}}(\adl
      U^+_\zeta)(\adl U^-_{-\xi}) \tau(-2 \omega_r).
  \end{align}
  Recall from Remark \ref{tau-as-ltil} that
  $\ltil_{\rfb_{21}}(c_{n,n})=\tau(-2\omega_r)$. By Proposition
  \ref{CalderoProp} the map $\ltil_{\rfb_{21}}$ is an isomorphism of left
  $\Uq$-modules and hence relations (\ref{domr-in}) and
  (\ref{adastckl}) imply that $\Omega_{i,j}=0$ if $n-i+1>j$.
 Hence we can write the matrix $\Omega$ in the form
\begin{align}\label{Oms-matrix}
  \Omega&=\left(\begin{array}{cc}
    0& F \\
    G& H
  \end{array}\right)
\end{align} 
where each entry $F,G,$ and $H$ is an $(m\times m)$-matrix. It follows
from the $\adl (t_i t_{p(i)}^{-1})$-invariance of $D$ for $1\leq i<m$
that both $F$ and $G$ are codiagonal and that $H$ is diagonal. To determine the remaining
entries of $F$, $G$, and $H$ one uses the following formulae which
immediately follow from (\ref{adastckl}).
\begin{lem}
  For $1\le i<m$ and $1\le j,k\le m$ the following relations hold:
  \begin{align}
    (\ad_r^\ast B_i)c_{k,n-k+1}&=-\delta_{i,k-1}q c_{k-1,n-k+1} + \delta_{i,k}q c_{k,n-k},\label{adast1}\\
    (\ad_r^\ast B_i)c_{n-k+1,k}&=\delta_{i,k} c_{n-k+1,k+1} -
    \delta_{i,k-1} c_{n-k+2,k},\label{adast2}\\
    (\ad_r^\ast B_{i})c_{n-j+1,n-j+1}&=-\delta_{i,j-1}q^{-1}
    c_{n-j+2,n-j+1} + \delta_{i,j} q c_{n-j+1,n-j}.\label{adast3} 
  \end{align}
\end{lem}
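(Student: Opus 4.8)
The plan is to reduce the statement to the explicit formulae \eqref{adastckl} for $\adrs(x_i)c_{k,l}$, $\adrs(y_i)c_{k,l}$ and $\adrs(t_i)c_{k,l}$, together with the observation, recorded after \eqref{adrs-def}, that $\adrs$ is a \emph{left} action of $\Uq$ on $\cqg$. A left action is an algebra homomorphism, so $\adrs(t_i^{-1}x_{n-i})=\adrs(t_i^{-1})\adrs(x_{n-i})$, and from the definition $B_i=y_i+t_i^{-1}x_{p(i)}$ in case (ii), with $p(i)=n-i$, one has for $1\le i<m$
\[
  \adrs(B_i)c=\adrs(y_i)c+\adrs(t_i^{-1})\bigl(\adrs(x_{n-i})c\bigr)\qquad(c\in\cqg).
\]
First I would apply $\adrs(x_{n-i})$ to each of the three matrix coefficients $c_{k,n-k+1}$, $c_{n-k+1,k}$, $c_{n-j+1,n-j+1}$ using \eqref{adastckl}, then hit the outcome with $\adrs(t_i^{-1})$, and separately compute the $\adrs(y_i)$-contribution; finally I would add everything up.

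The simplifications all stem from the hypotheses $n=2m$, $1\le i<m$, $1\le j,k\le m$. The indices $n-k+1$, $n-j+1$ and the indices $n-k$, $n-k+2$, $n-j+2$ occurring in intermediate steps lie in the ``upper block'' $\{m+1,\dots,n\}$, while $i$, $j$, $k$ stay in the ``lower block'' $\{1,\dots,m\}$, so any Kronecker delta in \eqref{adastckl} coupling an upper index to one of $i,j,k$ vanishes; likewise most of the deltas in the $q$-exponents vanish, either by this block separation or by a parity obstruction (e.g.\ $\delta_{k+1,n-k}=\delta_{2k+1,2m}=0$). The one point of care is that the nonzero $\adrs(x_{n-i})$-terms always carry a factor $\delta_{i,k}$ or $\delta_{i,j}$, so when one then applies $\adrs(t_i^{-1})$ one may substitute $i=k$, respectively $i=j$, before reading the $t_i$-exponent off \eqref{adastckl}; the hypothesis $i<m$ then kills otherwise dangerous deltas such as $\delta_{k,n-k}=\delta_{k,m}$ and $\delta_{j,n-j}=\delta_{j,m}$, and the surviving $q$-exponents collapse to $0$ or $\pm 1$. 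Collecting the terms yields \eqref{adast1}, \eqref{adast2} and \eqref{adast3}; in \eqref{adast1} and \eqref{adast2} the $\adrs(y_i)$-summand supplies one term and the $t_i^{-1}x_{n-i}$-summand the other, while in \eqref{adast3} the $\adrs(y_i)$-summand contributes nothing and both terms come from $t_i^{-1}x_{n-i}$.

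There is no genuine obstacle here: the whole argument is a direct substitution into \eqref{adastckl}. The only place demanding attention is the bookkeeping of the Kronecker deltas near the middle of the index range, $k=m$ or $j=m$, where $\delta_{k,n-k}$ and $\delta_{j,n-j}$ are not identically zero; as explained above these are always multiplied by $\delta_{i,k}$ or $\delta_{i,j}$, and the condition $i<m$ makes them disappear, so keeping track of this coupling is all that is needed.
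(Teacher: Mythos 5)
Your proposal is correct and is exactly the argument the paper intends: the paper simply asserts that these relations ``immediately follow from (\ref{adastckl})'', and your computation --- splitting $\adrs(B_i)=\adrs(y_i)+\adrs(t_i^{-1})\adrs(x_{n-i})$ and eliminating the Kronecker deltas via the block separation, the parity of $n=2m$, and the coupling of the middle-range deltas to $\delta_{i,k}$ or $\delta_{i,j}$ with $i<m$ --- is precisely the verification being left to the reader. The bookkeeping checks out in all three cases, including the exponent $+1$ from $\adrs(t_i^{-1})$ in \eqref{adast1} and its cancellation against the $q^{-1}$ in \eqref{adast2}.
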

The $\adl(B_i)$-invariance
of $D$ for $1\le i<m$ now implies that all codiagonal entries of $F$
and $G$ are the same, respectively, and moreover that 
\begin{align}\label{OmRel1}
  \Omega_{i,i}=q^2 \Omega_{i+1,i+1},
\end{align}
if $m+1\le i\le n-1$. It remains to determine the relation between
$\Omega_{m+1,m+1}$ and the entries of $F$ or $G$. To this end one
calculates
\begin{align*}
  (\ad^\ast_r B_m)c_{m+1,m+1}&= -q^2 c_{m,m+1}+q^{-1}c_{m+1,m}+s\, c_{m+1,m+1},\\
  (\ad^\ast_r B_m)c_{m+1,m}&= -c_{m,m}+c_{m+1,m+1}+s\,q^{-2}c_{m+1,m} ,\\
  (\ad^\ast_r B_m)c_{m,m+1}&= -q^{-1}c_{m+1,m+1}+q^{-1}c_{m,m}+s\, q^2  c_{m,m+1}.
\end{align*}
Comparing coefficients in the equality $\adl(B_m)D=sD$ with respect to
the basis elements $\ltil_{\rfb_{21}}(c_{i,j})$ for $(i,j)=(m+1,m)$, $(m+1,m+1)$ one obtains 
\begin{align}\label{OmRel2}
  F=qG
\end{align}
 and 
\begin{align}\label{OmRel3}
 \Omega_{m+1,m+1}=s(q-q^{-1})\Omega_{m+1,m}.
\end{align}
The relations (\ref{OmRel1}), (\ref{OmRel2}), and (\ref{OmRel3})
determine $\Omega$ up to an overall scalar factor $\lambda\in
\qfield\setminus \{0\}$. One obtains
\begin{align*}
  \Omega_{i,j}=\lambda\begin{cases}
                  q& \mbox{if $i=n-j+1\le m$,}\\
                  1& \mbox{if $i=n-j+1\ge m$,}\\
                  s(q-q^{-1})q^{-2(i-m-1)}&\mbox{if $i=j\ge m+1$,}\\
                  0& \mbox{else.}
               \end{cases}
\end{align*}
 The character $g_{C,V}$ is obtained from (\ref{fOm}). We summarise the
 result of our calculation in the following proposition which also
 proves Lemma \ref{inv-lem}.
\begin{prop}
  Up to a nonzero scalar multiple the numerical solution $M=(g_{C,V}(s^i_j))$ of the
  reflection equation corresponding to the central element
  $C=\sigma(D)\in Z(\sigma(B^s))\cap (\adr \Uq)\tau(2\omega_r)$ is given by
  \begin{align*}
  g_{C,V}(s^i_j)=\begin{cases}
                  q^{2i-n}& \mbox{if $j=n-i+1\le m$,}\\
                  q^{2i-n-1}& \mbox{if $j=n-i+1\ge m$,}\\
                  s(q^2-1)&\mbox{if $i=j\ge m+1$,}\\
                  0& \mbox{else.}
               \end{cases}
  \end{align*}
\end{prop}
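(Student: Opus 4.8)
The plan is to solve the linear recursions for the entries of $\Omega$ set up in the preceding paragraphs and then read off $g_{C,V}(s^i_j)$ from \eqref{fOm}. The block form \eqref{Oms-matrix}, together with the codiagonality of $F$ and $G$ and the diagonality of $H$, reduces $\Omega$ to three pieces of data: the common codiagonal value $\lambda$ of $G$, the common codiagonal value of $F$, and the $m$ diagonal entries of $H$. First I would use \eqref{OmRel2} to conclude that every codiagonal entry of $F$ equals $\lambda q$, so that $\Omega_{i,n-i+1}=\lambda q$ for $1\le i\le m$ while $\Omega_{i,n-i+1}=\lambda$ for $m+1\le i\le n$. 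Then I would iterate \eqref{OmRel1}, obtaining $\Omega_{i,i}=q^{-2(i-m-1)}\Omega_{m+1,m+1}$ for $m+1\le i\le n$, and combine this with \eqref{OmRel3} and the identification $\Omega_{m+1,m}=\lambda$ (this being the codiagonal entry of $G$ in row $m+1$) to get $\Omega_{i,i}=\lambda\,s(q-q^{-1})\,q^{-2(i-m-1)}$. Together with the vanishing of all remaining entries this yields the displayed closed formula for $\Omega$; a short count of parameters against the relations \eqref{OmRel1}--\eqref{OmRel3} confirms that $\Omega$ is indeed pinned down up to the single nonzero scalar $\lambda$.

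With this formula in hand I would substitute into $g_{C,V}(s^i_j)=q^{2i-n-1}\Omega_{j,i}$ from \eqref{fOm}, distinguishing the cases $j=n-i+1$ with $j\le m$ (which gives $\lambda q^{2i-n}$), $j=n-i+1$ with $j\ge m+1$ (which gives $\lambda q^{2i-n-1}$), and $i=j\ge m+1$. The only nonobvious step is the last case, where the exponents combine via $q^{2i-n-1}q^{-2(i-m-1)}=q^{2m+1-n}=q$ and $q(q-q^{-1})=q^2-1$, producing the value $\lambda\,s(q^2-1)$. Absorbing $\lambda$ into the assertion ``up to a nonzero scalar multiple'' then yields the stated formula for $g_{C,V}(s^i_j)$.

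Finally, to prove Lemma \ref{inv-lem} I would note that the resulting matrix $M=(g_{C,V}(s^i_j))$ has the $m\times m$ block form $\left(\begin{smallmatrix}0&A\\ B&C\end{smallmatrix}\right)$, where $A$ and $B$ are codiagonal with nonzero entries the powers $q^{2i-n-1}$ and $q^{2i-n}$ respectively, hence invertible, and $C$ is diagonal; a column operation clearing $C$ against $B$ reduces $M$ to $\left(\begin{smallmatrix}0&A\\ B&0\end{smallmatrix}\right)$, whose determinant is $\pm\det(A)\det(B)\neq 0$. The only thing requiring genuine care throughout is the bookkeeping of the index shift $j\leftrightarrow n-i+1$ and of the behaviour at the block boundary between $i=m$ and $i=m+1$; beyond this there is no real obstacle, since all the substantive input, namely the relations \eqref{OmRel1}--\eqref{OmRel3}, has already been established.
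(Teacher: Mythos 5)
Your proposal is correct and follows essentially the same route as the paper: the paper's proof of this proposition is exactly the assembly of the relations \eqref{OmRel1}--\eqref{OmRel3} together with the block and codiagonal structure of $\Omega$ into the closed formula $\Omega_{i,j}$ up to the scalar $\lambda$, followed by substitution into \eqref{fOm}, and your exponent bookkeeping (in particular $q^{2i-n-1}q^{-2(i-m-1)}=q^{2m+1-n}=q$ and $q(q-q^{-1})=q^{2}-1$) matches the paper's result. The only genuine addition is that you spell out the block-determinant argument for Lemma \ref{inv-lem}, which the paper leaves implicit in the remark that the proposition ``also proves'' that lemma; this is a correct and welcome clarification.
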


\begin{rema}
  Note the structural similarity between the matrix $M=(g_{C,V}(s^i_j))$ and
  the matrix defined in \cite[(2.14)]{a-NDS97},
  \cite[(6.13)]{a-DS99}. In principal, this similarity is not
  surprising. One argues along the lines of \cite[Section
  6]{a-Letzter99a} that up to a Hopf algebra automorphism of $\Uq$ and
  painstaking translation of conventions the coideal subalgebra
  constructed in \cite{a-NDS97} is a subalgebra of $B^s$. 
  Since the subspaces consisting of $\adl$-invariant elements in
  $F_l(\Uq)\simeq k_q[G]$ with respect to the two coideal subalgebras
  coincide (\cite[Theorem 2.6]{a-NDS97}, \cite[Theorem 6.6]{a-DS99} and \cite[Theorem
  7.7]{MSRI-Letzter}) the centres of the two coideal subalgebras also
  coincide. This in turn links the corresponding solutions of the reflection equation. 
\end{rema}
\begin{rema}  
The universal cylinder forms constructed in \cite{a-tD99} also have
the same structure as the character $g_{C,V}$ constructed for the
symmetric pair AIII(case 2) above. 
\end{rema}

\providecommand{\bysame}{\leavevmode\hbox to3em{\hrulefill}\thinspace}
\providecommand{\MR}{\relax\ifhmode\unskip\space\fi MR }
\providecommand{\MRhref}[2]{%
  \href{http://www.ams.org/mathscinet-getitem?mr=#1}{#2}
}
\providecommand{\href}[2]{#2}

\end{document}